\def\commentout#1{{}}
\def\Z{{\mathbb Z}}
\def\R{{\mathbb R}}
\def\C{{\mathbb C}}
\def\N{{\mathbb N}}
\def\F{{\mathbb F}}
\def\cC{{\mathcal C}}
\def\profin#1{{{#1}^{{\kern -0.15em}\wedge}}}
\def\Ghat{{\profin{G}}}
\def\Xhat{{\profin{X}}}
\def\Ihat{{\profin{I}}}
\def\Jhat{{\profin{J}}}
\def\Rhat{{\profin{R}}}
\def\fhat{{\profin{f}}}
\def\GL{{\mathrm{GL}}}
\def\bd{{\mathbf{d}}}
\newcommand\Hom{\operatorname{Hom}}
\newcommand\Ker{\operatorname{Ker}}
\newcommand\Img{\operatorname{Im}}
\newcommand\AS{{\operatorname{AS}}}
\newcommand\surj{{\operatorname{surj}}}
\newcommand\csurj{{\operatorname{csurj}}}
\newcommand\Alg{{\operatorname{Alg}}}
\newcommand\ps{{\operatorname{ps}}}
\newcommand\cs{{\operatorname{cs}}}
\newcommand\FinSets{{\operatorname{FinSets}}}
\newcommand\dom{{\operatorname{dom}}}
\newcommand\parto{\rightharpoonup}
\newcommand\id{{\operatorname{id}}}
\newcommand\Mod{{\operatorname{Mod}}}
\newcommand\pr{{\operatorname{pr}}}
\newcommand\Xcheck{{{\check{X}}}}
\newcommand\Zvcheck{{{\check{Z_v}}}}
\newcommand\HS{{{\operatorname{HS}}}}
\newcommand\tr{{{\operatorname{tr}}}}
\newcommand\ttop{{{\operatorname{top}}}}
\newcommand\bbot{{{\operatorname{bot}}}}
\newcommand\avgY{\operatorname{avg}_Y}
\newcommand\avg{\operatorname{avg}}
\newcommand\wt{\operatorname{wt}}
\newcommand\shape{{{\operatorname{shape}}}}
\newcommand\height{{{\operatorname{sumup}}}} % we will replace the command?
\newcommand\End{{{\operatorname{End}}}}
\newcommand\sumup{{{\operatorname{sum}}}} % Makoto changed to avoid the collision
\newcommand\ev{{\operatorname{ev}}}
\newtheorem{theorem}{Theorem}[section]
\newtheorem{lemma}[theorem]{Lemma}
\newtheorem{proposition}[theorem]{Proposition}
\newtheorem{corollary}[theorem]{Corollary}
\theoremstyle{definition}
\newtheorem{definition}[theorem]{Definition}
\theoremstyle{remark}
\newtheorem{remark}[theorem]{Remark}
\numberwithin{equation}{section}
\begin{document}
\title[Profinite association schemes]
{Functoriality of Bose-Mesner algebras
and profinite association schemes}

\author{Makoto Matsumoto}
\address{Mathematics Program \\ 
Graduate School of Advanced Science and Engineering\\
Hiroshima University, 739-8526 Japan}
\email{m-mat@math.sci.hiroshima-u.ac.jp}

\author{Kento Ogawa}
\address{Mathematics Program \\ 
Graduate School of Advanced Science and Engineering\\
Hiroshima University, 739-8526 Japan}
\email{knt-ogawa@hiroshima-u.ac.jp}

\author{Takayuki Okuda}
\address{Mathematics Program \\ 
Graduate School of Advanced Science and Engineering\\
Hiroshima University, 739-8526 Japan}
\email{okudatak@hiroshima-u.ac.jp}

\keywords{Association Scheme, Profinite set, Delsarte theory, 
$(t,m,s)$-net, $(t,s)$-sequence, 
Ordered Hamming scheme, LP-program
}
\thanks{
The first author is partially supported by JSPS
Grants-in-Aid for Scientific Research
JP18K03213.
The second author is partially supported by
JST SPRING, Grant Number JPMJSP2132.
The third author is partially supported by JSPS
Grants-in-Aid for Scientific Research
JP20K03589, JP20K14310, and JP22H01124.
}

\subjclass[2020]{
05E30 Association schemes, strongly regular graphs, 
20D60 Arithmetic and combinatorial problems,
20E18 Limits, profinite groups
}
\date{\today}

\begin{abstract}
We show that taking the set of 
primitive idempotents of commutative association schemes
is a functor from 
the category of commutative association schemes with surjective
morphisms to the category of finite sets with 
surjective partial functions. We then consider 
projective systems of commutative association schemes consisting of
surjections (which we call profinite association schemes),
for which Bose-Mesner algebra is defined,
and describe a Delsarte theory on such schemes.
This is another method for generalizing 
association schemes to those on infinite sets, related with
the approach by Barg and Skriganov.
Relation with $(t,m,s)$-nets and $(t,s)$-sequences is studied.
We reprove some of the results of Martin-Stinson from this viewpoint.
\end{abstract}

\maketitle
%\setcounter{tocdepth}{3}
%\tableofcontents
\section{Introduction}
Association schemes are central objects in algebraic combinatorics,
with many interactions with other areas of mathematics.
It is natural to try to take projective limits of a system 
of association schemes, but there seems to be some obstruction
to have a natural Bose-Mesner algebra.
We show that a surjective morphism between commutative association schemes
behaves well with respect to 
the two product structures of Bose-Mesner algebras and primitive
idempotents, which gives rise to a notion of profinite
association schemes and their Delsarte theory. They 
are closely related with profinite groups. We study
kernel schemes and ordered Hamming schemes introduced
by Martin-Stinson\cite{MARTIN-STINSON} as examples.
We reprove their characterization of $(t,m,s)$-net
as a design in an ordered Hamming scheme,
and give a characterization of $(t,s)$-sequences
in terms of profinite association schemes.

\section{Surjections among association schemes and functoriality}
Let $\N$ denote the set of natural numbers including $0$, and $\N_{>0}$ the set of 
positive integers.
\subsection{Category of association schemes}
Let us recall the notion of association schemes briefly.
See \cite{Bannai}\cite{DELSARTE} for details.
We summarize basic terminologies.
\begin{definition}
Let $X$ be a set. By $\#X$ we denote the cardinality of
$X$. If $X$ has a topology, $C(X)$ denotes the vector space of continuous
functions from $X$ to $\C$. If a topology is not specified,
$X$ is regarded to be a discrete space. For $S\subset X$, we define
the indicator function of $S$ as a function $\chi_S:X \to \{0,1\}$
where $\chi(x)=1$ if and only if $x \in S$.
If $S$ is a singleton $\{x\}$, we denote the indicator function by $\delta_x$.
For a map $f:X \to Y$ between finite 
sets, $f^\dagger:C(Y) \to C(X)$ denotes the pull back: 
$g\in C(Y) \mapsto g\circ f$. It is injective (respectively surjective)
if $f$ is surjective (respectively injective).
By the multiplication of functions, 
$C(X)$ is a unital commutative ring. This multiplication is 
called the Hadamard product. 
For a finite set $X$, the set $C(X\times X)$
is naturally identified with the set of complex valued 
square matrices of size $\#(X)$, and the matrix product
is given by $AB(x,z)=\sum_{y\in X}A(x,y)B(y,z)$. The Hadamard
product $\circ$ is given by the component wise product, namely,
$(A\circ B)(x,z)=A(x,z)B(x,z)$. (Note that $\circ$ may denote
the composition of mappings, but no confusions would occur.)
\end{definition}

\begin{definition}\label{def:assoc-scheme}
Let $X$, $I$ be finite sets, and $R:X\times X \to I$ a surjection.
We call $(X,R,I)$ an association scheme, 
if the following properties are satisfied.
For each $i \in I$, 
$R^{-1}(i)$ may be regarded as a relation on $X$, denoted by $R_i$.
Let $A_i$ be the corresponding adjacency matrix
in $C(X\times X)$.
\begin{enumerate}
 \item There is an $i_0 \in I$ such that $A_{i_0}$
is an identity matrix.
 \item Consider the injection $R^\dagger:C(I) \to C(X\times X)$.
Its image $A_X$ is closed by the matrix product.
 \item $A_X$ is closed under the transpose of $C(X\times X)$.
\end{enumerate}
The algebra $A_X$ with the two multiplications
(the Hadamard product and the matrix product) is called
the Bose-Mesner algebra of $(X,R,I)$.
We may use the same notation
$i_0$ for distinct association schemes.
The number of $1$ in a row of $A_i$ is called 
the $i$-th valency and denoted by $k_i$.
If $A_X$ is commutative with respect to the matrix product, 
then $(X,R,I)$ is said to be commutative.
\end{definition}
In the following, we write simply an ``association scheme $X$''
for an association $(X,R,I)$ by an abuse of language.
We follow MacLane \cite{MACLANE} for the terminologies of the category theory,
in particular, functors, projective systems, and limits.
The association schemes form a category, 
by the following\cite{HANAKI-CAT}\cite{ZIESCHANG}.
\begin{definition}
Let $(X,R,I)$, $(X',R',I')$ be association schemes.
A morphism of association schemes from $X$ to $X'$ is 
a pair of mappings $f:X \to X'$ and $g:I \to I'$ 
such that the following diagram commutes:
\begin{equation}\label{eq:comm}
\begin{array}{ccc}
X \times X & \to &  I \\
f\times f \downarrow \phantom{f \times f} 
&\circlearrowleft & \phantom{g}\downarrow g  \\
X' \times X'& \to &  I'. \\
\end{array}
\end{equation}
It is clear that the surjectivity of $f$ implies 
that of $g$. The morphism $(f,g)$ is said to be 
surjective if $f$ is surjective. 
%We simply write
%$f:X \to X'$ for a morphism of association scheme.
\end{definition}

\subsection{Functoriality of Bose-Mesner algebra}
\begin{proposition}\label{prop:fiber} 
If $(f,g):X \to X'$ is a surjective morphism of 
association schemes, then for any $x'\in X'$,
the cardinality of the fiber $\#f^{-1}(x')$ is
$\#X/\#X'$.
\end{proposition}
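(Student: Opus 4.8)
The plan is to show that every fiber of $f$ has the same cardinality $c$, and then to pin down $c$ by writing $X$ as a disjoint union of its fibers. The key observation is that the identity (diagonal) relation of $X'$ pulls back through $g$ to a union of relations on $X$ that describes exactly when two points of $X$ lie in the same fiber of $f$.

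First I would record that $g$ carries the identity index $i_0$ of $X$ to the identity index (call it $i_0'$) of $X'$: for any $x\in X$ we have $R(x,x)=i_0$, and commutativity of \eqref{eq:comm} gives $g(R(x,x))=R'(f(x),f(x))=i_0'$, so $g(i_0)=i_0'$. The substantive point is the description of $g^{-1}(i_0')\subseteq I$: for $x,y\in X$ one has $f(x)=f(y)$ if and only if $R(x,y)\in g^{-1}(i_0')$. This is immediate from the diagram, since $f(x)=f(y)$ is equivalent to $R'(f(x),f(y))=i_0'$, and $R'(f(x),f(y))=g(R(x,y))$ by commutativity.

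Next I would fix $x'\in X'$ and, using surjectivity of $f$, choose some $x\in f^{-1}(x')$. By the previous step,
\[
\#f^{-1}(x')=\#\{y\in X : R(x,y)\in g^{-1}(i_0')\}=\sum_{i\in g^{-1}(i_0')}k_i,
\]
where I use that the number of $y$ with $R(x,y)=i$ equals the valency $k_i$, independent of $x$. Since the right-hand side depends neither on the choice of $x$ nor on $x'$, all fibers share the common cardinality $c:=\sum_{i\in g^{-1}(i_0')}k_i$.

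Finally, $X$ is the disjoint union of its fibers $f^{-1}(x')$ over $x'\in X'$, each nonempty (by surjectivity) and of size $c$; hence $\#X=c\cdot\#X'$, giving $c=\#X/\#X'$ as claimed. The only place where the full association-scheme axioms enter, rather than purely formal properties of the commuting square, is the constancy of the valencies $k_i$, which is the one mild obstacle here; it follows from $A_X$ being closed under the matrix product together with $J=\sum_i A_i\in A_X$, and I would invoke it as a standard fact underlying the very definition of $k_i$.
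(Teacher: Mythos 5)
Your proof is correct and follows essentially the same route as the paper: the paper's proof likewise identifies $\#f^{-1}(x')$ with $\sum_{i\in g^{-1}(i_0')}k_i$, notes this is independent of $x'$, and concludes by counting $\#X$ as the disjoint union of fibers. You have merely filled in the details the paper leaves implicit (the diagram chase showing $f(x)=f(y)\iff R(x,y)\in g^{-1}(i_0')$, and the constancy of the valencies), so there is nothing to correct.
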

\begin{proof}
The cardinality of the fiber is the summation of 
the valencies of $R_i$ for $i$ with $g(i)=i_0'$, hence 
independent of the choice of $x'$, which implies the result.
This is well-known \cite{ZIESCHANG}, and it holds
under a weaker condition (for unital regular relation partitions, see
\cite{KMO}).  
\end{proof}

\begin{theorem}\label{th:convolution}
$ $

\begin{enumerate}
\item Let $(f,g):(X,R,I) \to (X',R',I')$ be a morphism. 
It induces an injection $C(I') \to C(I)$ and hence $\Psi: A_{X'} \to A_{X}$.
Then $\Psi$ is a morphism of unital rings with respect to the Hadamard product.
\item Suppose that $f$ is surjective. We define the convolution 
product $\bullet_X$ on $A_X$ by 
$$
 (A\bullet_X B)(x,z)=\frac{1}{\#X}\sum_{y\in X}A(x,y)B(y,z)
$$ 
(i.e., the matrix product normalized by the factor of $\#X$).
Then, $A_X$ has a unit $E_X:=\#X A_{i_0}$, 
and
$\Psi$ is a morphism of rings with respect to the convolution product
(which may not map the unit of $A_{X'}$ to the unit of $A_{X}$).

\item The vector space $C(X)$ is a left $A_X$-module
by the following action:
$$
A_X \times C(X) \to C(X), (A, h) \mapsto (A \bullet h)
$$
given by 
$$
(A \bullet h)(x)=\frac{1}{\#X}\sum_{y\in X}A(x,y)h(y),
$$
and the unit $E_X$ acts trivially.
This module structure is compatible with $\Psi$ in the 
sense that the following diagram commutes:
\begin{equation}\label{eq:mod}
\begin{array}{ccc}
A_X \times C(X) & \to &  C(X) \\
\Psi \times f^\dagger \uparrow \phantom{f^\dagger \times f^\dagger} 
&\circlearrowleft & \phantom{g\dagger}\uparrow f^\dagger  \\
A_{X'} \times C(X')& \to &  C(X'). \\
\end{array}
\end{equation}
If we consider $C(X')\subset C(X)$,
then the element $\Psi(E_{X'})\in A_X$ acts on $C(X)$
as a projector $C(X) \to C(X')$.
\end{enumerate}
\end{theorem}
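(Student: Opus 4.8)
The plan is to realize $\Psi$ concretely as the restriction to $A_{X'}$ of the pullback $(f\times f)^\dagger\colon C(X'\times X')\to C(X\times X)$, and then to read off all three assertions from this description together with the constant fiber cardinality of Proposition~\ref{prop:fiber}. For part (1), I would first check that $(f\times f)^\dagger$ carries $A_{X'}$ into $A_X$. Using the commuting square \eqref{eq:comm}, for $i'\in I'$ the preimage $(f\times f)^{-1}(R'_{i'})$ equals $R^{-1}(g^{-1}(i'))$, so that the pullback of an indicator gives $(f\times f)^\dagger A'_{i'}=\sum_{g(i)=i'}A_i\in A_X$; in terms of the identifications $A_X\cong C(I)$ and $A_{X'}\cong C(I')$ furnished by $R^\dagger$ and $(R')^\dagger$, the induced map is exactly the pullback $g^\dagger\colon C(I')\to C(I)$, which is injective exactly when $g$ is onto (in particular whenever $f$ is surjective). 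Hadamard-multiplicativity and preservation of the Hadamard unit (the all-ones matrix, which is $R^\dagger$ of the constant function $1$) are then immediate, since a pullback of functions is always a unital homomorphism for the pointwise product.

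For part (2), closure of $A_X$ under $\bullet_X$ is just the scaled version of closure under the matrix product from Definition~\ref{def:assoc-scheme}, and a one-line computation shows $E_X=\#X\,A_{i_0}$ is a two-sided unit for $\bullet_X$. The substantive point is multiplicativity: expanding $\Psi(A'\bullet_{X'}B')(x,z)$ and $(\Psi(A')\bullet_X\Psi(B'))(x,z)$, the two differ only in that the first sums over $y'\in X'$ with weight $1/\#X'$ while the second sums over $y\in X$ with weight $1/\#X$. Reindexing the second sum by $y'=f(y)$ and invoking Proposition~\ref{prop:fiber} (each fiber has size $\#X/\#X'$) converts the weight $1/\#X$ into $1/\#X'$ and makes the two expressions agree. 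The same computation with $A'=B'=E_{X'}$ shows that $\Psi(E_{X'})(x,y)=\#X'$ when $f(x)=f(y)$ and $0$ otherwise, which is not $E_X$ unless $f$ is injective, accounting for the parenthetical remark.

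For part (3), the module axioms follow formally: the unit acts trivially by the same computation as for $E_X$, and associativity of the action is the associativity of the normalized matrix product applied to a vector. Commutativity of \eqref{eq:mod} is once more the fiber-reindexing identity, now applied to $f^\dagger(A'\bullet_{X'}h')$ versus $\Psi(A')\bullet_X f^\dagger(h')$. Finally, since $\Psi$ is a $\bullet$-homomorphism (part (2)) and $E_{X'}$ is idempotent, $\Psi(E_{X'})$ is $\bullet_X$-idempotent, so it acts on $C(X)$ as an idempotent operator; explicitly, $(\Psi(E_{X'})\bullet h)(x)$ is the average of $h$ over the fiber $f^{-1}(f(x))$. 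This average depends only on $f(x)$, so its image lies in $f^\dagger C(X')=C(X')$, and it restricts to the identity on $C(X')$ — equivalently, applying the commutativity of \eqref{eq:mod} to the unit gives $\Psi(E_{X'})\bullet f^\dagger(h')=f^\dagger(E_{X'}\bullet_{X'}h')=f^\dagger(h')$ — whence it is exactly the projector onto $C(X')$.

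I expect the only genuine obstacle to be organizing the fiber-reindexing so that Proposition~\ref{prop:fiber} is applied cleanly and the normalization constants track correctly; once the constant fiber cardinality is in hand, every remaining step is a formal property of pullbacks and of the normalized matrix product.
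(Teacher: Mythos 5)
Your proposal is correct and follows essentially the same route as the paper: identify $\Psi$ with the pullback $g^\dagger$ (equivalently the restriction of $(f\times f)^\dagger$) for part (1), and use the constant fiber cardinality of Proposition~\ref{prop:fiber} to reindex the normalized sums for parts (2) and (3), with the projector claim following from the image of $\Psi(A)\bullet h$ depending only on $f(x)$. Your explicit computation of $\Psi(E_{X'})$ as fiber-averaging and your remark that injectivity of $C(I')\to C(I)$ requires $g$ onto are accurate refinements but do not change the argument.
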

\begin{proof}
For a morphism of finite sets $X \to Y$, the induced map
$C(Y) \to C(X)$ is a morphism of unital rings with respect to 
the Hadamard product. The first statement follows from this and
the commutative diagram (\ref{eq:comm}).
For the second part, take $A,B \in A_X'$. (In fact, one may
take $A,B \in C(X'\times X')$; the following arguments depend
only on the fact that the cardinality of the fiber of $f$ is constant.)
Then
\begin{eqnarray*}
\Psi(A\bullet_{X'} B)(x,z)&=&(A \bullet_{X'} B)(f(x),f(z)) \\
&=& \frac{1}{\#X'}\sum_{y'\in X'}A(f(x),y')B(y',f(z)) \\
&=& \frac{1}{\#X'}\frac{1}{\#f^{-1}(y')}\sum_{y\in X}A(f(x),f(y))B(f(y),f(z))\\
&=& \frac{1}{\#X'}\frac{\#X'}{\#X}\sum_{y\in X}\Psi(A)(x,y))\Psi(B)(y,z)\\
&=& \frac{1}{\#X}\sum_{y\in X}\Psi(A)(x,y))\Psi(B)(y,z)\\
&=& (\Psi(A)\bullet_{X} \Psi(B))(x,z).\\
\end{eqnarray*}
 
For the third part, it is a routine calculation 
to check that $A_X$ acts on $C(X)$ as a unital ring.
For $A\in A_{X'}$ and $h\in C(X')$, the 
compatibility follows from:
\begin{eqnarray*}
f^\dagger((A\bullet h))(x)&=&(A \bullet h)(f(x)) \\
&=& \frac{1}{\#X'}\sum_{y'\in X'}A(f(x),y')h(y') \\
&=& \frac{1}{\#X'}\frac{1}{\#f^{-1}(y')}
    \sum_{y\in f^{-1}(y'), y'\in X'}A(f(x),f(y))h(f(y))\\
&=& \frac{1}{\#X'}\frac{\#X'}{\#X}
    \sum_{y\in Y}\Psi(A)(x,y))(f^\dagger h)(y)\\
&=& \frac{1}{\#X}\sum_{y\in X}\Psi(A)(x,y))(f^\dagger h)(y)\\
&=& (\Psi(A)\bullet (f^\dagger h))(x).
\end{eqnarray*}
The compatibility implies that $\Psi(E_{X'})$ trivially acts on the image 
of $C(X')$ in $C(X)$. For $A\in A_{X'}$ and $h\in C(X)$,
\begin{eqnarray*}
(\Psi(A)\bullet h))(x)&=&\frac{1}{\#X}\sum_{y\in X}\Psi(A)(x,y)h(y) \\
&=&\frac{1}{\#X}\sum_{y\in X}A(f(x),f(y))h(y),
\end{eqnarray*}
which depends only on $f(x)$, and hence $\Psi(A)\bullet h$
lies in $f^\dagger(C(X')) \subset C(X)$.
Hence $\Psi(E_{X'})$ is a projection $C(X)\to C(X')$.
\end{proof}
\begin{definition}
Let $\AS_\surj$ be the category of association schemes with 
surjective morphisms.  
\end{definition}
\begin{corollary}
Let $\Alg_{HC}$ (HC means Hadamard and convolution) be the category of
finite dimensional $\C$-vector spaces $A$ with:
\begin{enumerate}
\item 
one associative multiplication (Hadamard product)
which gives a commutative unital semi-simple $\C$-algebra structure to $A$,
\item
one (possibly non-commutative) associative multiplication
(convolution product)
which gives a unital semi-simple $\C$-algebra structure to $A$.
\end{enumerate}
(Note that 
in this case, semi-simplicity is equivalent to that $A$ 
is a direct product of a finite number of matrix algebras over $\C$).
Morphisms are injective $\C$-linear maps preserving the both two products and
the unit for the Hadamard product (we don't require preservation of 
unit for the convolution product).

Then, the correspondence $X \mapsto A_X$ gives a contravariant functor
from $\AS_\surj$ to $\Alg_{HC}$.
\end{corollary}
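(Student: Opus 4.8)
The plan is to note that Theorem~\ref{th:convolution} already supplies nearly all of the required structure, so that the corollary reduces to three routine checks: that each $A_X$ is genuinely an object of $\Alg_{HC}$, that each induced map $\Psi$ is a morphism there, and that the assignment preserves identities and reverses composition.

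For the object part, fix an association scheme $(X,R,I)$. For the Hadamard product, $R^\dagger$ identifies $A_X$ with $C(I)$ as a unital ring, since pullback along any map carries the pointwise product and the constant function $1$ to the pointwise product and its unit; as $C(I)\cong\C^{\#I}$ with componentwise multiplication, the Hadamard structure on $A_X$ is commutative, unital and semi-simple. For the convolution product, I would first observe that multiplication by the nonzero scalar $1/\#X$ is a unital ring isomorphism from $(A_X,\bullet_X)$ onto the ordinary adjacency algebra, carrying the unit $E_X=\#X\,A_{i_0}$ of Theorem~\ref{th:convolution}(2) to the identity matrix $A_{i_0}$. Thus it suffices to recall that the adjacency (Bose--Mesner) algebra is semi-simple: the $A_i$ have real $0$--$1$ entries and, by axiom~(3) of Definition~\ref{def:assoc-scheme}, $A_X$ is closed under transpose, hence under the conjugate transpose, making it a $*$-subalgebra of $M_{\#X}(\C)$; for any $x$ in the Jacobson radical, which is a two-sided ideal, the element $x^*x$ again lies in the radical and is therefore nilpotent, while being positive semi-definite, so $x=0$. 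Hence $A_X\in\Alg_{HC}$.

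For the morphism part, let $(f,g)\colon X\to X'$ be surjective. Then $g$ is surjective, so $g^\dagger$ is injective, and $\Psi$ is the injective $\C$-linear map obtained by transporting $g^\dagger$ through the isomorphisms $R'^\dagger$ and $R^\dagger$. By Theorem~\ref{th:convolution}(1), $\Psi$ is a homomorphism for the Hadamard product preserving its unit, and by Theorem~\ref{th:convolution}(2) it is a homomorphism for the convolution product. Since $\Alg_{HC}$ does not require preservation of the convolution unit, this is precisely the data of a morphism of $\Alg_{HC}$.

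Finally, functoriality is inherited from functoriality of pullback: the identity morphism of $X$ induces the identity of $A_X$, and for composable surjective morphisms $\varphi$ and $\psi$ the relation on the index components gives $\Psi_{\psi\circ\varphi}=\Psi_{\varphi}\circ\Psi_{\psi}$, so the functor is contravariant. The only non-formal input is the semi-simplicity of the convolution algebra, i.e.\ the classical semi-simplicity of Bose--Mesner algebras; I expect this to be the sole point requiring more than bookkeeping, every other step being definitional or already contained in Theorem~\ref{th:convolution}.
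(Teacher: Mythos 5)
Your proposal is correct and follows essentially the same route as the paper: the Hadamard structure comes from the identification $A_X\cong C(I)$ via $R^\dagger$, the convolution homomorphism properties come from Theorem~\ref{th:convolution}, and semi-simplicity of the convolution algebra is established by the same $*$-algebra argument (the paper phrases it as ``$A\bullet A^*$ is not nilpotent for $A\neq 0$,'' which is your Jacobson-radical/positive-semi-definite computation). Your write-up merely fills in the details that the paper leaves as ``well-known'' or ``easy to check.''
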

\begin{proof} It is proved that $A_X$ has two products. 
Semi-simplicity is well-known, c.f. \cite{ZIESCHANG}. (It is enough to
show that there is no nilpotent ideal, but for any nonzero $A \in A_X$, 
the product with its unitary conjugate 
$A\bullet A^*=\frac{1}{\#X}AA^* $ is not nilpotent, hence $A_X$ 
is semi-simple.) 
Functoriality is easy to check, using Theorem~\ref{th:convolution}.
\end{proof}

\begin{corollary}\label{cor:mod}
Let us consider the category $\Mod$ of $R$-modules, whose 
object is a pair of a unital commutative ring $R$ and 
an $R$-module $M$, and a morphism from $(R,M)$ to $(R',M')$
is a pair of a ring morphism $f:R \to R'$ and 
a $\Z$-module morphism $g:M \to M'$
which makes the following diagram commute:
$$
\begin{array}{ccc}
R \times M & \to &  M \\
f\times g \downarrow \phantom{f\times g}
&\circlearrowleft & \phantom{g}\downarrow g \\
R' \times M'& \to &  M'. \\
\end{array}
$$
Then, the correspondence $X \mapsto (A_X,C(X))$ is
a contravariant functor from $\AS_\surj$ to $\Mod$.
\end{corollary}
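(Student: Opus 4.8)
The plan is to assemble the functor directly from Theorem~\ref{th:convolution}, which already contains all the substantive content; the corollary is essentially a bookkeeping statement packaging parts (2) and (3) of that theorem as a single morphism in $\Mod$. On objects I would send an association scheme $X=(X,R,I)$ to the pair $(A_X,C(X))$, where $A_X$ carries the convolution product $\bullet_X$ (so that it is a unital ring with unit $E_X=\#X\,A_{i_0}$) and $C(X)$ is regarded as a left $A_X$-module via the action $\bullet$ of Theorem~\ref{th:convolution}(3). For this pair to be an object of $\Mod$ one needs $(A_X,\bullet_X)$ commutative; since $\bullet_X$ and the matrix product differ only by the scalar $1/\#X$, this holds exactly for commutative association schemes, which is the relevant setting. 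On a surjective morphism $(f,g)\colon X\to X'$ I would assign the pair $(\Psi,f^\dagger)$, where $\Psi\colon A_{X'}\to A_X$ is the map of Theorem~\ref{th:convolution} (concretely $\Psi(A)(x,z)=A(f(x),f(z))$, i.e. the restriction of $(f\times f)^\dagger$) and $f^\dagger\colon C(X')\to C(X)$ is the pullback. Since the target arrow runs from $(A_{X'},C(X'))$ to $(A_X,C(X))$, the assignment is contravariant, as claimed.

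The first step is to check that $(\Psi,f^\dagger)$ is genuinely a morphism in $\Mod$. That $\Psi$ is a morphism of convolution rings is exactly Theorem~\ref{th:convolution}(2), and $f^\dagger$ is visibly $\C$-linear, hence $\Z$-linear. The commutativity of the defining square of $\Mod$ — namely $f^\dagger(A\bullet h)=\Psi(A)\bullet f^\dagger(h)$ for $A\in A_{X'}$ and $h\in C(X')$ — is precisely the commuting diagram \eqref{eq:mod} of Theorem~\ref{th:convolution}(3). So no new computation is needed at this step.

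It then remains to verify the two functor axioms. Both $\Psi$ and $f^\dagger$ are built from pullback (along $f\times f$ and along $f$ respectively), and pullback of functions is strictly contravariantly functorial: the identity morphism pulls back to the identity, and for composable surjections $X\xrightarrow{(f,g)}X'\xrightarrow{(f',g')}X''$ one has $(f'\circ f)^\dagger=f^\dagger\circ (f')^\dagger$ and likewise $\Psi_{f'\circ f}=\Psi_f\circ\Psi_{f'}$, because $\big((f'\circ f)\times(f'\circ f)\big)^\dagger=(f\times f)^\dagger\circ(f'\times f')^\dagger$. Since composition in $\Mod$ is componentwise, the assignment therefore reverses composition and preserves identities.

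I expect the only delicate point to be expository rather than mathematical: confirming that $\Psi$ genuinely lands in $A_X$ (not merely in $C(X\times X)$), and that it is the convolution ring structure, rather than the Hadamard one, that makes $C(X)$ into the module being transported. Both facts are already guaranteed by Theorem~\ref{th:convolution}, so once the correct structures are pinned down, everything else is formal.
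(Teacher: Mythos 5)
Your proposal is correct and follows essentially the same route as the paper: the paper's own proof simply observes that the object and morphism assignments are those of Theorem~\ref{th:convolution} and that functoriality holds because each component $X\mapsto A_X$ and $X\mapsto C(X)$ is contravariantly functorial (via pullback), which is exactly what you verify in more detail. Your aside that commutativity of $(A_X,\bullet_X)$ requires $X$ commutative is a fair reading of the slight mismatch between the definition of $\Mod$ and the stated domain $\AS_\surj$, but it does not change the argument.
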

\begin{proof}
The correspondence is given in Theorem~\ref{th:convolution}.
It is a contravariant functor, since each of the correspondences
$X\mapsto A_X$, $X\mapsto C(X)$ is a contravariant functor.
\end{proof}

\begin{remark}
In \cite{French}, French constructed a sub-category of association 
schemes, and a covariant functor from it to the category of algebra.
Our construction produces a contravariant functor, which seems to be
of different nature. 
\end{remark}

\subsection{Commutative association schemes and primitive idempotents}
\label{sec:comm}

By semisimplicity and Artin-Wedderburn Theorem, any Bose-Mesner
algebra $A_X$ (with convolution product) is isomorphic to a product of matrix algebras over $\C$.
We assume that $X$ is commutative.
Then, $A_X$ is, as a unital ring, isomorphic to 
a direct product of copies of $\C$. Namely, 
$A_X\cong \C\times \cdots \times\C=:\C^n$ for some $n\in \N$,
and this decomposition is unique as a direct product of rings. 

\begin{proposition}\label{prop:idempotent}
Let $A$ be a ring isomorphic to $\C^n$ (with componentwise multiplication). 
An element $j\in A$ corresponding to an element of $\C^n$ with one coordinate $1$
and the other coordinates $0$ is called a primitive idempotent of $A_X$. 
It is characterized by the idempotent property $j^2=j$, $j\neq 0$ 
(this is equivalent to that the each coordinate is $0$ or $1$ and
at least one $1$ exists)
and 
that there is no nonzero idempotent $j'\neq j$ such that $jj'=j'$ holds 
(this says only one coordinate is $1$).
Any idempotent is uniquely a sum of primitive idempotents.
\end{proposition}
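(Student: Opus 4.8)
The plan is to fix an isomorphism $A\cong\C^n$ and carry out every verification coordinate-wise, so that all ring-theoretic conditions translate into elementary statements about tuples. Writing an element as $a=(a_1,\dots,a_n)$, I would first observe that $a^2=a$ forces $a_i^2=a_i$ in $\C$ for every $i$, hence $a_i\in\{0,1\}$ because $\C$ is a field; conversely any $0/1$-tuple is idempotent. Thus idempotents are in bijection with subsets $S\subseteq\{1,\dots,n\}$ via $a\leftrightarrow\supp(a):=\{i:a_i=1\}$, and $a\neq 0$ precisely when $S\neq\emptyset$. This settles the two parenthetical reformulations in the statement.

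Next I would analyze the absorption condition. For idempotents $j,j'$ with supports $S,S'$, the product $jj'$ is the idempotent with support $S\cap S'$, so $jj'=j'$ is equivalent to $S'\subseteq S$. Hence the condition ``there is no nonzero idempotent $j'\neq j$ with $jj'=j'$'' says exactly that $S$ admits no nonempty proper subset, i.e.\ $|S|=1$. This shows that the stated characterization singles out precisely the tuples with a single coordinate equal to $1$ and the rest $0$, matching the definition of a primitive idempotent.

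Finally, for the decomposition, let $e_1,\dots,e_n$ denote the primitive idempotents, which by the previous step are exactly the standard basis vectors of $\C^n$. Given any idempotent $j$ with support $S$, the identity $j=\sum_{i\in S}e_i$ is immediate coordinate-wise. Uniqueness follows from the linear independence of the $e_i$ in $\C^n$: a sum of distinct primitive idempotents both determines and is determined by its index set, so two subsets producing the same idempotent have equal support and therefore coincide.

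I do not expect a genuine obstacle here, since the entire content is the splitting $A\cong\C^n$ together with the fact that $\C$ has no idempotents besides $0$ and $1$; the only point requiring care is to keep the support bookkeeping clean so that each of the three assertions (the $0/1$ reformulation, the characterization of primitivity, and the unique decomposition) is seen to be exactly the corresponding statement about subsets of $\{1,\dots,n\}$.
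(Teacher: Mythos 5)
Your proof is correct and takes essentially the same route as the paper, which states this proposition without a separate proof and relies on exactly the coordinate-wise, support-set reasoning you spell out in its parenthetical remarks. Nothing further is needed.
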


\begin{definition}
Let $\FinSets$ be the category of finite sets.
Let $\FinSets_\ps$ be the category of finite sets
and partial surjective functions.

Recall that for sets $X$ and $Y$, a partial function 
$f$ from $X$ to $Y$ consists of a pair of a subset $\dom(f)\subset X$
(which may be empty)
and a function $f|_{\dom(f)}:\dom(f)\to Y$. Two partial functions
are equal if they have the same domain and the same function
on the domain. 
A partial function is denoted by $f:X \parto Y$.
For $S\subset Y$, $f^{-1}(S):=\{x \in X \mid x\in \dom(f), f(x)\in S\}$.
Composition $g\circ f$ of partial functions $f:X \parto Y$ and
$g:Y \parto Z$ has domain $f^{-1}(\dom(g))$.
A partial function $f$ is surjective if 
$\Img(f):=\{f(x) \mid x\in \dom(f)\}\subset Y$ is $Y$.
\end{definition}

\begin{proposition}
Let $\Alg_{cs}$ be the category of rings isomorphic to $\C^n$ for some $n$,
with injective $\C$-linear ring morphisms which may not preserve the unit.
For $A\in \Alg_{cs}$, we define $J(A)$ as the finite set of primitive idempotents of $A$.
For a finite set $I$, we define $C(I)$ the set of maps from $I$ to $\C$
as defined above. These are contravariant 
functors, and give contra-equivalence
between $\Alg_{cs}$ and $\FinSets_\ps$.
\end{proposition}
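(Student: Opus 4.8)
The plan is to make both functors completely explicit on objects and morphisms, reduce every assertion to the ring-theoretic behaviour of primitive idempotents, and then assemble the contra-equivalence from natural isomorphisms witnessing the composites $C\circ J$ and $J\circ C$ as identities. The single computational fact that drives everything is the following description of characters. For $A\cong \C^n$ in $\Alg_{cs}$ and a primitive idempotent $j\in J(A)$, projection onto the coordinate of $j$ is a $\C$-algebra homomorphism $\ev_j\colon A\to\C$, determined on primitive idempotents by $\ev_j(j')=\delta_{j,j'}$, and every element satisfies $a=\sum_{j\in J(A)}\ev_j(a)\,j$. A short argument shows that $j\mapsto\ev_j$ is a bijection from $J(A)$ onto the set of nonzero $\C$-algebra homomorphisms $A\to\C$: a nonzero such homomorphism $\chi$ must satisfy $\chi(1_A)=1$, and since $1_A=\sum_j j$ with the values $\chi(j)\in\{0,1\}$ pairwise orthogonal, exactly one $\chi(j)$ equals $1$. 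Dually, the primitive idempotents of $C(I)$ are precisely the $\delta_i$, so $i\mapsto\delta_i$ identifies $I$ with $J(C(I))$ and $\ev_{\delta_i}$ is evaluation at $i$. In particular $a\mapsto(j\mapsto\ev_j(a))$ is a ring isomorphism $A\xrightarrow{\sim}C(J(A))$.

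Next I would define the two functors on morphisms. For a surjective partial function $f\colon I\parto I'$ I set $C(f)\colon C(I')\to C(I)$ by $C(f)(g)(i)=g(f(i))$ for $i\in\dom(f)$ and $0$ otherwise; this is a homomorphism for the Hadamard product, and surjectivity of $f$ is exactly what forces it to be injective, while $\dom(f)=I$ is what would make it unital, which we do not require. For an injective morphism $\phi\colon A\to B$ and $j'\in J(B)$, the composite $\ev_{j'}\circ\phi\colon A\to\C$ is a $\C$-algebra homomorphism, hence either $0$ or $\ev_j$ for a unique $j\in J(A)$; I define $J(\phi)(j')=j$ in the latter case and leave $j'$ out of $\dom(J(\phi))$ in the former. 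The key point is that $j$ lies in the image of $J(\phi)$ if and only if $\phi(j)\neq 0$, which uses that $\phi$ sends primitive idempotents to pairwise orthogonal idempotents of $B$; thus injectivity of $\phi$ forces $J(\phi)$ to be surjective, and this is the precise place where ``injective ring map'' and ``surjective partial function'' correspond. Contravariant functoriality of each assignment, namely $J(\psi\circ\phi)=J(\phi)\circ J(\psi)$, $C(g\circ f)=C(f)\circ C(g)$, and preservation of identities, then follows by tracking domains and composing characters.

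Finally I would establish the contra-equivalence through the natural isomorphisms $\eta_A\colon A\xrightarrow{\sim}C(J(A))$, $a\mapsto(j\mapsto\ev_j(a))$, and $\epsilon_I\colon I\xrightarrow{\sim}J(C(I))$, $i\mapsto\delta_i$, of the first step. Naturality amounts to checking that under these identifications $J(C(f))$ is carried to $f$ and $C(J(\phi))$ to $\phi$. For a primitive idempotent $\delta_i$ of $C(I)$ one computes $\ev_{\delta_i}\circ C(f)=\ev_{\delta_{f(i)}}$ when $i\in\dom(f)$ and $0$ otherwise, so $J(C(f))$ agrees with $f$; and for $a\in A$ one computes $\eta_B(\phi(a))$ and $C(J(\phi))(\eta_A(a))$ coordinate-wise on $J(B)$ and finds that they agree, using $\ev_{j'}\circ\phi=\ev_{J(\phi)(j')}$ on $\dom(J(\phi))$ and vanishing off it.

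I expect the main obstacle to be bookkeeping rather than conceptual: keeping the partial, non-unital structure consistent throughout, that is, matching ``undefined at $j'$'' with ``$\ev_{j'}\circ\phi=0$'' and ``$i\notin\dom(f)$'' with the value $0$ of $C(f)(g)$, and confirming at each stage that composites of surjective partial functions remain surjective and composites of injective ring maps remain injective, so that the two functors genuinely land in the stated categories. Once the dictionary \emph{primitive idempotents equal nonzero characters} is in place, every remaining verification is a coordinate computation.
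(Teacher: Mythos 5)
Your proposal is correct and follows essentially the same route as the paper: the core of both is that an injective ring map $\phi\colon A\to B$ sends each primitive idempotent $j$ to a nonzero idempotent, hence to a nonempty sum over a subset $S_j\subset J(B)$, and the disjointness and nonemptiness of the $S_j$ is exactly a surjective partial function $J(B)\parto J(A)$ --- your character dictionary $J(\phi)(j')=j\iff \ev_{j'}\circ\phi=\ev_j$ is just the dual bookkeeping for the same construction. You also supply the naturality checks for the contra-equivalence that the paper explicitly omits, and those verifications are correct.
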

\begin{proof}
Note first that the notion of $f:X \parto Y$ is 
the notion of a family of disjoint subsets 
$S_y \subset X \ \ (y\in Y)$, where $S_y=f^{-1}(y)$,
and the surjectivity of $f$ is equivalent to that $S_y \neq \emptyset$ for all
$y\in Y$. 
To show that $J$ is a functor, let $\varphi:A \to B$
be an injective $\C$-algebra homomorphism. For $j \in J(A)$, $\varphi(j)$
is a nonzero idempotent. Thus, it is a non-empty sum of elements of 
$J(B)$, which gives a non-empty subset $S_j$ of $J(B)$ 
by Proposition~\ref{prop:idempotent}.
We construct a partial surjection $J(B) \parto J(A)$ by:
for $j\in J(A)$, elements of $S_j$ is mapped to $j$.
Note that if $j\neq j'\in J(A)$, then $jj'=0$ and hence
$S_j \cap S_{j'}=\emptyset$. 
Since $S_j \neq \emptyset$, 
the partial function is surjective.
To check the functoriality, we 
consider $A \stackrel{\varphi}{\to} B \stackrel{\phi}{\to} C$. 
The construction of partial surjection for $\varphi$
is done by assigning to $j \in J(A)$ the set $S_j \subset J(B)$
such that summation of elements in $S_j$ is $\varphi(j)$.
Similarly, for each $j'\in S_j$, we have $S_{j'}\subset J(C)$.
Thus, $J(\varphi) \circ J(\phi)$ maps $\coprod_{j'\in S_j}S_{j'}$
to $j$. This is the same with
$J(\phi\circ \varphi)$.

The functoriality of $I \to C(I)$ can be also checked in an elementary 
manner. It is easy to check that the two functors give contra-equivalence.
We omit the detail.
\end{proof}

\begin{definition}
Let $\AS_\csurj$ be a full subcategory of the commutative association schemes of
$\AS_\surj$, and $\Alg_{cHC}$ a full subcategory of $\Alg_{HC}$
consisting of algebras with commutative convolution multiplication.
By the remark at the beginning of Section~\ref{sec:comm}, we have 
a contravariant functor $\AS_\csurj\to \Alg_{cHC}$.
\end{definition}

\begin{corollary}\label{cor:J}
The composite of the three functors $\AS_\csurj \to \Alg_{cHC}$, 
$\Alg_{cHC} \to \Alg_\cs$ forgetting
the Hadamard product, and 
$\Alg_\cs \to \FinSets_\ps$ gives a covariant
functor $AS_\surj \to \FinSets_\ps$. We denote this composition
functor by $J$.
Thus, $(X,R,I) \mapsto J(X,R,I)$ is a functor, which corresponds
$X$ to the sets of primitive idempotents of $A_X$
with respect to (the normalized) convolution products.
\end{corollary}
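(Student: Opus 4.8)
The plan is to assemble Corollary~\ref{cor:J} purely from the three functors named in its statement, since each has already been established. Concretely, I would verify that the displayed composite is well-defined on objects and morphisms, and then identify what the composite does.

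First I would treat the three constituent functors separately. The functor $\AS_\csurj \to \Alg_{cHC}$ is exactly the restriction to commutative schemes of the contravariant functor $X \mapsto A_X$ produced by Theorem~\ref{th:convolution} and the preceding corollaries; its target lands in $\Alg_{cHC}$ precisely because commutativity of $(X,R,I)$ forces the convolution product on $A_X$ to be commutative, as recorded in the remark opening Section~\ref{sec:comm}. The second functor $\Alg_{cHC} \to \Alg_\cs$ is the forgetful functor discarding the Hadamard product; it is visibly covariant and manifestly well-defined, since an object of $\Alg_{cHC}$ equipped only with its (now commutative) convolution product is, by the Artin--Wedderburn discussion, a ring isomorphic to $\C^n$, and a morphism in $\Alg_{cHC}$ is in particular an injective $\C$-linear ring map for the convolution product. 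The third functor is the contravariant functor $A \mapsto J(A)$ of the preceding proposition, sending a ring isomorphic to $\C^n$ to its finite set of primitive idempotents.

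Next I would compose them, tracking variance. The first is contravariant, the second covariant, and the third contravariant; a composite of a contravariant, a covariant, and a contravariant functor is covariant, which yields a covariant functor $\AS_\csurj \to \FinSets_\ps$, exactly as claimed. Functoriality of the composite follows formally from the functoriality of each factor, so no new verification is needed beyond noting that composites of functors are functors. I would then unwind the composite on objects: it sends $(X,R,I)$ first to $A_X$, then to the convolution algebra underlying $A_X$, then to $J(A_X)$, the set of primitive idempotents of $A_X$ taken with respect to the normalized convolution product. This matches the description in the statement, so the identification is immediate.

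I do not expect a genuine obstacle here, as the corollary is essentially a bookkeeping assembly of previously proved functors; the only point requiring care is the variance count and the insistence that we use the convolution product (not the Hadamard product) when passing to primitive idempotents, since $\Alg_\cs$ is defined via the ring structure that survives the forgetful step. The subtlest conceptual issue is why morphisms genuinely produce \emph{partial} surjections rather than total ones: a surjective scheme morphism $(f,g)$ gives an injection $\Psi\colon A_{X'}\hookrightarrow A_X$ that need not preserve the convolution unit, so $\Psi$ sends a primitive idempotent of $A_{X'}$ to a nonzero idempotent of $A_X$ that is a sum over a nonempty fiber, and idempotents of $A_X$ outside the image of $\Psi$ have empty preimage; this is precisely the mechanism, already handled in the proof of the preceding proposition, that forces the target category to be $\FinSets_\ps$ rather than $\FinSets$. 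Hence I would simply cite that proposition together with Theorem~\ref{th:convolution} and conclude.
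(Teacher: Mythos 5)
Your proposal is correct and matches the paper's intent exactly: the corollary is stated without a separate proof precisely because it is the formal composite of the three previously established functors, with the variance count (contravariant, covariant, contravariant giving covariant) and the identification of the composite on objects being the only content. Your additional remark on why the target must be $\FinSets_\ps$ rather than $\FinSets$ correctly defers to the mechanism already handled in the proposition on $\Alg_\cs \to \FinSets_\ps$, so nothing is missing.
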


\begin{definition}
Let $j_0$ denote the element of $J(X,R,I)$, corresponding
to $J_X$, where $J_X$ denotes the matrix in $C(X\times X)$
with components all $1$. This element acts on $C(X)$
as a projection to the space of constant functions. 
We use the symbol $j_0$ for different association schemes,
similarly to $i_0$.
\end{definition}

\subsection{Trivial examples and remarks}
In the theory of commutative association schemes, the set $I$ and 
the set $J$ are considered to be ``dual.'' Thus, it is natural 
to seek the relation between $J$'s for a morphism of association
schemes (while a morphism between $I$'s is given by definition).
The previous result says that there is a natural relation, 
if the morphism of association schemes is surjective. 
We couldn't generalize the result
for general (non-surjective) morphisms.

For a surjective morphism between association schemes, 
the corresponding partial surjection may be a properly partial 
function, and may be not an injection, as follows.
\begin{definition}\label{def:thin}
Let $G$ be a finite group. The triple $(G,R,I)$
where $I=G$ and $R(x,y)=x^{-1}y$ is a (possibly non-commutative)
association scheme called the thin scheme of $G$. 
\end{definition}
\begin{remark} Let $G$ be a finite abelian group, and $f:G \to H$ be 
a surjective homomorphism of abelian groups. 
Then we have the thin scheme of $G$ and the thin scheme of $H$,
and $f$ is a surjective morphism of the association schemes. In this case,
$J$ for $G$ is naturally isomorphic to
the character group $G^\vee$, and one can check that the functor $J$
gives 
$G^\vee \parto H^\vee$, which comes from the natural
inclusion $\iota:H^\vee \to G^\vee$ ($x \in G^\vee$ maps to $y\in H^\vee$
if $\iota(y)=x$, and $x$ is outside the domain if there is no such $y$).
This gives an example that $J(X) \parto J(X')$ is not a map but a partial 
function.
\end{remark}

\begin{remark}
For any association scheme $(X,R,I)$, the mappings $f=\id_X$
and $g:I \to \{0,1\}$, $g(i_0)=0$ and $g(i)=1$ if $i\neq i_0$,
gives a morphism of association schemes
$(X,R,I) \to (X, g\circ R, \{0,1\})$. In this case, $J(X, g\circ R, \{0,1\})$
consists of $j_0$ and $E_X-j_0$. 
If $(X,R,I)$ is commutative, then the image of $j\in J(X,R,I)$ 
in $J(X,g\circ R, \{0,1\})$ is $j_0$ if $j=j_0$, and
$E_X-j_0$ if $j\neq j_0$. This gives an example that
$J(X) \parto J(X')$ is not injective.
\end{remark}

\section{Profinite association schemes}
\subsection{Projective system of association schemes}\label{sec:def-pro}
We recall the notion of projective system.
\begin{definition} (Projective system)
$ $

We work in a fixed category $\cC$.
Let $\Lambda$ be a directed ordered set, namely, $\Lambda$ is non-empty
and 
for any $\alpha,\beta\in \Lambda$, there is 
a $\gamma \in \Lambda$ with $\alpha \leq \gamma$ and $\beta \leq \gamma$.
Let $X_\lambda (\lambda \in \Lambda)$ be a family of objects.
For any $\lambda \leq \lambda'$,
a morphism 
$p_{\lambda',\lambda}:X_{\lambda'} \to X_\lambda$ is specified,
and they satisfy the commutativity condition 
$$
p_{\lambda',\lambda}p_{\lambda'',\lambda'}=p_{\lambda'',\lambda}
$$
for any
$\lambda \leq \lambda' \leq \lambda''$, 
and $p_{\lambda,\lambda=\id_{X_\lambda}}$. 
Then the system $(X_\lambda, p_{\lambda,\lambda'})$ is
called a projective system in $\cC$.
The morphisms $p_{\lambda',\lambda}$
are called the structure morphisms of the projective system.
The dual notion (i.e. the direction of morphism 
is inverted in the definition) is called an inductive system.
\end{definition}
Let $X_\lambda$ be a projective system of association schemes.
Then, each of 
$X_\lambda$, $I_\lambda$ is a projective system of finite sets.
If all morphisms are surjective, then 
by Theorem~\ref{th:convolution}, we have an inductive system 
of Bose-Mesner algebras $A_{X_\lambda}$, and if they are commutative, by Corollary~\ref{cor:J}
a projective system $J_{X_\lambda}:=J(X_\lambda, R_\lambda, I_\lambda)$
(with structure morphisms being partially surjective maps).

\begin{definition}\label{def:proj}
Let $\Lambda$ be a directed ordered set.
A projective system $(X_\lambda, R_\lambda, I_\lambda)$ $(\lambda \in \Lambda)$
in 
$\AS_\surj$ is called a profinite association 
scheme. (This includes the notion of 
usual association scheme, as the case where $\#\Lambda=1$.)
We define profinite sets 
(see Lemma~\ref{lem:profin-top})
$\Xhat:=\varprojlim X_\lambda$
and
$\Ihat:=\varprojlim I_\lambda$
with projective limit topologies. 
A commutative profinite association scheme is 
a projective system of commutative association schemes. In this case, we define $\Jhat$ similarly: 
an element $\hat{j}\in \Jhat$ is 
the set of pair $\lambda \in \Lambda$
and compatible family of elements $j_\mu \in J_\mu$ for all $\mu \geq \lambda$,
divided by the equivalence relation
$(\lambda, j_\mu) \sim (\lambda', j'_{\mu'})$
defined by the existence of $\lambda''\geq \lambda$,
$\lambda''\geq \lambda'$ such that for all $\mu \geq \lambda''$
$j_{\mu}=j'_{\mu}$ holds. An open basis of $\Jhat$ is
given by the inverse image of $j_\lambda \in J_\lambda$
through $\Jhat \parto J_\lambda$.
\end{definition}

\begin{proposition}\label{prop:compact-Hausdorff}
The natural projections
$\Xhat \to X_\lambda$ and $\Ihat \to I_\lambda$ are surjective,
the induced map $\Rhat:\Xhat \times \Xhat \to \Ihat$ is surjective,
and $\Jhat \parto J_\lambda$ is a partial surjection.
%The domain of $\Jhat \parto J_\lambda$ will be denoted by $\domJhat_{\lambda}$.
\end{proposition}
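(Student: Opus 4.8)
The plan is to reduce all three assertions to the standard fact that the inverse limit of a projective system of nonempty finite sets over a directed index set is nonempty. This is a compactness statement: realizing $\varprojlim_\lambda Y_\lambda$ as the intersection of the closed constraint sets $\{(y_\nu)\mid q_{\mu,\lambda}(y_\mu)=y_\lambda\}$ inside the compact product $\prod_\lambda Y_\lambda$ (Tychonoff, each $Y_\lambda$ finite), one checks the finite intersection property using directedness of $\Lambda$: finitely many constraints involve finitely many indices, these admit a common upper bound $\nu$, and pushing any chosen $y_\nu$ down along the structure maps satisfies them. In each of the three parts I will build, from the given data, a projective system of nonempty finite \emph{fibers} to which this nonemptiness principle applies.

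First I would treat the surjectivity of $\Xhat \to X_\lambda$. Fix $x_\lambda \in X_\lambda$ and, for each $\mu \geq \lambda$, set $F_\mu := p_{\mu,\lambda}^{-1}(x_\lambda) \subseteq X_\mu$. By Proposition~\ref{prop:fiber} each fiber of a surjective morphism has constant cardinality $\#X_\mu/\#X_\lambda \geq 1$, so every $F_\mu$ is nonempty; since $F_{\mu'} = p_{\mu',\mu}^{-1}(F_\mu)$ and the same constant-fiber property forces $F_{\mu'} \to F_\mu$ to be surjective, the $F_\mu$ form a projective system of nonempty finite sets. As $\{\mu \geq \lambda\}$ is cofinal in $\Lambda$, a point of the (nonempty) limit $\varprojlim_\mu F_\mu$ is exactly a point of $\Xhat$ lying over $x_\lambda$ (note $F_\lambda=\{x_\lambda\}$), so surjectivity follows. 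The surjectivity of $\Ihat \to I_\lambda$ is identical, using that surjectivity of the $f$'s forces surjectivity of the $g$'s on the $I_\lambda$ (as noted after the definition of morphism), so the $I_\lambda$ themselves form a projective system with surjective structure maps.

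For the second assertion, the maps $R_\lambda$ are compatible with the structure maps by the commuting square (\ref{eq:comm}), hence induce $\Rhat$ on the limits via $\Xhat \times \Xhat = \varprojlim_\lambda(X_\lambda \times X_\lambda)$. Given $\hat{i} = (i_\lambda) \in \Ihat$, I would set $G_\lambda := R_\lambda^{-1}(i_\lambda) \subseteq X_\lambda \times X_\lambda$; each is nonempty since $R_\lambda$ is surjective, and (\ref{eq:comm}) together with compatibility of $(i_\lambda)$ shows that $p_{\mu,\lambda} \times p_{\mu,\lambda}$ restricts to well-defined maps $G_\mu \to G_\lambda$. A point of $\varprojlim_\lambda G_\lambda$ is precisely a pair $(\hat{x}, \hat{z})$ with $\Rhat(\hat{x}, \hat{z}) = \hat{i}$, and nonemptiness of this limit again follows from the compactness principle; here I do not even need the restricted maps to be surjective, only the $G_\lambda$ nonempty.

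The third assertion is the most delicate and is where I expect the real bookkeeping to live, since the structure maps of $(J_\lambda)$ are merely partial surjections (Corollary~\ref{cor:J}) and $\Jhat$ is defined through germs of tail-compatible families. First I would make $\Jhat \parto J_\lambda$ explicit: represent $\hat{j}$ by a base index $\mu \geq \lambda$ (possible after enlarging the base, by directedness) and a compatible family $(j_\nu)_{\nu \geq \mu}$, declare $\hat{j}$ to lie in the domain precisely when $j_\mu \in \dom(J(p_{\mu,\lambda}))$, and send it to $J(p_{\mu,\lambda})(j_\mu)$; the functoriality $J(p_{\mu,\lambda}) \circ J(p_{\mu',\mu}) = J(p_{\mu',\lambda})$ and the domain rule $\dom(g\circ f)=f^{-1}(\dom g)$ show this is independent of the representative. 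For surjectivity, fix $j_\lambda \in J_\lambda$ and put $K_\mu := J(p_{\mu,\lambda})^{-1}(j_\lambda) \subseteq J_\mu$ for $\mu \geq \lambda$. Surjectivity of the partial function $J(p_{\mu,\lambda})$ makes each $K_\mu$ nonempty, and the composition-of-partial-functions identities show the $K_\mu$ form a projective system whose transition maps $K_{\mu'} \to K_\mu$ are total and surjective, again via surjectivity of the $J(p_{\mu',\mu})$. The nonemptiness principle then yields a compatible family $(j_\mu)_{\mu \geq \lambda} \in \varprojlim_\mu K_\mu$, which is an element $\hat{j} \in \Jhat$ with base $\lambda$; since $K_\lambda = \{j_\lambda\}$, it maps to $j_\lambda$. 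The main obstacle throughout this part is purely the careful tracking of domains of partial functions; once the $K_\mu$ are shown nonempty with surjective transition maps, the argument collapses to the same lemma used in the first two parts.
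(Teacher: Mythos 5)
Your proof is correct, but it takes a genuinely different route from the paper's, most visibly for the third statement. The paper (Lemma~\ref{lem:profin-top}) handles $\Jhat \parto J_\lambda$ by a one-point completion: it adjoins a basepoint $\star$ to each $J_\lambda$, converting the partial surjections into honest surjections, takes the limit $\Jhat^\star$ of the completed system, deletes the distinguished point $\widehat{\star}$, and identifies the result with $\Jhat$; partial surjectivity of $\Jhat \parto J_\lambda$ is then inherited from surjectivity of $\Jhat^\star \to J_\lambda^\star$, i.e.\ reduced to the total-surjection case. You instead work directly with the germ definition of $\Jhat$: you make the partial projection explicit (well-defined via functoriality of $J$ and the domain rule for composites of partial functions --- this verification is needed and you do it correctly), and prove surjectivity by exhibiting the fibers $K_\mu = J(p_{\mu,\lambda})^{-1}(j_\lambda)$ as a projective system of nonempty finite sets with total, surjective transition maps, then invoking nonemptiness of inverse limits of nonempty finite sets over a directed index set --- the same compactness principle you use for $\Xhat$, $\Ihat$ and $\Rhat$. (For the surjectivity of $\Rhat$, the paper simply cites \cite[Lemma~1.1.5]{RIBES-ZALESSKII}; your fiber argument, which correctly observes that the transition maps between the fibers $G_\lambda$ need not be surjective, amounts to a self-contained proof of that citation. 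Your use of Proposition~\ref{prop:fiber} in the first part is harmless but unnecessary: surjectivity of the structure maps alone makes the fibers nonempty and the restricted maps surjective.) What each approach buys: yours is more uniform and economical, reducing all four claims to a single nonemptiness lemma with no auxiliary objects; the paper's star construction does more than the proposition strictly requires, but it is what equips $\Jhat$ with its natural locally compact, totally disconnected topology (clopen basis, the homeomorphic identification $\Jhat \cong \Jhat'$), which the paper relies on later, e.g.\ in Proposition~\ref{prop:discrete}, and which your argument does not produce.
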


%\footnote{Memo: the domain of the partial map $\Jhat \parto J_\lambda$ is denoted by ``domJhat'' in TeX. }

The first two statements
are well-known for projective systems of compact Hausdorff
spaces with continuous morphisms,
\cite[Proposition~1.1.10, Lemma~1.1.5]{RIBES-ZALESSKII}.
We need to prove the third statement, for partial surjections. 
It follows from the first statement, 
but for the self-containedness and for explaining
the natural (projective limit) topology, we give a proof
for the both.
\begin{lemma} $ $ \label{lem:profin-top}

\begin{enumerate}
\item\label{enum:surj}
Let $X_\lambda$ be a projective system of finite sets and 
surjections. Define $\Xhat$ as $\varprojlim X_\lambda$. 
Then, the natural map $\Xhat \to X_\lambda$ is surjective.
The set $\Xhat$ is called a profinite set, and has a natural topology, called profinite topology,
which is compact Hausdorff. For every $x_\lambda\in X_\lambda$,
its inverse image in $\Xhat$ is clopen, and these form 
an open basis.
\item\label{enum:partial}
Let $J_\lambda$ be a projective system of finite sets and 
partial surjections. Define $\Jhat$ as above. Then, 
the natural partial map $\Jhat \parto J_\lambda$ is a partial surjection.
The set $\Jhat$ has a natural topology, 
which is locally compact and Hausdorff.
For every $j_\lambda\in J_\lambda$,
its inverse image in $\Jhat$ is clopen, and these form 
an open basis.
\item\label{enum:surj-map}
Let $X_\lambda$, $Y_\lambda$ be projective systems of
finite sets over the same directed ordered set $\Lambda$,
and let $f_\lambda:X_\lambda \to Y_\lambda$ be a family of
maps which commute with the structure morphisms. 
This induces a continuous map 
$$
\fhat:\varprojlim X_\lambda \to \varprojlim Y_\lambda.
$$
If every $f_\lambda$ is surjective, then $\fhat$ is surjective.
\end{enumerate}
\end{lemma}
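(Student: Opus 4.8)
The plan is to isolate one compactness principle and deduce all three assertions from it. Write $\prod_\lambda X_\lambda$ for the product with the product topology; since each finite set is discrete and compact, Tychonoff's theorem makes the product compact, Hausdorff and totally disconnected, and the sets obtained by fixing one coordinate are clopen and generate the topology. First I would record the \emph{key fact}: the projective limit of a projective system of nonempty finite sets over a directed ordered set $\Lambda$ is nonempty. To see this, realize $\varprojlim_\lambda X_\lambda$ inside $\prod_\lambda X_\lambda$ as the intersection $\bigcap_{\lambda\le\lambda'}C_{\lambda',\lambda}$, where $C_{\lambda',\lambda}=\{(x_\mu)\mid p_{\lambda',\lambda}(x_{\lambda'})=x_\lambda\}$ is closed. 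Any finite subintersection is nonempty: choosing $\gamma$ above the finitely many indices involved (possible since $\Lambda$ is directed) and any element of the nonempty set $X_\gamma$, one pushes it down by the structure maps to satisfy all the constraints at once. The finite intersection property in the compact space $\prod_\lambda X_\lambda$ then gives a point in the whole intersection.

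Granting this, the first assertion is immediate: $\Xhat=\varprojlim X_\lambda$ is closed in the compact Hausdorff product, hence compact Hausdorff, and it inherits the clopen basis obtained by fixing one coordinate $x_\lambda\in X_\lambda$, this basic set being exactly the inverse image of $x_\lambda$ under $\Xhat\to X_\lambda$. For surjectivity of $\Xhat\to X_\mu$ I would fix $x_\mu$ and apply the key fact to the projective system of fibers $p_{\lambda,\mu}^{-1}(x_\mu)$ over the cofinal directed set $\{\lambda\ge\mu\}$; these are nonempty because the structure maps are surjective, so a compatible family over $\{\lambda\ge\mu\}$ exists, and cofinality identifies it with a point of $\Xhat$ above $x_\mu$. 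The third assertion is handled the same way: $\fhat$ is continuous by the universal property of the limit, and for $\hat y=(y_\lambda)\in\varprojlim Y_\lambda$ the fibers $f_\lambda^{-1}(y_\lambda)$ are nonempty (each $f_\lambda$ being surjective) and form a projective system under the restrictions of the structure maps of $X_\lambda$, which are well defined because the $f_\lambda$ commute with those maps. Here no surjectivity of the structure maps is required; the key fact alone produces a preimage of $\hat y$.

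The substance is the second assertion, where the structure maps $p_{\lambda',\lambda}\colon J_{\lambda'}\parto J_\lambda$ are only partial, and my plan is to reduce to the total case by restricting to domains. For fixed $\lambda$ and each $\mu\ge\lambda$ set $D^\lambda_\mu:=\dom(p_{\mu,\lambda})\subset J_\mu$; then $p_{\mu,\lambda}$ restricts to a \emph{total} surjection $D^\lambda_\mu\to J_\lambda$, and for $\mu\le\mu'$ the partial map $p_{\mu',\mu}$ restricts to a total surjection $D^\lambda_{\mu'}\to D^\lambda_\mu$, using the identity $p_{\mu',\lambda}=p_{\mu,\lambda}\circ p_{\mu',\mu}$ of partial functions to check both well-definedness and surjectivity. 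Thus $(D^\lambda_\mu)_{\mu\ge\lambda}$ is a projective system of nonempty finite sets with surjective total maps, so by the first assertion $L_\lambda:=\varprojlim_{\mu\ge\lambda}D^\lambda_\mu$ is a profinite set surjecting onto $D^\lambda_\lambda=J_\lambda$. Since a point of $L_\lambda$ is precisely a compatible family representing an element of $\Jhat$ defined from level $\lambda$, this proves $\Jhat\parto J_\lambda$ is a partial surjection.

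Finally I would assemble the topology. The restriction maps $L_\lambda\to L_{\lambda'}$ for $\lambda\le\lambda'$ are injective, because a family over $\{\mu\ge\lambda'\}$ determines the value at any $\mu\ge\lambda$ by pushing down from some $\gamma\ge\mu,\lambda'$; hence $\Jhat=\varinjlim_\lambda L_\lambda$ is a rising union of the compact Hausdorff spaces $L_\lambda$. Each $L_\lambda$ is the domain of $\Jhat\parto J_\lambda$, which is the union of the finitely many basic open sets (the inverse images of the points of $J_\lambda$), hence open, so the canonical maps are open embeddings and the declared basis generates the topology. Every basic set is a clopen subset of some $L_\lambda$, hence compact, and every point lies in such a set, so $\Jhat$ is locally compact; it is Hausdorff since any two points lie in a common $L_\lambda$ and are separated there by preimages of distinct coordinates, and compactness of the basic sets inside this Hausdorff space makes them closed as well, yielding the clopen basis. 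The main obstacle I anticipate is exactly this last step: controlling the direct-limit topology when the transition maps are partial, in particular verifying cleanly that each $L_\lambda$ sits as an open (and closed) subspace so that local compactness, the Hausdorff property, and the clopen basis come out; the surjectivity, by contrast, is forced once the domain-restriction reduces everything to the first assertion.
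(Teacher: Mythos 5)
Your proof is correct. Parts (\ref{enum:surj}) and (\ref{enum:surj-map}) are essentially the paper's argument (realize the limit as an intersection of closed sets in the Tychonoff product and use the finite intersection property on the nonempty fibers; the paper simply cites Ribes--Zalesskii for (\ref{enum:surj-map})). For part (\ref{enum:partial}) you take a genuinely different route. The paper adjoins a basepoint $\star$ to each $J_\lambda$, converts the partial surjections into total surjections $p^\star$, applies part (\ref{enum:surj}) to get a compact Hausdorff limit $\Jhat^\star$, and realizes $\Jhat$ as the open complement of the single point $\widehat\star$ --- local compactness, the Hausdorff property, and the clopen basis then come for free from one ambient compact space, and the only work left is identifying this $\Jhat'$ with the equivalence-class definition. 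You instead restrict to domains: for fixed $\lambda$ the sets $D^\lambda_\mu=\dom(p_{\mu,\lambda})$ form a projective system of total surjections, so $L_\lambda=\varprojlim_{\mu\ge\lambda}D^\lambda_\mu$ is a profinite set surjecting onto $J_\lambda$ by part (\ref{enum:surj}), and $\Jhat=\varinjlim_\lambda L_\lambda$ is an increasing union of compact open pieces. This is more hands-on, makes the surjectivity of $\Jhat\parto J_\lambda$ a direct corollary of part (\ref{enum:surj}), and identifies $\Jhat$ with the equivalence classes of Definition~\ref{def:proj} essentially by construction; the price is that local compactness and the clopen basis must be assembled from the colimit rather than inherited from one compactification. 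The one step you state but do not verify is that the image of $L_\lambda$ in $\Jhat$ is the \emph{entire} domain of $\Jhat\parto J_\lambda$: a class represented only from some level $\lambda'\not\le\lambda$ but compatible with $J_\lambda$ must be shown to extend to a full compatible family over $\{\nu\ge\lambda\}$. This does hold, because $\dom(p_{\gamma,\lambda})=p_{\gamma,\nu}^{-1}(\dom(p_{\nu,\lambda}))\subseteq\dom(p_{\gamma,\nu})$ for $\lambda\le\nu\le\gamma$, so pushing a representative down from a common upper bound defines the missing coordinates consistently; it is worth writing out, since it is exactly the point where partiality could have caused trouble.
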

\begin{proof}
(\ref{enum:surj}):
Recall the construction of the projective limit $\varprojlim X_\lambda$
in the category of set. It is a subset of the direct product
$\prod_{\lambda \in \Lambda}X_\lambda$
defined as the intersection of
$$
S_{\mu',\mu}:=
\{(s_\lambda)_\lambda \in \prod_{\lambda \in \Lambda}X_\lambda 
\mid s_\mu=p_{\mu',\mu}(s_{\mu'})
\}
$$
for all $\mu', \mu \in \Lambda$ with $\mu' \geq \mu$. (This means that $(s_\lambda)_\lambda$
belongs to $\varprojlim X_\lambda$ if and only if
they satisfy $p_{\mu',\mu}(s_{\mu'})=s_\mu$ for 
every $\mu'\geq \mu$.) Each finite set is equipped with
the discrete topology, which is compact and Hausdorff. By Tychonoff's
theorem, the product is compact and Hausdorff. Because each 
component is Hausdorff, $S_{\mu',\mu}$ is a closed subset, and
the intersection $\varprojlim X_\lambda$ is compact and Hausdorff. 
Take any $x_\delta \in X_\delta$.
We shall show that this is in the image from $\varprojlim X_\lambda$.
For each $\alpha \in \Lambda$, $\alpha \geq \delta$, 
we consider the closed set
$$
T_\alpha:=
\{(s_\lambda)_\lambda \in \prod_{\lambda \in \Lambda}X_\lambda 
\mid s_\delta=x_\delta, s_\beta=p_{\alpha,\beta}(s_{\alpha})
\mbox{ for all }\beta \leq \alpha
\}.
$$
There is an element $y_\alpha \in X_\alpha$ such that
$p_{\alpha,\delta}(y_\alpha)=x_\delta$, since $p$ is surjective.
Then, there is a system $p_{\alpha,\beta}(y_\alpha)$
which is an element of $T_\alpha$, and thus $T_\alpha$ is nonempty. 
Now we take the intersection of $T_\alpha$ for all $\alpha\geq \delta$.
For any finite number of $\alpha_i$, we may take $\gamma$
as an upper bound of all $\alpha_i$. Then the finite intersection 
contains $T_\gamma$, which is nonempty. By compactness,
the intersection of $T_\alpha$ for $\alpha\geq \delta$
is nonempty. Take an element $(g_\lambda)_\lambda$ 
in the intersection. It lies in $\varprojlim X_\lambda$,
and it projects to $x_\delta$ via 
$\varprojlim X_\lambda \to X_\delta$.
From the definition of the direct product topology, 
the inverse image of $x_\lambda\in X_\lambda$ for
various $\lambda$ forms an open basis. They are 
clopen, since $x\in X_\lambda$ is clopen.

(\ref{enum:partial}):
Choose an element $\star$ which is contained in no $J_\lambda$'s.
Let $J_\lambda^\star$ be $J_\lambda \cup \{\star\}$.
For a partial surjection $p:J_\lambda \parto J_\mu$, 
we associate a surjection $p^\star:J_\lambda^\star \to J_\mu^\star$
as follows. For $x \in \dom(p)$, $p^\star(x)=p(x)$.
For $x \notin \dom(p)$, $p^\star(x)=\star$. In particular, 
$p^\star(\star)=\star$. 
%For a surjection $q:J_\lambda^\star \to J_\mu^\star$
%such that $q(\star)=\star$, we have a partial surjection 
%$q':J_\lambda \parto J_\mu$ where $\dom(q'):=J_\lambda \setminus q^{-1}(\star)$
%and $q'|_{\dom{q'}}=q|_{\dom{q'}}$. 
Let us take the projective limit 
as in the previous case to obtain 
$\Jhat^\star:=\varprojlim J_\lambda^\star$. Then for any $\lambda$,
by (\ref{enum:surj}), we have a surjection 
$$p^\star_\lambda:\Jhat^\star \to J_\lambda^\star.$$
It is clear that
there is a unique element 
$\widehat{\star} \in \Jhat^\star$ that is mapped to $\star$
for every $J_\lambda^\star$. 
We define 
$\Jhat':=(\varprojlim J_\lambda^\star) \setminus \{\widehat{\star}\}$,
and a partial surjection $p_\lambda:\Jhat' \parto J_\lambda$ by:
for $x \in \Jhat'$, $x$ is outside of the domain of $p_\lambda$
if $p^\star_\lambda(x)=\star$, and otherwise $p_\lambda(x)=p^\star_\lambda(x)$.
It is easy to check that $p_\lambda$ is a partial surjection
(using the surjectivity of $p^\star_\lambda$),
and $p_\lambda$ for various $\lambda$ is compatible with the
structure morphisms $p_{\mu',\mu}$.

Since $\Jhat^\star$ is compact Hausdorff, its open subset
$\Jhat'$ is locally compact Hausdorff, and has an open basis
as in the case (\ref{enum:surj}).
We need to prove that $\Jhat$ in Definition~\ref{def:proj}
is canonically isomorphic to $\Jhat'$. 
Let $\tilde{J}$ be the set of 
pairs $(\lambda,(j_\mu)_\mu)$ where $(j_\mu)_\mu$ is
a compatible system for $\mu \geq \lambda$.
For such a pair, we assign an element 
of $\Jhat^\star$ as follows. For any $\alpha\in \Lambda$, 
take any $\beta$ with $\beta \geq \alpha$ and $\beta \geq \lambda$.
Define $j_\alpha$ as $p^\star_{\beta,\alpha}(j_\mu)$. This
is well-defined by the commutativity of the structure morphisms,
and gives a map $f:\tilde{J} \to \Jhat'$. This is surjective.
Two pairs $(\lambda,(j_\mu)_\mu)$, $(\lambda',(j'_{\mu'})_{\mu'})$
have the same image if and only if there is a $\lambda''$
with $\lambda''\geq \lambda$, $\lambda''\geq \lambda'$
such that for any $\mu\geq \lambda''$, $j_\mu=j'_\mu$ holds.
This gives a bijection $\Jhat:=\tilde{J}/\sim \to \Jhat'$.

(\ref{enum:surj-map}): See \cite[Lemma~1.1.5]{RIBES-ZALESSKII}.
%The existence of $\fhat$ is 
%well-known, and we shall prove the surjectivity. This is similar
%to the proof of (\ref{enum:surj}): for an element of 
%$(y_\mu)_\mu \in \widehat{Y}$ and $\alpha \in \Lambda$, 
%we consider the set
%$$
%T_\alpha:=\{(x_\mu)_\mu \in \prod_{\lambda \in \Lambda}X_\lambda 
%\mid f_\mu(x_\mu)=y_\mu, x_\beta=p_{\alpha,\beta}(x_{\alpha})
%\mbox{ for all }\mu \leq \alpha, \beta \leq \alpha
%\}.
%$$
%Then $T_\alpha$ is not empty, since for any $y_\alpha$
%there is an $x_\alpha \mapsto y_\alpha$ by the surjectivity of
%$f_\alpha$, and $(p_{\alpha,\mu}(x_\alpha))_\mu \in T_\alpha$.
%A similar argument to (\ref{enum:surj}) shows that $T_\alpha$
%has finite intersection property, hence the intersection of
%all $T_\alpha$ is non-empty, whose elements map to $(y_\mu)_\mu$. 
\end{proof}

\begin{remark}
%For $\Jhat$, the inverse image of every point in $J_\lambda$ for 
%every $\lambda$ is an open set, and constitutes an open basis,
%as in the profinite case.
The above construction gives 
a category equivalence between 
the category of sets with partial maps and
the category of sets with one base point. 
Using this, it is not difficult
to show that $\Jhat$ is the projective limit in the 
category of sets with partial surjections.
%The same proof gives similar results for compact Hausdorff spaces,
%under the condition that $\dom(p_{\beta,\alpha})\subset J_\beta$ 
%is clopen for partial case,
%which is a necessary and sufficient condition to make $p_{\beta,\alpha}^\star$
%continuous when $J_\lambda^\star$ has the disjoint union topology
%with $\star$ is clopen.
\end{remark}
Later it will be proved that, for any commutative profinite association 
scheme, $\Jhat$ has the discrete topology
(Proposition~\ref{prop:discrete}).
\begin{definition}\label{def:Bose-Mesner}
For a profinite association scheme 
$(X_\lambda, R_\lambda, I_\lambda)$, its Bose-Mesner
algebra is defined by 
$A_{\Xhat}:=\varinjlim A_{X_\lambda}$. 
For a commutative profinite association scheme, through the natural isomorphism 
$C(J_\lambda) \to A_{X_\lambda}$,
we have an isomorphism
$\varinjlim C(J_\lambda) \to A_{\Xhat}$
which maps the point-wise product to the convolution product,
and
an isomorphism $\varinjlim C(I_\lambda) \to A_{\Xhat}$
which maps the point-wise product to the Hadamard product.

Let $C(\Xhat \times \Xhat)$
denote the set of complex valued continuous functions 
on $\Xhat \times \Xhat$ (and similarly $C(\Ihat)$ be
the set of continuous functions on $\Ihat$).
Since we have inductive families of injections 
$$
A_{X_\lambda} \to C(X_\lambda \times X_\lambda) \to C(\Xhat \times \Xhat),
$$
by universality, we have a canonical injective 
morphism $A_{\Xhat} \to C(\Xhat\times \Xhat)$.
similarly, we have an injection $\varinjlim C(I_\lambda) \to C(\Ihat)$.
\end{definition}

\begin{definition}
For $j_\lambda \in J_\lambda$, we denote by $E_{j_\lambda} \in A_{X_\lambda}$
the primitive idempotent $j_\lambda$ (the notation introduced to 
distinguish an element of $J$ with an element of $A_{X_\lambda}$).
It is an element of $C(X_\lambda\times X_\lambda)$, and
acts on $C(X_\lambda)$ by the normalized convolution 
defined in Theorem~\ref{th:convolution}.
Let $C(X_\lambda)_{j_\lambda}$ be the image of $E_{j_\lambda}$
by this action.
\end{definition}

\begin{proposition}
Take $j_\lambda \in J_\lambda$. For $\mu \geq \lambda$, 
we may regard $C(X_\lambda)\subset C(X_\mu)$. 
Then $\Psi(E_{j_\lambda})$ 
given in Theorem~\ref{th:convolution}
acts on $g \in C(X_\mu)$
by $\Psi(E_{j_\lambda})\bullet g$. 
The operator $\Psi(E_{j_\lambda})$ gives a splitting 
$C(X_\mu) \to C(X_\lambda)_{j_\lambda}$ to 
$C(X_\lambda)_{j_\lambda}\subset C(X_\mu)$.
\end{proposition}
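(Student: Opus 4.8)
The plan is to deduce everything from the module-compatibility square~(\ref{eq:mod}) of Theorem~\ref{th:convolution}(3), applied to the structure morphism $(f,g)\colon X_\mu \to X_\lambda$. Writing $\Psi\colon A_{X_\lambda}\to A_{X_\mu}$ for the induced map, that square reads $f^\dagger(A\bullet h)=\Psi(A)\bullet f^\dagger(h)$ for all $A\in A_{X_\lambda}$ and $h\in C(X_\lambda)$, where on the left $\bullet$ is the action of $A_{X_\lambda}$ on $C(X_\lambda)$ and on the right that of $A_{X_\mu}$ on $C(X_\mu)$. I will combine this with two facts already available: $\Psi$ is a homomorphism for the convolution product by Theorem~\ref{th:convolution}(2), so that $\Psi(E_{j_\lambda})\bullet_{X_\mu}\Psi(E_{j_\lambda})=\Psi(E_{j_\lambda}\bullet_{X_\lambda}E_{j_\lambda})=\Psi(E_{j_\lambda})$ because $E_{j_\lambda}$ is idempotent; and $f^\dagger$ is injective because $f$ is surjective.

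First I would record two preliminary properties of the operator $g\mapsto \Psi(E_{j_\lambda})\bullet g$ on $C(X_\mu)$. By associativity of the module action and the displayed idempotency of $\Psi(E_{j_\lambda})$, this operator is itself idempotent. Moreover its image lies in $f^\dagger(C(X_\lambda))$: this is precisely the computation closing the proof of Theorem~\ref{th:convolution}, since $(\Psi(E_{j_\lambda})\bullet g)(x)=\tfrac{1}{\#X_\mu}\sum_{y}E_{j_\lambda}(f(x),f(y))\,g(y)$ depends only on $f(x)$.

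The crux is to upgrade ``image in $f^\dagger(C(X_\lambda))$'' to ``image in the isotypic summand $C(X_\lambda)_{j_\lambda}$''. Given $g\in C(X_\mu)$, write $\Psi(E_{j_\lambda})\bullet g=f^\dagger(h')$ with $h'\in C(X_\lambda)$ uniquely determined by injectivity of $f^\dagger$. Applying the square to $A=E_{j_\lambda}$, $h=h'$ and using idempotency gives the chain $f^\dagger(E_{j_\lambda}\bullet h')=\Psi(E_{j_\lambda})\bullet f^\dagger(h')=\Psi(E_{j_\lambda})\bullet(\Psi(E_{j_\lambda})\bullet g)=\Psi(E_{j_\lambda})\bullet g=f^\dagger(h')$, whence $E_{j_\lambda}\bullet h'=h'$ by injectivity of $f^\dagger$; that is, $h'\in C(X_\lambda)_{j_\lambda}$. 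Conversely, for $v=f^\dagger(w)$ with $w\in C(X_\lambda)_{j_\lambda}$ (so $E_{j_\lambda}\bullet w=w$) the square gives $\Psi(E_{j_\lambda})\bullet v=f^\dagger(E_{j_\lambda}\bullet w)=f^\dagger(w)=v$, so the operator restricts to the identity on $C(X_\lambda)_{j_\lambda}$. These two facts together say exactly that $\Psi(E_{j_\lambda})$ is a projector onto $C(X_\lambda)_{j_\lambda}$ splitting the inclusion $C(X_\lambda)_{j_\lambda}\subset C(X_\mu)$. The one step needing care is this passage from the full pullback $f^\dagger(C(X_\lambda))$ to the single component $C(X_\lambda)_{j_\lambda}$, which is where both the idempotency of $\Psi(E_{j_\lambda})$ and the injectivity of $f^\dagger$ are genuinely used; the rest is a mechanical unwinding of~(\ref{eq:mod}).
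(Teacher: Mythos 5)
Your argument is correct: the idempotency of $\Psi(E_{j_\lambda})$ (via Theorem~\ref{th:convolution}(2)), the observation that its image lands in $f^\dagger(C(X_\lambda))$ (the computation closing the proof of Theorem~\ref{th:convolution}), and the two-sided chase through the square~(\ref{eq:mod}) together do establish that the operator is a projector onto $C(X_\lambda)_{j_\lambda}$ restricting to the identity there, which is exactly the claimed splitting. The paper reaches the same conclusion by a shorter route that you do not use: it factors the idempotent through the convolution unit, writing $\Psi(E_{j_\lambda})=\Psi(E_{j_\lambda}\bullet E_{X_\lambda})=\Psi(E_{j_\lambda})\bullet\Psi(E_{X_\lambda})$ and invoking the last assertion of Theorem~\ref{th:convolution}(3), namely that $\Psi(E_{X_\lambda})$ is already known to be the projector $C(X_\mu)\to C(X_\lambda)$; the composite is then visibly $C(X_\mu)\to C(X_\lambda)\to C(X_\lambda)_{j_\lambda}$. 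Your version trades that one-line factorization for a direct verification (idempotency, image identification via injectivity of $f^\dagger$, and the fixed-point check on $C(X_\lambda)_{j_\lambda}$), re-deriving in passing the ``image lies in $f^\dagger(C(X_\lambda))$'' fact rather than quoting it as the statement about $\Psi(E_{X_\lambda})$. The paper's factorization is slicker and generalizes immediately to any element of $A_{X_\lambda}$; your argument is more self-contained in that it spells out exactly where injectivity of $f^\dagger$ and idempotency of $E_{j_\lambda}$ enter, which the paper leaves implicit. Both are valid.
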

\begin{proof}
%The subspace $C(X_\lambda)\subset C(X_\mu)$ is identified 
%with the set of $g\in C(X_\mu)$ which is constant 
%on each fiber of $X_\mu \to X_\lambda$. 
%Now denote by $\pr: X_\mu \to X_\lambda$ 
%the structural surjection. For $x\in X_\mu$, 
%\begin{eqnarray*}
%\Psi(E_{j_\lambda})\bullet g (x) &=&
%\frac{1}{\#X_\mu}\sum_{y\in X_\mu}\Psi(E_{j_\lambda})(x,y)g(y) \\
%&=&
%\frac{1}{\#X_\mu}\sum_{y\in X_\mu}E_{j_\lambda}(\pr(x),\pr(y))g(y), \\
%\end{eqnarray*}
%which depends only on $\pr(x)$ and hence lies in $C(X_\lambda)$.
%It is easy to see from this expression 
%that $\Psi(E_{j_\lambda})\bullet g$ is acted trivially 
%by $E_{j_\lambda}$,
%and hence lies in $C(X_\lambda)_{j_\lambda}$.
%If $g\in C(X_\lambda)_{j_\lambda}$, $g(y)=g(\pr(y))$ (by an abuse of notation)
%and the above formula becomes
%\begin{eqnarray*}
%& & 
%\frac{1}{\#X_\mu}\sum_{y\in X_\mu}(E_{j_\lambda})(\pr(x),\pr(y))g(y) \\
%&=&
%\frac{1}{\#X_\mu}\sum_{y\in X_\mu}(E_{j_\lambda})(\pr(x),\pr(y))g(\pr(y)) \\
%&=&
%\frac{1}{\#X_\mu}\#\pr^{-1}(y) 
%\sum_{y'\in X_\lambda}E_{j_\lambda}(\pr(x),y')g(y') \\
%&=& (E_{j_\lambda} \bullet g)(\pr(x)) \\
%&=& g(\pr(x)) = g(x),
%\end{eqnarray*}
By Theorem~\ref{th:convolution}, 
$\Psi(E_{X_\lambda}) \in \End(C(X_\mu))$ is a projection to $C(X_\lambda)$.
Then 
$$
\Psi(E_{j_\lambda})=\Psi(E_{j_\lambda}\bullet E_{X_\lambda})
=\Psi(E_{j_\lambda})\bullet \Psi(E_{X_\lambda})
$$
is a projection $C(X_\mu) \to C(X_\lambda) \to C(X_\lambda)_{j_\lambda}$.
\end{proof}
\begin{definition}\label{def:isolate}
Let $j$ be an element of $\Jhat$.
Then $j$ is said to be isolated
at $\lambda$ to $j_\lambda$, 
if there is a $j_\lambda \in J_\lambda$
such that the inverse image of $j_\lambda$
is the singleton $\{j\}$.
\end{definition}

\begin{proposition}\label{prop:discrete}
The cardinality of the inverse image of $j_\lambda$
in $J_\mu$ for $\mu \geq \lambda$ does not exceed the dimension of 
$C(X_\lambda)_{j_\lambda}$, which is finite. 
Consequently, for each $j \in \Jhat$, 
there exists a $\lambda$ at which $j$ is isolated.
%$j \in \domJhat_{\lambda}$ (the domain of the partial %map $\Jhat \parto J_{\lambda}$), 
%and for the image $j_\lambda$ of $j$, 
%the inverse image of $j_\lambda$ in $\domJhat$ %coincides with $\{ j \}$.
Thus, $\Jhat$ has the discrete topology.
\end{proposition}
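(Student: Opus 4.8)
The plan is to prove the numerical bound first and then derive isolation from the stabilization of a descending chain of eigenspaces.

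First I would establish the cardinality bound. For $\mu\ge\lambda$ write $p_{\mu,\lambda}\colon J_\mu\parto J_\lambda$ for the partial surjection obtained by applying the functor $J$ to the structure morphism $X_\mu\to X_\lambda$, and let $\Psi\colon A_{X_\lambda}\to A_{X_\mu}$ be the associated injection of convolution algebras from Theorem~\ref{th:convolution}. By the construction of $J$, the idempotent $\Psi(E_{j_\lambda})$ equals $\sum_{j_\mu\in p_{\mu,\lambda}^{-1}(j_\lambda)}E_{j_\mu}$ in $A_{X_\mu}$. Since $A_{X_\mu}$ acts faithfully on $C(X_\mu)$ by convolution and the primitive idempotents project onto the mutually orthogonal nonzero eigenspaces giving $C(X_\mu)=\bigoplus_{j_\mu\in J_\mu}C(X_\mu)_{j_\mu}$, the operator $\Psi(E_{j_\lambda})$ is the projection onto $\bigoplus_{j_\mu\in p_{\mu,\lambda}^{-1}(j_\lambda)}C(X_\mu)_{j_\mu}$. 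The preceding proposition identifies the image of this operator with $C(X_\lambda)_{j_\lambda}$, so
$$
C(X_\lambda)_{j_\lambda}=\bigoplus_{j_\mu\in p_{\mu,\lambda}^{-1}(j_\lambda)}C(X_\mu)_{j_\mu}.
$$
Each summand is nonzero, whence $\#p_{\mu,\lambda}^{-1}(j_\lambda)\le\dim C(X_\lambda)_{j_\lambda}$, and this dimension is finite because $X_\lambda$ is finite.

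To deduce isolation, I would fix $j\in\Jhat$ represented by a compatible family $(j_\mu)_{\mu\ge\lambda_0}$. For $\mu\ge\lambda\ge\lambda_0$ the component $j_\mu$ lies in $p_{\mu,\lambda}^{-1}(j_\lambda)$ by compatibility, so the displayed identity gives $C(X_\mu)_{j_\mu}\subseteq C(X_\lambda)_{j_\lambda}$. Hence $\mu\mapsto\dim C(X_\mu)_{j_\mu}$ takes positive integer values and is nonincreasing along the directed set (i.e.\ $\mu\ge\lambda$ forces $\dim C(X_\mu)_{j_\mu}\le\dim C(X_\lambda)_{j_\lambda}$), so it is eventually constant: there are $\lambda_1\ge\lambda_0$ and $d\ge1$ with $\dim C(X_\mu)_{j_\mu}=d$ for all $\mu\ge\lambda_1$.

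Then, for every $\mu\ge\lambda_1$, the inclusion $C(X_\mu)_{j_\mu}\subseteq C(X_{\lambda_1})_{j_{\lambda_1}}$ becomes an equality of $d$-dimensional spaces. Since $C(X_{\lambda_1})_{j_{\lambda_1}}$ is the direct sum of the nonzero eigenspaces $C(X_\mu)_{j'_\mu}$ over $j'_\mu\in p_{\mu,\lambda_1}^{-1}(j_{\lambda_1})$ and one summand already fills the whole space, there can be only one summand; that is, $p_{\mu,\lambda_1}^{-1}(j_{\lambda_1})=\{j_\mu\}$ for all $\mu\ge\lambda_1$. Consequently the inverse image of $j_{\lambda_1}$ in $\Jhat$ is the single compatible family $(j_\mu)_{\mu\ge\lambda_1}=j$, so $j$ is isolated at $\lambda_1$. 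As the inverse images of points of the $J_\lambda$ form an open basis of $\Jhat$, every singleton is open and $\Jhat$ is discrete.

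The crux, I expect, is the first step: fusing the two descriptions of the image of $\Psi(E_{j_\lambda})$ — as the span of the primitive idempotents in the fiber $p_{\mu,\lambda}^{-1}(j_\lambda)$, and, via the preceding proposition, as $C(X_\lambda)_{j_\lambda}$ — into the single direct-sum identity, since it is this identity that converts the partial-function bookkeeping on the $J_\lambda$ into nested eigenspaces. Once it is in hand, the stabilization of a descending chain of subspaces of a finite-dimensional space and the passage to the projective limit are routine, the only care needed being that the index set is directed rather than totally ordered.
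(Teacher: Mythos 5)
Your proof is correct and follows essentially the same route as the paper's: both rest on identifying the image of $\Psi(E_{j_\lambda})$ with $C(X_\lambda)_{j_\lambda}=\bigoplus_{j_\mu}C(X_\mu)_{j_\mu}$ summed over the (nonzero) eigenspaces indexed by the fiber, which yields the cardinality bound, and both deduce isolation by stabilizing a monotone integer quantity along the directed set. The only cosmetic difference is that you stabilize the nonincreasing dimension $\dim C(X_\mu)_{j_\mu}$ along the compatible family, whereas the paper stabilizes the nondecreasing cardinality of the fiber of $j_{\lambda_0}$; the two arguments are interchangeable.
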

\begin{proof}
Suppose that the inverse image of $j_\lambda$ in $J_\mu$ is $\{ j_1, j_2, \dots ,j_n \}$.
Then the operator $\Psi(E_{j_\lambda}) = \sum_{i=1}^n \Psi(E_{j_i})$ on $C(X_\mu)$
is a projection to $C(X_\lambda)_{j_\lambda}$. 
Thus $C(X_\mu)_{j_i}$ $(i=1,\ldots,n)$ are nonzero direct summands of 
$C(X_\lambda)_{j_\lambda}$, and hence $n$ does not exceed
the dimension of $C(X_\lambda)_{j_\lambda}$. 
This means that for any $\mu \geq \lambda$, 
the cardinality of the inverse image of $j_\lambda$
is bounded, and hence there is a $\mu'$
such that the cardinality is constant for all $\mu \geq \mu'$.
Let us fix $j \in \Jhat$.
Take any $\lambda_0$ such that $j$ is in the domain of the partial map to $J_{\lambda_0}$.
We put $j_{\lambda_0}$ to the image of $j$.
Then by the arguments above, we can find a $\lambda$ 
such that the cardinality of the inverse image of $j_{\lambda_0}$ in $J_{\lambda'}$ is constant for all $\lambda' \geq \lambda$.
We put $j_\lambda$ to the image of $j$ in $J_\lambda$.
Then $j_\lambda$ is in the inverse image of $j_{\lambda_0}$ and 
the cardinality of the inverse image of $j_\lambda$  in $J_{\lambda'}$ should be one for any $\lambda' \geq \lambda$.
This implies that the inverse image of $j_{\lambda}$ in $\Jhat$ is just $\{ j \}$, namely, $j$ is isolated
at $\lambda$.
We have proved that $\{ j \}$ is a clopen subset of $\Jhat$ for any $j \in \Jhat$, 
and hence $\Jhat$ has the discrete topology.
\end{proof}
\begin{corollary}\label{cor:E-j}
We have $\varinjlim C(J_\lambda)=\bigoplus_{j\in \Jhat} \C\cdot j$.
We write $E_j$ for the image of $j\in \Jhat$ in $A_\Xhat$.
Then $\bigoplus_{j\in \Jhat} \C\cdot E_j = A_\Xhat$.
We may use the notation $C_c(\Jhat)$ for 
$\varinjlim C(J_\lambda)$, since it is the set of
functions (automatically continuous) on $\Jhat$
with finite ($=$compact)
support. Through the isomorphism $C_c(\Jhat) \to A_\Xhat$, $E_j\in A_\Xhat$ corresponds to the indicator function 
$\delta_j \in C_c(\Jhat)$.
%We may use the notation $C_c(\Jhat)$ for 
%$\varinjlim C(J_\lambda)$, since it is the set of
%functions (automatically continuous) on $\Jhat$
%with finite ($=$compact)
%support.
\end{corollary}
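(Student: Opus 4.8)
The plan is to compute the inductive limit $\varinjlim C(J_\lambda)$ explicitly, using the discreteness of $\Jhat$ (Proposition~\ref{prop:discrete}) to exhibit a canonical basis indexed by $\Jhat$, and then to transport everything across the isomorphism $\varinjlim C(J_\lambda)\to A_{\Xhat}$ of Definition~\ref{def:Bose-Mesner}. Throughout I identify $j\in\Jhat$ with the corresponding basis vector, writing $E_j$ for its image in $A_{\Xhat}$.

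First I would pin down the transition maps of the inductive system. For $\mu\geq\lambda$ the structure morphism $X_\mu\to X_\lambda$ induces $\Psi:A_{X_\lambda}\to A_{X_\mu}$, which (by the construction of the functor $J$) sends the primitive idempotent $E_{j_\lambda}$ to $\sum_{j_\mu\in p_{\mu,\lambda}^{-1}(j_\lambda)}E_{j_\mu}$, where $p_{\mu,\lambda}:J_\mu\parto J_\lambda$ is the associated partial surjection. Under the identification $C(J_\lambda)\cong A_{X_\lambda}$ this is exactly the pull-back along $p_{\mu,\lambda}$ extended by zero off the domain, so $\delta_{j_\lambda}\mapsto\sum_{j_\mu\in p_{\mu,\lambda}^{-1}(j_\lambda)}\delta_{j_\mu}$. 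Since every $p_{\mu,\lambda}$ is surjective this map is injective (equivalently, morphisms in $\Alg_{HC}$ are injective), so each $C(J_\lambda)$ injects into the limit and $\varinjlim C(J_\lambda)=\bigcup_\lambda\im\bigl(C(J_\lambda)\bigr)$.

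Next I would use isolation. Given $j\in\Jhat$, Proposition~\ref{prop:discrete} supplies a $\lambda$ at which $j$ is isolated to some $j_\lambda$, and from its proof, for every $\mu\geq\lambda$ the fiber $p_{\mu,\lambda}^{-1}(j_\lambda)$ is the singleton $\{j_\mu\}$, the image of $j$. Hence the class $E_j:=[\delta_{j_\lambda}]$ is independent of the isolating index (compare images at a common upper bound). For linear independence, given a finite $F\subset\Jhat$ I would pick $\mu$ above all isolating indices; then the $E_j$ $(j\in F)$ are represented by the distinct basis vectors $\delta_{j_\mu}\in C(J_\mu)$, and injectivity of $C(J_\mu)\hookrightarrow\varinjlim C(J_\lambda)$ forces any vanishing relation to be trivial. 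For spanning, an arbitrary class is represented by some $g_\lambda=\sum_{j_\lambda}c_{j_\lambda}\delta_{j_\lambda}$; pushing $\delta_{j_\lambda}$ to a level above all isolating indices of the finite fiber $\pi_\lambda^{-1}(j_\lambda)\subset\Jhat$ (finite by Proposition~\ref{prop:discrete}) gives $[\delta_{j_\lambda}]=\sum_{j\in\pi_\lambda^{-1}(j_\lambda)}E_j$. This yields $\varinjlim C(J_\lambda)=\bigoplus_{j\in\Jhat}\C\cdot E_j$, the first displayed equality.

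Finally I would transport and compare. The isomorphism $\varinjlim C(J_\lambda)\to A_{\Xhat}$ of Definition~\ref{def:Bose-Mesner} carries $E_j$ to its image in $A_{\Xhat}$, giving $\bigoplus_{j\in\Jhat}\C\cdot E_j=A_{\Xhat}$. Because $\Jhat$ is discrete, each $\{j\}$ is clopen, so $\delta_j$ is continuous and compactly supported and $C_c(\Jhat)=\bigoplus_{j\in\Jhat}\C\cdot\delta_j$; the assignment $\delta_j\mapsto E_j$ is then the asserted isomorphism $C_c(\Jhat)\to A_{\Xhat}$. The only genuinely delicate point is the identification $[\delta_{j_\lambda}]=\sum_{j\in\pi_\lambda^{-1}(j_\lambda)}E_j$, which rests on the stabilization of fiber cardinalities proved in Proposition~\ref{prop:discrete}; once that is granted, the remainder is bookkeeping in an injective inductive system.
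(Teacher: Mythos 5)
Your proof is correct and follows exactly the route the paper intends: the corollary is stated without proof as a direct consequence of Proposition~\ref{prop:discrete}, and your argument (injective transition maps sending $\delta_{j_\lambda}$ to the sum of $\delta_{j_\mu}$ over the fiber, isolation to define $E_j$ and get independence, stabilized finite fibers to get spanning) is precisely the bookkeeping the paper leaves implicit. No gaps; the delicate point you flag, $[\delta_{j_\lambda}]=\sum_{j\in\pi_\lambda^{-1}(j_\lambda)}E_j$, is indeed justified by the fiber-stabilization established in the proof of Proposition~\ref{prop:discrete} together with the surjectivity of $\Jhat\parto J_\mu$.
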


\begin{remark}
In the Pontryagin duality, the dual of a compact abelian group
is a discrete group. Since $\Ihat$ is profinite and compact,
the above discreteness of $\Jhat$ is an analogue to this fact. 
\end{remark}

We summarize the results of this subsection, to compare with
the case of finite commutative association schemes.
\begin{theorem}\label{th:commutative}
Let $X_\lambda$ be a commutative profinite association scheme. 
Notations are as in Definition~\ref{def:Bose-Mesner} and Corollary~\ref{cor:E-j}.
We have the following commutative diagram, where 
all the vertical arrows are injections:
\begin{equation}\label{eq:commutative}
\begin{array}{ccccccccc}
& & C(\Jhat) & &  C(\Xhat \times \Xhat) & \leftarrow & C(\Ihat) & &\\
& &\uparrow & & \uparrow & & \uparrow & &\\
C_c(\Jhat)&=&\varinjlim C(J_\lambda) &\simeq& A_{\Xhat}
  &\simeq& \varinjlim C(I_\lambda) &=& C_{lc}(\Ihat), \\
\end{array}
\end{equation}
where:
\begin{enumerate}
 \item The set $\Jhat$ is discrete.
  The image of the left vertical arrow is the 
  set of compact support functions $C_c(\Jhat)$
  (i.e. takes 0 except but finite points
  in $\Jhat$). 
  %Each $j\in \Jhat$ corresponds to 
  %a primitive idempotent $E_j$ in $\End(C(\Xhat))$
  %which is the projector to $C()$
 \item The image of the right vertical arrow is the
 set of locally constant functions $C_{lc}(\Ihat)$, 
 which is dense with respect to the supremum norm, see the next lemma.
\end{enumerate}
\end{theorem}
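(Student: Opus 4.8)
The plan is to read the theorem as an assembly of the results already established, the only genuinely new points being the identification of the images of the two outer vertical arrows and the commutativity of the diagram. The bottom row, together with the middle vertical injection $A_{\Xhat}\hookrightarrow C(\Xhat\times\Xhat)$ and the two isomorphisms $\varinjlim C(J_\lambda)\simeq A_{\Xhat}\simeq\varinjlim C(I_\lambda)$, is exactly Definition~\ref{def:Bose-Mesner}; the discreteness of $\Jhat$ is Proposition~\ref{prop:discrete}, and the equality $\varinjlim C(J_\lambda)=C_c(\Jhat)$ together with $E_j\leftrightarrow\delta_j$ is Corollary~\ref{cor:E-j}. Thus statement~(1) is immediate: since $\Jhat$ is discrete, every function on it is continuous, so $C(\Jhat)$ is the space of all $\C$-valued functions, and the left vertical arrow is the tautological inclusion of the finitely supported ones, which is injective.

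Next I would treat the right vertical arrow and identify its image. Each structure injection $C(I_\lambda)\to C(\Ihat)$ is pullback along the projection $\Ihat\to I_\lambda$, which is surjective by Proposition~\ref{prop:compact-Hausdorff}; hence the pullback is injective and its image consists of those functions that are constant on the fibers of $\Ihat\to I_\lambda$. These fibers are clopen by Lemma~\ref{lem:profin-top}(\ref{enum:surj}), so such functions are locally constant, and passing to the colimit shows that the image of the right vertical arrow is contained in $C_{lc}(\Ihat)$. For the reverse inclusion, a locally constant $\varphi$ on the compact space $\Ihat$ has finite image and clopen level sets; these level sets form a finite clopen partition, which by compactness and the open basis of Lemma~\ref{lem:profin-top}(\ref{enum:surj}) is refined by the fibers of $\Ihat\to I_\mu$ for some $\mu$, so $\varphi$ factors through $I_\mu$ and lies in the image. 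This yields statement~(2) apart from the density assertion, which is deferred to the next lemma.

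Injectivity of all three vertical arrows is already recorded: the middle and right ones are part of Definition~\ref{def:Bose-Mesner}, and the left one is the tautological inclusion noted above. It remains to check that the two squares commute. For this I would reduce everything to a finite stage $\lambda$: the embedding $A_{X_\lambda}\hookrightarrow C(X_\lambda\times X_\lambda)$ is by construction $R_\lambda^\dagger$, the maps to the profinite objects are the pullbacks along the surjections $\Xhat\to X_\lambda$, $\Ihat\to I_\lambda$ and $\Rhat\colon\Xhat\times\Xhat\to\Ihat$, and these are compatible with the structure morphisms via diagram~(\ref{eq:comm}); commutativity then passes to the colimit. I expect the only mildly delicate point to be the reverse inclusion in the identification of the image with $C_{lc}(\Ihat)$—that every locally constant function on $\Ihat$ descends to some finite quotient $I_\mu$—where compactness is needed to pass from an arbitrary finite clopen partition to one arising from a single $I_\mu$; everything else is a formal consequence of the cited results.
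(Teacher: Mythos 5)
Your proposal is correct and follows essentially the same route as the paper: the theorem is an assembly of Definition~\ref{def:Bose-Mesner}, Proposition~\ref{prop:discrete}, and Corollary~\ref{cor:E-j}, and the one substantive point you flag --- that a locally constant function on the compact space $\Ihat$ descends to a single finite quotient $I_\mu$ via a compactness refinement of a clopen covering --- is exactly the content and method of Lemma~\ref{lem:density}, which the paper defers this to.
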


\begin{lemma}\label{lem:density}
Let $\Xhat$ be the projective limit of a projective system of finite sets $X_\lambda$
for a directed ordered set $\Lambda$. 
\begin{enumerate}
\item \label{enum:separate}
Let $C$ be a compact subspace of $\Xhat$, and $O_a$ $(a\in A)$
be an open covering of $C$. 
Then, there exists a $\mu \in \Lambda$ such that 
by putting $S$ to the image of $C$ in $X_\mu$, 
the inverse image of $s\in S$ in $\Xhat$
constitutes a (finite and disjoint) open covering of $C$, and each of
these open sets is contained in one of $O_a$.
\item \label{enum:dense}
The image of $\varinjlim C(X_\lambda) \to C(\Xhat)$ is 
the set of locally constant functions on $\Xhat$, and is dense
with respect to the supremum norm.
\end{enumerate}
\end{lemma}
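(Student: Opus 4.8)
The plan is to establish part~(\ref{enum:separate}) as the core topological statement and to deduce part~(\ref{enum:dense}) from it. I first record two facts about $\Xhat$ that hold for an arbitrary projective system of finite sets, with no use of surjectivity of the structure maps: $\Xhat$ is compact Hausdorff, and the inverse images of points $x_\lambda \in X_\lambda$ are clopen and form an open basis. Both are contained in the proof of Lemma~\ref{lem:profin-top}(\ref{enum:surj}), where only Tychonoff's theorem and the description of the product topology are used (so the weaker hypothesis of this lemma is enough).

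For part~(\ref{enum:separate}), I would, for each $c \in C$, choose an $O_{a(c)} \ni c$ and then, using the basis property, a basic clopen set $U_c$ — the inverse image of some $x_{\lambda(c)} \in X_{\lambda(c)}$ — with $c \in U_c \subset O_{a(c)}$. Compactness of $C$ extracts a finite subcover $U_{c_1}, \dots, U_{c_n}$, and directedness of $\Lambda$ provides a $\mu$ with $\mu \ge \lambda(c_i)$ for all $i$. Letting $S$ be the image of $C$ in $X_\mu$, the fibers $V_s$ $(s \in S)$ of $\Xhat \to X_\mu$ are automatically clopen, pairwise disjoint, and cover $C$; the only remaining point is that each $V_s$ lies inside some $O_a$.

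The step I expect to require care is showing that the level-$\mu$ fibers refine the cover $\{U_{c_i}\}$. Fix $s \in S$ and pick $c \in V_s \cap C$; then $c \in U_{c_i}$ for some $i$, so $c$ has image $x_{\lambda(c_i)}$ in $X_{\lambda(c_i)}$. Every point of $V_s$ has image $s$ in $X_\mu$, and since $\mu \ge \lambda(c_i)$ the compatibility relation $p_{\mu,\lambda(c_i)}$ forces every point of $V_s$ to have image $x_{\lambda(c_i)}$ as well; hence $V_s \subset U_{c_i} \subset O_{a(c_i)}$. This use of the compatibility of the projections is the heart of the argument, and everything else is bookkeeping.

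For part~(\ref{enum:dense}), the image of $\varinjlim C(X_\lambda)$ is exactly the set of locally constant functions: a function pulled back from some $X_\lambda$ is constant on each clopen fiber, hence locally constant, while conversely a locally constant $f$ has clopen level sets covering the compact $\Xhat$, so it takes finitely many values, and applying part~(\ref{enum:separate}) with $C = \Xhat$ and $\{O_a\}$ the level sets of $f$ yields a $\mu$ through which $f$ factors. For density, given continuous $f$ and $\epsilon > 0$, I would cover $\Xhat$ by basic clopen sets on each of which any two values of $f$ differ by less than $\epsilon$, apply part~(\ref{enum:separate}) to obtain a $\mu$ whose fibers $V_s$ each sit inside one such set, and set $g$ equal to the locally constant function taking the value $f(y_s)$ on $V_s$ for a chosen point $y_s \in V_s$. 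Then any $y$ and its representative $y_s$ lie in a common set of oscillation $< \epsilon$, so $\|f - g\|_\infty \le \epsilon$, and letting $\epsilon \to 0$ gives the claimed density.
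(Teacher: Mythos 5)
Your proof is correct and follows essentially the same route as the paper: clopen fibers form a basis, compactness of $C$ extracts a finite subcover, directedness supplies a common upper bound $\mu$, and the compatibility $p_{\mu,\lambda}$ of the projections shows the $\mu$-fibers refine the chosen balls; part~(2) then follows by applying part~(1) to the constancy (resp.\ small-oscillation) cover. The only difference is cosmetic: you spell out the refinement step via the structure maps, which the paper asserts implicitly, and you note that surjectivity of the structure maps is never used.
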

\begin{proof}

(\ref{enum:separate}): For $x_\lambda \in X_\lambda$, 
we denote by $B_{x_\lambda}$ the inverse image of $x_\lambda$
in $\Xhat$, which we call a $\lambda$-ball. 
This gives an open basis, and thus
every $O_a$ is a union of $\lambda$-balls for various $\lambda$.
Since $C$ is compact and covered by these balls, 
we may find a finite number of these balls which covers $C$
and each contained in some $O_a$. We take an upper bound $\mu$
for these finite number of $\lambda$. Let $S$
be the image of $C$ in $X_\mu$. Then, $B_{s_\mu} (s_\mu \in S)$
are finite, clopen, and mutually disjoint balls covering $C$,
such that each ball is contained in some $O_a$.

(\ref{enum:dense})
Let $f$ be a locally constant function. Then, for any $x\in \Xhat$,
there is an open neighborhood $O_x$ so that $f$ is constant on $O_x$.
These constitute an open covering of $\Xhat$.
Applying (\ref{enum:separate}) for $C=\Xhat$ to find $\mu$ with $S=X_\mu$,
where each $\mu$-ball is contained in some $O_x$, and hence $f$
is constant on each ball. This means that $f$ is an image 
of an element in $C(X_\mu)$.
Conversely, any function 
in $C(X_\mu)$ clearly maps to a locally constant function on $\Xhat$.

We shall show the density. For any $f\in C(\Xhat)$
and $\epsilon>0$, by continuity, for all $x\in \Xhat$ we have
an open neighborhood $O_x$
such that $y\in O_x$
implies $|f(x)-f(y)|<\epsilon$. By applying (\ref{enum:separate})
for $C=\Xhat$ to find a $\mu$ with $S=X_\mu$.
We see that the $\mu$-balls cover $\Xhat$, and for each $x_\mu \in X_\mu$,
we define $g(x_\mu)=f(x')$ by choosing any $x'$ in $B_{x_\mu}$.
Denote the projection by $\pr:\Xhat \to X_\mu$.
Then for any $y\in \Xhat$, $g(\pr(y))=f(x')$ for some 
$x' \in B_{\pr(y)}$. Since $B_{\pr(y)}\subset O_x$ for some $x$,
the values of $f$ in this ball differ at most by $2\epsilon$,
namely, 
$$
|g(\pr(y))-f(y)|=|f(x')-f(y)|<2\epsilon.
$$
This shows that
$$
||\pr^\dagger(g)-f||_{\sup}<2\epsilon,
$$
which is the desired density.
\end{proof}

\subsection{Inner product and orthogonality}
\begin{proposition}
For a profinite association scheme, $\Xhat$ has a natural 
probability regular Borel measure $\mu$, whose push-forward is the normalized 
(probability) counting measure on $X_\lambda$ for each $\lambda$.
If $\Xhat$ is a profinite group $G$ considered
as a projective system of $G/N_\lambda$ for (finite index) open normal 
subgroups, then the above $\mu$ coincides with the probability Haar measure.
\end{proposition}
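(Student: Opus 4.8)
The plan is to build $\mu$ from the Riesz representation theorem applied to a functional defined on locally constant functions, using the density statement of Lemma~\ref{lem:density}. First I would record the compatibility that makes everything work: each structure map $p_{\lambda',\lambda}:X_{\lambda'}\to X_\lambda$ is a surjective morphism of association schemes, so by Proposition~\ref{prop:fiber} all its fibres have the same cardinality $\#X_{\lambda'}/\#X_\lambda$. Writing $\nu_\lambda$ for the normalized counting measure on $X_\lambda$, this constant-fibre property yields $(p_{\lambda',\lambda})_*\nu_{\lambda'}=\nu_\lambda$, so $(\nu_\lambda)_\lambda$ is a projective system of probability measures.

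Next I would define a linear functional $L$ on the dense subspace $\varinjlim C(X_\lambda)\subset C(\Xhat)$ of locally constant functions (density by Lemma~\ref{lem:density}(\ref{enum:dense})). Any such $f$ equals $\pr_\lambda^\dagger(g)$ for some level $\lambda$ and some $g\in C(X_\lambda)$; I set $L(f):=\frac{1}{\#X_\lambda}\sum_{x\in X_\lambda}g(x)=\int_{X_\lambda}g\,d\nu_\lambda$. The compatibility above makes $L(f)$ independent of the chosen level, so $L$ is well defined, linear, positive, normalized by $L(1)=1$, and bounded by $|L(f)|\le\|f\|_{\sup}$. Hence $L$ extends uniquely to a positive normalized functional on all of $C(\Xhat)$, and the Riesz representation theorem produces a unique regular Borel probability measure $\mu$ with $\int_\Xhat f\,d\mu=L(f)$. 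Evaluating at $f=\pr_\lambda^\dagger(g)$ gives $(\pr_\lambda)_*\mu=\nu_\lambda$, the asserted push-forward property. Alternatively one may define $\mu$ on the clopen algebra by $\mu(\bigsqcup_{s\in S}B_s)=\#S/\#X_\lambda$; finite additivity is immediate, and since clopen sets are compact every countable clopen partition is in fact finite, so countable additivity holds automatically and Carath\'eodory extension applies.

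For the second statement, write $\Xhat=G=\varprojlim G/N_\lambda$. I would show $\mu$ is left-invariant and then invoke the uniqueness of Haar measure. For $g\in G$ with image $g_\lambda\in G/N_\lambda$, left translation is compatible with the projection, $\pr_\lambda\circ L_g=L_{g_\lambda}\circ\pr_\lambda$, whence $(\pr_\lambda)_*(L_g)_*\mu=(L_{g_\lambda})_*\nu_\lambda=\nu_\lambda$, the final equality because counting measure on the finite group $G/N_\lambda$ is translation-invariant. Thus $(L_g)_*\mu$ and $\mu$ have the same push-forward at every level; since a regular Borel probability measure on $\Xhat$ is determined by its integrals against locally constant functions (these are dense, and the Riesz functional is unique), $(L_g)_*\mu=\mu$ for all $g$. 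Therefore $\mu$ is a left-invariant regular Borel probability measure on the compact group $G$, hence coincides with its Haar measure.

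The step I expect to demand the most care is the passage from the finitely additive datum on clopen sets (equivalently, the functional on locally constant functions) to a genuine regular Borel measure, since for uncountable $\Lambda$ the space $\Xhat$ need not be metrizable and the usual metric-space regularity arguments are unavailable. Routing the construction through Riesz representation is what I would emphasize, as it yields existence, regularity, and the uniqueness needed for the Haar identification simultaneously, avoiding a separate verification of $\sigma$-additivity and outer regularity.
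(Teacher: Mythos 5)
Your proof is correct, but it takes a genuinely different route from the paper. The paper constructs $\mu$ by viewing $(X_\lambda,\nu_\lambda)$ as an inverse family of measured spaces and invoking Choksi's theorem on inverse limits of measure spaces, with regularity extracted from Choksi's inner-regularity statement plus a complementation argument, and then identifies $\mu$ with Haar measure by showing the two measures induce the same regular content on compact sets (approximating compact sets by finite disjoint unions of cosets and citing Halmos). You instead define a positive normalized functional on the dense subspace of locally constant functions (density from Lemma~\ref{lem:density}, well-definedness from the constant-fibre property of Proposition~\ref{prop:fiber}, exactly the same combinatorial input the paper uses), extend by continuity, and apply the Riesz representation theorem on the compact Hausdorff space $\Xhat$; for the Haar identification you verify left-invariance level by level and appeal to uniqueness of Haar measure. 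Your route buys a cleaner package: existence, regularity, and the uniqueness needed for the Haar comparison all come out of Riesz at once, and the invariance argument replaces the paper's somewhat delicate content-comparison with a one-line density argument. The paper's route, by contrast, exhibits $\mu$ explicitly as an inverse limit of measures and does not presuppose the Riesz machinery; it would also adapt to settings where one wants the measure on the generated $\sigma$-ring without first passing through $C(\Xhat)$. Two minor points you should make explicit if you write this up: positivity of the extended functional on all of $C(\Xhat)$ needs the standard $\epsilon$-approximation argument (if $f\ge 0$ and $\|f-g\|_{\sup}<\epsilon$ with $g$ locally constant, then $L(g)\ge-\epsilon$), and in the invariance step you should note that $(L_g)_*\mu$ is again regular (push-forward under a homeomorphism) so that Riesz uniqueness applies to conclude $(L_g)_*\mu=\mu$.
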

\begin{proof} 
This is a direct consequence of Choksi~\cite{CHOKSI}
and a general property of measures (see Halmos~\cite{HALMOS}),
as follows.
Each $X_\lambda$ is finite discrete (hence 
compact Hausdorff) with 
normalized counting measure,
and the measure is preserved by 
the uniformness of 
the cardinality of each fiber
as in Proposition~\ref{prop:fiber}.
This means that the system is an inverse family of
measured spaces \cite[Definition~2]{CHOKSI}, and hence
$\Xhat$ has a weak measure structure: a set $M$ of ring of sets
and a function $\mu$ from $M$ to non-negative
real numbers, such that $\Xhat \to X_\lambda$
preserves the measure. In this case, $M$ is the set of open sets,
and $\mu(\Xhat)=1$.
Then \cite[Theorem~2.2]{CHOKSI} tells that
this $\mu$ on $\Xhat$ is extended to a measure on 
$\sigma$-ring generated by $M$,
namely, the Borel sets. There it is also proved
that for any measurable set $E$ with finite measure and any $\epsilon>0$, 
there is a compact set $C \subset E$ with $\mu(E)<\mu(C)+\epsilon$.
This proves the inner regularity of $\mu$ (\cite[\S52, P.224]{HALMOS}).
By taking the complement, for any $\epsilon>0$ and measurable set 
$E^c$, there is an open $U \supset E^c$ with $1-\mu(E^c)<1-\mu(U)+\epsilon$,
i.e., $\mu(E^c)>\mu(U)-\epsilon$, which proves the outer regularity of $\mu$,
and hence $\mu$ is regular.

Let $\nu$ be the probability Haar measure of $G$. Then, for 
any (finite index) open normal subgroup $N$, $\nu(N)$ is $\frac{1}{\#(G/N)}$,
since $\#(G/N)$ cosets of $N$ disjointly covers $G$. This gives
an inverse family of measured spaces, and the above construction 
gives a regular Borel measure $\mu$. The Haar measure is regular 
\cite[\S58, Theorem~B]{HALMOS}. For the coset $gN$, $\mu(gN)=\nu(gN)$.
The cosets $gN$ for $g$ and $N$ varying make an open basis.
By the outer regularity, for any compact set $E$ and any $\epsilon>0$, 
there is an open set $O\supset E$ with $\mu(E)>\mu(O)-\epsilon$.
Use Lemma~\ref{lem:density} for $C=E$ and the open covering $O$,
to find $\mu$, $S\subset X_\mu$. Then $B_{s_\mu} (s_\mu\in S)$
constitutes an open cover $\cC$ of $E$ consisting of a finite number of
disjoint cosets (contained in $O$) such that the sum of 
the measure of each coset in $\cC$, denoted by $\mu(\cC)$
$(\mu(\cC)\leq \mu(O))$, satisfying
\begin{equation}\label{eq:epsilon}
\mu(\cC)\geq \mu(E)\geq \mu(\cC)-\epsilon, 
\end{equation}
but if the same argument simultaneously applied for both $\mu$
and $\nu$ for the $O$ satisfying both
$\mu(E)>\mu(O)-\epsilon$ and
$\nu(E)>\nu(O)-\epsilon$, we may take the same $\cC$ for the both and 
the same inequality (\ref{eq:epsilon})
holds for $\nu$, and because consisting of 
a finite number of 
disjoint cosets, $\nu(\cC)=\mu(\cC)$ holds,
which implies $\mu(E)=\nu(E)$. This means that the restrictions of
$\nu$ and $\mu$ to compact sets give the same regular content,
which implies $\nu=\mu$ by \cite[\S54, Theorem~B]{HALMOS}.
% (at least for the case of 
%profinite groups). 
%Let $\pr_\lambda:\Xhat \to X_\lambda$ be
%the natural projection. Then, the measure of $\pr_\lambda^{-1}(x)$
%for each $x\in X_\lambda$ is defined by $\frac{1}{\#X_\lambda}$.
%This is well-defined since the cardinality of each fiber is 
%constant in Proposition~\ref{prop:fiber}.
\end{proof}

\begin{proposition}\label{def:inner-product}
For a finite set $X$, we define a (normalized) Hermitian inner product 
on $C(X)$ by 
$$
 (f,g):=\frac{1}{\#X}\sum_{x\in X}f(x)\overline{g(x)}.
$$
For a profinite association scheme, it is defined on $C(\Xhat)$
by 
$$
(f,g):=\int_{x\in \Xhat}f(x)\overline{g(x)} d\mu(x).
$$
Then $C(X_\lambda)\to C(\Xhat)$ preserves the inner products.
\end{proposition}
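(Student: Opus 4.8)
The plan is to identify the map $C(X_\lambda)\to C(\Xhat)$ as the pullback $\pr_\lambda^\dagger$ along the canonical projection $\pr_\lambda\colon \Xhat\to X_\lambda$, sending $h\in C(X_\lambda)$ to $h\circ \pr_\lambda$, and then to reduce the claimed identity of inner products to the defining property of $\mu$ established in the preceding proposition, namely that the push-forward $(\pr_\lambda)_*\mu$ is the normalized (probability) counting measure on $X_\lambda$. Since $X_\lambda$ is finite and discrete, any $h\in C(X_\lambda)$ is bounded, so $h\circ\pr_\lambda$ is a bounded, locally constant (hence Borel) function on $\Xhat$ and is therefore $\mu$-integrable; this disposes of all integrability issues at the outset.

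First I would take $f,g\in C(X_\lambda)$ and observe that $(f\circ\pr_\lambda)\cdot\overline{(g\circ\pr_\lambda)}=(f\bar g)\circ\pr_\lambda$, where $f\bar g\in C(X_\lambda)$ is the pointwise product. Applying the change-of-variables (push-forward) formula gives
$$
\int_{\Xhat}(f\bar g)\bigl(\pr_\lambda(\hat x)\bigr)\,d\mu(\hat x)
=\int_{X_\lambda}(f\bar g)(x)\,d\bigl((\pr_\lambda)_*\mu\bigr)(x).
$$
By the preceding proposition the measure $(\pr_\lambda)_*\mu$ is the normalized counting measure, so the right-hand side equals $\frac{1}{\#X_\lambda}\sum_{x\in X_\lambda}f(x)\overline{g(x)}$, which is exactly the finite inner product $(f,g)$ on $C(X_\lambda)$. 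This is precisely the assertion that $\pr_\lambda^\dagger$ preserves inner products.

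Alternatively, and perhaps more transparently, I would verify the identity on the basis of indicator functions $\delta_x$ $(x\in X_\lambda)$ and extend by sesquilinearity. Here $\pr_\lambda^\dagger(\delta_x)$ is the indicator of the clopen $\lambda$-ball $\pr_\lambda^{-1}(x)\subset\Xhat$; for $x\neq y$ these balls are disjoint, so both inner products vanish, while for $x=y$ the $\Xhat$-side equals $\mu\bigl(\pr_\lambda^{-1}(x)\bigr)$, which equals $1/\#X_\lambda$ by the push-forward property, matching the $X_\lambda$-side.

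The entire content is carried by the push-forward property of $\mu$, so I do not expect a genuine obstacle here: the only points requiring (routine) care are confirming that the pulled-back functions are bounded and measurable so that the integral is defined, and invoking the push-forward formula in the regular Borel measure setting constructed in the preceding proposition. I expect the verification to be very short.
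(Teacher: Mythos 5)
Your proposal is correct and follows essentially the same route as the paper, which disposes of the statement in one line by noting that $\Xhat \to X_\lambda$ preserves the measures; your change-of-variables argument (and the indicator-function check) simply makes that one-line remark explicit. No gaps.
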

This is clear since $\Xhat \to X_\lambda$ preserves the measures.
\begin{definition}
Let $V,W$ be $\C$-vector spaces with Hermitian inner products.
For a linear morphism $A:V \to W$, its conjugate $A^*:W \to V$ 
(which may not exists)
is characterized by the property
$$
(Af, g)_W=(f,A^*g)_V
$$ 
for any $f\in V$, $g \in W$. It is unique if exists.
An endomorphism $A:W \to W$ is said to be Hermitian if $A=A^*$, as usual.

Let $W$ be again a $\C$-vector space with Hermitian inner product,
and $V$ is a subspace with restricted inner product.
Let $V^\perp$ be the space of vectors which is orthogonal to 
any vector in $V$. If $V\oplus V^\perp$ is equal to $W$, 
then $V$ is said to be an orthogonal component of $W$. 
\end{definition}
\begin{definition}\label{def:ortho-proj}
Let $W$ be a $\C$-linear space with Hermitian inner product.
For a linear subspace $V\subset W$, 
let $p:W \to W$ be a projection to $V$, i.e., the image of $p$
is $V$ and $p|_V=\id_V$. Then by linear algebra $W$ is decomposed 
as the direct sum, $W=V \oplus \Ker p$.
We say that $p$ is an orthogonal projector to $V$ if
$V \oplus \Ker p$ is an orthogonal direct sum.
An orthogonal projector to $V$ exists uniquely, if $V$ is an orthogonal component.
\end{definition}

\begin{proposition}\label{prop:abuse}
Let $W$ be a $\C$-linear space with Hermitian inner product,
$V\subset W$ be an orthogonal component with 
orthogonal projector $p$,
and $\iota:V \hookrightarrow W$ be the 
inclusion.
We denote by $q: W \to V$ the restriction of the codomain of $p$.
Then we have $\iota^*=q$ and $q^*=\iota$.
%For $A:W \to V$, $\iota\circ A$ is denoted by $A$.
For $A:V \to V$, if $A^*$ exists, then
\begin{equation}\label{eq:subconj}
(\iota \circ A \circ q)^*=\iota \circ A^* \circ q:V \to V.
\end{equation}
We sometimes denote $\iota \circ A \circ q$ simply by $A$. We make a remark
when this abuse of notation is used.
\end{proposition}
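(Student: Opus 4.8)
The plan is to first pin down the two adjoints $\iota^*$ and $q^*$ using the orthogonal decomposition of $W$, and then obtain \eqref{eq:subconj} formally from the contravariance of the conjugate operation.

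First I would write $W = V \oplus V^\perp$, which is available because $V$ is assumed to be an orthogonal component. For $g \in W$, decompose $g = g_V + g_\perp$ with $g_V \in V$ and $g_\perp \in V^\perp$; by the definition of the orthogonal projector, $pg = g_V$, and $q$ is exactly this map with codomain restricted to $V$, so $qg = g_V$. To identify $\iota^*$, I take $f \in V$ and $g \in W$ and compute $(\iota f, g)_W = (f, g_V)_W + (f, g_\perp)_W$; the second term vanishes because $f \in V$ and $g_\perp \in V^\perp$, leaving $(f, g_V)_W = (f, qg)_V$, since the inner product on $V$ is the restriction of that on $W$. By the uniqueness of the conjugate (from the preceding definition), this yields $\iota^* = q$. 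The identity $q^* = \iota$ then follows either by the symmetric computation $(qg, f)_V = (g_V, f)_W = (g, \iota f)_W$, or more cheaply from the involutivity $(\iota^*)^* = \iota$ of conjugation, which is itself immediate from the defining adjoint relation.

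With these two identities in hand, \eqref{eq:subconj} is immediate. Using the contravariant composition rule $(BC)^* = C^* B^*$, which is a direct consequence of the defining relation $(Af,g)=(f,A^*g)$ whenever the relevant conjugates exist, I would compute
\[
(\iota \circ A \circ q)^* = q^* \circ A^* \circ \iota^* = \iota \circ A^* \circ q,
\]
substituting $q^* = \iota$ and $\iota^* = q$ in the last step. Here $A^* : V \to V$ exists by hypothesis, which is precisely what makes the right-hand side meaningful; the existence of $(\iota \circ A \circ q)^*$ is then guaranteed by exhibiting this formula, and uniqueness closes the argument.

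All the computations are routine. The only point requiring genuine care is the bookkeeping of domains, codomains, and which inner product ($(\cdot,\cdot)_V$ or $(\cdot,\cdot)_W$) is in play at each step, since $\iota$ and $q$ are maps between the two distinct spaces $V$ and $W$ rather than endomorphisms of a single space. I expect no real obstacle beyond this tracking, together with making explicit that the composition rule for conjugates is valid in this (possibly infinite-dimensional) setting whenever the conjugates exist, so that no finite-dimensionality is tacitly invoked.
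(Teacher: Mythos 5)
Your proof is correct and follows essentially the same route as the paper: both identify $\iota^*=q$ and $q^*=\iota$ via the orthogonal decomposition $w=w_V+w_V'$ and the vanishing of the cross term, then deduce \eqref{eq:subconj} from the contravariance of the conjugate under composition. Your write-up is in fact slightly more explicit than the paper's, which leaves the final composition step as ``Thus \eqref{eq:subconj} follows.''
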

\begin{proof}
For $w\in W$ and $v\in V$, write $w=w_V+w_V'$ with $w_V\in V$ and $w_V'\in V^\perp$.
Then
$$
(qw,v)_V=(w_V,v)_V=(\iota w_V,\iota v)_W=(w_V+w_V',\iota v)_W=(w,\iota v)_W,
$$
which shows that $\iota^*=q$ and $q^*=\iota$.
Thus (\ref{eq:subconj}) follows.
\end{proof}

\begin{proposition}\label{prop:hermite-projection}
Let $W$ be a $\C$-vector space with Hermitian inner product, and
$p:W \to W$ an idempotent, namely, $p^2=p$.
By linear algebra, $W=\Img(p)\oplus\Ker(p)$
and $p$ is a projector to $V:=\Img(p)$. Then, $p$ is an orthogonal
projector if and only if $p=p^*$.
If $A \in \End(V)$ is an orthogonal projector to $U\subset V$, 
then $\iota A q : W \rightarrow W$ is a projector to $U\subset W$.
\end{proposition}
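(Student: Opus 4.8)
The plan is to treat the two assertions separately, reducing each to a short inner-product computation together with the defining relations of $\iota$ and $q$ from Proposition~\ref{prop:abuse}.

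First I would prove the equivalence. By the definition of an orthogonal direct sum, $p$ is an orthogonal projector precisely when $\Img(p)\perp\Ker(p)$, so it suffices to show this orthogonality is equivalent to $p=p^*$. For the forward direction I would write arbitrary $w_1,w_2\in W$ as $w_i=v_i+u_i$ with $v_i\in V:=\Img(p)$ and $u_i\in\Ker(p)$, so that $pw_i=v_i$; then the cross terms vanish by orthogonality and $(pw_1,w_2)=(v_1,v_2)=(w_1,pw_2)$, giving $p=p^*$. For the converse, assuming $p=p^*$, I would take $v\in\Img(p)$ (so $pv=v$, as $p$ fixes its image) and $u\in\Ker(p)$, and compute $(v,u)=(pv,u)=(v,p^*u)=(v,pu)=(v,0)=0$, which is exactly $\Img(p)\perp\Ker(p)$.

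For the second assertion I would use the relations $q\circ\iota=\id_V$ and the surjectivity of $q$ coming from the construction in Proposition~\ref{prop:abuse} (these are available because, once $p=p^*$, $V=\Img(p)$ is an orthogonal component). Idempotency of $\iota A q$ follows from $(\iota A q)\circ(\iota A q)=\iota A(q\iota)Aq=\iota A^2 q=\iota A q$, using $q\iota=\id_V$ and $A^2=A$. To identify the image, note $q$ is surjective onto $V$ and $A$ is a projector with image $U$, so $Aq(W)=A(V)=U$ and hence $\iota A q(W)=\iota(U)=U\subset W$; finally, for $u\in U\subset V\subset W$ one has $q(u)=u$ (as $p$ fixes $V$), $A(q u)=Au=u$ (as $A$ fixes $U$), and $\iota(u)=u$, so $\iota A q$ restricts to the identity on $U$. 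Together these show $\iota A q$ is a projector onto $U$.

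These arguments are routine linear algebra; there is no serious obstacle, and the only point requiring care is the bookkeeping of the identities $q\iota=\id_V$ and $\iota q=p$ inherited from Proposition~\ref{prop:abuse}, together with the facts that a projector fixes its image and that $A^2=A$. I would also remark that, since $A=A^*$ and $\iota^*=q$, $q^*=\iota$, equation~(\ref{eq:subconj}) gives $(\iota A q)^*=\iota A q$; by the first assertion this shows $\iota A q$ is in fact an \emph{orthogonal} projector onto $U$, slightly more than is claimed.
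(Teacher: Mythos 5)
Your proposal is correct and follows essentially the same route as the paper: the equivalence is proved by the identical decomposition $w=v+k$ and the same two inner-product computations, and the second assertion by idempotency via $q\iota=\id_V$ together with identification of the image (the paper additionally invokes (\ref{eq:subconj}) for Hermitian-ness, which you recover in your closing remark). No gaps.
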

\begin{proof}
If $p^2=p$, then $W=V\oplus \Ker p$ for $V=\Img(p)$.
Thus any element $w,w'\in W$ is written as $v+k, v'+k'$, respectively.
If $V \oplus \Ker p$ is orthogonal, 
$$
(p(v+k), v'+k')=(pv,v'+k')=(v,v')=(v+k,pv')=(v+k,p(v'+k')),
$$
which shows $p=p^*$. For the converse, assume $p=p^*$. 
Then for $v\in V$, $k\in \Ker p$, $(v,k)=(pv,k)=(v,pk)=(v,0)=0$,
and hence $W$ and $\Ker p$ are mutually orthogonal.
For the last, (\ref{eq:subconj}) implies that 
$\iota A q$ is an Hermitian operator, and by $q\circ \iota=\id_V$
is an idempotent, whose image is $U$, hence is an orthogonal
projector to $U$.
\end{proof}

\begin{definition}\label{def:meas_on_fiber}
Let $(X_\lambda)$ be a profinite association scheme, fix $\lambda\in \Lambda$ and $x_\lambda \in X_\lambda$.
For the surjection $\pr:\Xhat \to X_\lambda$, 
the fiber $\pr^{-1}(x_\lambda)$ at $x_\lambda$ 
is a clopen subset of $\Xhat$.
Thus, we have the restriction of the measure $\mu$ on $\Xhat$
to $\pr^{-1}(x_\lambda)$, denoted by the same symbol $(\pr^{-1}(x_\lambda), \mu)$.
Note that the volume of the fiber $\pr^{-1}(x_\lambda)$ is $1/\#X_{\lambda}$.
\end{definition}
Since $\Xhat = \coprod_{x_\lambda \in X_\lambda}\pr^{-1}(x_\lambda)$, the following proposition holds.
\begin{proposition}\label{prop:FubiniXhat}
For any $\lambda$ and $f \in C(\Xhat)$, we have
\[
\int_{x \in \Xhat} f(x) d\mu(x) = \sum_{x_\lambda \in X_\lambda} \int_{y \in \pr^{-1}(x_\lambda)} f(y) d\mu(y).
\]
\end{proposition}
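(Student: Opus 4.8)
The plan is to recognize this as nothing more than finite additivity of the integral over a finite clopen partition, so that essentially no analysis is needed beyond what is already recorded in Definition~\ref{def:meas_on_fiber}. First I would note that since $X_\lambda$ is a finite set, the family $\{\pr^{-1}(x_\lambda)\}_{x_\lambda \in X_\lambda}$ is finite; by Definition~\ref{def:meas_on_fiber} each fiber $\pr^{-1}(x_\lambda)$ is clopen, hence Borel measurable, and distinct fibers are disjoint because $\pr$ is a genuine (everywhere-defined, single-valued) function. Their union is all of $\Xhat$, which is precisely the decomposition $\Xhat = \coprod_{x_\lambda \in X_\lambda} \pr^{-1}(x_\lambda)$ quoted just before the statement.

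Next I would write, for the given $f \in C(\Xhat)$, the pointwise identity
$$
f = \sum_{x_\lambda \in X_\lambda} f\cdot \chi_{\pr^{-1}(x_\lambda)},
$$
which is a finite sum since $X_\lambda$ is finite. Each summand $f\cdot \chi_{\pr^{-1}(x_\lambda)}$ is measurable and bounded (as $f$ is continuous on the compact space $\Xhat$), hence $\mu$-integrable. Integrating and using linearity of the integral over a finite sum yields
$$
\int_{x\in\Xhat} f(x)\, d\mu(x) = \sum_{x_\lambda \in X_\lambda} \int_{x\in\Xhat} f(x)\chi_{\pr^{-1}(x_\lambda)}(x)\, d\mu(x) = \sum_{x_\lambda\in X_\lambda} \int_{y\in\pr^{-1}(x_\lambda)} f(y)\, d\mu(y),
$$
where the last equality is simply the definition of the restricted integral against the restricted measure $(\pr^{-1}(x_\lambda),\mu)$ introduced in Definition~\ref{def:meas_on_fiber}.

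Since the partition is finite, there is no convergence issue and I would not need countable additivity of $\mu$ at all: only linearity of the integral over finitely many terms, together with measurability of the fibers. Consequently there is no genuine obstacle here. The only points worth checking are that each fiber is measurable and that the sum ranges over a finite index set, and both are immediate from the finiteness of $X_\lambda$ and the clopenness established in Definition~\ref{def:meas_on_fiber}.
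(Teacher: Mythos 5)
Your proof is correct and follows essentially the same route as the paper, which simply notes the finite disjoint decomposition $\Xhat = \coprod_{x_\lambda \in X_\lambda}\pr^{-1}(x_\lambda)$ and asserts the proposition; you have merely filled in the routine details (clopenness of the fibers, the indicator-function identity, and finite additivity of the integral). No gap.
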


\begin{proposition}\label{prop:q}
Let $(X_\lambda)$ be a profinite association scheme.
Then, by the surjection $\pr:\Xhat \to X_\lambda$,
$C(X_\lambda)$ is identified with a subspace
of $C(\Xhat)$ with the inner product given by restriction. 
There is an orthogonal projection $q:C(\Xhat) \to C(X_\lambda)$
given by averaging over each fiber: for $f \in C(\Xhat)$ define
$$
q(f)(x_\lambda)=
 \frac{1}{\mu(\pr^{-1}(x_\lambda))}
 \int_{x\in \pr^{-1}(x_\lambda)}f(x) d\mu(x),
$$
where $\mu(\pr^{-1}(x_\lambda))=\frac{1}{\#X_\lambda}$.
Thus, $C(X_\lambda)$ is an orthogonal component of $C(\Xhat)$.
\end{proposition}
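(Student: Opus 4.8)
The plan is to verify three things in turn: that $\pr^\dagger$ identifies $C(X_\lambda)$ with a subspace of $C(\Xhat)$ on which the restricted inner product agrees with the given one; that the fiber-averaging operator is a well-defined retraction; and that the induced endomorphism of $C(\Xhat)$ is self-adjoint, so that orthogonality follows from Proposition~\ref{prop:hermite-projection}.

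First I would record the embedding. Since $\pr:\Xhat \to X_\lambda$ is surjective, the pullback $\pr^\dagger:C(X_\lambda)\to C(\Xhat)$, $h \mapsto h\circ \pr$, is injective, with image the space of continuous functions that are constant on each fiber $\pr^{-1}(x_\lambda)$. That this embedding preserves inner products is exactly Proposition~\ref{def:inner-product}, so the inner product on $C(X_\lambda)$ coincides with the restriction of the one on $C(\Xhat)$. Because $X_\lambda$ is finite and discrete, every function $X_\lambda\to\C$ is continuous, so $q(f)$ automatically lies in $C(X_\lambda)$; and $q(f)$ is well-defined because each fiber is clopen of finite measure $1/\#X_\lambda$ (Definition~\ref{def:meas_on_fiber}) and $f$ is continuous on the compact space $\Xhat$, hence integrable. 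A direct check gives $q\circ \pr^\dagger=\id_{C(X_\lambda)}$: if $f=\pr^\dagger h$, then $f$ is constant equal to $h(x_\lambda)$ on $\pr^{-1}(x_\lambda)$, so its fiber average is $h(x_\lambda)$.

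Next I would set $p:=\pr^\dagger\circ q\in \End(C(\Xhat))$ and observe that $p$ is a projector with image $C(X_\lambda)$: idempotency reads $p^2=\pr^\dagger(q\,\pr^\dagger)q=\pr^\dagger q=p$, and $\Img p=\Img \pr^\dagger=C(X_\lambda)$ since $p$ fixes every $\pr^\dagger h$. The crucial step is self-adjointness of $p$. Using the Fubini-type decomposition of Proposition~\ref{prop:FubiniXhat}, I would split $(pf,g)$ as a sum over fibers; on each fiber $pf$ is the constant $q(f)(x_\lambda)$, so integrating against $\overline g$ over the fiber produces the factor $\mu(\pr^{-1}(x_\lambda))\,\overline{q(g)(x_\lambda)}$. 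Summing over $x_\lambda\in X_\lambda$ and using $\mu(\pr^{-1}(x_\lambda))=1/\#X_\lambda$ collapses the expression to $(q(f),q(g))_{C(X_\lambda)}$. The same computation applied to $(pg,f)$ gives $(q(g),q(f))_{C(X_\lambda)}$, so $(f,pg)=\overline{(pg,f)}=\overline{(q(g),q(f))}=(q(f),q(g))=(pf,g)$; hence $p=p^*$.

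Finally, with $p$ a self-adjoint idempotent, Proposition~\ref{prop:hermite-projection} shows $p$ is an orthogonal projector onto $V:=C(X_\lambda)$, so $C(\Xhat)=V\oplus \Ker p$ is an orthogonal direct sum and $C(X_\lambda)$ is an orthogonal component, with $q$ (the corestriction of $p$ to $V$, in the abuse of Proposition~\ref{prop:abuse}) its orthogonal projector. I expect the only point requiring genuine care to be the fiber computation for self-adjointness, and in particular keeping track of the normalization $\mu(\pr^{-1}(x_\lambda))=1/\#X_\lambda$ so that the normalized inner products on $C(\Xhat)$ and $C(X_\lambda)$ match; everything else is formal.
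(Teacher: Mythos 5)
Your proof is correct and rests on the same key computation as the paper's: the Fubini-type decomposition of Proposition~\ref{prop:FubiniXhat} over the fibers of $\pr$, together with the inner-product preservation from Proposition~\ref{def:inner-product} and the identity $q\circ\pr^\dagger=\id$. The only difference is packaging: the paper checks orthogonality directly by taking $f\in\Ker q$ and showing $(f,g\circ\pr)_{\Xhat}=0$, whereas you verify that the idempotent $p=\pr^\dagger\circ q$ is self-adjoint and invoke Proposition~\ref{prop:hermite-projection}; both reduce to the same fiber-wise calculation.
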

\begin{proof}
The preservation of the inner products is shown in Proposition~\ref{def:inner-product}.
For any $g \in C(X_\lambda)$ its image is $g\circ \pr \in C(\Xhat)$ and we have
\begin{eqnarray*}
q(g\circ \pr)(x_\lambda)&=&\frac{1}{\mu(\pr^{-1}(x_\lambda))}\int_{y\in \pr^{-1}(x_\lambda)}g(\pr(y))d\mu(y) \\
&=&g(x_\lambda)\frac{1}{\mu(\pr^{-1}(x_\lambda))}\int_{y\in \pr^{-1}(x_\lambda)}1\cdot d\mu(y)\\
&=&g(x_\lambda).
\end{eqnarray*}
This implies that $q$ is a projection to $C(X_\lambda)$.
%For $f,g\in C(X_\lambda)$,
%\begin{eqnarray*}
%(f,g)_{X_\lambda}
%&=&\frac{1}{\#X_\lambda}\sum_{x'\in X_\lambda}f(x')\overline{g(x')}\\
%&=& \frac{1}{\#X_\lambda}\sum_{x'\in X_\lambda}\#X_\lambda
%\int_{x\in \pr^{-1}(x')}f(\pr(x))\overline{g(\pr(x))}dx\\
%&=&(f\circ \pr(x), g\circ\pr(x))_{\Xhat},
%\end{eqnarray*}
%hence the inner products coincide.

To show that $q$ is an orthogonal projection, we take $f\in \Ker q$.
This means that the average of $f$ on 
each fiber of $\pr$ is zero. To show that $q$ is orthogonal, 
it suffices to show that $(f,g\circ \pr)_\Xhat=0$ for $g\in C(X_\lambda)$
by Definition~\ref{def:ortho-proj}, 
but by applying Proposition \ref{prop:FubiniXhat}, we have 
\begin{eqnarray*}
(f,g\circ \pr)_\Xhat&=&\int_{x \in \Xhat} f(x) \overline{g(\pr(x))} d\mu(x) \\
&=&
\sum_{x'\in X_\lambda}
\int_{x\in \pr^{-1}(x')}f(x)\overline{g(\pr(x))} d\mu(x) \\
&=&
\sum_{x'\in X_\lambda}
\overline{g(x')}\int_{x\in \pr^{-1}(x')}f(x) d\mu(x) \\
&=&
\sum_{x'\in X_\lambda}
\overline{g(x')}\cdot 0 = 0. \\
\end{eqnarray*}
\end{proof}

\begin{proposition}\label{prop:orthogonal}$ $

\begin{enumerate} 
    \item \label{enum:conv} The (convolution) algebra $A_\Xhat$ acts on $C(\Xhat)$,
which makes $C(\Xhat)$ an $A_\Xhat$-module. This is compatible
with the action of $A_{X_\lambda}$ on $C(X_\lambda)$, in the sense of Corollary~\ref{cor:mod}.
    \item The convolution unit $E_{X_\lambda} \in A_{X_\lambda} \subset A_{\Xhat}$
acts as an orthogonal projector to $C(X_\lambda)$.
    \item Let $j \in \Jhat$. Take a $\lambda$ 
    where $j$ is isolated to $j_\lambda$,
    and consider $E_{j_\lambda}\in A_{X_\lambda}$.
    By the abuse of 
    notation in Proposition~\ref{prop:abuse},
    $E_{j_\lambda}$ extends to the orthogonal projector
    $$
    E_j:C(\Xhat) \to C(\Xhat) 
    $$
    that projects to the domain 
    of the composition
    $$
    C(X_{\lambda})_{j_\lambda} \to C(X_{\lambda}) \to C(\Xhat).
    $$
    We define 
    $$
    C(\Xhat)_j:=C(X_\lambda)_{j_\lambda}.
    $$
    This is independent of the choice of the $\lambda$.
%    \footnote{This is strange, since $C(X_\lambda)_{j_\lambda}$ may be strictly larger
%    than $C(\Xhat)_j$. We need to take a $\lambda$ %which isolates $j$. Equivalently,
%    take $\lambda$ where $E_j$ exists.}
%    \[
%    C(\Xhat)_j := \varinjlim C(X_\lambda)_{j_\lambda} = \bigcup_{\lambda} C(X_\lambda)_{j_\lambda} \subset C(\Xhat) 
%    \]
%    where $\lambda$ rans over the set of indices such that $j$ is in the domain of the partial map to $J_\lambda$ and we put $j_\lambda$ for the image of $j$ in $J_\lambda$.
%    Then $\dim C(\Xhat)_j$ is finite. 
%    Furthermore, for a fixed $\lambda$, 
%    if the inverse image of $j_\lambda$ is just $\{ j \}$ in $\Jhat$ (by Proposition~\ref{prop:discrete}, such $\lambda$ exists),
%    then $C(\Xhat)_j = C(X_\lambda)_{j_\lambda}$.
%    \item Let $j\in \Jhat$. Then the operator $E_j$ is an orthogonal projector $C(\Xhat) \to C(\Xhat)_{j}$.
    \item For $j \neq j'$, 
$E_j\bullet E_{j'}=0$.
    \item 
We have an orthogonal decomposition
$$
C(X_\lambda)=\bigoplus_{j\in J'} C(\Xhat)_j
$$
for the domain $J'$ of the partial map from $\Jhat$ to $J_\lambda$.
Note that $J'$ is finite.
\end{enumerate}
\end{proposition}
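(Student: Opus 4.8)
The plan is to treat the five assertions in sequence, the common device being to view each element of $A_\Xhat$ as a continuous kernel on $\Xhat\times\Xhat$ (Definition~\ref{def:Bose-Mesner}) and let it act on $C(\Xhat)$ by integration against $\mu$. For (1) I would define, for $A\in A_\Xhat\subset C(\Xhat\times\Xhat)$ and $f\in C(\Xhat)$,
$$
(A\bullet f)(x)=\int_{y\in\Xhat}A(x,y)f(y)\,d\mu(y).
$$
This is independent of the level $\lambda$ representing $A$, continuous in $x$ by uniform continuity on the compact space $\Xhat$, and satisfies $(A\bullet B)\bullet f=A\bullet(B\bullet f)$ by a single Fubini step, recalling that the convolution product of $A_\Xhat$ has kernel $\int_{y\in\Xhat} A(x,y)B(y,z)\,d\mu(y)$. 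The compatibility with the finite-level action then follows by applying Proposition~\ref{prop:FubiniXhat} to split the integral over the fibers of $\pr\colon\Xhat\to X_\lambda$, each of volume $1/\#X_\lambda$: for $A\in A_{X_\lambda}$ and $h\in C(X_\lambda)$ the integral collapses to the normalized sum of Theorem~\ref{th:convolution}(3), giving $A\bullet(h\circ\pr)=(A\bullet_{X_\lambda}h)\circ\pr$, which is the statement of Corollary~\ref{cor:mod}. For (2), the kernel of $E_{X_\lambda}=\#X_\lambda A_{i_0}$ equals $\#X_\lambda$ on $\{\pr(x)=\pr(y)\}$ and $0$ elsewhere, so the integral becomes the fiberwise average $\tfrac{1}{\mu(\pr^{-1}(\pr(x)))}\int_{\pr^{-1}(\pr(x))}f\,d\mu$; this is exactly $\iota\circ q$ for the orthogonal projector $q$ of Proposition~\ref{prop:q}, so $E_{X_\lambda}$ acts as the orthogonal projection onto $C(X_\lambda)$.

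For (3) the step I expect to require the most justification is that the finite-level idempotent already projects orthogonally. The algebra $A_{X_\lambda}\subset\End(C(X_\lambda))$ is closed under the adjoint for the inner product of Proposition~\ref{def:inner-product}, since that adjoint is the conjugate transpose, and the Bose--Mesner algebra is stable under transpose (scheme axiom) and under conjugation (the $A_i$ are real). Being commutative, semisimple and $*$-closed it is a commutative $C^*$-algebra, whose minimal idempotents $E_{j_\lambda}$ are therefore self-adjoint, which is the standard fact that primitive idempotents of a commutative scheme are Hermitian, cf.~\cite{Bannai}. Hence $E_{j_\lambda}$ is an orthogonal projector of $C(X_\lambda)$ onto $C(X_\lambda)_{j_\lambda}$, and Propositions~\ref{prop:abuse} and~\ref{prop:hermite-projection} upgrade $\iota\,E_{j_\lambda}\,q$ to an orthogonal projector $E_j$ of $C(\Xhat)$ with image $C(X_\lambda)_{j_\lambda}$; that this really is the action of $E_{j_\lambda}\in A_\Xhat$ comes from (2) together with $E_{j_\lambda}\bullet E_{X_\lambda}=E_{j_\lambda}$.

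Independence of the isolating index is where Definition~\ref{def:isolate} is used: for $\mu\geq\lambda$ with $j$ isolated, the functoriality of $J$ (Corollary~\ref{cor:J}) gives $\Psi(E_{j_\lambda})=\sum_{j_\mu\mapsto j_\lambda}E_{j_\mu}$, and isolation forces the preimage to be a single point, so $\Psi(E_{j_\lambda})=E_{j_\mu}$ represents the same element of the colimit $A_\Xhat$ with the same image (here I would invoke the splitting proposition stated just before Definition~\ref{def:isolate}); comparing two isolating indices through a common upper bound then gives well-definedness of both $E_j$ and $C(\Xhat)_j$. For (4) I would use that isolation persists under enlarging the index (from the proof of Proposition~\ref{prop:discrete}) to find one $\lambda$ at which both $j$ and $j'$ are isolated, necessarily to distinct $j_\lambda\neq j'_\lambda$; then $E_j\bullet E_{j'}=E_{j_\lambda}\bullet E_{j'_\lambda}=0$ because distinct primitive idempotents of $A_{X_\lambda}$ are orthogonal by Proposition~\ref{prop:idempotent}.

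Finally, for (5) I would begin with the finite decomposition $C(X_\lambda)=\bigoplus_{j_\lambda\in J_\lambda}C(X_\lambda)_{j_\lambda}$, orthogonal because $\sum_{j_\lambda}E_{j_\lambda}=E_{X_\lambda}$ acts as the identity on $C(X_\lambda)$ and the summands are mutually orthogonal. The main obstacle, and the step I would spend the most care on, is the reindexing from $J_\lambda$ to the domain $J'\subset\Jhat$ of the partial map $\Jhat\parto J_\lambda$, which is finite by the cardinality bound of Proposition~\ref{prop:discrete}. I would refine each summand via $C(X_\lambda)_{j_\lambda}=\bigoplus_{j_\mu\mapsto j_\lambda}C(X_\mu)_{j_\mu}$ (again the splitting proposition, since $\Psi(E_{j_\lambda})$ projects onto $C(X_\lambda)_{j_\lambda}$), and then choose a single $\mu\geq\lambda$ large enough that the preimage in $J_\mu$ of every $j_\lambda\in J_\lambda$ has reached its stable cardinality, so that each such $j_\mu$ is isolated and corresponds to a unique point of $J'$. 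With this $\mu$ the double sum is indexed bijectively by $J'$, each $C(X_\mu)_{j_\mu}$ is the invariantly defined $C(\Xhat)_j$ of (3), and orthogonality of the decomposition is inherited from the mutual orthogonality of the $E_j$ from (4).
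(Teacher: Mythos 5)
Your proposal is correct and follows essentially the same route as the paper: the action is defined by integration against the kernel, reduced via the fiberwise Fubini decomposition to the identity $A=\iota A q$, part (2) and (3) then follow from Propositions~\ref{prop:q}, \ref{prop:abuse} and \ref{prop:hermite-projection} together with the self-adjointness of finite-level primitive idempotents, and (4), (5) are obtained by passing to a common isolating level. Your treatment of the well-definedness in (3) and the reindexing in (5) is somewhat more explicit than the paper's, but it is the same argument.
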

\begin{proof}
The action is given by: $A\in A_{X_\lambda}$
acts on $f\in C(\Xhat)$ by
$$
(A \bullet f)(x):=\int_{y\in \Xhat}A(\pr_\lambda(x),\pr_\lambda(y))f(y)d\mu(y).
$$
By Proposition~\ref{prop:FubiniXhat}, this is the same with 
\begin{equation}\label{eq:actionA}
\sum_{y'\in X_\lambda}A(\pr_\lambda(x),y')
(\int_{y\in \pr_\lambda^{-1}(y')}f(y)d\mu(y)).
\end{equation}
Comparing with $q$ in Proposition~\ref{prop:q},
we have
\begin{eqnarray*}
(A \bullet_{\Xhat} f)(x)
&=&
\sum_{y'\in X_\lambda}A(\pr_\lambda(x),y')
(\int_{y\in \pr_\lambda^{-1}(y')}f(y)d\mu(y)) \\
&=&
\sum_{y'\in X_\lambda}A(\pr_\lambda(x),y')
\frac{1}{\#X_\lambda} q(f)(y') \\
&=&
(A\bullet_{X_\lambda} q(f))(\pr_\lambda(x)), 
\end{eqnarray*}
which shows that
\begin{equation}\label{eq:iAq}
A= \iota A q,
\end{equation}
where $\iota:C(X_\lambda)\to C(\Xhat)$ is the injection $\pr_\lambda^\dagger$
and $A$ in the right hand side is an endomorphism of $C(X_\lambda)$.
This and $q\circ \iota=\id_{C(X_\lambda)}$ show that $C(\Xhat)$ is an $A_\Xhat$-module.
The well-definedness 
(independence of the choice of $\lambda$)
follows by taking a sufficiently large $\mu$.

By (\ref{eq:iAq}) and $E_{X_\lambda}=\id_{C(X_\lambda)}$ on $C(X_\lambda)$ 
show that 
$$
E_{X_\lambda}=\iota \circ q,
$$
which is the orthogonal projection to $C(X_\lambda)$.

For $j\in J$, take a $\lambda$ where $j$ is isolated to $j_\lambda \in J_\lambda$.
By (\ref{eq:iAq}) we have
\[
E_{j_\lambda}=\iota E_{j_\lambda} q,
\]
where $E_{j_\lambda}$ in the right hand side is an
endomorphism of $C(X_\lambda)$.
Hence to prove that $E_{j_\lambda}$ 
is an orthogonal projector 
$C(\Xhat) \to C(X_\lambda)_{j_\lambda}$,
it suffices to show that 
$E_{j_\lambda} \in A_{X_\lambda}$ is an orthogonal projector 
$C(X_\lambda) \to C(X_\lambda)_{j_\lambda}$ by Proposition~\ref{prop:hermite-projection}.
The orthogonality of $E_{j_\lambda}\in A_{X_\lambda}$ is well known \cite{Bannai}. 
(Since $A_{X_\lambda}$ is closed under the anti-isomorphism $*$ of ring, $E_{j_\lambda}^*$
must be a primitive idempotent, and since $E_{j_\lambda}E_{j_\lambda}^*\neq 0$,
they must coincide, hence $E_{j_\lambda}$ is Hermitian and an orthogonal projection 
by Proposition~\ref{prop:hermite-projection}.)
We define $E_j:=E_{j_\lambda}$, where
$$
E_j :C(\Xhat) \to C(\Xhat)_j:=C(X_\lambda)_{j_\lambda}.
$$
For the well-definedness, we take $\lambda$, $\lambda'$
where $j$ is isolated. There exists a $\mu$
with $\mu\geq \lambda$, $\mu\geq \lambda'$. Then $j$
is isolated to $j_\mu$ at $\mu$, and by (\ref{eq:iAq}) it is not difficult
to see that the $E_j$ defined using $\mu$
is the same as those defined using $\lambda,\lambda'$.
%This implies $E_j=E_j^*$ on $C(X_\lambda)$,
%and Proposition~\ref{prop:hermite-projection} implies that $E_j$
%is the projection $C(\Xhat)\to C(X_\lambda)_j$.
%it is enough to show that $E_j^*$
%(complex conjugate of the transpose of $E_j$) is equal to $E_j$ 
%by Proposition~\ref{prop:hermite-projection}.
%Since $C(I_\lambda)\subset C(X_\lambda\times X_\lambda)$ is closed under
%the transposition, $A_{X_\lambda}$ is closed under $^*$,
%and $\C\cdot E_j$ is its simple subalgebra (for convolution). 
%Hence $\C\cdot E_j^*$ is another simple subalgebra. If these two
%subalgebras are distinct, then $E_j\bullet E_j^*$ must be zero, 
%but it is positive. This implies that
%$\C\cdot E_j=\C\cdot E_j^*$, and both $E_j$ and $E_j^*$
%are characterized as the unique nonzero idempotent
%in this algebra, 
%hence they are equal.
For $E_j\bullet E_{j'}=0$,
take a $\lambda$ where both $j$, $j'$ 
are isolated to $j_\lambda$, $j'_\lambda$, respectively.
Then, $j_\lambda \neq j_\lambda'$,
$E_{j_\lambda}, E_{j'_\lambda} \in A_{X_\lambda}$, 
and $E_{j_\lambda}\bullet E_{j'_\lambda}=0$. Thus, the relation 
holds in the inductive limit $A_{\Xhat}$.

For fixed $\lambda$, it is well-known \cite{Bannai} that
we have an orthogonal decomposition
$$
C(X_\lambda)=\bigoplus_{j_\lambda \in J_\lambda} C(X_\lambda)_{j_\lambda}.
$$
(This follows from the orthogonality of $E_{j_\lambda}$.)
Since each $C(X_\lambda)_{j_\lambda}$ is an orthogonal
direct sum of $C(\Xhat)_j$ over the finite number of $j\in \Jhat$ 
whose image is $j_\lambda$, the claim follows.
\end{proof}
The following is an analogue to Peter-Weyl Theorem.
\begin{theorem}
We have an orthogonal direct decomposition
$$
\varinjlim C(X_\lambda)=\bigoplus_{j\in J}C(\Xhat)_j.
$$
Hence the right hand side is dense in $C(\Xhat)$
with respect to the supremum norm.
\end{theorem}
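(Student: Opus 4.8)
The plan is to establish the orthogonal direct sum decomposition first, and then deduce density from the density statement already available in Lemma~\ref{lem:density}. For the decomposition, I would argue as follows. We have the identification $\varinjlim C(X_\lambda) = \bigcup_\lambda C(X_\lambda)$, where each $C(X_\lambda)$ sits inside $C(\Xhat)$ via $\pr_\lambda^\dagger$ as an orthogonal component (Proposition~\ref{prop:q}). By Proposition~\ref{prop:orthogonal}(5), each $C(X_\lambda)$ decomposes as the orthogonal sum $\bigoplus_{j \in J'_\lambda} C(\Xhat)_j$, where $J'_\lambda$ is the (finite) domain of the partial map $\Jhat \parto J_\lambda$. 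The first point to verify is that these local decompositions are compatible: if $\lambda \leq \mu$, then $J'_\lambda \subseteq J'_\mu$ as subsets of $\Jhat$, and for each $j$ in the smaller index set the subspace $C(\Xhat)_j$ is literally the same space regardless of whether it is computed at $\lambda$ or at $\mu$ — this is exactly the well-definedness built into Proposition~\ref{prop:orthogonal}(3). Taking the union over $\lambda$ and using that $\Lambda$ is directed, every $j \in \Jhat$ lies in some $J'_\lambda$ (indeed $j$ is eventually isolated, by Proposition~\ref{prop:discrete}), so the index sets exhaust $\Jhat$.

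Next I would assemble these compatible pieces into the global statement. Because the transition maps are inclusions of orthogonal components and the summands $C(\Xhat)_j$ stabilize, the direct limit $\varinjlim C(X_\lambda)$ is the increasing union of the finite orthogonal sums $\bigoplus_{j \in J'_\lambda} C(\Xhat)_j$, which is precisely the algebraic direct sum $\bigoplus_{j \in \Jhat} C(\Xhat)_j$. Mutual orthogonality of distinct summands $C(\Xhat)_j$ and $C(\Xhat)_{j'}$ follows from $E_j \bullet E_{j'} = 0$ for $j \neq j'$ (Proposition~\ref{prop:orthogonal}(4)) together with the fact that each $E_j$ is the orthogonal projector onto $C(\Xhat)_j$: for vectors $v \in C(\Xhat)_j$ and $w \in C(\Xhat)_{j'}$ one has $(v,w) = (E_j v, E_{j'} w) = (v, E_j^* E_{j'} w) = (v, E_j E_{j'} w) = 0$, using that $E_j$ is Hermitian. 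This gives the orthogonal direct decomposition claimed.

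Finally, density is immediate once the decomposition is in hand: the left-hand side $\varinjlim C(X_\lambda)$ is, by Lemma~\ref{lem:density}(\ref{enum:dense}), exactly the space of locally constant functions on $\Xhat$, which is dense in $C(\Xhat)$ in the supremum norm. Since the right-hand side equals the left-hand side as subspaces of $C(\Xhat)$, it too is dense.

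I expect the main obstacle to be the compatibility bookkeeping in the first paragraph — verifying that the spaces $C(\Xhat)_j$ glue consistently across the directed system, so that the filtered colimit of finite orthogonal decompositions really is the full direct sum indexed by $\Jhat$. The analytic content is light, since orthogonality reduces to the already-established relations among the idempotents $E_j$ and density is inherited verbatim from Lemma~\ref{lem:density}; the care required is almost entirely in tracking which $j \in \Jhat$ appear at which level $\lambda$ and in invoking the stabilization guaranteed by Proposition~\ref{prop:discrete}.
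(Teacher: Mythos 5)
Your proposal is correct and follows essentially the same route as the paper: mutual orthogonality of the $C(\Xhat)_j$ from Proposition~\ref{prop:orthogonal}, the finite orthogonal decomposition of each $C(X_\lambda)$, passage to the directed union to obtain the full direct sum over $\Jhat$, and density via Lemma~\ref{lem:density}. Your version merely makes explicit the compatibility bookkeeping (stabilization of the summands across $\lambda$, guaranteed by the well-definedness in Proposition~\ref{prop:orthogonal} and the isolation statement of Proposition~\ref{prop:discrete}) that the paper leaves implicit.
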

\begin{proof}
Proposition~\ref{prop:orthogonal} implies
that $C(\Xhat)_j$ are mutually orthogonal, and 
their orthogonal sum is a subspace of $C(\Xhat)$.
For any $\lambda$, $C(X_\lambda)$ is an orthogonal 
sum of a finite number of $C(\Xhat)_j$. Thus 
$\varinjlim C(X_\lambda)$ is the direct sum
of $C(\Xhat)_j$ for $j\in \Jhat$.
The density follows from
Lemma~\ref{lem:density}.
\end{proof}
\subsection{Measures and Fourier analysis}
Since $\Xhat$ has a natural probability measure,
$\Ihat$ has a pushforward measure via $\Xhat \times \Xhat \to \Ihat$.
It will be shown that $\Jhat$ has a natural measure,
and these measures are
closely related to the classical notions 
of the multiplicity $m_{j_\lambda}$ for $J_\lambda$, 
and the valency $k_{i_\lambda}$ for $I_\lambda$ (see \cite{Bannai}\cite{DELSARTE}).
\begin{definition}
Let $\mu_\Ihat$ be the pushforward measure on $\Ihat$ 
from $\Xhat \times \Xhat$. This is characterized as follows.
For $i_\lambda \in I_\lambda$, let $\pr_{I_\lambda}^{-1}(i)\subset \Ihat$
be the open set. Then 
$$
\mu_\Ihat(\pr_{I_\lambda}^{-1}(i_\lambda))
=
\mu_{\Xhat\times\Xhat}
(R_\lambda \circ \pr_{X_\lambda\times X_\lambda})^{-1}(i_\lambda).
$$
\end{definition}
%The latter equality comes from the commutativity 
%$\pr_{I_\lambda}\circ \Rhat=R_\lambda\circ \pr_{X_\lambda\times X_\lambda})$.
\begin{proposition}\label{prop:k-i}
In the above definition, 
$$
\mu_\Ihat(\pr_{I_\lambda}^{-1}(i_\lambda))=\frac{k_{i_\lambda}}{\#X_\lambda}
$$
holds, where $k_{i_\lambda}$ is the valency of the 
adjacency matrix $A_{i_\lambda} \in A_{X_\lambda}$.
\end{proposition}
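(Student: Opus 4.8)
The plan is to unwind the definition of $\mu_\Ihat$ and reduce the claim to a finite counting problem at level $\lambda$. By the defining property, $\mu_\Ihat(\pr_{I_\lambda}^{-1}(i_\lambda))$ equals the value of $\mu_{\Xhat\times\Xhat}$ on the set $(R_\lambda \circ \pr_{X_\lambda\times X_\lambda})^{-1}(i_\lambda)$, so I would first observe that this set is precisely the $\pr_{X_\lambda\times X_\lambda}$-preimage of the finite relation $R_\lambda^{-1}(i_\lambda) \subset X_\lambda \times X_\lambda$. Writing $R_\lambda^{-1}(i_\lambda)$ as the disjoint union of its points $(x_\lambda, y_\lambda)$, its preimage decomposes as the disjoint clopen union $\coprod_{(x_\lambda,y_\lambda)\in R_\lambda^{-1}(i_\lambda)} \pr_{X_\lambda\times X_\lambda}^{-1}(x_\lambda, y_\lambda)$, so additivity of $\mu_{\Xhat\times\Xhat}$ reduces everything to the measure of a single fiber times the number of points in $R_\lambda^{-1}(i_\lambda)$.

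For the measure of a single fiber, I would use that $\Xhat\times\Xhat = \varprojlim(X_\lambda\times X_\lambda)$ is itself the profinite set attached to the projective system $X_\lambda\times X_\lambda$, and that $\mu_{\Xhat\times\Xhat}$ is the natural probability measure whose pushforward to $X_\lambda\times X_\lambda$ is the normalized counting measure (equivalently, the product $\mu\times\mu$). As in Definition~\ref{def:meas_on_fiber}, each fiber $\pr_{X_\lambda\times X_\lambda}^{-1}(x_\lambda,y_\lambda)$ is clopen of volume $1/\#(X_\lambda\times X_\lambda) = 1/(\#X_\lambda)^2$.

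It then remains to count $\#R_\lambda^{-1}(i_\lambda)$. By the definition of the valency $k_{i_\lambda}$ as the number of $1$'s in a row of the adjacency matrix $A_{i_\lambda}$, for each fixed $x_\lambda \in X_\lambda$ there are exactly $k_{i_\lambda}$ elements $y_\lambda$ with $R_\lambda(x_\lambda,y_\lambda)=i_\lambda$; summing over $x_\lambda$ gives $\#R_\lambda^{-1}(i_\lambda) = \#X_\lambda \cdot k_{i_\lambda}$. Combining the two computations yields $\mu_\Ihat(\pr_{I_\lambda}^{-1}(i_\lambda)) = \#X_\lambda \cdot k_{i_\lambda} \cdot (\#X_\lambda)^{-2} = k_{i_\lambda}/\#X_\lambda$, as claimed.

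The main obstacle I anticipate is not the counting but the measure-theoretic bookkeeping on the product: one must be sure that $\mu_{\Xhat\times\Xhat}$ really does assign volume $(\#X_\lambda)^{-2}$ to each level-$\lambda$ fiber. This follows from the same construction applied to the projective system $X_\lambda\times X_\lambda$ of finite sets — whose fiber cardinalities are uniformly $(\#X_\mu/\#X_\lambda)^2$ by Proposition~\ref{prop:fiber} applied to $f\times f$ — together with the identification of this natural measure with $\mu\times\mu$ by a Fubini-type argument as in Proposition~\ref{prop:FubiniXhat}. Once this identification is in hand, the remaining steps are routine additivity and counting.
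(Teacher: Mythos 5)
Your proof is correct and follows essentially the same route as the paper: the paper computes $\mu_{\Xhat\times\Xhat}\bigl((R_\lambda\circ\pr_{X_\lambda\times X_\lambda})^{-1}(i_\lambda)\bigr)$ as the integral $\int A_{i_\lambda}(\pr_{X_\lambda}(x),\pr_{X_\lambda}(y))\,d\mu(x)\,d\mu(y)=\frac{1}{\#X_\lambda^2}\sum_{x',y'}A_{i_\lambda}(x',y')=\frac{k_{i_\lambda}}{\#X_\lambda}$, which is exactly your fiber-by-fiber decomposition (each level-$\lambda$ fiber of measure $(\#X_\lambda)^{-2}$, times $\#R_\lambda^{-1}(i_\lambda)=\#X_\lambda\cdot k_{i_\lambda}$ points) phrased as an integral of the pulled-back indicator function. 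Your extra care about why each fiber has volume $(\#X_\lambda)^{-2}$ is a reasonable point the paper leaves implicit in the notation $d\mu(x)\,d\mu(y)$, but it does not change the substance of the argument.
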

\begin{proof}
Recall that $A_{i_\lambda}$ is the
indicator function of $R_\lambda^{-1}(i_\lambda)$,
and the sum of the rows is an integer $k_{i_\lambda}$ called the valency
\cite{Bannai}.
Thus, the volume $\mu_{\Ihat}(\pr_{I_\lambda}^{-1}(i_\lambda))$
is the volume of 
$\mu_{\Xhat}(\pr_{X_\lambda\times X_\lambda})^{-1}R_\lambda^{-1}(i_\lambda)$,
which is
\begin{eqnarray*}
& & \\
\int_{x,y\in \Xhat\times \Xhat} 
A_{i_\lambda}(\pr_{X_\lambda}(x),\pr_{X_\lambda}(y)) 
d\mu(x)d\mu(y)
&=&
\frac{1}{\#X_\lambda^2}\sum_{x',y'\in X_\lambda \times X_\lambda}
A_{i_\lambda}(x',y') \\
&=& 
\frac{k_{i_\lambda}}{\#X_\lambda}.
\end{eqnarray*}
\end{proof}

\begin{definition}
The algebra $A_{X_\lambda}$ is closed under the adjoint ${}^*$.
We define the Hilbert-Schmidt inner product,
which is Hermitian, as follows. For $A,B\in A_{X_\lambda}$,
$$
(A,B)_\HS:= \sum_{x \in X_\lambda} (A\bullet B^*)(x,x).
$$
For $A, B \in A_\Xhat$, we define its Hilbert-Schmidt inner product 
by 
$$
(A,B)_\HS:= \int_{x \in \Xhat} (A\bullet B^*)(x,x) d\mu(x).
$$
Then inclusion $A_{X_\lambda} \to A_{\Xhat}$ preserves
the inner products.
\end{definition}
\begin{proof}
For $B \in A_{X_\lambda}$,
$B^* \in A_{X_\lambda}$ follows because of the definition of
association schemes, and the Hilbert-Schmidt inner product
is defined. 
For $B \in A_{\Xhat}$, we must show that $B^*$
exists in $A_{\Xhat}$ and the integration converges. However, 
$B \in A_{X_\lambda}$ for some $\lambda$, and 
$B^* \in A_{X_\lambda}$.
Since $C(X_\lambda)$ is 
an orthogonal component of $C(\Xhat)$, 
$B^*=\iota B^* q$ is
the adjoint as an operator on $C(\Xhat)$,
by the abuse of notation in Proposition~\ref{prop:abuse}.
The notation $(A\bullet B^*)(x,x)$ comes from the middle vertical injection
in (\ref{eq:commutative}).
Since
$A, B^* \in A_{X_\lambda} \subset C(X_\lambda \times X_\lambda)$,
$$
\int_{x \in \Xhat} (A\bullet B^*)(x,x) d\mu(x) = 
\sum_{x' \in X_\lambda} (A\bullet B^*)(x',x'),
$$
which is finite, satisfying the axioms of Hermitian
inner product, and compatible with $A_{X_\lambda} \to A_{\Xhat}$.
\end{proof}
\begin{proposition}\label{prop:HS-I}
Through the natural isomorphism 
$$
\varinjlim C(I_\lambda) \to A_{\Xhat},
$$
we obtain Hermitian inner product on $\varinjlim C(I_\lambda)$,
which is denoted by $(-,-)_\HS$.
Then for $f,g\in \varinjlim C(I_\lambda)$,
we have
$$
(f,g)_\HS=\int_{i\in \Ihat}f(i)\overline{g(i)} d\mu_\Ihat (i).
$$
\end{proposition}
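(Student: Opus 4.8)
The plan is to pull everything back to a single finite level and then reduce the identity to the combinatorial relation between valencies and the number of pairs realizing a given relation. Since $\varinjlim C(I_\lambda)$ is a directed union, by directedness of $\Lambda$ I may fix one $\lambda$ with $f,g\in C(I_\lambda)$; let $A,B\in A_{X_\lambda}$ be their images under the Hadamard isomorphism $C(I_\lambda)\cong A_{X_\lambda}$, so that $A(x,y)=f(R_\lambda(x,y))$ and $B(x,y)=g(R_\lambda(x,y))$ for $x,y\in X_\lambda$. Both the Hilbert--Schmidt pairing and the right-hand integral are sesquilinear and compatible with the transition maps, so it is enough to establish the identity at this level $\lambda$.

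For the left-hand side, the integrand $x\mapsto(A\bullet B^*)(x,x)$ lies in the image of $C(X_\lambda)$ in $C(\Xhat)$, since $A\bullet B^*\in A_{X_\lambda}$; moreover the adjoint $B^*$ is computed at level $\lambda$ by Proposition~\ref{prop:abuse}. Applying Proposition~\ref{prop:FubiniXhat} (equivalently, that $\mu$ restricts to the normalized counting measure on each fiber) and writing out the normalized convolution with $B^*(y,x)=\overline{B(x,y)}$ gives
\[
(A,B)_\HS=\frac{1}{\#X_\lambda}\sum_{x'\in X_\lambda}(A\bullet_{X_\lambda}B^*)(x',x')
=\frac{1}{(\#X_\lambda)^2}\sum_{x,y\in X_\lambda}f(R_\lambda(x,y))\,\overline{g(R_\lambda(x,y))}.
\]
Now I would group the pairs $(x,y)$ by the value $i_\lambda=R_\lambda(x,y)$; because each pair realizes exactly one relation and each of the $\#X_\lambda$ rows of $A_{i_\lambda}$ has $k_{i_\lambda}$ ones, the number of pairs with $R_\lambda(x,y)=i_\lambda$ is $\#X_\lambda\cdot k_{i_\lambda}$, whence
\[
(A,B)_\HS=\frac{1}{\#X_\lambda}\sum_{i_\lambda\in I_\lambda}k_{i_\lambda}\,f(i_\lambda)\,\overline{g(i_\lambda)}.
\]

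For the right-hand side, $f$ and $g$ are locally constant on $\Ihat$, being constant on the basic clopen sets $\pr_{I_\lambda}^{-1}(i_\lambda)$, so
\[
\int_{i\in\Ihat}f(i)\overline{g(i)}\,d\mu_\Ihat(i)
=\sum_{i_\lambda\in I_\lambda}f(i_\lambda)\,\overline{g(i_\lambda)}\;\mu_\Ihat\!\left(\pr_{I_\lambda}^{-1}(i_\lambda)\right),
\]
and Proposition~\ref{prop:k-i} identifies each weight $\mu_\Ihat(\pr_{I_\lambda}^{-1}(i_\lambda))$ with $k_{i_\lambda}/\#X_\lambda$, giving precisely the same expression as above. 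Comparing the two completes the proof. The one place demanding care is the bookkeeping of normalizations: the factor $1/\#X_\lambda$ produced by the probability measure on $\Xhat$ in the Hilbert--Schmidt integral must match the valency weight $k_{i_\lambda}/\#X_\lambda$ coming from Proposition~\ref{prop:k-i}, and it is exactly the count $\#R_\lambda^{-1}(i_\lambda)=\#X_\lambda\cdot k_{i_\lambda}$ that reconciles them; independence of the chosen level $\lambda$ then follows because all the maps involved respect the transition morphisms.
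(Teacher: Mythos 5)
Your proof is correct and follows essentially the same route as the paper: restrict to a single level $\lambda$ by directedness, express the Hilbert--Schmidt pairing as a normalized double sum over $X_\lambda\times X_\lambda$, and match it against the right-hand side via Proposition~\ref{prop:k-i}. The only cosmetic difference is that the paper first reduces by sesquilinearity to the basis $f=\delta_{i_\lambda}$, $g=\delta_{i_\lambda'}$ and invokes the disjointness of the supports of the adjacency matrices, while you keep general $f,g$ and partition the sum by the value of $R_\lambda(x,y)$ -- the same computation in a slightly different order.
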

\begin{proof}
We may assume $f,g \in C(I_\lambda)$ for some $\lambda$.
Then by bilinearity it suffices to check the equality
for $f=\delta_{i_\lambda}$, $g=\delta_{i_\lambda'}$.
Then, the right hand side is $\delta_{i_\lambda,i_\lambda'}k_{i_\lambda}/\#X_{\lambda}$
by Proposition~\ref{prop:k-i}.
In $A_\Xhat$, these functions correspond to 
$f\mapsto A_{i_\lambda}, g\mapsto A_{i_\lambda'}$, and their inner product is
\begin{eqnarray*}
& & 
\int_{x,y\in \Xhat\times \Xhat} 
A_{i_\lambda}(\pr_{X_\lambda}(x),\pr_{X_\lambda}(y)) A_{i_\lambda'}(\pr_{X_\lambda}(y),\pr_{X_\lambda}(x))^* 
d\mu(x)d\mu(y)\\
&=&
\frac{1}{\#X_\lambda^2}\sum_{x',y'\in X_\lambda \times X_\lambda}
A_{i_\lambda}(x',y') A_{i_\lambda'}(x',y')\\
&=& 
\delta_{i_\lambda,i_\lambda'}\frac{k_{i_\lambda}}{\#X_\lambda}.
\end{eqnarray*}
This uses a well-known orthogonality of $A_{i_\lambda}$:
for any $x\in X_\lambda$ and $i_1,i_2\in I_\lambda$, 
the $(x,x)$-component of the matrix product $A_{i_1}\cdot^{t}A_{i_2}$
is the number of $y\in X_\lambda$ with $R(x,y)=i_1$ and $R(x,y)=i_2$,
which is zero unless $i_1=i_2$, and in the equal case $k_{i_1}$.
\end{proof}
\begin{lemma}
For a commutative association scheme $(X,R,I)$, we defined $J$
as the set of primitive idempotents $E_j$
with respect to the product $\bullet$ in Theorem~\ref{th:convolution}.
We put a positive measure $m_j$ for $j\in J$, so that
the inner product with respect to $m_j$ in $C(J)$ is
isometric to that of $A_X$ with respect to $\HS$ inner product. 
Then, $m_j$ is the multiplicity
of $j$, defined as $\dim(C(X)_j)$
%\footnote{$\Xhat$ or $X$. Probably the latter.}
\cite{Bannai}.
\end{lemma}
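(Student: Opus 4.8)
The plan is to identify the multiplicity $m_j$ by computing the Hilbert--Schmidt norm of the primitive idempotent $E_j$ and matching it against the counting measure on $J$. The key observation is that the isometry condition forces $m_j = (E_j, E_j)_\HS$, since in $C(J)$ with the measure $m$ the primitive idempotent $E_j$ corresponds (through the isomorphism $C(J) \to A_X$ mapping pointwise product to convolution) to the indicator function $\delta_j$, whose squared norm with respect to $m$ is exactly $m_j \cdot |\delta_j(j)|^2 = m_j$. So the whole statement reduces to the single identity
\begin{equation*}
(E_j, E_j)_\HS = \dim\bigl(C(X)_j\bigr).
\end{equation*}

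First I would recall from Proposition~\ref{prop:orthogonal} (applied in the finite case $\#\Lambda = 1$, or equivalently the cited classical fact) that $E_j$ is an orthogonal projector of $C(X)$ onto $C(X)_j$, and in particular $E_j = E_j^*$ and $E_j \bullet E_j = E_j$. Then I would expand the Hilbert--Schmidt inner product directly from its definition:
\begin{equation*}
(E_j, E_j)_\HS = \sum_{x \in X} (E_j \bullet E_j^*)(x,x) = \sum_{x \in X} (E_j \bullet E_j)(x,x) = \sum_{x \in X} E_j(x,x).
\end{equation*}
The final sum is just the trace of $E_j$ viewed as the matrix $\#X \cdot E_{j}$ normalized appropriately as an operator on $C(X)$; more precisely, since $E_j$ acts on $C(X)$ as the orthogonal projector onto $C(X)_j$, its operator trace equals $\dim(C(X)_j)$. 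The main step is therefore to verify that the diagonal sum $\sum_{x} E_j(x,x)$ (computed in the convolution-normalized Bose--Mesner algebra, where the unit $E_X = \#X\, A_{i_0}$ accounts for the normalization factor) coincides with the trace of the projection operator on $C(X)$, so that it equals the dimension of the image $C(X)_j$.

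The one point that needs care is the bookkeeping of the normalization constant $1/\#X$ built into the convolution product $\bullet$ and the inner products: I would check that the factor appearing in $(A\bullet B)(x,z) = \frac{1}{\#X}\sum_y A(x,y)B(y,z)$ is precisely the factor that makes the matrix trace $\sum_x E_j(x,x)$ agree with the honest operator trace of the projector on $C(X)$ equipped with its normalized Hermitian inner product from Proposition~\ref{def:inner-product}. This is where a sign-or-scalar slip would change $m_j$ by a power of $\#X$, so I expect this normalization reconciliation to be the only real obstacle; once it is cleared, the equality $(E_j,E_j)_\HS = \dim(C(X)_j) = m_j$ is immediate and the isometry of $C(J)$ with the $m_j$-weighted inner product against $(A_X, (-,-)_\HS)$ follows by bilinearity, since the $\delta_{j}$ form an orthogonal basis of $C(J)$ matching the mutually orthogonal primitive idempotents $E_j$ of $A_X$.
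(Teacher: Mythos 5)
Your proposal is correct and follows essentially the same route as the paper: both reduce the isometry claim to the single computation $(E_j,E_{j'})_\HS=\delta_{jj'}\operatorname{tr}(E_j)=\delta_{jj'}\dim(C(X)_j)$, using that $E_j$ is a Hermitian idempotent for $\bullet$ and that the trace of the orthogonal projector onto $C(X)_j$ is its dimension. Your explicit flagging of the $1/\#X$ normalization bookkeeping is a reasonable extra precaution (and indeed the spot where the scalings must be reconciled with the convention $E_j=\#X$ times the classical idempotent), but it does not change the argument.
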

\begin{proof}
Let $E_j, E_{j'}\in A_{X}$ be two primitive
idempotents. The computation
\begin{eqnarray*}
(E_j,E_{j'})_{\HS}
=\sum_{x\in X}E_j\bullet E_{j'}^*(x,x)
=\delta_{jj'}\tr(E_j)=\delta_{jj'}m_j
\end{eqnarray*}
implies that if we define $\mu_{J}(j):=m_j$,
then the corresponding inner product
is compatible with $C(J) \to A_X$ with 
respect to the $\HS$-inner product on $A$.
%\footnote{$A_X$.}
\end{proof}

\begin{proposition}\label{prop:HS-J}
For a projective association scheme $(X_\lambda,R_\lambda,I_\lambda)$, 
we define a measure $\mu_\Jhat$
on the discrete set $\Jhat$ by 
$\mu_\Jhat(j)=\dim C(\Xhat)_j$. 
Then, the inner product on $\varinjlim C(J_\lambda)$
defined by 
$$
(f,g)_\HS=\sum_{j \in \Jhat} f(j)\overline{g(j)}\mu_\Jhat(j)
$$
makes $\varinjlim C(J_\lambda) \cong A_\Xhat$ isometric with respect
to $\HS$ inner product.
\end{proposition}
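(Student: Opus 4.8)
The plan is to verify the claimed isometry on a natural basis. By Corollary~\ref{cor:E-j} the isomorphism $\varinjlim C(J_\lambda)=C_c(\Jhat)\xrightarrow{\sim} A_\Xhat$ carries the indicator function $\delta_j$ to the primitive idempotent $E_j$, and $\{\delta_j\}_{j\in\Jhat}$ is a vector space basis of $C_c(\Jhat)$. Since both the proposed inner product and the $\HS$ inner product are sesquilinear, it suffices to prove the single identity $(E_j,E_{j'})_\HS=\delta_{jj'}\mu_\Jhat(j)$ for all $j,j'\in\Jhat$; this is exactly the value of the proposed form on the pair $(\delta_j,\delta_{j'})$.

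To compute $(E_j,E_{j'})_\HS$, I would first choose an index $\lambda$ at which both $j$ and $j'$ are isolated. Such a $\lambda$ exists because, by Proposition~\ref{prop:discrete}, each of $j,j'$ is isolated at some index, isolation persists at every larger index (the fiber over the image can only shrink, and a singleton fiber stays a singleton), and $\Lambda$ is directed so one may pass to a common upper bound. Then $E_j=E_{j_\lambda}$ and $E_{j'}=E_{j'_\lambda}$ as elements of $A_{X_\lambda}\subset A_\Xhat$. Because the inclusion $A_{X_\lambda}\to A_\Xhat$ preserves the $\HS$ inner product, the computation descends to the finite scheme $X_\lambda$:
$$
(E_j,E_{j'})_\HS=(E_{j_\lambda},E_{j'_\lambda})_\HS=\sum_{x\in X_\lambda}(E_{j_\lambda}\bullet E_{j'_\lambda}^*)(x,x).
$$

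Applying the preceding Lemma to the finite commutative scheme $X_\lambda$, this finite $\HS$ inner product equals $\delta_{j_\lambda j'_\lambda}\tr(E_{j_\lambda})=\delta_{j_\lambda j'_\lambda}\,m_{j_\lambda}$, where $m_{j_\lambda}=\dim C(X_\lambda)_{j_\lambda}$ is the multiplicity. If $j=j'$ then $j_\lambda=j'_\lambda$, and by the definition $C(\Xhat)_j:=C(X_\lambda)_{j_\lambda}$ at an index of isolation the value is $\dim C(\Xhat)_j=\mu_\Jhat(j)$. If $j\neq j'$ then isolation at the common index $\lambda$ forces $j_\lambda\neq j'_\lambda$ (otherwise the fiber of $j_\lambda$ in $\Jhat$ would contain both $j$ and $j'$), so the value is $0$. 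In either case $(E_j,E_{j'})_\HS=\delta_{jj'}\mu_\Jhat(j)$, which is what I needed.

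The only genuinely delicate point is the bookkeeping around isolation: one must select a single index at which $j$ and $j'$ are \emph{simultaneously} isolated and confirm that the resulting images $j_\lambda,j'_\lambda$ are distinct precisely when $j\neq j'$. Everything else is a transfer of the finite-dimensional orthogonality computation through the inner-product-preserving inclusion $A_{X_\lambda}\to A_\Xhat$, together with the matching of the multiplicity $m_{j_\lambda}$ with the dimension $\dim C(\Xhat)_j$ that defines $\mu_\Jhat(j)$.
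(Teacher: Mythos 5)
Your proof is correct and follows essentially the same route as the paper's: both reduce to the orthogonality relation $(E_j,E_{j'})_\HS=\delta_{jj'}\dim C(\Xhat)_j$ by choosing a common index $\lambda$ at which the relevant $j$'s are isolated and then invoking the preceding lemma's finite computation $\delta_{jj'}\tr(E_{j_\lambda})=\delta_{jj'}m_{j_\lambda}$. The only difference is that you spell out the persistence of isolation under passage to larger indices and the distinctness of $j_\lambda, j'_\lambda$, details the paper leaves implicit.
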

\begin{proof}
Any $f,g\in \varinjlim C(J_\lambda)$ are 
contained in some $C(J_\lambda)$, where
any $j$ in the support of $f$ or $g$ is isolated at $\lambda$.
The functions $f,g$ are linear combinations of orthogonal basis $E_{j}$
of $C(J_\lambda)$,
namely, $f=\sum_{j}f(j)E_j$ and $g=\sum_{j}g(j)E_j$ in $A_{X_\lambda}$.
The previous lemma says that
$(E_{j},E_{j'})_\HS=\delta_{jj'}\dim C(\Xhat)_j$, 
which proves the proposition.
\end{proof}
In sum, we have the following form of Fourier transform.
\begin{theorem}
The isomorphism of vector spaces
$\varinjlim C(J_\lambda) \cong A_{\Xhat} \cong \varinjlim C(I_\lambda)$
is an isometry between
the space of compact support functions $C_c(\Jhat)$ with inner product
defined in Proposition~\ref{prop:HS-J}
and the space of locally constant functions $C_{lc}(\Ihat)$
with inner product defined in Proposition~\ref{prop:HS-I},
which maps the componentwise product in $C_c(\Jhat)$
to the convolution product in $C_{lc}(\Ihat)$. 
\end{theorem}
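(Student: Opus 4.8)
The plan is to observe that all three spaces in the statement are identified with the single object $A_{\Xhat}$, so that both the metric and the multiplicative assertions reduce to facts already recorded for $A_{\Xhat}$. Concretely, I would write $\Phi_J\colon \varinjlim C(J_\lambda)\to A_{\Xhat}$ and $\Phi_I\colon \varinjlim C(I_\lambda)\to A_{\Xhat}$ for the two isomorphisms of Definition~\ref{def:Bose-Mesner}, and take the Fourier transform to be the composite $\Phi_I^{-1}\circ\Phi_J\colon C_c(\Jhat)\to C_{lc}(\Ihat)$, using Corollary~\ref{cor:E-j} and Theorem~\ref{th:commutative} to identify $\varinjlim C(J_\lambda)=C_c(\Jhat)$ and $\varinjlim C(I_\lambda)=C_{lc}(\Ihat)$.

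For the isometry assertion I would simply chain the two previously established isometries. Proposition~\ref{prop:HS-J} shows that $\Phi_J$ carries the inner product $\sum_{j\in\Jhat}f(j)\overline{g(j)}\,\mu_\Jhat(j)$ on $C_c(\Jhat)$ to the Hilbert-Schmidt inner product $(-,-)_\HS$ on $A_{\Xhat}$, while Proposition~\ref{prop:HS-I} shows that $\Phi_I$ carries $\int_{\Ihat}f\overline{g}\,d\mu_\Ihat$ on $C_{lc}(\Ihat)$ to the \emph{same} $(-,-)_\HS$ on $A_{\Xhat}$. Since both legs land isometrically in $(A_{\Xhat},(-,-)_\HS)$, the composite $\Phi_I^{-1}\circ\Phi_J$ is an isometry; the only subtlety worth flagging is that the two propositions refer to literally the same inner product on $A_{\Xhat}$, which they do by construction.

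For the multiplicative assertion I would track products through $A_{\Xhat}$ using Definition~\ref{def:Bose-Mesner}: there $\Phi_J$ is shown to send the componentwise product on $C_c(\Jhat)$ to the convolution product $\bullet$ on $A_{\Xhat}$. The convolution product on $C_{lc}(\Ihat)$ is, by definition, the one transported from $(A_{\Xhat},\bullet)$ through $\Phi_I$ (equivalently, at each finite level it is the product on $C(I_\lambda)$ whose structure constants are the normalized intersection numbers read off from $A_{i_\lambda}\bullet A_{i'_\lambda}$ in $A_{X_\lambda}$). Hence the componentwise product in $C_c(\Jhat)$ goes to $\bullet$ on $A_{\Xhat}$ and then to the convolution product in $C_{lc}(\Ihat)$, as claimed.

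There is no genuine obstacle remaining: the substance was already spent in Propositions~\ref{prop:HS-J} and~\ref{prop:HS-I}, which computed the Hilbert-Schmidt norms of the primitive idempotents $E_j$ (giving $\mu_\Jhat(j)=\dim C(\Xhat)_j$) and of the adjacency matrices $A_{i_\lambda}$ (giving the valencies $k_{i_\lambda}$). The only point requiring care is bookkeeping across the inductive limit, namely that the identifications of inner products and of products are compatible with the structure morphisms $A_{X_\lambda}\to A_{X_\mu}$ and hence descend to $\varinjlim$; but this is exactly what Definition~\ref{def:Bose-Mesner} and the isometry of each inclusion $A_{X_\lambda}\to A_{\Xhat}$ already guarantee, so the theorem follows by assembling these pieces.
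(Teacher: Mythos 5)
Your proposal is correct and matches the paper's intent exactly: the paper states this theorem as a summary (``In sum, \dots'') with no separate proof, precisely because it follows by composing the two isometries of Propositions~\ref{prop:HS-J} and~\ref{prop:HS-I} into $(A_{\Xhat},(-,-)_\HS)$ and invoking the product identifications of Definition~\ref{def:Bose-Mesner}. Your assembly of these pieces, including the remark that the convolution product on $C_{lc}(\Ihat)$ is by definition the one transported from $(A_{\Xhat},\bullet)$, is the same argument.
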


\begin{remark}
For a finite association scheme $(X,R,I)$, both $E_j$ $(j\in J)$
and $A_i$ $(i \in I)$ are orthogonal bases of $A_X$ 
with respect to $\HS$-inner product. The isometry
$$
C(I) \to C(J)
$$
is given by 
\begin{eqnarray*}
A_i& \mapsto & \sum_{j \in J} p_i(j)E_j.  \\
\end{eqnarray*}
The isometry shows that
$$
(A_i,A_{i'})_\HS=\delta_{ii'}\mu_I(i)=\delta_{ii'}\frac{k_i}{\#X}
$$ 
is equal to 
$$
\sum_{j\in J}p_i(j)\overline{p_{i'}(j)}m_j,
$$
which is one of the two orthogonalities of eigenmatrices 
as classically known \cite{Bannai}.
(Here the factor $\#X$ appears in a different manner because
of the normalization in Theorem~\ref{th:convolution}.
Our $E_j$ is $\#X$ times the classical primitive idempotent,
which makes $p_i(j)=\frac{1}{\#X}P_i(j)$ where $P_i(j)$ are the
components of the standard eigen matrices.)
\end{remark}
The next is an analogue of the Fourier transform for $L^2$ spaces.
\begin{proposition}
The isometry $\varinjlim C(J_\lambda) \to \varinjlim C(I_\lambda)$
extends to a unique isometry 
$$
L^2(\Jhat,\mu_\Jhat) \to L^2(\Ihat,\mu_\Ihat).
$$ 
\end{proposition}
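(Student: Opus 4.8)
The plan is to reduce the statement to the standard functional-analytic fact that a linear isometry between two pre-Hilbert spaces extends uniquely to an isometric isomorphism of their completions, and then to identify $L^2(\Jhat,\mu_\Jhat)$ and $L^2(\Ihat,\mu_\Ihat)$ as precisely these completions. Write $T\colon C_c(\Jhat)\to C_{lc}(\Ihat)$ for the isometry of the previous theorem, regarded as a map between the pre-Hilbert spaces $C_c(\Jhat)\subset L^2(\Jhat,\mu_\Jhat)$ and $C_{lc}(\Ihat)\subset L^2(\Ihat,\mu_\Ihat)$, with the inner products of Proposition~\ref{prop:HS-J} and Proposition~\ref{prop:HS-I}. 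Once the two density statements are in place, the bounded-linear-transformation theorem produces a unique continuous extension $\hat T\colon L^2(\Jhat,\mu_\Jhat)\to L^2(\Ihat,\mu_\Ihat)$; it is isometric because norm preservation passes to limits, and it is surjective because an isometry has closed range and that range already contains the dense subspace $C_{lc}(\Ihat)$, so $\hat T$ is an isometric isomorphism. Uniqueness is automatic, since two continuous maps agreeing on the dense subspace $C_c(\Jhat)$ coincide.

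First I would verify density on the $\Jhat$ side, which is routine. By Proposition~\ref{prop:discrete} the set $\Jhat$ is discrete, and $\mu_\Jhat$ assigns the finite positive mass $\dim C(\Xhat)_j$ to each point $j$; hence $L^2(\Jhat,\mu_\Jhat)$ is the weighted space $\{f:\sum_{j}|f(j)|^2\mu_\Jhat(j)<\infty\}$, and $C_c(\Jhat)=\varinjlim C(J_\lambda)$ is exactly its subspace of finitely supported functions. For any $f$ in the completion, the truncations of $f$ to finite subsets of $\Jhat$ converge to $f$ in norm by convergence of $\sum_{j}|f(j)|^2\mu_\Jhat(j)$, so $C_c(\Jhat)$ is dense.

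The hard part will be density on the $\Ihat$ side, and I would handle it by combining two facts. Here $\Ihat$ is compact, being profinite, and $\mu_\Ihat$ is a finite regular Borel measure; in fact it is a probability measure, as the pushforward of the probability measure on $\Xhat\times\Xhat$. The continuous functions $C(\Ihat)$ are dense in $L^2(\Ihat,\mu_\Ihat)$ by standard regular-measure theory on a compact Hausdorff space, the genuinely measure-theoretic ingredient resting on the outer regularity of $\mu_\Ihat$ already recorded in the construction of these measures. Moreover $C_{lc}(\Ihat)$ is dense in $C(\Ihat)$ in the supremum norm by Lemma~\ref{lem:density}. Since $\mu_\Ihat$ is finite, supremum-norm approximation implies $L^2$-approximation, because the $L^2$-norm is bounded by the sup-norm times $\mu_\Ihat(\Ihat)^{1/2}=1$; hence $C_{lc}(\Ihat)$ is dense in $L^2(\Ihat,\mu_\Ihat)$.

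With both density statements established, the extension principle cited in the first paragraph applies directly and yields the unique isometric isomorphism. Everything except the $L^2$-density of continuous functions on the compact space $\Ihat$ is formal, so I expect the write-up to be short once that regular-measure input is invoked.
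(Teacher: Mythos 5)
Your proposal is correct and follows essentially the same route as the paper: both sides are identified as completions of the pre-Hilbert spaces $\varinjlim C(J_\lambda)$ and $\varinjlim C(I_\lambda)$, with density on the $\Jhat$ side coming from the discrete weighted-$\ell^2$ description and density on the $\Ihat$ side coming from Lemma~\ref{lem:density} plus the passage from sup-norm to $L^2$-norm on the compact space $\Ihat$, after which the unique isometric extension is formal. The paper phrases the final step as ``universality of the completion'' rather than the bounded-linear-transformation theorem, but the content is identical.
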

\begin{proof}
We consider $L^2$-norm for all the spaces. 
The map $\varinjlim C(J_\lambda) \to L^2(\Jhat)$
is isometric, injective, and has a dense image, by considering
the orthogonal basis $E_j$. Since
$L^2(\Jhat)$ is complete, this is a completion of $\varinjlim C(J_\lambda)$.
The map $\varinjlim C(I_\lambda) \to L^2(\Ihat)$
is also isometric, hence injective, and has a dense image
(by Lemma~\ref{lem:density} it is dense with respect to 
supremum norm in $C(\Ihat)$, and since $\Ihat$ is compact, dense
with respect to the $L^2$-norm in $C(\Ihat)$, and thus dense 
in $L^2(\Ihat)$), so $L^2(\Ihat)$ is a completion of 
$\varinjlim C(I_\lambda)$. By the universality of the completion, 
they are isometric. 
\end{proof}

\section{Some examples and relation with Barg-Skriganov theory}
The results of this paper are closely related to the
study by Barg and Skriganov \cite{BARG-SKRIGANOV}. 
They defined the notion of association schemes on a set $X$ with 
a $\sigma$-additive measure. See \cite[Definition~1]{BARG-SKRIGANOV},
where $I$ is denoted by $\Upsilon$. 
Association scheme structure is given by a surjection 
$$
R:X \times X \to I,
$$
with axioms similar to the finite case. The inverse image 
$R^{-1}(i) \subset X \times X$
of $i\in I$ is required to be measurable in $X\times X$,
and intersection numbers $p_{ij}^k$ are defined 
as the measure of the set
$$
\{z\in X \mid R(x,z)=i, R(z,y)=j \}
$$
where $R(x,y)=k$, which is required to be independent of the 
choice of $x,y$. The set $I$ is required to be at most countably 
infinite. This countability assumption seems to be necessary, 
for example to avoid
the case where $p_{ij}^k$ is all zero for any $i,j,k$.

Let $\chi_i(x,y)$ be the indicator function of $R^{-1}(i)$.
They define adjacency algebras (Bose-Mesner algebras in our terminology)
in \cite[Section~3]{BARG-SKRIGANOV} as a set of finite linear
combination of $\chi_i(x,y)$ with complex coefficient, with 
convolution product given by the integration 
$$
(a*b)(x,y):=\int_{X}a(x,z)b(z,y)d\mu(z).
$$
However, it is not clear whether the adjacency algebra is closed
under the convolution product. 
In fact, the authors remarked in the last of \cite[Section~3.1]{BARG-SKRIGANOV}
 ``our arguments in this part are of somewhat heuristic nature. 
We make them fully rigorous for the case of 
association schemes on zero-dimensional 
groups; see Section~8.''
Later in Section~8, they define the adjacency algebra with an 
aid of locally compact zero-dimensional abelian groups and
their duality. Some of the treated objects (compact cases) are examples
of profinite association schemes in this paper.

\subsection{Profinite groups}
Let $G$ be a profinite group, that is, a projective limit of 
finite groups $G_\lambda$ with projective limit topology. 
Absolute Galois groups, arithmetic fundamental groups and $p$-adic
integers are some
examples appear in arithmetic geometry \cite{RIBES-ZALESSKII}.
There are significant amounts of studies on analysis on
abelian profinite groups, see for example \cite{LANG-DYADIC}.

%begin-change
\subsubsection{Schurian profinite association schemes}
We describe Schurian association schemes, 
and then profinite analogues.
When we say an action of a group $G$, it means the left action,
unless otherwise stated.
\begin{lemma}\label{lem:quotient}
Let $X$ be a topological space and $G$ a group
acting on $X$ such that every element of $G$ induces a homeomorphism on $X$.
Then the space of continuous functions $C(X)$ is 
a left $G$-module by $f(x)\mapsto f(g^{-1}x)$. 
Let $G\backslash X$ denote the quotient space
with quotient topology. 
Let 
$$C(X)^G \subset C(X)$$
denote the subspace of functions invariant by the action of $G$.
There is a canonical injection
$$
C(G\backslash X) \hookrightarrow C(X)
$$
associated with the continuous surjection $X \to G\backslash X$.
Then, there is a canonical isomorphism of $\C$-algebras
$$
C(X)^G \to C(G\backslash X), 
$$
and hence we identify them.
\end{lemma}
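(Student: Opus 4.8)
The plan is to show that the pullback $\pi^\dagger \colon C(G\backslash X) \to C(X)$ along the quotient map $\pi \colon X \to G\backslash X$ is a morphism of $\C$-algebras identifying $C(G\backslash X)$ with the invariant subalgebra $C(X)^G$. First I would verify that the prescription $(g \cdot f)(x) = f(g^{-1}x)$ genuinely defines a left $G$-action on $C(X)$: the relation $(g_1 g_2)^{-1} = g_2^{-1} g_1^{-1}$ gives $g_1 \cdot (g_2 \cdot f) = (g_1 g_2) \cdot f$, and each $g \cdot f = f \circ (x \mapsto g^{-1}x)$ is again continuous precisely because $g^{-1}$ acts as a homeomorphism of $X$. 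So the action is well defined on $C(X)$ and $C(X)^G$ is meaningful.

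Next I would check that $\pi^\dagger$ lands in $C(X)^G$ and is injective. Since $\pi$ is constant on $G$-orbits, $\pi(g^{-1}x) = \pi(x)$, so for $h \in C(G\backslash X)$ we get $(g\cdot \pi^\dagger h)(x) = h(\pi(g^{-1}x)) = h(\pi(x)) = (\pi^\dagger h)(x)$; hence $\pi^\dagger h \in C(X)^G$. Injectivity is immediate from the surjectivity of $\pi$: if $h \circ \pi = h' \circ \pi$ then $h = h'$ on $\pi(X) = G\backslash X$.

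The heart of the argument is the surjectivity of $\pi^\dagger \colon C(G\backslash X) \to C(X)^G$. Given $f \in C(X)^G$, the function $f$ is constant on each $G$-orbit, so there is a unique set-theoretic map $\bar f \colon G\backslash X \to \C$ with $\bar f \circ \pi = f$. The one genuinely topological point—and what I expect to be the only place requiring care—is the continuity of $\bar f$. This is exactly the universal property of the quotient topology on $G\backslash X$: a map out of the quotient is continuous if and only if its composite with $\pi$ is continuous, and here $\bar f \circ \pi = f$ is continuous by hypothesis. Thus $\bar f \in C(G\backslash X)$ with $\pi^\dagger \bar f = f$, which gives surjectivity onto $C(X)^G$.

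Finally, $\pi^\dagger$ is a morphism of unital $\C$-algebras for the pointwise (Hadamard) product, since $\pi^\dagger(h_1 h_2) = (h_1 h_2)\circ \pi = (h_1\circ\pi)(h_2\circ\pi)$, and likewise for addition, scalar multiplication, and the constant function $1$. Combining injectivity, surjectivity onto $C(X)^G$, and the algebra-homomorphism property yields the claimed canonical isomorphism $C(X)^G \cong C(G\backslash X)$, whose inverse is the map of the statement. Apart from the appeal to the universal property of the quotient topology to produce a continuous $\bar f$, every step is a routine verification, so I anticipate no real obstacle.
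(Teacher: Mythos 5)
Your argument is correct and follows the same route as the paper's proof: an invariant continuous function descends to a map on $G\backslash X$ whose continuity is exactly the universal property of the quotient topology, and conversely the pullback along $X \to G\backslash X$ produces $G$-invariant continuous functions. Your write-up simply spells out the routine verifications (well-definedness of the action, injectivity, and the algebra-homomorphism property) that the paper leaves implicit.
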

\begin{proof}
An element $f$ of $C(X)^G$ is a continuous
function which takes one same value
for each $G$-orbit of $X$. Thus, it gives
a mapping $G\backslash X \to \C$. By the universality
of the quotient topology, this is continuous, 
hence lies in $C(G\backslash X)$. Conversely,
a function in $C(G\backslash X)$ gives a 
continuous function in $C(X)$, which is invariant 
by $G$.
\end{proof}

\begin{proposition} (Schurian association scheme)
\label{prop:schurian}

Let $G$ be a finite group, $H$ a subgroup, 
and let $X:=G/H$ be the quotient.
Then, we have a natural bijection
\begin{equation}\label{eq:hecke}
G\backslash (X \times X) \to H\backslash G/H, 
\end{equation}
where $G$ acts diagonally. We define 
$$I:=G\backslash (X \times X).$$
The quotient mapping
$$
R:X \times X  \to I
$$
is an association scheme, which is called 
a Schurian scheme.
\end{proposition}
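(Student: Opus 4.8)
The plan is to split the statement into two independent parts: first I would establish the natural bijection (\ref{eq:hecke}), and then verify the three axioms of Definition~\ref{def:assoc-scheme} for the quotient map $R$. The surjectivity of $R$ is automatic, since it is a quotient onto its own orbit space.

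For the bijection, I would observe that every $G$-orbit on $X\times X=(G/H)\times(G/H)$ under the diagonal action has a representative of the special form $(H, gH)$: given $(g_1H, g_2H)$, acting by $g_1^{-1}$ yields $(H, g_1^{-1}g_2 H)$. I then send the orbit of $(g_1H, g_2H)$ to the double coset $Hg_1^{-1}g_2H$ and check well-definedness on both counts. Replacing the pair by $(gg_1H, gg_2H)$ leaves $g_1^{-1}g_2$ literally unchanged, while changing the coset representatives via $g_1\mapsto g_1h_1$, $g_2\mapsto g_2h_2$ with $h_i\in H$ sends $g_1^{-1}g_2$ to $h_1^{-1}(g_1^{-1}g_2)h_2\in Hg_1^{-1}g_2H$. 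Surjectivity is clear, and injectivity follows because $(H,gH)$ and $(H,g'H)$ lie in one orbit precisely when some $h\in H$ fixes $H$ and carries $gH$ to $g'H$, i.e.\ when $g'\in HgH$. This yields (\ref{eq:hecke}).

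For the axioms, the cleanest route is to identify the candidate Bose-Mesner algebra $A_X=R^\dagger(C(I))$ with the algebra $C(X\times X)^G$ of diagonally $G$-invariant functions. Since $I=G\backslash(X\times X)$ and $R$ is the quotient map, $R^\dagger$ identifies $C(I)$ with the functions on $X\times X$ constant on $G$-orbits, which is exactly $C(X\times X)^G$ (this is Lemma~\ref{lem:quotient} applied to the diagonal action on the finite discrete set $X\times X$). Here ``$G$-invariant'' means $A(gx,gy)=A(x,y)$ for all $g$.

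With this identification the axioms become invariance checks. The diagonal is a single orbit, mapping to the trivial double coset $H$, so its indicator (the identity matrix) lies in $A_X$, giving axiom (1). For axiom (3), $A^\top(gx,gy)=A(gy,gx)=A(y,x)=A^\top(x,y)$, so transposition preserves invariance (it corresponds to $HgH\mapsto Hg^{-1}H$). The one substantive point is axiom (2), closure under the matrix product; I expect this to be the main obstacle, but for $G$-invariant $A,B$ the substitution $z=gz'$ dissolves it into
\[
(AB)(gx,gy)=\sum_{z}A(gx,z)B(z,gy)=\sum_{z'}A(x,z')B(z',y)=(AB)(x,y),
\]
so $AB$ is again $G$-invariant and lies in $A_X$. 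This is exactly where the transitivity of $G$ on each relation enters; equivalently, it shows the structure constants $p_{ij}^k$ are well-defined, independent of the chosen pair in the relation $k$. No commutativity is claimed or needed, as Schurian schemes are in general non-commutative.
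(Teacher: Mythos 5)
Your proof is correct, and on the bijection \eqref{eq:hecke} it coincides with the paper's (same maps $(g_1H,g_2H)\mapsto Hg_1^{-1}g_2H$ and $HgH\mapsto (H,gH)$; you simply write out the well-definedness and injectivity checks that the paper leaves to the reader). Where you diverge is in how you establish closure under the matrix product. You work entirely inside $C(X\times X)^G$ and verify by the substitution $z\mapsto gz'$ that the product of two diagonally invariant matrices is again invariant; this is the most elementary possible argument and needs nothing beyond Lemma~\ref{lem:quotient}. The paper instead first builds the identification $C(X\times X)^G\cong \Hom_G(C(X),C(X))$ (via Lemma~\ref{lem:ident} and the isomorphism \eqref{eq:matrix}) and deduces closure from the fact that matrix product corresponds to composition of $G$-equivariant endomorphisms. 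The two arguments encode the same fact, but the paper's detour through the endomorphism ring is not wasted effort: it is exactly what is reused later for the commutativity criterion in Corollary~\ref{cor:multiplicity} (multiplicity-freeness via Schur's lemma) and for the profinite Schurian schemes in Proposition~\ref{prop:commute-hat}, whereas your computation, while cleaner here, would have to be supplemented by that identification anyway for those later results. Your closing remark that the computation shows the intersection numbers $p_{ij}^k$ are well-defined is also accurate and is the classical way of phrasing the same closure property.
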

We give a proof, since descriptions used in 
the proof are necessary for the
profinite case. The equality
(\ref{eq:hecke}) is given by a surjection
$$
(g_1H, g_2H) \mapsto H g_1^{-1}g_2H,
$$
which certainly factors through
$$
G\backslash(g_1H, g_2H) \mapsto Hg_1^{-1}g_2H,
$$
and the converse map is given by 
$$
HgH \mapsto (H, gH),
$$
where the check of the well-definedness is left to the reader.
\begin{lemma}\label{lem:ident}
Let $C(X)^\vee:=\Hom_\C(C(X),\C)$ be the dual space,
which is regarded as a left $G$-module, by letting $g\in G$
act on $\xi\in C(X)^\vee$ by $g(\xi)f=\xi(g^{-1}(f))$.
Then we have an isomorphism of $G$-modules
$$
C(X) \stackrel{\vee}\to C(X)^\vee, \quad \delta_x \mapsto \ev_x
$$
for every $x\in X$, where $\delta_x$ is the indicator function of $x$
and $\ev_x$ is the evaluation at $x$. 
\end{lemma}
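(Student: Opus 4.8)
The plan is to establish the two properties that together constitute an isomorphism of $G$-modules: that the stated assignment is a linear isomorphism, and that it intertwines the two $G$-actions. Since $X=G/H$ is a finite discrete set, $\{\delta_x\}_{x\in X}$ is a basis of $C(X)$, and the first observation is that $\{\ev_x\}_{x\in X}$ is exactly the dual basis of $C(X)^\vee$: one has $\ev_x(\delta_y)=\delta_y(x)$, which equals $1$ if $x=y$ and $0$ otherwise. Consequently the rule $\delta_x\mapsto \ev_x$ extends linearly to a bijection $C(X)\to C(X)^\vee$, and nothing beyond the equality $\#X=\dim C(X)=\dim C(X)^\vee$ is required for the isomorphism of vector spaces.

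The substance lies in the equivariance, and the efficient route is to compute the $G$-action directly on each family of basis vectors. First I would record how $g$ permutes the $\delta_x$: applying the defining action $(gf)(y)=f(g^{-1}y)$ to $f=\delta_x$ gives $(g\delta_x)(y)=\delta_x(g^{-1}y)$, which is $1$ precisely when $y=gx$, so that $g\cdot\delta_x=\delta_{gx}$. Next I would compute the dual action on $\ev_x$: applying $(g\xi)(f)=\xi(g^{-1}f)$ to $\xi=\ev_x$ gives $(g\cdot\ev_x)(f)=\ev_x(g^{-1}f)=(g^{-1}f)(x)=f(gx)=\ev_{gx}(f)$, hence $g\cdot\ev_x=\ev_{gx}$. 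Comparing the two, the map sends $g\cdot\delta_x=\delta_{gx}$ to $\ev_{gx}=g\cdot\ev_x$, so it intertwines the actions on basis vectors, and by linearity on all of $C(X)$.

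The only subtlety in this otherwise routine verification is the bookkeeping of the inverses: the action on $C(X)$ inserts a $g^{-1}$ in order to be a genuine left action, and the dual action inserts another $g^{-1}$; these two inversions are exactly what make both $\delta_x$ and $\ev_x$ transform \emph{covariantly} by $x\mapsto gx$, rather than one covariantly and one contravariantly. Once this is tracked correctly the equivariance is immediate, and there is no further obstacle.
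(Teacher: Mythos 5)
Your proof is correct and follows essentially the same route as the paper: identify $\{\ev_x\}$ as the dual basis of $\{\delta_x\}$, then verify equivariance by computing $g\cdot\delta_x=\delta_{gx}$ and $g\cdot\ev_x=\ev_{gx}$ directly. The remark about the two inversions cancelling to give covariant transformation of both bases is a nice clarification, but the argument is the paper's own.
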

\begin{proof}
It is well-known that $\{\delta_x \mid x\in X\}$ 
is a linear basis of $C(X)$, and $\{\ev_x \mid x \in X\}$
is a linear basis of $C(X)^\vee$. 
We compute
$$
g(\delta_x)(y)=\delta_x(g^{-1}y)=\delta_{gx}(y).
$$
On the other hand,
$$
g(\ev_x)(f)=\ev_x(g^{-1}(f))=(g^{-1}(f))(x)=f(gx)=\ev_{gx}(f).
$$
\end{proof}
\begin{proposition}
The identification of a matrix and a linear map is
obtained by an isomorphism of $G$-modules
\begin{equation}\label{eq:matrix}
C(X\times X)=C(X)\otimes C(X)\stackrel{\id \otimes \vee}\to
C(X)\otimes C(X)^\vee \cong \Hom_\C(C(X),C(X)).  
\end{equation}
\end{proposition}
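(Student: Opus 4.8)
The plan is to factor the displayed map as a composite of three isomorphisms and verify $G$-equivariance of each in turn. The underlying maps are all the standard identifications of multilinear algebra for a finite-dimensional space, so the only genuine content is checking that they respect the various $G$-actions (all taken to be left actions, as fixed at the start of the subsection).

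First I would treat the equality $C(X\times X)=C(X)\otimes C(X)$. Since $X$ is finite, the assignment $\delta_x\otimes\delta_y\mapsto\delta_{(x,y)}$, equivalently $f\otimes h\mapsto\bigl((x,y)\mapsto f(x)h(y)\bigr)$, is a linear isomorphism. To see it is $G$-equivariant for the diagonal action on $X\times X$ (the action used to define $I=G\backslash(X\times X)$), I would compute, for $g\in G$,
$$
\bigl((g\cdot f)\otimes(g\cdot h)\bigr)(x,y)=f(g^{-1}x)h(g^{-1}y)=(f\otimes h)(g^{-1}x,g^{-1}y),
$$
which is exactly the value of $g\cdot(f\otimes h)$ at $(x,y)$. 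Hence this first arrow is an isomorphism of $G$-modules. For the middle arrow $\id\otimes\vee$, I would simply invoke Lemma~\ref{lem:ident}, which states that $\vee:C(X)\to C(X)^\vee$, $\delta_x\mapsto\ev_x$, is a $G$-module isomorphism; tensoring a $G$-module isomorphism with the identity again yields one, so $\id\otimes\vee$ is an isomorphism of $G$-modules.

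Finally, for the last arrow I would use the canonical isomorphism $\Phi:C(X)\otimes C(X)^\vee\to\Hom_\C(C(X),C(X))$ given by $\Phi(v\otimes\xi)(w)=\xi(w)\,v$, which is bijective because $C(X)$ is finite-dimensional. Here $\Hom_\C(C(X),C(X))$ carries the natural conjugation action $(g\cdot\phi)(w)=g\cdot\phi(g^{-1}w)$, and the dual carries $(g\cdot\xi)(w)=\xi(g^{-1}w)$ as in Lemma~\ref{lem:ident}. I would check equivariance by
$$
\Phi\bigl(g\cdot(v\otimes\xi)\bigr)(w)=(g\cdot\xi)(w)\,(g\cdot v)=\xi(g^{-1}w)\,(g\cdot v)=g\cdot\bigl(\xi(g^{-1}w)\,v\bigr)=\bigl(g\cdot\Phi(v\otimes\xi)\bigr)(w),
$$
so $\Phi$ is a $G$-module isomorphism. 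Composing the three gives the asserted isomorphism, and tracing $F\in C(X\times X)$ through the composite shows it is sent to the operator $w\mapsto\bigl(x\mapsto\sum_{y}F(x,y)w(y)\bigr)$, i.e. to its associated linear map, confirming that the composite really is the classical ``matrix equals linear map'' identification.

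Since each step is routine, I do not expect a serious obstacle; the one point demanding care is the bookkeeping of inverses. The actions on $C(X)$, on $C(X)^\vee$, and on $\Hom_\C(C(X),C(X))$ each carry a $g^{-1}$ (or a conjugation) in their definition, and the verification that $\Phi$ is equivariant—rather than merely anti-equivariant—hinges on these inverses cancelling in exactly the right way. Keeping all actions \emph{left} actions throughout is what makes the canonical evaluation pairing $G$-equivariant, and this is the detail I would double-check most carefully.
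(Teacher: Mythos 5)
Your proof is correct and follows essentially the same route as the paper: factor the displayed map into the three standard identifications, use Lemma~\ref{lem:ident} for the middle arrow, and trace an element through to see that the composite sends a matrix to its associated linear operator. The only difference is that you spell out the $G$-equivariance of the first and third arrows explicitly (and correctly), whereas the paper treats these checks as standard and concentrates on verifying that the composite is the classical matrix-to-linear-map identification.
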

\begin{proof}
An element $A \in C(X\times X)$ is mapped as
\begin{eqnarray*}
A &\mapsto& \sum_{x,y\in X^2}A(x,y)\delta_x \otimes \delta_y \\
  &\mapsto& \sum_{x,y\in X^2}A(x,y)\delta_x \otimes \ev_y.
\end{eqnarray*}
The last element is regarded as an element of $\Hom_\C(C(X),C(X))$
by mapping $f=\sum_{y\in X}f(y)\delta_y \in C(X)$ to 
$$
(\sum_{x,y\in X^2}A(x,y)\delta_x \otimes \ev_y) (f)
=
\sum_{x\in X}\sum_{y\in Y}A(x,y)f(y)\delta_x,
$$
which is the multiplication of a matrix to a vector.
\end{proof}
\begin{proposition}
By taking the $G$-invariant part of (\ref{eq:matrix}),
we have
\begin{equation}\label{eq:hom}
\Hom_G(C(X),C(X))\cong C(X\times X)^G=C(G\backslash (X\times X))
=C(H\backslash G/H),
\end{equation}
where the left $\Hom_G$ denotes the endomorphism
as $G$-modules.
\end{proposition}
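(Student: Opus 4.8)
The plan is to apply the exact functor of taking $G$-invariants to the isomorphism of $G$-modules (\ref{eq:matrix}) and then identify each resulting term with the aid of the earlier lemmas. First I would observe that, since (\ref{eq:matrix}) is an isomorphism of $G$-modules, it restricts to an isomorphism of the invariant subspaces,
$$
C(X\times X)^G \cong \bigl(\Hom_\C(C(X),C(X))\bigr)^G.
$$
The left-hand side is immediately handled by Lemma~\ref{lem:quotient}, applied to the diagonal action of $G$ on the (discrete) space $X\times X$: it gives $C(X\times X)^G = C(G\backslash (X\times X))$. Composing with the bijection (\ref{eq:hecke}), whose pullback is an isomorphism of function spaces $C(G\backslash (X\times X)) = C(H\backslash G/H)$, produces the two right-hand equalities of (\ref{eq:hom}). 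Thus the only substantive point left is to identify $\bigl(\Hom_\C(C(X),C(X))\bigr)^G$ with $\Hom_G(C(X),C(X))$.

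For that identification the key step is to transport the $G$-action carefully along the canonical isomorphism of (\ref{eq:matrix}): the action on $C(X\times X)=C(X)\otimes C(X)$ is diagonal, so I must see what it becomes on $\Hom_\C(C(X),C(X))$ after passing through $\id\otimes\vee$ and the identification $C(X)\otimes C(X)^\vee \cong \Hom_\C(C(X),C(X))$. I would check this on elementary tensors: under these maps $\delta_x\otimes\delta_y$ corresponds to the rank-one operator $f\mapsto \ev_y(f)\,\delta_x$, and using Lemma~\ref{lem:ident} for the induced action on $C(X)^\vee$ one computes that $g(\delta_x\otimes\delta_y)$ corresponds to $f\mapsto f(gy)\,\delta_{gx}$. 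Comparing with the conjugation formula $(g\cdot\phi)(f)=g\bigl(\phi(g^{-1}f)\bigr)$ applied to the same rank-one operator shows the two agree, so by linearity the transported action is precisely conjugation.

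Once the conjugation action is in hand, the final step is routine: an operator $\phi$ is $G$-invariant exactly when $g\bigl(\phi(g^{-1}f)\bigr)=\phi(f)$ for all $g\in G$ and $f\in C(X)$, which upon substituting $gf$ for $f$ becomes $\phi(gf)=g\,\phi(f)$, i.e.\ the statement that $\phi$ is $G$-equivariant; hence $\bigl(\Hom_\C(C(X),C(X))\bigr)^G=\Hom_G(C(X),C(X))$. The main (and only mildly delicate) obstacle in the whole argument is the bookkeeping of the $G$-action through the chain of canonical identifications in (\ref{eq:matrix}) together with Lemma~\ref{lem:ident}; everything before and after that verification is formal, and the result follows by assembling the displayed isomorphisms.
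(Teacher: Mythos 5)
Your proposal is correct and follows essentially the same route as the paper, which simply cites (\ref{eq:hecke}), Lemma~\ref{lem:quotient}, and the definition of $G$-homomorphisms; you have merely spelled out the details, in particular the (correct) verification on rank-one operators that the transported action on $\Hom_\C(C(X),C(X))$ is conjugation, so that its invariants are exactly the $G$-equivariant maps.
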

\begin{proof}
This follows from (\ref{eq:hecke}), Lemma~\ref{lem:quotient}
and the definition of $G$-homomorphisms. 
\end{proof}
\begin{corollary}
The construction in Proposition~\ref{prop:schurian}
gives an association scheme. 
The endomorphism ring
$\Hom_G(C(X),C(X))$ is isomorphic 
to the Bose-Mesner algebra $A_X$.
\end{corollary}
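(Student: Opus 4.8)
The plan is to deduce both assertions from the algebra isomorphism (\ref{eq:hom}), once we observe that $A_X$ is exactly the space of $G$-invariant functions on $X\times X$. First I would identify $A_X$ with $C(X\times X)^G$: by Definition~\ref{def:assoc-scheme} the Bose-Mesner algebra $A_X$ is the image of the pullback $R^\dagger:C(I)\to C(X\times X)$, and since $R:X\times X\to I=G\backslash(X\times X)$ is the orbit map of the diagonal $G$-action, a function on $X\times X$ lies in $\Img(R^\dagger)$ precisely when it is constant on $G$-orbits, i.e.\ when it is $G$-invariant. Hence $A_X=C(X\times X)^G$, and by Lemma~\ref{lem:quotient} the map $R^\dagger$ restricts to an isomorphism $C(I)\cong A_X$.

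Next I would verify the three axioms of Definition~\ref{def:assoc-scheme} by orbit combinatorics. For the identity axiom, the diagonal $\Delta=\{(x,x)\mid x\in X\}$ is a single $G$-orbit because $G$ acts transitively on $X=G/H$; writing $i_0\in I$ for the corresponding index, its adjacency matrix $A_{i_0}=\chi_\Delta$ is the identity matrix. For closure under transpose, the coordinate swap $\sigma:(x,y)\mapsto(y,x)$ commutes with the diagonal action, so it permutes $G$-orbits and thus preserves $C(X\times X)^G=A_X$; concretely, for $G$-invariant $A$ one has $({}^{t}A)(gx,gy)=A(gy,gx)=A(y,x)=({}^{t}A)(x,y)$. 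For closure under the matrix product---the only genuinely algebraic axiom---I would invoke the identification (\ref{eq:matrix}), under which the matrix product on $C(X\times X)$ corresponds to composition of endomorphisms of $C(X)$; since a composite of $G$-equivariant maps is again $G$-equivariant, $\Hom_G(C(X),C(X))$ is closed under composition, and therefore $A_X$ is closed under the matrix product.

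Finally, the ring isomorphism is just (\ref{eq:hom}) together with the matching of products already used above: under (\ref{eq:matrix}) the subspace $A_X=C(X\times X)^G$ carries the matrix product to the composition product on $\Hom_G(C(X),C(X))$, giving the asserted isomorphism of rings. I expect the only step requiring real care to be this bookkeeping of multiplicative structures---namely confirming that (\ref{eq:matrix}) intertwines the (unnormalized) matrix product with composition of operators, and that this compatibility is inherited by passage to the $G$-invariant parts---since the three scheme axioms themselves reduce to elementary statements about $G$-orbits and the commuting swap $\sigma$.
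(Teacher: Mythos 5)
Your proposal is correct and follows essentially the same route as the paper: both identify $A_X$ with $C(X\times X)^G\cong\Hom_G(C(X),C(X))$ via (\ref{eq:hom}) and deduce closure under the matrix product from closure of $G$-equivariant endomorphisms under composition, with the identity and transpose axioms checked by orbit considerations. Your write-up merely spells out the orbit-level details (transitivity giving the diagonal as a single orbit, the swap commuting with the diagonal action) that the paper leaves implicit.
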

\begin{proof}
By the construction of $I=G\backslash (X\times X)$, 
the set of the adjacency matrices is closed under the transpose
and contains the identity matrix. 
Their linear span is closed
by the matrix product, since the
adjacency matrices form a linear basis of
(\ref{eq:hom}) and the matrix product corresponds
to the composition of the endomorphism ring $\Hom_G(C(X),C(X))$.
This gives an association scheme with Bose-Mesner
algebra $\Hom_G(C(X),C(X))$.
\end{proof}
\begin{definition}
Let $G$ be a finite group acting on a $\C$-linear space $V$.
By the representation of finite groups, 
$V$ is decomposed into irreducible representations.
For a fixed irreducible representation $W$,
the multiplicity of $W$ in $V$ is the dimension of the vector space
$$
\Hom_G(W,V).
$$
We say $V$ is multiplicity free, if the multiplicity is 
at most one for any irreducible representation $W$.
\end{definition}
\begin{corollary}\label{cor:multiplicity}
The Schurian 
association scheme is commutative, if and only if 
$C(X)$ is multiplicity free. In this case, 
the set of the primitive idempotents is canonically 
identified with the set of the irreducible representations
of $G$ in $C(X)$.
\end{corollary}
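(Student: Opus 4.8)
The plan is to transport everything through the algebra isomorphism $A_X \cong \Hom_G(C(X),C(X)) = \End_G(C(X))$ established in (\ref{eq:hom}), under which the matrix product corresponds to composition of endomorphisms. Commutativity of the Schurian scheme means, by definition, that $A_X$ is commutative for the matrix product; since the convolution product of Theorem~\ref{th:convolution} differs from the matrix product only by the scalar $1/\#X$, this is equivalent to commutativity of $\End_G(C(X))$ under composition. So the whole statement reduces to an analysis of this endomorphism algebra.

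First I would decompose $C(X)$ as a $G$-module. As $G$ is finite and the coefficient field is $\C$, Maschke's theorem gives a semisimple decomposition $C(X) \cong \bigoplus_W W^{\oplus m_W}$, where $W$ runs over the isomorphism classes of irreducible representations of $G$ and $m_W = \dim \Hom_G(W, C(X))$ is the multiplicity. Schur's lemma—$\Hom_G(W,W')\cong \C$ if $W\cong W'$ and $0$ otherwise—then yields a ring isomorphism $\End_G(C(X)) \cong \prod_W M_{m_W}(\C)$, the product taken over those $W$ with $m_W \geq 1$. This is the structural heart of the argument, and it is entirely standard.

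From this description both claims follow. A finite product of matrix algebras $\prod_W M_{m_W}(\C)$ is commutative if and only if each factor is, i.e. if and only if every $m_W \leq 1$, which is exactly the condition that $C(X)$ be multiplicity free; this proves the first assertion. In the multiplicity-free case each surviving factor is a copy of $\C$, so $\End_G(C(X)) \cong \prod_{W:\,m_W=1}\C$, and the primitive idempotents of this product ring are precisely the units of the individual factors. Transporting back through (\ref{eq:hom}), each primitive idempotent is the $G$-equivariant projection of $C(X)$ onto its $W$-isotypic component (a single copy of $W$); since the isotypic decomposition is canonical, this gives the asserted canonical bijection between primitive idempotents and the irreducible constituents of $C(X)$. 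The only point requiring care—rather than a genuine obstacle—is matching the association-scheme primitive idempotents of Proposition~\ref{prop:idempotent}, defined via the convolution product $\bullet$, with the ring-theoretic idempotents of $\End_G(C(X))$ under composition; because convolution is composition rescaled by $1/\#X$, the two share the same decomposition of $A_X$ into a product of copies of $\C$, and their primitive idempotents differ only by the scalar $\#X$, so the indexing by irreducible constituents is unaffected.
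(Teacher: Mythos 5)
Your proposal is correct and follows essentially the same route as the paper: both arguments transport the Bose--Mesner algebra to $\Hom_G(C(X),C(X))$ via (\ref{eq:hom}) and apply Schur's lemma to identify it with a product of matrix algebras $M_{m_W}(\C)$, concluding commutativity exactly when all multiplicities are at most one. Your additional remarks on the isotypic projections and the $1/\#X$ rescaling between the matrix and convolution products are just careful elaborations of what the paper leaves implicit.
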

\begin{proof}
If the multiplicity is one for each irreducible representation,
Schur's lemma 
implies that $\Hom_G(C(X),C(X))$ is a direct sum of
copies of $\C$ (each copy corresponding to each
irreducible representation in $C(X)$)
as a ring, and hence is commutative. If there is an irreducible
component with multiplicity $m\geq 2$, 
$\Hom_G(C(X),C(X))$ contains a matrix algebra $M_m(\C)$,
and thus is not commutative.
\end{proof}
See \cite{RIBES-ZALESSKII} for the following definition 
and properties (c.f. Section~\ref{sec:def-pro} above).
\begin{definition}
Let $G_\lambda (\lambda\in \Lambda)$ be a projective system 
of surjections of finite groups. A profinite group
is a topological group which is isomorphic to
$$
\Ghat:=\varprojlim G_\lambda,
$$
where each $G_\lambda$ has the discrete topology.
Because the projection $\pr_\lambda:\Ghat \to G_\lambda$ is
surjective and continuous, we identify $G_\lambda$ 
with $\Ghat/N_\lambda$, where $N_\lambda$ is
the kernel of the $\pr_\lambda$, which is a clopen
normal subgroup of $\Ghat$, since $G_\lambda$ is finite and 
discrete. By Tychonoff's theorem, $\Ghat$ is a compact Hausdorff group, with a 
basis of open neighborhood of the unit element $e$ consisting of $N_\lambda$.
Being Hausdorff, the intersection of $N_\lambda$ is 
$\{e\}$.
\end{definition}

We prepare a well-known lemma for describing profinite
Schurian association schemes.
\begin{lemma}\label{lem:prep}
Let $\Ghat= \varprojlim G_\lambda = \varprojlim \Ghat/N_\lambda$ 
be a profinite group. 
Let $H$ and $K$ be 
closed subgroups of $\Ghat$.
Give a quotient topology on $H\backslash \Ghat/K$
by 
$$
q: \Ghat \to H\backslash \Ghat/K.
$$
\begin{enumerate}
 \item \label{enum:image}
The image of $K$ in $G_\lambda$ is $KN_\lambda$
(which is a union of cosets of $N_\lambda$, and hence
a subset of $G_\lambda$).
The same statement holds for $H$.
 \item \label{enum:bij}
We have the following commutative diagram 
of continuous surjections, where $\alpha$ is a bijection.
\begin{equation}\label{eq:triangle}
\begin{array}{ccc}
 H\backslash \Ghat /K & & \\
  \downarrow & \searrow & \\
 HN_\lambda\backslash \Ghat/KN_\lambda &\stackrel{\alpha}{\to}&
 HN_\lambda\backslash G_\lambda/KN_\lambda.
\end{array} 
\end{equation}
\item \label{enum:open}
The $q:\Ghat \to H\backslash \Ghat/K$ is an open morphism,
and $H\backslash \Ghat/K$ is compact Hausdorff.
\item \label{enum:proj}
The finite sets $HN_\lambda\backslash G_\lambda/KN_\lambda$
constitute a projective system, with 
a canonical homeomorphism
$$
H\backslash \Ghat/K\stackrel{\sim}{\to}
\varprojlim HN_\lambda\backslash G_\lambda/KN_\lambda.
$$
\end{enumerate}
\end{lemma}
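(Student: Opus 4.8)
The plan is to deduce the whole lemma from two elementary facts about the profinite group $\Ghat$: the clopen normal subgroups $N_\lambda$ form a neighbourhood basis of the identity $e$, and $\Ghat$ (hence any closed subgroup) is compact. Parts (\ref{enum:image}) and (\ref{enum:bij}) are essentially bookkeeping with double cosets. For (\ref{enum:image}), the projection $\pr_\lambda\colon \Ghat \to G_\lambda=\Ghat/N_\lambda$ sends $K$ to $\{kN_\lambda \mid k\in K\}=KN_\lambda/N_\lambda$, which under the identification of $G_\lambda$ with the set of $N_\lambda$-cosets is exactly the union of cosets $KN_\lambda\subseteq\Ghat$; the same holds for $H$. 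For (\ref{enum:bij}) I would first record the identity $HN_\lambda\,\hat g\,KN_\lambda=H\hat g K N_\lambda$, which follows from normality of $N_\lambda$ via $N_\lambda\hat g=\hat g N_\lambda$ and $N_\lambda K=KN_\lambda$. The vertical map $H\backslash\Ghat/K\to HN_\lambda\backslash\Ghat/KN_\lambda$ is induced by the inclusions $H\subseteq HN_\lambda$, $K\subseteq KN_\lambda$ (each double coset is sent to the larger one containing it) and is a continuous surjection for the quotient topologies. For the bijection $\alpha$, the point is that each $HN_\lambda$--$KN_\lambda$ double coset in $\Ghat$ is $N_\lambda$-saturated (it contains $\hat g N_\lambda$ and $N_\lambda\hat g$), so $\pr_\lambda$ descends to a bijection onto the double cosets of $G_\lambda$ formed by the images $HN_\lambda/N_\lambda$ and $KN_\lambda/N_\lambda$; this is the standard fact that double cosets descend along a quotient by a subgroup contained in both factors, and its verification is routine. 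The diagonal arrow is the composite.

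For (\ref{enum:open}), openness of $q$ is the usual argument for quotients by a group action: for open $U\subseteq\Ghat$ one has $q^{-1}(q(U))=HUK=\bigcup_{h\in H,\,k\in K}hUk$, which is open, whence $q(U)$ is open. Compactness of $H\backslash\Ghat/K$ is automatic, being a continuous image of the compact group $\Ghat$. For the Hausdorff property I would use the closed-relation criterion: the double-coset equivalence relation $\cR\subseteq\Ghat\times\Ghat$ is the image of the compact set $H\times\Ghat\times K$ under $(h,\hat g,k)\mapsto(\hat g,h\hat g k)$, hence compact and therefore closed in the Hausdorff space $\Ghat\times\Ghat$; since $q$ is open and $\cR$ is closed, the quotient is Hausdorff.

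For (\ref{enum:proj}), when $\mu\geq\lambda$ one has $N_\mu\subseteq N_\lambda$, so the structure map $G_\mu\to G_\lambda$ carries $HN_\mu/N_\mu$ and $KN_\mu/N_\mu$ into $HN_\lambda/N_\lambda$ and $KN_\lambda/N_\lambda$, inducing transition maps between the (finite) double-coset sets; compatibility is routine, so these form a projective system, and the maps of (\ref{enum:bij}) assemble into a continuous map $\beta\colon H\backslash\Ghat/K\to\varprojlim HN_\lambda\backslash G_\lambda/KN_\lambda$. As the source is compact and the target Hausdorff, it suffices to prove that $\beta$ is a bijection. Surjectivity follows from the surjectivity of projective limits of surjective systems of nonempty finite sets, exactly as in Lemma~\ref{lem:profin-top}(\ref{enum:surj}).

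The crux, and the step I expect to be the main obstacle, is injectivity. Two elements $\hat g,\hat g'$ have the same image in every $HN_\lambda\backslash G_\lambda/KN_\lambda$ if and only if $\hat g'\in HN_\lambda\hat g KN_\lambda=H\hat g K N_\lambda$ for all $\lambda$, i.e.\ $\hat g'\in\bigcap_\lambda H\hat g K N_\lambda$. Since $H$ and $K$ are closed in the compact group $\Ghat$ they are compact, so $H\hat g K$ (the image of $H\times K$ under $(h,k)\mapsto h\hat g k$) is compact, hence closed. For any closed set $S$ one has $\bigcap_\lambda SN_\lambda=S$: the inclusion $\supseteq$ is clear, and if $x\in SN_\lambda$ then $xN_\lambda\cap S\neq\emptyset$, so every basic neighbourhood $xN_\lambda$ of $x$ meets $S$, giving $x\in\overline S=S$. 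Applying this with $S=H\hat g K$ yields $\hat g'\in H\hat g K$, so $\beta$ is injective. A continuous bijection from a compact space to a Hausdorff space is a homeomorphism, which finishes (\ref{enum:proj}). The essential input beyond formal nonsense is thus the compactness of the closed subgroups $H,K$, used precisely to make $H\hat g K$ closed and the intersection $\bigcap_\lambda H\hat g K N_\lambda$ collapse to $H\hat g K$.
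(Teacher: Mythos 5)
Your proof is correct, and it shares the overall architecture of the paper's argument --- in particular parts (\ref{enum:image}) and (\ref{enum:bij}), the saturation argument $q^{-1}(q(U))=HUK$ for openness, and the final reduction of (\ref{enum:proj}) to ``a continuous bijection from a compact space to a Hausdorff space is a homeomorphism'' --- but you execute the two genuinely nontrivial steps differently. For the Hausdorff property the paper separates two disjoint compact double cosets $HgK$ and $Hg'K$ by open sets and then invokes its Lemma~\ref{lem:density}(\ref{enum:separate}) to find a single level $\mu$ with $HgKN_\mu\cap Hg'KN_\mu=\emptyset$; you instead use the closed-relation criterion for open quotient maps, noting that the double-coset relation is the continuous image of the compact set $H\times\Ghat\times K$, hence closed. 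For injectivity in (\ref{enum:proj}) the paper simply reuses its Hausdorff separation: distinct double cosets already have distinct images at some finite level $\lambda$. You instead prove the identity $\bigcap_\lambda H\hat gKN_\lambda=H\hat gK$ from closedness of $H\hat gK$ and the fact that $\{xN_\lambda\}$ is a neighbourhood basis of $x$. Your route is arguably more self-contained (it does not route through Lemma~\ref{lem:density}) and isolates cleanly where compactness of the closed subgroups $H,K$ enters, whereas the paper's version has the advantage that the separation at a finite level $\mu$ does double duty, yielding both Hausdorffness and injectivity from one computation. Your surjectivity argument for $\beta$ (finite intersection property of the closed fibers) and the paper's (dense image of a compact set is everything) are the same compactness fact in two guises. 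Both proofs are complete.
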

\begin{proof}

(\ref{enum:image}) When we consider $G_\lambda$ 
as a quotient of $\Ghat$, $G_\lambda$ is a set of 
cosets $gN_\lambda$.
The image of $K$ is the union of $kN_\lambda$
for $k\in K$, which is nothing but $KN_\lambda$.

(\ref{enum:bij}) Every set in the diagram is a quotient
of $\Ghat$ with various equivalence relation.
The vertical arrow is obviously well-defined.
The mapping $\alpha$ is bijective, since as a quotient of 
$\Ghat$, they are the same. 
The slanting arrow is merely the composition of the two.
We recall that $\Ghat \to H\backslash \Ghat /K$
is a quotient map. Thus, every map in the diagram is continuous,
since the maps from $\Ghat$ are continuous
(where the mappings to the two sets at the bottom of the diagram factors 
through $\Ghat/N_\lambda$, which is finite and discrete).

(\ref{enum:open})
The open sets $sN_\lambda$ for $s\in \Ghat$ and various 
$\lambda$ constitute an open basis of $\Ghat$.
The image of $sN_\lambda$ in $H\backslash \Ghat/K$
is $HsKN_\lambda$ (since $N_\lambda$ is normal),
whose inverse image in $\Ghat$ is again $HsKN_\lambda$, 
which is open. Thus, by the definition of quotient 
topology, $HsKN_\lambda$ is open in $H\backslash \Ghat/K$,
and hence $q$ is open.

Take two distinct elements $HgK$, $Hg'K$ in $H\backslash \Ghat/K$.
These are compact disjoint subsets of the Hausdorff space
$\Ghat$. 
It is well known that 
there are an open set $O_a\subset \Ghat$ containing $HgK$ and 
an open set $O_b \supset Hg'K$ that separate 
$HgK$ and $Hg'K$. By (\ref{enum:separate}) of Lemma~\ref{lem:density}
for the union of $HgK$ and $Hg'K$,
we may take $\mu$ such that
open sets of the form $sN_\mu\subset O_a$ with $sN_\mu \cap HgK \neq \emptyset$ cover $HgK$
and open sets $sN_\mu \subset O_b$ with $sN_\mu \cap Hg'K \neq \emptyset$ cover $Hg'K$, that is, 
$HgKN_\mu\subset O_a$ and $Hg'KN_\mu \subset O_b$ are disjoint. 
Since $q$ is open, $HgKN_\mu$ and $Hg'KN_\mu$ 
are open sets in $H\backslash \Ghat/K$ which have 
no intersection and are neighborhoods of $HgK$, $Hg'K$
(which are considered as elements of $H\backslash \Ghat/K$),
respectively. Thus $H\backslash \Ghat/K$ is Hausdorff.

(\ref{enum:proj}) If we see these sets
as quotients of $\Ghat$ as in (\ref{eq:triangle}), it is clear that these
form a projective system. By the universality of 
projective limit, we have a continuous map
$$
f:H\backslash \Ghat/K \to \varprojlim HN_\lambda\backslash G_\lambda/KN_\lambda
$$
by (\ref{enum:bij}). The image of $f$ is dense, 
since for any element $\xi$ in the projective limit
and its open neighborhood, there is a smaller
open neighborhood which has the form
$\pr_\lambda^{-1}\pr_\lambda(\xi)$ for some $\lambda$,
since these form a basis of open neighborhoods of $\xi$.
Because
$$
H\backslash \Ghat/K \to HN_\lambda\backslash G_\lambda/KN_\lambda
$$
is surjective, the density of the image follows.
Since $H\backslash \Ghat/K$ is compact (being a quotient of
a compact set $\Ghat$), the image is compact and closed,
hence $f$ is surjective. Since $H\backslash \Ghat/K$
is Hausdorff, any two distinct points $HgK$, $Hg'K$
have disjoint open neighborhoods
$HgKN_\lambda$, $Hg'KN_\lambda$, respectively, 
as proved above, hence their images
in $HN_\lambda\backslash G_\lambda/KN_\lambda$ are distinct,
which implies the injectivity of $f$. 
By the compactness of $H\backslash \Ghat/K$, 
$f$ is closed, hence $f^{-1}$ is continuous,
thus a homeomorphism.
\end{proof}

\begin{proposition} (profinite Schurian association scheme) 
\label{prop:profin-Schurian}
Let $\Ghat= \varprojlim G_\lambda = \varprojlim \Ghat/N_\lambda$ 
be a profinite group.
We denote by $G_\lambda$ the quotient $\Ghat/N_\lambda$.
For a closed subgroup $H<G$, its image in $G_\lambda$
is $H_\lambda:=HN_\lambda$.
Then we have
a projective system of Schurian association schemes
$$
X_\lambda:=G_\lambda/H_\lambda
$$
and
$$
I_\lambda:=H_\lambda \backslash G_\lambda/H_\lambda.
$$
These systems give a profinite association scheme, 
which may be non-commutative. We call this type of
profinite association scheme Schurian.
We have canonical homeomorphisms
$$
\Ghat/H \stackrel{\sim}{\to} \Xhat = \varprojlim X_\lambda
=\varprojlim \Ghat/HN_\lambda
$$
and
$$
H\backslash \Ghat/H
\stackrel{\sim}{\to} \Ihat = \varprojlim I_\lambda=
\varprojlim H_\lambda \backslash \Ghat/H_\lambda.
$$
\end{proposition}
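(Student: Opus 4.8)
The plan is to assemble the claim from the finite-level construction in Proposition~\ref{prop:schurian} together with the limit identifications furnished by Lemma~\ref{lem:prep}. First I would record that each $H_\lambda=HN_\lambda$ is a genuine subgroup of $G_\lambda=\Ghat/N_\lambda$: since $N_\lambda$ is a clopen normal subgroup of $\Ghat$, the product $HN_\lambda$ is a subgroup of $\Ghat$, and its image in $G_\lambda$ is the subgroup $HN_\lambda/N_\lambda$. Hence Proposition~\ref{prop:schurian} applies verbatim at each $\lambda$, producing a Schurian association scheme $(X_\lambda,R_\lambda,I_\lambda)$ with $X_\lambda=G_\lambda/H_\lambda$ and $I_\lambda=H_\lambda\backslash G_\lambda/H_\lambda$, where $R_\lambda$ is the quotient by the diagonal $G_\lambda$-action, identified via (\ref{eq:hecke}) with $(g_1H_\lambda,g_2H_\lambda)\mapsto H_\lambda g_1^{-1}g_2H_\lambda$.

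Next I would build the projective system. For $\lambda'\geq\lambda$ we have $N_{\lambda'}\subset N_\lambda$, so the structure map $\pi\colon G_{\lambda'}\to G_\lambda$ of the profinite group is a surjection carrying $H_{\lambda'}$ into $H_\lambda$; it therefore induces maps $f\colon X_{\lambda'}\to X_\lambda$ and $g\colon I_{\lambda'}\to I_\lambda$ on the quotients. That $(f,g)$ is a morphism of association schemes, i.e.\ that (\ref{eq:comm}) commutes, is immediate from the Hecke description: both $R_\lambda$ and $R_{\lambda'}$ send a pair of cosets to the double coset of $g_1^{-1}g_2$, and $\pi$ is a group homomorphism, so $g(R_{\lambda'}(x,y))=R_\lambda(f(x),f(y))$. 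Surjectivity of $f$ follows from that of $\pi$, and surjectivity of $g$ then follows from the remark after the definition of morphisms. The cocycle condition is inherited from the corresponding identity among the projections $G_{\lambda''}\to G_{\lambda'}\to G_\lambda$. Thus $(X_\lambda,R_\lambda,I_\lambda)$ is a projective system in $\AS_\surj$, i.e.\ a profinite association scheme; as in the finite Schurian case it need not be commutative.

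Finally the two homeomorphisms are direct applications of part~(\ref{enum:proj}) of Lemma~\ref{lem:prep}. Taking both closed subgroups equal to $H$ gives
$$
H\backslash\Ghat/H \stackrel{\sim}{\to}
\varprojlim H_\lambda\backslash G_\lambda/H_\lambda=\varprojlim I_\lambda=\Ihat.
$$
For $\Xhat$, I would apply the same lemma with the first subgroup trivial and the second equal to $H$: then $\{e\}N_\lambda=N_\lambda$ has trivial image in $G_\lambda=\Ghat/N_\lambda$, so $N_\lambda\backslash G_\lambda/H_\lambda=G_\lambda/H_\lambda=X_\lambda$, and the lemma yields
$$
\Ghat/H \stackrel{\sim}{\to}\varprojlim G_\lambda/H_\lambda=\varprojlim X_\lambda=\Xhat.
$$
The genuine work has already been discharged by Lemma~\ref{lem:prep}, whose proof established that the canonical continuous bijection to the projective limit is a homeomorphism via compactness; the only point demanding care here is the bookkeeping verifying that the structure maps are morphisms in $\AS_\surj$ and that the limit index sets reduce to $X_\lambda$ and $I_\lambda$ as written. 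I expect the main (though modest) obstacle to be this matching of the abstract projective-limit description with the concrete double-coset sets, which the formula (\ref{eq:hecke}) handles cleanly.
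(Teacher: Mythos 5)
Your proposal is correct and follows essentially the same route as the paper: apply Proposition~\ref{prop:schurian} at each finite level, verify that the induced maps on cosets and double cosets make the square (\ref{eq:comm}) commute via the Hecke description $(g_1H,g_2H)\mapsto Hg_1^{-1}g_2H$, and invoke part~(\ref{enum:proj}) of Lemma~\ref{lem:prep} for the two homeomorphisms. Your explicit specialization of Lemma~\ref{lem:prep} to the trivial subgroup for the $\Xhat$ identification is a detail the paper leaves implicit, but it is the intended reading.
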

\begin{proof}
Everything is proved in Proposition~\ref{prop:schurian} 
and Lemma~\ref{lem:prep},
except that these mappings give
surjective morphisms of association schemes.
Take $\lambda\geq \mu$. 
Since the surjectivity is proved 
in (\ref{enum:bij}), it suffices to show the commutativity of
$$
\begin{matrix}
X_\lambda \times X_\lambda &\to& I_\lambda \\
\downarrow & & \downarrow \\
X_\mu \times X_\mu &\to& I_\mu,
\end{matrix}
$$
which is easy to check: an element $(gHN_\lambda, g'HN_\lambda)$
at the left top 
is mapped to the right $Hg^{-1}g'HN_\lambda$, then to the
bottom $Hg^{-1}g'HN_\mu$. This is the same via the left bottom corner.
\end{proof}

\begin{proposition}\label{prop:commute-hat}
Let $\Ghat/H$ be a Schurian profinite association scheme as
in \ref{prop:profin-Schurian}. Suppose that
each finite association scheme $X_\lambda$ is commutative.
Then, $\Jhat$ is the set of irreducible representations of $\Ghat$
appearing in $C(X_\lambda)$ for some $\lambda$, where
the multiplicity is at most one for any $C(X_\lambda)$.
\end{proposition}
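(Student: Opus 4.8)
The plan is to reduce everything to the finite levels via Corollary~\ref{cor:multiplicity} and then reassemble using the description of $\Jhat$ and the projectors $E_j$ from Proposition~\ref{prop:orthogonal}. First I would record what happens at each $\lambda$: since $X_\lambda$ is a commutative Schurian scheme, Corollary~\ref{cor:multiplicity} tells us that $C(X_\lambda)$ is multiplicity free as a $G_\lambda$-module and identifies $J_\lambda$ with the set of irreducible $G_\lambda$-representations occurring in $C(X_\lambda)$, each component $C(X_\lambda)_{j_\lambda}$ being irreducible. Because the $\Ghat$-action on $C(X_\lambda)$ factors through the surjection $\Ghat \to G_\lambda$, an irreducible $G_\lambda$-submodule is the same thing as an irreducible (continuous, finite-dimensional) $\Ghat$-submodule, and the two multiplicities agree. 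This already yields the second assertion, that every irreducible of $\Ghat$ occurs in each $C(X_\lambda)$ with multiplicity at most one.

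The structural heart of the argument, which I expect to be the main obstacle, is to show that commutativity forces the partial surjection $\Jhat \parto J_\lambda$ to have singleton fibers, equivalently that $C(\Xhat)_j = C(X_\lambda)_{j_\lambda}$ whenever $j$ lies over $j_\lambda$. Here is the idea: choosing $\lambda''\geq\lambda$ at which $j$ is isolated (Proposition~\ref{prop:discrete}), the subspace $C(\Xhat)_j = C(X_{\lambda''})_{j_{\lambda''}}$ is $\Ghat$-stable, and by part (5) of Proposition~\ref{prop:orthogonal} it sits inside $C(X_\lambda)_{j_\lambda}$. Since $\Ghat$ acts on $C(X_\lambda)$ through $G_\lambda$, this makes $C(\Xhat)_j$ a nonzero $G_\lambda$-submodule of the \emph{irreducible} module $C(X_\lambda)_{j_\lambda}$, hence equal to it; so only one $j$ can lie over $j_\lambda$. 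This is the commutative analogue of the remark that for abelian groups the transition maps $J$ are partial injections, and it is what makes the correspondence rigid.

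With this in hand I would define $\Phi\colon \Jhat \to \{\text{irreducibles of }\Ghat\text{ occurring in some }C(X_\lambda)\}$ by sending $j$ to the isomorphism class of $C(\Xhat)_j$; well-definedness (independence of $\lambda$) is the content of part (3) of Proposition~\ref{prop:orthogonal}. For injectivity, if $j\neq j'$ pick $\mu$ at which both are isolated; then $C(\Xhat)_j$ and $C(\Xhat)_{j'}$ are orthogonal nonzero submodules of the multiplicity-free $C(X_\mu)$, so they are non-isomorphic (were they isomorphic they would coincide as the unique copy, contradicting $E_j\bullet E_{j'}=0$ from part (4) of Proposition~\ref{prop:orthogonal}). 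For surjectivity, any irreducible $W$ occurring in $C(X_\lambda)$ equals some $C(X_\lambda)_{j_\lambda}$ by Corollary~\ref{cor:multiplicity}, and since $\Jhat \parto J_\lambda$ is surjective (Proposition~\ref{prop:compact-Hausdorff}) we may pick $j$ over $j_\lambda$, whereupon the singleton-fiber step gives $\Phi(j)=C(\Xhat)_j=C(X_\lambda)_{j_\lambda}\cong W$. Thus $\Phi$ is a bijection, which is exactly the identification of $\Jhat$ claimed. The only point demanding care throughout is the passage between "irreducible as a $G_\lambda$-module" and "irreducible as a $\Ghat$-module," which stays clean precisely because the action factors through the surjective quotient.
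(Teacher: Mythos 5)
Your proposal is correct and takes essentially the same route as the paper: Corollary~\ref{cor:multiplicity} gives multiplicity-freeness and irreducibility of each $C(X_\lambda)_{j_\lambda}$ at every finite level, and that irreducibility is exactly what forces the dual transition maps to be partial inverses of the injections $J_\mu\hookrightarrow J_\lambda$ obtained by lifting irreducibles along $G_\lambda\to G_\mu$, identifying $\Jhat$ with the set of irreducibles of $\Ghat$ occurring in some $C(X_\lambda)$. Your only deviation is cosmetic --- you run the singleton-fiber argument through $\Jhat$ and the isolation property of Proposition~\ref{prop:discrete} rather than directly between two finite levels as the paper does --- and you spell out the bijection more explicitly than the paper's terse proof.
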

\begin{proof}
We have an inductive system of injections
$C(X_\lambda \times X_\lambda)$,
which gives via (\ref{eq:matrix}) an inductive system 
of injections
$$
\Hom_\C(C(X_\mu),C(X_\mu)) \to \Hom_\C(C(X_\lambda),C(X_\lambda))
$$
for $\lambda \geq \mu$. To obtain 
$A_{X_\mu}$ and $A_{X_\lambda}$, we take
$\Ghat$-invariant part (\ref{eq:hom})
$$
\Hom_{G_\mu}(C(X_\mu),C(X_\mu)) \to 
\Hom_{G_\lambda}(C(X_\lambda),C(X_\lambda))
$$
(note that there is a canonical surjection $\Ghat \to G_\lambda$, hence
a $G_\lambda$-module is a $\Ghat$-module.)
By the surjectivity of $\Ghat \to G_\lambda$, an 
irreducible representation in $C(X_\mu)$ of $G_\mu$
lifts to an irreducible representation in $C(X_\lambda)$
of $G_\lambda$. 

We assume that every finite association 
scheme appeared is commutative. Then, the multiplicity of 
each irreducible representation $V\subset C(X_\lambda)$ is one
by Corollary~\ref{cor:multiplicity} for any $\lambda$. 
Thus, the partial surjection 
given in Corollary~\ref{cor:J} is in this case the inverse to the 
injection $J_\mu \to J_\lambda$, induced by $G_\lambda \to G_\mu$, 
and 
$$
\Jhat = \varinjlim J_\lambda
$$
is identified with the set of irreducible representations of
$\Ghat$ appeared in some $C(X_\lambda)$.
\end{proof}

Indeed, the above irreducible representations
are exactly the irreducible representations
of $\Ghat$ appearing in $C(\Ghat/H)$.
To show this, we recall a well known 
``no small subgroups'' lemma.
\begin{lemma}\label{lem:no-small}
Let $G$ be a Lie group over $\R$ or $\C$. 
Then, there is an open neighborhood of the unit $e$,
which contains no subgroup except $\{e\}$.
\end{lemma}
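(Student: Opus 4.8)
The plan is to use the exponential map of $G$ together with the standard ``walking out of a ball'' argument. First I would fix a norm $\|\cdot\|$ on the Lie algebra $\mathfrak{g}$ of $G$ and choose $r>0$ small enough that the exponential map $\exp:\mathfrak{g}\to G$ restricts to a diffeomorphism of the open ball $B_r:=\{X\in\mathfrak{g}\mid \|X\|<r\}$ onto an open neighborhood of $e$; this is possible because the differential of $\exp$ at $0$ is the identity, so $\exp$ is a local diffeomorphism at $0$. The candidate neighborhood is then $U:=\exp(B_{r/2})$, the image of the half-ball, which is open and contains $e=\exp(0)$.

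Next I would show that $U$ contains no nontrivial subgroup. Suppose $H\le G$ satisfies $H\subseteq U$ and $H\neq\{e\}$, and pick $h\in H$ with $h\neq e$. Since $\exp$ is a bijection from $B_{r/2}$ onto $U$, I may write $h=\exp(X)$ with $X\in B_{r/2}\setminus\{0\}$. The key observation is that the powers of $h$ are controlled by integer multiples of $X$ through the one-parameter subgroup $t\mapsto\exp(tX)$, giving $h^n=\exp(nX)$ for every $n$. I would then let $n$ be the least positive integer with $\|nX\|\ge r/2$; since $0<\|X\|<r/2$ such an $n$ exists and $n\ge 2$, and minimality gives $\|(n-1)X\|<r/2$, whence $\|nX\|\le\|(n-1)X\|+\|X\|<r$, so that $nX\in B_r$. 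Now $h^n\in H\subseteq U=\exp(B_{r/2})$ forces $h^n=\exp(Z)$ for some $Z\in B_{r/2}$; but $nX,Z\in B_r$ together with the injectivity of $\exp$ on $B_r$ yields $Z=nX$, hence $\|nX\|<r/2$, contradicting $\|nX\|\ge r/2$. This contradiction shows $H=\{e\}$.

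The only genuine input beyond this bookkeeping comes from Lie theory: the existence of $\exp$ as a local diffeomorphism near $0$ and the identity $\exp(X)^n=\exp(nX)$ arising from one-parameter subgroups. I expect the main (and essentially only) obstacle to be expository, namely the calibration of the two radii so that no single nontrivial element can remain trapped in the half-ball under taking powers; once the minimal exponent $n$ with $\|nX\|\ge r/2$ is isolated, the injectivity of $\exp$ on $B_r$ closes the argument at once. Since the statement is classical, I would keep the proof brief and, if desired, cite a standard reference for the properties of $\exp$ rather than reprove them.
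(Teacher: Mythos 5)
Your proof is correct and follows the same basic strategy as the paper: use the exponential map to identify a neighborhood of $e$ with a bounded neighborhood of $0$ in $\mathfrak{g}$, and show that the powers of any nontrivial element must eventually escape. The one substantive difference is that your version is actually more careful than the paper's. The paper asserts that if $h=\log(g)\in V$ then $nh=\log(g^n)$ and ``hence $nh\in V$ for any $n$,'' but this step presupposes that $nh$ already lies in the domain $V$ on which $\log$ is defined --- which is essentially the conclusion being sought. Your two-radius calibration (taking $U=\exp(B_{r/2})$ inside the injectivity ball $B_r$, and choosing the minimal $n$ with $\|nX\|\geq r/2$ so that $nX$ still lies in $B_r$ by the triangle inequality) is exactly the bookkeeping needed to make that step legitimate: it guarantees that $nX$ and the genuine logarithm $Z$ of $h^n$ both lie in a region where $\exp$ is injective, so they can be compared. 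In short, your argument is the standard rigorous form of the ``no small subgroups'' lemma, and it patches a small gap in the paper's own sketch.
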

\begin{proof}
We consider the Lie algebra $\mathfrak{g}$ of $G$. Give an 
arbitrary (Hermitian or positive definite)
metric to $\mathfrak{g}$. It is known that 
$$\exp:\mathfrak{g}\to G$$
is homeomorphism when restricted to a small enough neighborhood
of $0$. We take the homeomorphic neighborhoods $U$ of $e\in G$
and $V$ of $0\in \mathfrak{g}$. We may assume that $V$ is bounded
with respect to the metric.
Assume that $U$ contains non-trivial subgroup of $G$.
Take an $e\neq g\in G$ in the subgroup. Consequently, for any integer $n$,
$g^n\in U$.
If $h=\log(g)\in V \subset \mathfrak{g}$, $nh=\log(g^n)$ corresponds to $g^n$
and hence $nh \in V$ for any $n$. However, $V$ is bounded, 
which contradicts the assumption.
\end{proof}
\begin{corollary}\label{cor:finite-image}
Let $\Ghat$ be a profinite group, and $f:\Ghat \to G$
a group morphism to a real or complex Lie group $G$.
Then, $\Ker(f)$ is an open normal subgroup of $\Ghat$,
and the image $f(\Ghat)$ in $G$ is a finite subgroup of $G$.
\end{corollary}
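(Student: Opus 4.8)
The plan is to deduce everything from the ``no small subgroups'' Lemma~\ref{lem:no-small}, combined with the basic structural fact, recorded in the definition of a profinite group, that the clopen normal subgroups $N_\lambda=\Ker(\Ghat\to G_\lambda)$ form a basis of open neighborhoods of the unit $e\in\Ghat$. Throughout, $f$ is read as a morphism of topological groups, hence continuous, and this continuity is genuinely needed: for a merely abstract homomorphism the conclusion is false (for example $\Z_p$ carries discontinuous additive maps into $\R$ whose kernels are not open, obtained from $\Q$-linear maps $\Q_p\to\R$).

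First I would apply Lemma~\ref{lem:no-small} to the real or complex Lie group $G$ to obtain an open neighborhood $U$ of the unit of $G$ that contains no subgroup other than the trivial one. By continuity of $f$, the preimage $f^{-1}(U)$ is an open neighborhood of $e$ in $\Ghat$, so it contains one of the basic clopen normal subgroups $N_\lambda$. Since $N_\lambda$ is a subgroup, its image $f(N_\lambda)$ is a subgroup of $G$ lying inside $U$; by the defining property of $U$ this forces $f(N_\lambda)=\{e\}$, that is, $N_\lambda\subseteq\Ker(f)$.

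The remaining assertions are then formal. The kernel $\Ker(f)$ is normal and contains the open subgroup $N_\lambda$; a subgroup containing an open neighborhood of $e$ is a union of cosets of that neighborhood and hence open, so $\Ker(f)$ is open. As $N_\lambda$ has finite index in $\Ghat$ (because $G_\lambda=\Ghat/N_\lambda$ is finite), so does $\Ker(f)$. Therefore $f$ factors through the finite quotient $\Ghat/\Ker(f)$, which is itself a quotient of $G_\lambda=\Ghat/N_\lambda$, and consequently $f(\Ghat)\cong\Ghat/\Ker(f)$ is a finite subgroup of $G$.

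The argument is essentially immediate once Lemma~\ref{lem:no-small} is available, so I expect no serious obstacle. The one point demanding care is the passage from ``$f^{-1}(U)$ is open'' to ``$f^{-1}(U)$ contains some $N_\lambda$'', which is exactly where the profinite structure is used, via the fact that the $N_\lambda$ form a neighborhood basis at $e$; everything else is bookkeeping with open finite-index normal subgroups, and the real conceptual content is the continuity of $f$ that makes the whole reduction go through.
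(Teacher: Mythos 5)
Your proof is correct and follows essentially the same route as the paper: apply Lemma~\ref{lem:no-small} to get a neighborhood $U$ with no nontrivial subgroups, use continuity to find some $N_\lambda\subseteq f^{-1}(U)$, conclude $f(N_\lambda)=\{e\}$ and hence that $\Ker(f)$ is open, and deduce finiteness of the image. The only cosmetic difference is that the paper derives finiteness of $f(\Ghat)$ from compactness of $\Ghat$ together with openness of $\Ker f$, whereas you invoke the finite index of $N_\lambda$ directly; these are interchangeable.
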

\begin{proof}
Take a small enough neighborhood of $U$ in Lemma~\ref{lem:no-small}.
Consider $H=f^{-1}(U)\subset \Ghat$, which is open in $\Ghat$.
Thus, 
there exists $\lambda$ such that the open normal subgroup $N = N_\lambda$ is contained in $H$.
The image $f(N)<G$ is a subgroup in $U$, which must be $\{e\}\subset G$ by definition of $U$.
Thus, $\Ker(f)$ contains $N$, and is a union of cosets
of $N$, hence an open normal subgroup of $\Ghat$.
Since $\Ghat$ is compact and $\Ker f$ is open, $f(\Ghat)\cong\Ghat/\Ker f$ is a finite group.
\end{proof}

\begin{proposition}\label{prop:compact-repr}
Let $G$ be a compact Hausdorff group, and $H$ a closed subgroup.
Consider the representation of $G$ on $C(G/H)$.
Then, each irreducible component of $C(G/H)$
is finite dimensional. 
\end{proposition}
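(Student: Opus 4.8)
The plan is to run the standard Peter--Weyl style argument, realizing the irreducible component inside a finite-dimensional eigenspace of a compact self-adjoint intertwining operator. Since $G$ is compact Hausdorff it carries a Haar probability measure, and $G/H$ carries the induced $G$-invariant regular Borel probability measure $\nu$ (the pushforward of Haar measure along $G \to G/H$). I would form the Hilbert space $L^2(G/H,\nu)$; left translation $\pi(g)f(x)=f(g^{-1}x)$ is a unitary representation, and $C(G/H)\subset L^2(G/H,\nu)$ is a dense $G$-stable subspace containing the given irreducible component $V$.

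For $\phi\in C(G)$ I would introduce the averaging operator $T_\phi$ on $L^2(G/H)$,
$$
(T_\phi f)(x)=\int_G \phi(g)\, f(g^{-1}x)\, dg,
$$
and record its basic properties. It is bounded and commutes with every $\pi(g)$, being an average of the $\pi(g)$, so it is $G$-equivariant. A change of variables ($g=su^{-1}$) followed by Weil's integration formula for the compact fibration $G\to G/H$ turns it into an integral operator $(T_\phi f)(sH)=\int_{G/H}K(sH,tH)f(tH)\,d\nu(tH)$ with kernel $K(sH,tH)=\int_H \phi(s h^{-1} t^{-1})\,dh$; since $\phi$ is continuous and $H$ is compact, $K$ is well defined on cosets and continuous on the compact space $G/H\times G/H$, so $T_\phi$ is Hilbert--Schmidt, in particular compact. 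Choosing $\phi$ real with $\phi(g)=\phi(g^{-1})$ makes $T_\phi$ self-adjoint.

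Next I would fix $0\neq v\in V$ and use an approximate identity: taking $\phi\geq 0$ supported in a small symmetric neighborhood of $e$ with $\int_G \phi\,dg=1$, one has $T_\phi v\to v$ uniformly (hence in $L^2$), because $g\mapsto \pi(g)v$ is continuous into $(C(G/H),\|\cdot\|_\infty)$. Thus for a suitable such $\phi$ the compact self-adjoint operator $T:=T_\phi$ satisfies $Tv\neq 0$. By the spectral theorem $L^2(G/H)=\ker T\oplus\bigoplus_{\lambda\neq 0}E_\lambda$ with each eigenspace $E_\lambda$ finite-dimensional, and the orthogonal projections $P_\lambda$ onto $E_\lambda$ are $G$-equivariant because $T$ is. Since $Tv\neq 0$, some component $P_\lambda v\neq 0$ with $\lambda\neq 0$. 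Then $P_\lambda|_V:V\to E_\lambda$ is a nonzero $G$-equivariant map; as $V$ is irreducible, its kernel $V\cap\ker P_\lambda$ is a proper invariant subspace, hence $0$, so $P_\lambda|_V$ is injective and $\dim_\C V\leq\dim_\C E_\lambda<\infty$.

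The main obstacle is the analytic input of the second step: verifying that $T_\phi$ is genuinely compact (the continuous-kernel-on-a-compact-space computation, together with making the fibre integral over $H$ rigorous via Weil's formula) and that the approximate identity converges, $T_\phi v\to v$. Both rely on compactness of $G$ and $H$ and on uniform continuity of the orbit map. Once these are in hand, the representation-theoretic conclusion through the equivariant spectral projection is immediate and conveniently sidesteps any appeal to Schur's lemma for possibly non-closed irreducible subspaces.
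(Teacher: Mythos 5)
Your overall strategy is the standard Peter--Weyl compact-operator argument, which is essentially the content of the result the paper simply outsources to Takeuchi; the structure (compact self-adjoint intertwiner, approximate identity, finite-dimensional eigenspaces, equivariant spectral projection onto which the irreducible injects) is sound. However, there is one step that fails as written: the claim that $T_\phi$ ``commutes with every $\pi(g)$, being an average of the $\pi(g)$.'' An average of the operators $\pi(g)$ lies in the algebra \emph{generated} by the representation, not in its commutant. Concretely, $\pi(s)T_\phi=\int_G\phi(s^{-1}u)\pi(u)\,du$ while $T_\phi\pi(s)=\int_G\phi(us^{-1})\pi(u)\,du$, so $T_\phi$ intertwines the action only when $\phi$ is a class function. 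The same defect shows up in your kernel: $K(sH,tH)=\int_H\phi(sh^{-1}t^{-1})\,dh$ satisfies the equivariance condition $K(gsH,gtH)=K(sH,tH)$ exactly when $\phi$ is conjugation-invariant. Without equivariance of $T_\phi$, the spectral projections $P_\lambda$ are not $G$-maps, and the final step --- that $\ker(P_\lambda|_V)$ is an invariant subspace of the irreducible $V$ --- collapses, so the finite-dimensionality conclusion does not follow.

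The gap is repairable by a standard device, which you should include: replace $\phi$ by its conjugation average $\tilde\phi(g)=\int_G\phi(sgs^{-1})\,ds$, which is a continuous nonnegative class function with $\int_G\tilde\phi=1$. To keep the approximate-identity property you must also control the support of $\tilde\phi$; this uses the fact that in a compact group every neighborhood of $e$ contains a conjugation-invariant neighborhood (take $\bigcap_{s\in G}sUs^{-1}$ for suitable $U$ and check openness via compactness of $G$), so $\phi$ may be chosen supported in an invariant neighborhood from the start, making $\tilde\phi$ supported there too. Alternatively, one can bypass the issue by working directly with integral operators on $L^2(G/H)$ whose kernels $K$ are continuous and $G$-invariant ($K(gx,gy)=K(x,y)$), equivalently with right convolution by $H$-bi-invariant functions on $G$ concentrated near $H$; these commute with left translation automatically. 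With either repair, the rest of your argument (Hilbert--Schmidt hence compact, self-adjointness for symmetric real $\phi$, $T\,v\neq 0$ for small support, injectivity of $P_\lambda|_V$ by irreducibility) goes through and does give the finite-dimensionality claimed in the proposition.
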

\begin{proof}
This follows from a variant of the Peter-Weyl theorem,
see Takeuchi\cite[Section~1]{TAKEUCHI}.
\end{proof}

\begin{proposition}\label{prop:comp-kurihara}
Let $\Ghat$ be a profinite group and $H$ a closed subgroup.
Consider the profinite association scheme
arising from $\Ghat/H$. 
Let $J(\Ghat/H)$ be the set of the irreducible representations 
of $\Ghat$ on $C(\Ghat/H)$.
Then, all finite 
association schemes $X_\lambda$ in Proposition~\ref{prop:profin-Schurian}
are commutative, if and only if the representation $C(\Ghat/H)$
is multiplicity free. 
In this case, there is a canonical bijection
\begin{equation}\label{eq:Jhat-schur}
f:J(\Ghat/H) \stackrel{\sim}{\to} \Jhat=\varinjlim J_\lambda. 
\end{equation}

\end{proposition}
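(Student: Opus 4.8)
The plan is to realize each $C(X_\lambda)$ as a space of invariants inside $C(\Ghat/H)$ and then to move the multiplicity-free condition between $\Ghat$ and its finite quotients $G_\lambda$, using that every irreducible constituent of $C(\Ghat/H)$ is finite dimensional. First I would establish the identification $C(X_\lambda)=C(\Ghat/H)^{N_\lambda}$. Since $N_\lambda$ is a clopen normal subgroup of $\Ghat$, its left-translation orbits on $\Ghat/H$ are exactly the cosets of $N_\lambda H$, so $N_\lambda\backslash(\Ghat/H)$ is canonically $\Ghat/(N_\lambda H)=G_\lambda/H_\lambda=X_\lambda$; applying Lemma~\ref{lem:quotient} to the action of $N_\lambda$ by homeomorphisms then gives $C(\Ghat/H)^{N_\lambda}=C(X_\lambda)$, compatibly with the $G_\lambda$-action.

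Next I would record the key multiplicity identity. By Proposition~\ref{prop:compact-repr} every irreducible $\Ghat$-subrepresentation $W$ of $C(\Ghat/H)$ is finite dimensional, and by Corollary~\ref{cor:finite-image} the representation $\Ghat\to\GL(W)$ has open kernel containing some $N_\lambda$; thus $N_\lambda$ acts trivially on $W$ and $W\subset C(X_\lambda)$. For such $W$, any $\Ghat$-equivariant $\phi\colon W\to C(\Ghat/H)$ satisfies $n\cdot\phi(w)=\phi(n\cdot w)=\phi(w)$ for $n\in N_\lambda$, so its image lies in the $N_\lambda$-invariants, giving
\begin{equation*}
\Hom_{\Ghat}(W,C(\Ghat/H))=\Hom_{\Ghat}(W,C(X_\lambda))=\Hom_{G_\lambda}(W,C(X_\lambda)).
\end{equation*}
Hence the multiplicity of $W$ in $C(\Ghat/H)$ equals its multiplicity in $C(X_\lambda)$ whenever $N_\lambda$ acts trivially on $W$.

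The two implications now follow from Corollary~\ref{cor:multiplicity}, which says $X_\lambda$ is commutative if and only if $C(X_\lambda)$ is multiplicity free as a $G_\lambda$-representation. If $C(\Ghat/H)$ is multiplicity free, then for any $\lambda$ an irreducible constituent $W$ of $C(X_\lambda)$ (automatically trivial on $N_\lambda$) has multiplicity in $C(X_\lambda)$ equal to its multiplicity in $C(\Ghat/H)$, which is at most one, so every $X_\lambda$ is commutative. Conversely, if all $X_\lambda$ are commutative, any irreducible $W\subset C(\Ghat/H)$ is trivial on some $N_\lambda$, and its multiplicity in $C(\Ghat/H)$ equals its multiplicity (at most one) in $C(X_\lambda)$, so $C(\Ghat/H)$ is multiplicity free.

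For the bijection~\eqref{eq:Jhat-schur}, I would invoke Proposition~\ref{prop:commute-hat}, which in the multiplicity-free case identifies $\Jhat=\varinjlim J_\lambda$ with the set of irreducible $\Ghat$-representations occurring in some $C(X_\lambda)$. The argument above shows this set equals $J(\Ghat/H)$: every irreducible constituent of $C(\Ghat/H)$ is trivial on some $N_\lambda$ and hence lies in $C(X_\lambda)$, while conversely each $C(X_\lambda)=C(\Ghat/H)^{N_\lambda}$ is a $\Ghat$-subrepresentation of $C(\Ghat/H)$; the resulting identity is the canonical bijection $f$. I expect the only genuine subtlety to be the multiplicity bookkeeping for the infinite-dimensional $C(\Ghat/H)$, and this is exactly what the invariants identity dispatches, reducing every multiplicity claim to the finite quotients where Corollary~\ref{cor:multiplicity} applies.
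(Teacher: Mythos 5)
Your proposal is correct and follows essentially the same route as the paper: both identify each irreducible constituent of $C(\Ghat/H)$ as finite dimensional (Proposition~\ref{prop:compact-repr}), use the no-small-subgroups property to place it inside some $C(X_\lambda)=C(\Ghat/H)^{N_\lambda}$, and then transfer multiplicity-freeness between $\Ghat$ and the finite quotients via Corollary~\ref{cor:multiplicity} and Proposition~\ref{prop:commute-hat}. Your explicit $\Hom$-space identity $\Hom_{\Ghat}(W,C(\Ghat/H))=\Hom_{G_\lambda}(W,C(X_\lambda))$ merely spells out the multiplicity bookkeeping that the paper's proof leaves implicit.
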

\begin{proof}
Put $\Xhat:=\Ghat/H$.
Let $V\subset C(\Xhat)$ be an irreducible representation 
of $\Ghat$.
By Proposition~\ref{prop:compact-repr}, $V$ is finite dimensional.
By Lemma~\ref{lem:no-small}, the representation factors through
$$
G \to G/N_\lambda \to \GL(V)
$$
for some $\lambda$. By Lemma~\ref{lem:quotient}, this means that $V\subset C(\Xhat)$
lies in 
$$
V \subset C(N_\lambda\backslash \Xhat)=C(X_\lambda) \subset C(\Xhat).
$$
Thus, $C(\Xhat)$ is multiplicity free, if and only if
$C(X_\lambda)$ is multiplicity free for every $\lambda\in \Lambda$, 
in other words, 
if and only if every $X_\lambda$ is a commutative association scheme,
by Proposition~\ref{prop:commute-hat}. In the commutative case,
the following observation shows that every irreducible
component of $C(\Xhat)$ appears in $C(X_\lambda)$ for some $\lambda$.
Because the multiplicity is one,
for any $V$ above, there is a unique
representation isomorphic to $V$ in $\varinjlim C(X_\lambda)$. 
This gives a map $f$ in 
(\ref{eq:Jhat-schur}). 
The injectivity follows from that $C(\Xhat)$ is multiplicity-free. 
The surjectivity follows from 
$C(X_\lambda)\subset C(\Xhat)$
is a $\Ghat$-submodule, and any $G_\lambda$-irreducible component
of $C(X_\lambda)$ is a $\Ghat$-irreducible component
since $\Ghat \to G_\lambda$ is surjective. 
\end{proof}
\begin{remark}
Kurihara-Okuda\cite{KURIHARA-OKUDA} gives the notion of
Bose-Mesner algebra for general homogeneous space $G/H$,
where $G$ is compact Hausdorff and $H$ is a closed subgroup.
In the case where $G$ is a profinite group and $C(G/H)$
is multiplicity free, their construction coincides with
our definition, by Proposition~\ref{prop:comp-kurihara}.
\end{remark}
%end-change
%\footnote{Comment out: Def of profinte schurian scheme}
\subsubsection{Profinite abelian groups}

For any finite abelian group $X$, we consider its thin scheme.
Namely, $R:X\times X \to I=X$ with $(x,y) \mapsto y-x$.
Then, the $i$-th adjacency matrix $R^{-1}(i)$ is the 
representation matrix of addition of $i$ on $C(X)$,
and the Bose-Mesner algebra is isomorphic to the 
group ring $C(X)$, with usual Hadamard product and
the convolution product 
$$
(f * g)(x)=\frac{1}{\#X}\sum_{y}f(x-y)g(y)
$$ 
(the factor $\frac{1}{\#X}$ is not the standard, incorporated
to be compatible with Theorem~\ref{th:convolution}).
Primitive idempotents are exactly the characters
$\xi \in \Xcheck$, where $\Xcheck \subset C(X)$ is the group of 
characters of $X$. 

For a profinite abelian group $\Xhat$, it is a projective
limit of finite abelian groups $X_\lambda$. We have an 
inductive system of the character groups $J_\lambda:=\check{X_\lambda}$.
For $\lambda \geq \mu$, 
the partial map $p_{\lambda,\mu}:J_\lambda \parto J_\mu$ is given by
the injection $i_{\mu,\lambda}:J_\mu \to J_\lambda$, where the 
domain of $p_{\lambda,\mu}$ is the image of $i_{\mu,\lambda}$,
and in the domain $p_{\lambda,\mu}$ is the inverse of $i_{\mu,\lambda}$.
Thus, $\Ihat$ is isomorphic to $\Xhat$, $\Jhat$ is the
inductive limit of the character groups $\check{X_\lambda}$.
The Bose-Mesner algebra is isomorphic to $C_c(\Jhat)$,
the space of finite linear combinations of characters of $\Xhat$,
which is isomorphic to $C_{lc}(\Ihat)$, the space of locally constant
functions on $\Ihat \simeq \Xhat$. This example is outside of the scope of Barg-Skriganov
theory, if $\Ihat \simeq \Xhat$ is uncountable.
\begin{remark}
Since $\Jhat$ is discrete, $C_c(\Jhat)$ has a natural (and orthogonal)
basis consisting of $E_j$ ($j\in \Jhat$), 
but for the canonically 
isomorphic vector space $C_{lc(\Ihat)}$,
we do not have a natural basis, since $\Ihat$
is compact with (possibly)
uncountable cardinality. This makes a definition 
of eigenmatrices difficult.
\end{remark}

\subsection{Kernel schemes}
Another example of profinite association schemes is
a projective system of the kernel schemes. 
%The projective limit gives
%a special case of ``metric schemes on groups'' studied by Barg-Skriganov
%\cite[Section~8]{BARG-SKRIGANOV}, for which the adjacency algebra
%is defined and structural constants are explicitly computed.
The kernel schemes are finite association schemes 
defined in Martin-Stinson\cite{MARTIN-STINSON} to study $(t,m,s)$-nets.
The $(t,m,s)$-nets are introduced by Niederreiter for quasi-Monte Carlo
integrations, see \cite{niederreiter:book}.

We recall the kernel scheme, with a slight modification
on the notation for $I$, to make a description as a projective
system easier. 
\begin{definition}\label{def:kernel}
Let $n$ be a positive integer, and $V$ be a finite set of 
alphabet with cardinality $v\geq 2$. Let $X_n$ be $V^n$,
and $I_n:=\{1,2,\ldots,n\}\cup\{\infty\}$.
Define $R_n:X_n\times X_n \to I_n$ as follows.
Let $x=(x_1,x_2,\ldots,x_n)$ and $y=(y_1,y_2,\ldots,y_n)$ be
elements of $X_n$. Let $R(x,y)$ be the smallest index $i$
for which $x_i\neq y_i$. If $x=y$, then $R(x,y)=\infty$.
This is a symmetric (and hence commutative) 
association scheme, with $R^{-1}(\infty)$ being the identity
relation. This is called a kernel scheme, and denoted by
$\overrightarrow{k(n,v)}$.
\end{definition}
A direct computation of the intersection numbers
shows that this is a commutative association scheme,
as shown in \cite{MARTIN-STINSON}.
We shall give an indirect proof
in Corollary~\ref{cor:ker-as}. (This is only for the self-containedness.)
\begin{proposition}\label{prop:pro-k}
The kernel schemes $\overrightarrow{k(n,v)}$ 
forms a projective system, where $X_{n+1} \to X_n$ is given
by deleting the right most component, 
and $I_{n+1} \to I_n$ is given by $m\mapsto m$ for $m\leq n$,
and the both $n+1,\infty$ are mapped to $\infty$.
Then $\Xhat=V^{\N_{>0}}$ holds, and $\pr_n:\Xhat \to X_n$ is obtained by
taking the left (first) $n$ components, while $\Ihat$ is 
$\N_{>0} \cup \{\infty\}$, with topology obtained by: each $n\in \N_{>0}$
is clopen, and an open set containing $\infty$ is a complement 
of a finite subset of $\N_{>0}$, that is, $\Ihat$ is the one-point 
compactification of $\N_{>0}$.
We call this a pro-kernel scheme and denote by $\overrightarrow{k(\infty, v)}$.
\end{proposition}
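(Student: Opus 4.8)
The plan is to proceed in three stages: first verify that the stated data form a projective system in $\AS_\surj$, then identify $\Xhat$, and finally identify $\Ihat$ together with its topology.

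For the first stage I would write $f_n\colon X_{n+1}\to X_n$ for the truncation $(x_1,\ldots,x_{n+1})\mapsto(x_1,\ldots,x_n)$ and $g_n\colon I_{n+1}\to I_n$ for the map in the statement, and check that $(f_n,g_n)$ is a surjective morphism of association schemes. Commutativity of \eqref{eq:comm}, namely $g_n(R_{n+1}(x,y))=R_n(f_n(x),f_n(y))$, follows from a case split on $R_{n+1}(x,y)$: if it equals $i\le n$ the first disagreement is at position $i\le n$ and survives truncation, so both sides are $i$; if it equals $n+1$ the truncations agree while $g_n(n+1)=\infty$, so both sides are $\infty$; and if it equals $\infty$ then $x=y$ and both sides are $\infty$. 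Surjectivity of $f_n$ (append any letter) gives surjectivity of $g_n$, and iterated truncation being truncation gives the compatibility $p_{n',n}p_{n'',n'}=p_{n'',n}$, so we obtain a projective system in $\AS_\surj$ over $\N_{>0}$.

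The second stage is the standard description of the projective limit of truncation maps: an element of $\Xhat=\varprojlim X_n$ is a compatible family $(x^{(n)})_n$ with $x^{(n)}\in V^n$ and $f_m(x^{(n)})=x^{(m)}$, which is the same datum as an infinite word $(x_1,x_2,\ldots)\in V^{\N_{>0}}$, under which $\pr_n$ becomes truncation to the first $n$ letters; the projective limit topology is the product topology with $V$ discrete, by Lemma~\ref{lem:profin-top}.

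The substance of the proof, and the step I expect to be the main obstacle, is the third stage. I would first pin down the underlying set of $\Ihat=\varprojlim I_n$ by analysing compatible families $(i^{(n)})_n$: the only element of $I_{n+1}$ mapping to a finite $k$ under $g_n$ is $k$ itself (valid whenever $k\le n$), so once some $i^{(n_0)}=k$ is finite one gets $i^{(n)}=k$ for all $n\ge k$ and, descending through $g_{k-1}(k)=\infty$, $i^{(n)}=\infty$ for all $n<k$; the remaining family is the constant $\infty$. This yields a bijection $\Ihat\cong\N_{>0}\cup\{\infty\}$, where $k\in\N_{>0}$ names the family that is $\infty$ below level $k$ and $k$ from level $k$ on, and $\infty$ names the constant family. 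I would then read the topology off the open basis $\pr_{I_n}^{-1}(i)$ of Lemma~\ref{lem:profin-top}: for $k\le n$ one gets $\pr_{I_n}^{-1}(k)=\{k\}$, so every finite point is clopen, while $\pr_{I_n}^{-1}(\infty)=\{\infty\}\cup\{m\in\N_{>0}\mid m>n\}$ is the complement of the finite set $\{1,\ldots,n\}$; hence the open neighbourhoods of $\infty$ are exactly the complements of finite subsets of $\N_{>0}$, which is precisely the one-point compactification of the discrete space $\N_{>0}$. The one delicate point to keep track of is the double role of $\infty$ in each $I_n$ — the diagonal class and the class absorbing the dropped top coordinate — since it is exactly this that makes the fibres of $g_n$ behave as required.
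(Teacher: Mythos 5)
Your proposal is correct and fills in, in the natural way, exactly the verifications the paper leaves as "easy to check": the case split for the commuting square, the identification of $\Xhat$ with $V^{\N_{>0}}$, and the analysis of compatible families and the basic open sets $\pr_{I_n}^{-1}(i)$ giving the one-point compactification. No divergence from the paper's (essentially omitted) argument.
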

\begin{proof} It is easy to check that these make a 
projective system of association schemes. The topology
of $\Ihat$ comes from the definition of projective limit topology.
\end{proof}

\begin{remark} In this case $\Ihat$ is countable, and this is 
a special case of infinite association schemes examined in detail,
named metric schemes, by Barg-Skriganov \cite[Section~8]{BARG-SKRIGANOV}, 
for which the adjacency algebra is defined.
In particular, in (8.33) they showed that it is an inductive
limit of finite dimensional adjacency algebra, which indeed
coincides with our definition.
\end{remark}
\begin{remark} This is merely an observation. For the 
set of $p$-adic integers $\Z_p$, we have a valuation 
$v:\Z_p \to \N\cup \{\infty\}$ giving an ultrametric.
The above pro-kernel scheme is isomorphic to this metric,
up to the difference by one on the metric,
i.e., 
$$
R:\Z_p \times \Z_p \to \N\cup \{\infty\}, (x,y)\mapsto v(x-y).
$$
The same holds for a ring of formal power series 
$\F_q[[t]]$, where $\F_q$ denotes the finite field of $q$ elements.
The both yield the isomorphic pro-kernel schemes if $p=q$.
\end{remark}
To see $\Jhat$ of the pro-kernel scheme, we need to work 
with primitive idempotents. All necessary results are in
Martin-Stinson\cite{MARTIN-STINSON}, but because of the choice of 
the index $I$ and normalization of the products necessary
for making the projective system, the obtained constants 
slightly differ. To help the readers' understandings, we 
recall the methods of Martin-Stinson.
For the chosen integer $v\geq 2$,
let $V:=Z_v$ be the additive group $\Z/v$. Then 
$X=Z_v^n$ is an additive group, and the kernel scheme
is a translation scheme \cite[\S2.10]{BROUWER}.
\begin{definition}
Let $X$ be a finite abelian group. A translation scheme is
an association scheme $R:X\times X \to I$, which
factors through $X\times X \to X, (x,y)\mapsto y-x$,
namely, there is an $S:X \to I$ such that $R(x,y)=S(y-x)$. 
%\footnote{$S = \mathrm{bot}$ in Section 5.3}
\end{definition}

\begin{definition}\label{def:bot} 
Return to the kernel scheme.
We define
$\bbot:X_n \to I_n$ by 
$$
\bbot(x_1,\ldots,x_n):=\min\{i \mid x_i\neq 0\},
$$
and $\bbot(0,\cdots,0)=\infty$. Then we have
$$
R_n:X_n\times X_n \to I_n, \quad (x,y) \mapsto \bbot(x-y).
$$
By $S:=\bbot$, the kernel scheme
is a translation scheme.
\end{definition}

Let $(X,R,I)$ be a translation scheme. 
For $i\in I$, let $S_i\subset X$ be $S^{-1}(i)$ ($S$ for the sphere).
Let $\Xcheck$ be the dual group of $X$. Then, 
$\xi \in \Xcheck$ is an orthonomal basis of
$C(X)$ under the normalized inner product
in Definition~\ref{def:inner-product}.
%For $a\in X$, let $A_a$ denote an endomorphism of $C(X)$
%given by
%$$
%(A_a \cdot f)(x)=f(x+a). 
%$$
%Thus 
Let $A_i$ denote the $i$-th adjacency matrix, and 
$\chi_i:X \to \{0,1\}$ be the indicator function for $S_i$.
Our action in Theorem~\ref{th:convolution} shows that for $x\in X$
\begin{eqnarray*}
(A_i\bullet \xi)(x) &=&
\frac{1}{\#X}\sum_{y\in X}A_i(x,y)\xi(y) \\
&=&
\frac{1}{\#X}\sum_{y\in X}\chi_i(y-x)\xi(y) \\
&=&
\frac{1}{\#X}\sum_{y\in x+S_i}\xi(y) \\
&=&
\frac{1}{\#X}\sum_{a\in S_i}\xi(x+a) \\
&=&
\frac{1}{\#X}(\sum_{a\in S_i}\xi(a))\xi(x).
\end{eqnarray*}
Thus $\xi$ is an eigenvector with eigenvalue 
\begin{equation}\label{eq:eigen}
\frac{1}{\#X}(\sum_{a\in S_i}\xi(a)). 
\end{equation}
The primitive idempotents are projectors to the 
common eigen spaces of $A_i$ for all $i\in I$.
Let us assume $X=Z_v^n$, and then
$\Xcheck=\Zvcheck^n$, where the product of $\Zvcheck$ is
written multiplicatively with unit $1$. Thus, 
\begin{equation}\label{eq:eval}
(\xi_1,\ldots,\xi_n)(x_1,\ldots,x_n)=\prod_{i=1}^n \xi_i(x_i). 
\end{equation}
\begin{definition}\label{def:ttop}
For $\xi=(\xi_1,\ldots,\xi_n)\in \Xcheck$, 
define $\ttop(\xi)$ as the max $i$ such that $\xi_i\neq 1$.
If $\xi=1$, then $\ttop(\xi)=0$. Thus $\ttop: \Xcheck \to \{0,1,\ldots,n\}$. 
We shall denote $J_n:=\{0,1,\ldots,n\}$, because we shall soon show
a natural correspondence between the set of primitive idempotents
and $J_n$.
\end{definition}
\begin{lemma}\label{lem:eigen}
For $\xi$ with $j=\ttop(\xi)$, 
we denote by $p_i(j)$ the eigenvalue of $A_i$ for $\xi$.
Then
$$
p_i(j)=
\begin{cases}
v^{-n} & \mbox{ if } i=\infty, \\
(v-1)v^{-i} & \mbox{ if } j<i<\infty, \\
-v^{-i} & \mbox{ if } j=i, \\
0 & \mbox{ if } j>i. \\
\end{cases}
$$
Thus, 
for any $A_i$, the eigenvalue of $\xi\in \Xcheck$ depends only on $\ttop(\xi)$,
and if $\ttop(\xi)\neq \ttop(\xi')$, then there is an $i$
such that $A_i$ has different eigenvalues for $\xi$ and $\xi'$.
\end{lemma}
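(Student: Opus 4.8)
The plan is to evaluate the character sum $\sum_{a\in S_i}\xi(a)$ occurring in the eigenvalue formula \eqref{eq:eigen} coordinate by coordinate, using the product decomposition of characters \eqref{eq:eval} together with the elementary orthogonality relation on $Z_v$, namely $\sum_{x\in Z_v}\eta(x)=v$ when $\eta=1$ and $0$ when $\eta\neq 1$. First I would make the spheres $S_i=\bbot^{-1}(i)$ fully explicit: for $1\leq i\leq n$ one has $S_i=\{(x_1,\ldots,x_n)\mid x_1=\cdots=x_{i-1}=0,\ x_i\neq 0\}$, with the coordinates $x_{i+1},\ldots,x_n$ ranging freely over $Z_v$, while $S_\infty=\{0\}$.

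Next, for $\xi=(\xi_1,\ldots,\xi_n)$ I would substitute \eqref{eq:eval} into $\sum_{a\in S_i}\xi(a)$ and factor the sum as a product over the $n$ coordinates. The frozen coordinates $x_1,\ldots,x_{i-1}$ each contribute $\xi_k(0)=1$; the coordinate $x_i$ contributes $\sum_{x_i\neq 0}\xi_i(x_i)$, which equals $v-1$ if $\xi_i=1$ and $-1$ if $\xi_i\neq 1$ (by subtracting the term $\xi_i(0)=1$ from the full orthogonality sum); and each free coordinate $x_k$ with $k>i$ contributes $\sum_{x_k\in Z_v}\xi_k(x_k)$, which is $v$ if $\xi_k=1$ and $0$ if $\xi_k\neq 1$.

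With this factorization in hand, the three finite cases are read off according to $j=\ttop(\xi)$. If $j>i$, then the top nontrivial coordinate $\xi_j$ sits in the free range $k>i$, so its factor vanishes and $p_i(j)=0$. If $j=i$, then $\xi_i\neq 1$ while $\xi_k=1$ for all $k>i$, so the middle factor is $-1$, the free factors give $v^{n-i}$, and thus $\sum_{a\in S_i}\xi(a)=-v^{n-i}$, whence $p_i(j)=-v^{-i}$ after dividing by $\#X=v^n$. If $j<i$, then $\xi_i=1$ and $\xi_k=1$ for all $k>i$, so the middle factor is $v-1$ and the free factors again give $v^{n-i}$, yielding $(v-1)v^{n-i}$ and $p_i(j)=(v-1)v^{-i}$. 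The case $i=\infty$ is immediate since $S_\infty=\{0\}$ and $\xi(0)=1$, giving $p_\infty(j)=v^{-n}$.

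The two concluding assertions then follow formally from the tabulated formula. It shows that $p_i(j)$ depends on $\xi$ only through $j=\ttop(\xi)$, as claimed. For the separation statement, suppose $\ttop(\xi)=j<j'=\ttop(\xi')$; testing both against $A_{j'}$ puts $\xi$ in the case $j<i=j'$ and $\xi'$ in the case $i=j'$, so $p_{j'}(j)=(v-1)v^{-j'}$ whereas $p_{j'}(j')=-v^{-j'}$, and these differ because $v-1\geq 1\neq -1$ as $v\geq 2$. I do not expect a genuine obstacle here, since the whole computation reduces to character orthogonality; the only point demanding care is the bookkeeping of which coordinate factors vanish, and in particular the observation that a single nontrivial coordinate at the top index $j$ already forces $p_i(j)=0$ for every $i<j$.
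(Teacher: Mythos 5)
Your proof is correct. It differs from the paper's in the bookkeeping: the paper computes the ball sums $\sum_{a\in B_i}\xi(a)$ over the subgroups $B_i=\bigcup_{\ell\geq i}S_\ell$ (which vanish for characters nontrivial on $B_i$ and equal $\#B_i$ otherwise) and then recovers the sphere sum by the telescoping $S_i=B_i\setminus B_{i+1}$, whereas you factor $\sum_{a\in S_i}\xi(a)$ directly as a product over the $n$ coordinates, handling the constrained coordinate $x_i\neq 0$ by subtracting the $\xi_i(0)$ term from the full orthogonality sum. Both arguments reduce to the same orthogonality relation on $Z_v$ and produce the identical case analysis in $j=\ttop(\xi)$ versus $i$; the paper's telescoping is marginally cleaner in that it only ever sums a character over a subgroup, while your factorization makes completely explicit which coordinate kills the sum when $j>i$ (the free coordinate $x_j$) and is arguably the more self-explanatory computation. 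Your verification of the final separation claim via $p_{j'}(j)=(v-1)v^{-j'}\neq -v^{-j'}=p_{j'}(j')$ for $j<j'$ is also fine, since $j'\geq 1$ guarantees $A_{j'}$ is an actual adjacency matrix of the scheme.
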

\begin{proof}
Recall that $I=\{1,2,\ldots,n\}\cup\{\infty\}$.
Put
$$
B_i:=\bigcup_{i \leq \ell \leq \infty} S_\ell.
$$
($B_i$ is the ball consisting of all the elements
$(x_1,\ldots,x_n)$ with $x_\ell=0$ for all $\ell<i$,
with the cardinality $v^{n-i+1}$ for $i<\infty$ and $1$ for
$i=\infty$.)
Fix $i$ and $\xi\in \Xcheck$ with $\ttop(\xi)=j$, then
by (\ref{eq:eigen}) we have
\begin{eqnarray*}
\tau(i,j)\xi &:=&
\sum_{i \leq \ell \leq \infty} A_i \bullet \xi \\
&=& \frac{1}{v^n}(\sum_{a\in B_i}\xi(a)) \xi
\\
&=& 
\begin{cases}
v^{-n}\xi  & \mbox{ if } i=\infty, \\
v^{-i+1}\xi & \mbox{ if } j<i<\infty, \\
0 & \mbox{ if } j\geq i. \\
\end{cases}
\end{eqnarray*}
The last equality follows from that for $i=\infty$, $B_\infty=\{0\}$
and hence the summation is $\sum_{a\in B_i}\xi(a)=1$, and 
if $j<i<\infty$, then (\ref{eq:eval}) is always $1$ and 
the cardinality of $B_i$ is $v^{n-i+1}$, while if $j\geq i$,
$\xi(a)$ depends on $a$ and the summation over $a \in B_i$
is zero, since $\xi$ is a non-trivial character 
on the abelian group $B_i$.
Thus, the eigenvalue of $A_i$ for $\xi$ is the summation 
over $S_i=B_i\setminus B_{i+1}$, that is, 
$$
\begin{cases}
v^{-n} & \mbox{ if } i=\infty, \\
(\tau(i,j)-\tau(i+1,j))=(v-1)v^{-i} & \mbox{ if } j<i<\infty,\\
(\tau(i,j)-\tau(i+1,j))=-v^{-i} & \mbox{ if } i=j, \\
0 & \mbox{ if } j>i,
\end{cases}
$$
as desired.
\end{proof}
\begin{corollary}\label{cor:ker-as}
The kernel scheme $\overrightarrow{k(n,v)}$
is a commutative association scheme.
A primitive idempotent is the projection 
to the subspace $C(X)_j$ of $C(X)$ spanned by $\{\xi \mid \ttop(\xi)=j\}$
for each $j\in J_n$, which identifies the set of primitive
idempotents with $J_n$.
\end{corollary}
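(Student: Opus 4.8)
The plan is to read the whole statement off the simultaneous diagonalization carried out before Lemma~\ref{lem:eigen}, together with the eigenvalue table of Lemma~\ref{lem:eigen}; no computation of intersection numbers is needed. Recall that every character $\xi \in \Xcheck$ is a common eigenvector of each adjacency matrix $A_i$ for the action $\bullet$, with eigenvalue (\ref{eq:eigen}), and that by Lemma~\ref{lem:eigen} this eigenvalue $p_i(\ttop(\xi))$ depends only on $j := \ttop(\xi) \in J_n = \{0,1,\dots,n\}$.

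First I would set, for each $j \in J_n$,
\[
C(X)_j := \operatorname{span}\{\xi \in \Xcheck \mid \ttop(\xi)=j\},
\]
and let $E_j \in C(X\times X)$ be the element whose $\bullet$-action on $C(X)$ is the orthogonal projection $P_j$ onto $C(X)_j$. Since $\Xcheck$ is an orthonormal basis, the $C(X)_j$ are mutually orthogonal and sum to $C(X)$, and each is nonzero because $v\geq 2$ lets one realize any value of $\ttop$ (the trivial character for $j=0$; for $j\geq1$ a character with $\xi_j\neq1$ and $\xi_\ell=1$ for $\ell>j$). As $A_i$ acts as $\sum_j p_i(j)P_j$ and the $\bullet$-action is faithful, this records the single identity $A_i=\sum_{j\in J_n}p_i(j)\,E_j$.

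The heart of the argument is a dimension count that avoids evaluating the determinant of $(p_i(j))$. The matrices $\{A_i\}_{i\in I_n}$ have pairwise disjoint supports and are nonzero, hence linearly independent, so $A_X=\operatorname{span}\{A_i\}$ has dimension $\#I_n=n+1$; the $E_j$ are likewise linearly independent (their actions are projections onto independent nonzero subspaces), spanning a space of dimension $\#J_n=n+1$. Since $\#I_n=\#J_n$ and $A_X\subseteq\operatorname{span}\{E_j\}$ by the displayed identity, equality of dimensions forces $A_X=\operatorname{span}\{E_j\}$. Because $E_j\bullet E_{j'}=\delta_{jj'}E_j$, this space is a commutative algebra closed under the matrix product; each relation $R_n^{-1}(i)$ is symmetric (as $\bbot(z)=\bbot(-z)$), so $A_X$ is closed under transpose, and it contains the identity $A_\infty$. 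This verifies the axioms of Definition~\ref{def:assoc-scheme} together with commutativity, so $\overrightarrow{k(n,v)}$ is a commutative association scheme.

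Finally, the convolution algebra $A_X$ is isomorphic to $\C^{n+1}$ with $\{E_j\}$ a complete system of orthogonal idempotents (here $\sum_j E_j$ is the convolution unit), so by Proposition~\ref{prop:idempotent} its primitive idempotents are exactly the $E_j$, each acting as the orthogonal projection onto $C(X)_j$; the map $j\mapsto E_j$ then identifies the set of primitive idempotents with $J_n$. The one genuinely delicate point I would treat with care is the normalization of $\bullet$: one must check that, as a matrix, $E_j$ equals $\#X$ times the orthogonal projection onto $C(X)_j$, so that it is idempotent for the convolution product and no spurious scalar factor creeps in.
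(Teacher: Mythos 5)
Your proposal is correct and follows essentially the same route as the paper: both rest on the observation that every $A_i$ acts as a scalar $p_i(j)$ on each common eigenspace $C(X)_j$, followed by the dimension count $\#I_n=\#J_n=n+1$ to conclude that the span of the $A_i$ coincides with the span of the $E_j$ and is therefore closed under the matrix product and commutative. Your explicit attention to the normalization of $\bullet$ (so that $E_j$ is $\#X$ times the usual projection matrix) is a point the paper leaves implicit until Corollary~\ref{cor:by-p}, but it is not a different argument.
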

\begin{proof}
Recall that we did not use the property of
association schemes for the kernel schemes so far.
Every $A_i$ has $C(X)_j$ as an eigenspace with
an eigenvalue. Since $C(X)$ is a direct
sum of $C(X)_j$ for $j\in J_n$, this implies that
the matrices $A_i$ mutually commute. We consider
the algebra generated by $A_i$ by the matrix product.
Each element of this algebra is determined by 
the $n+1$ eigenvalues corresponding to $J_n$.
Thus, the dimension of the algebra
as a vector space does not exceed
$n+1$, but $A_i$'s are $n+1$ linearly 
independent matrices, hence their linear
combination is closed under the matrix
multiplication. The rest of the axioms
of association schemes are easy to verify.
The description of primitive idempotents
immediately follows.
\end{proof}

\begin{corollary}
The multiplicity $m_j$ is $1$ for $j=0$ and $m_j=(v-1)v^{j-1}$ for $j>0$.
The valency $k_i$ is $1$ for $i=\infty$ and $k_i=(v-1)v^{n-i}$
for $i<\infty$.
\end{corollary}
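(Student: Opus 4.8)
The plan is to compute both quantities by direct enumeration, relying on the explicit descriptions of the eigenspaces $C(X)_j$ and the spheres $S_i$ already in hand. Throughout I take $X=Z_v^n$, so that $\Xcheck=\Zvcheck^n$ and each coordinate character ranges over the $v$-element group $\Zvcheck$, of which exactly $v-1$ are nontrivial.

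For the multiplicity, recall that $m_j=\dim C(X)_j$ and that by Corollary~\ref{cor:ker-as} the space $C(X)_j$ is spanned by the characters $\xi=(\xi_1,\ldots,\xi_n)$ with $\ttop(\xi)=j$. Since the characters form an orthonormal basis of $C(X)$, I would simply count such $\xi$. For $j=0$ the unique character with $\ttop(\xi)=0$ is the trivial one $\xi=1$, so $m_0=1$. For $j\geq 1$ the condition $\ttop(\xi)=j$ means $\xi_j\neq 1$ while $\xi_\ell=1$ for every $\ell>j$, with $\xi_1,\ldots,\xi_{j-1}$ unconstrained; this yields $v^{j-1}$ free choices for the first $j-1$ coordinates and $v-1$ for the $j$-th, hence $m_j=(v-1)v^{j-1}$.

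For the valency, recall that $k_i$ counts the $y\in X$ with $R_n(x,y)=i$ for a fixed $x$, and that $R_n(x,y)=\bbot(x-y)$ by Definition~\ref{def:bot}; substituting $z=x-y$ shows $k_i=\#S_i=\#\{z\mid \bbot(z)=i\}$. For $i=\infty$ the only solution is $z=0$, giving $k_\infty=1$. For $1\leq i\leq n$ the equation $\bbot(z)=i$ forces $z_1=\cdots=z_{i-1}=0$ and $z_i\neq 0$ while leaving $z_{i+1},\ldots,z_n$ arbitrary, so $k_i=(v-1)v^{n-i}$.

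There is no genuine obstacle here: once the identifications of $C(X)_j$ and $S_i$ supplied by Corollary~\ref{cor:ker-as} and Definition~\ref{def:bot} are invoked, both answers are pure counting. As a consistency check I would confirm $\sum_{j=0}^n m_j=\sum_{i}k_i=v^n=\#X$, both equalities reducing to the geometric sum $\sum_{j=1}^n(v-1)v^{j-1}=v^n-1$.
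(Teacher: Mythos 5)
Your proof is correct and follows exactly the paper's route: the paper likewise obtains $m_j$ by counting the characters $\xi$ with $\ttop(\xi)=j$ and $k_i$ as the cardinality of the sphere $S_i$, and your explicit enumerations (free choice of the first $j-1$ coordinate characters, resp.\ the last $n-i$ coordinates of $z$) just spell out the details the paper leaves implicit. The consistency check $\sum_j m_j=\sum_i k_i=v^n$ is a nice addition but not needed.
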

\begin{proof}
The multiplicity is the number of $\xi$ with $\ttop(\xi)=j$,
which implies the first statement. The valency is the cardinality 
of the sphere $S_i$, which implies the second.
\end{proof}
\begin{corollary}\label{cor:by-p}
For $j\in J_n$, 
the projector $E_j$ is given by
\begin{eqnarray*}
E_j(x,y)&=&
\sum_{\xi\in \Xcheck, \ttop(\xi)=j} \xi(x)\overline{\xi(y)} \\
&=&
\sum_{\xi\in \Xcheck, \ttop(\xi)=j} \xi(x-y).
\end{eqnarray*}
We have 
$$
A_i=\sum_{j\in J_n} p_i(j)E_j
$$
where $p_i(j)$ is defined in Lemma~\ref{lem:eigen}.
\end{corollary}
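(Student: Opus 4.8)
The plan is to identify $E_j$ with the kernel of the orthogonal projection onto $C(X)_j$, and then read off the spectral expansion of $A_i$ from Lemma~\ref{lem:eigen}. Recall from Corollary~\ref{cor:ker-as} that $E_j$ is the primitive idempotent projecting $C(X)$ onto the subspace $C(X)_j$ spanned by $\{\xi \mid \ttop(\xi)=j\}$, and that the characters $\xi\in\Xcheck$ form an orthonormal basis of $C(X)$ for the normalized inner product of Definition~\ref{def:inner-product}.

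First I would check that the two displayed formulas for $E_j(x,y)$ agree: since each $\xi$ is a character of the finite abelian group $X$, we have $\overline{\xi(y)}=\xi(y)^{-1}=\xi(-y)$, so $\xi(x)\overline{\xi(y)}=\xi(x-y)$ by multiplicativity, and summing over $\{\xi\mid\ttop(\xi)=j\}$ converts the first expression into the second. Next I would verify that the kernel $K(x,y):=\sum_{\ttop(\xi)=j}\xi(x)\overline{\xi(y)}$ really is $E_j$. Using the convolution action of Theorem~\ref{th:convolution}, for $h\in C(X)$ one computes
$$(K\bullet h)(x)=\frac{1}{\#X}\sum_{y\in X}\sum_{\ttop(\xi)=j}\xi(x)\overline{\xi(y)}h(y)=\sum_{\ttop(\xi)=j}\Bigl(\tfrac{1}{\#X}\sum_{y}h(y)\overline{\xi(y)}\Bigr)\xi(x)=\sum_{\ttop(\xi)=j}(h,\xi)\,\xi(x).$$
By orthonormality of the $\xi$, the right-hand side is exactly the orthogonal projection of $h$ onto $C(X)_j$; hence $K$ acts as the projector onto $C(X)_j$, and by the characterization in Corollary~\ref{cor:ker-as} we get $K=E_j$. (One may also confirm $K\bullet K=K$ directly via the orthogonality relation $\tfrac{1}{\#X}\sum_y(\eta\xi^{-1})(y)=\delta_{\xi\eta}$.)

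For the second identity, Lemma~\ref{lem:eigen} says that every $\xi$ with $\ttop(\xi)=j$ is an eigenvector of the convolution operator $A_i$ with eigenvalue $p_i(j)$; thus $A_i$ acts as the scalar $p_i(j)$ on $C(X)_j$. Since $E_j$ is the projector onto $C(X)_j$ and $C(X)=\bigoplus_{j\in J_n}C(X)_j$, the operators $A_i$ and $\sum_{j\in J_n}p_i(j)E_j$ agree on every summand, hence on all of $C(X)$. Finally, because each $C(X)_j$ is nonzero, the convolution representation of the $(n+1)$-dimensional algebra $A_X\cong\C^{n+1}$ on $C(X)$ is faithful (an element is determined by its $n+1$ eigenvalues), so the equality of operators lifts to the equality $A_i=\sum_{j\in J_n}p_i(j)E_j$ inside $A_X\subset C(X\times X)$. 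The only points needing care are tracking the normalizing factor $\tfrac{1}{\#X}$ in the convolution so that $K$ emerges as the genuine orthogonal projector, and invoking faithfulness to pass from operators back to kernels; neither is a serious obstacle.
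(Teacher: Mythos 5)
Your proof is correct and follows essentially the same route as the paper: the first formula is the standard expression for the orthogonal projector onto the span of the orthonormal family $\{\xi \mid \ttop(\xi)=j\}$ (Corollary~\ref{cor:ker-as}), and the second identity is the spectral decomposition coming from the eigenvalue computation of Lemma~\ref{lem:eigen}. The extra details you supply (the character identity $\xi(x)\overline{\xi(y)}=\xi(x-y)$, the explicit convolution computation with the $\tfrac{1}{\#X}$ normalization, and the faithfulness of the action to pass back from operators to kernels) are all accurate elaborations of what the paper leaves implicit.
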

\begin{proof}
The first half describes the projection to the eigenspace
corresponding to $j$ by an orthonomal basis. The second
is because $A_i$ has the eigenvalue $p_i(j)$ for that eigenspace.
\end{proof}
For $x-y \in S_i$, we define
\begin{eqnarray*}
q_j(i)&:=& E_j(x,y) = 
\sum_{\xi\in \Xcheck, \ttop(\xi)=j} \xi(x-y) \\
&=&
\begin{cases}
1 & \mbox{ if } 0=j \\
(v-1)v^{j-1} & \mbox{ if } 0<j<i \leq \infty \\
-v^{j-1} & \mbox{ if } j=i \\
0 & \mbox{ if } j>i.
\end{cases}
\end{eqnarray*}
This computation is dual to that for $p_i(j)$, so omitted.
\begin{corollary}\label{cor:by-q}
$$
E_j=\sum_{i \in I} q_j(i)A_i.
$$
\end{corollary}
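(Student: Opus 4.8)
The plan is to exploit the fact that $E_j$ already lies in the Bose-Mesner algebra $A_X$, whose underlying vector space carries the adjacency matrices $\{A_i\}_{i\in I}$ as a basis. Indeed, by Definition~\ref{def:assoc-scheme} the space $A_X$ is the image of the injection $R^\dagger:C(I)\to C(X\times X)$, so the $A_i$ are linearly independent and span $A_X$; and by Corollary~\ref{cor:ker-as} the primitive idempotent $E_j$ is an element of $A_X$. Hence there are unique coefficients $c_i\in\C$ with $E_j=\sum_{i\in I}c_iA_i$, and the whole task is to show $c_i=q_j(i)$.

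First I would pin down the coefficients by evaluating both sides at a pair $(x,y)$ with $R(x,y)=i$, equivalently $x-y\in S_i$. Since the relations $R^{-1}(i')=S_{i'}$ are pairwise disjoint and each $A_{i'}$ is the indicator function of $R^{-1}(i')$, we have $A_{i'}(x,y)=\delta_{i',i}$, so the right-hand side collapses to $c_i$. On the other hand, by Corollary~\ref{cor:by-p} we have $E_j(x,y)=\sum_{\xi\in\Xcheck,\,\ttop(\xi)=j}\xi(x-y)$, and this is precisely the quantity $q_j(i)$ by its very definition (the display preceding the statement). Comparing gives $c_i=q_j(i)$ and therefore the asserted identity. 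In effect this is simply the expansion of the matrix $E_j$, which is constant on each relation, in the adjacency basis, and it is dual to the expansion $A_i=\sum_j p_i(j)E_j$ of Corollary~\ref{cor:by-p}.

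The only point that genuinely needs care is the well-definedness of $q_j(i)$: the value $E_j(x,y)$ must depend only on the class $i=R(x,y)$ and not on the particular representative $(x,y)\in R^{-1}(i)$. This is guaranteed by the membership $E_j\in A_X$ (equivalently, by translation invariance of $E_j(x,y)=\sum_{\ttop(\xi)=j}\xi(x-y)$ together with the computation that evaluates this sum as a function of $\bbot(x-y)$ alone). So I do not expect a real obstacle here; once the constancy of $E_j$ on each sphere $S_i$ is acknowledged, the corollary is immediate, and the verification that $\sum_{\ttop(\xi)=j}\xi(x-y)$ indeed takes the tabulated value $q_j(i)$ is the routine dual of the calculation already carried out for $p_i(j)$ in Lemma~\ref{lem:eigen}.
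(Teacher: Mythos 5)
Your proposal is correct and follows essentially the same route as the paper: both arguments evaluate the two sides at a pair $(x,y)$ with $x-y\in S_i$, note that $A_{i'}(x,y)=\delta_{i',i}$ collapses the sum to the single coefficient, and identify that coefficient with $q_j(i)$, which is defined precisely as $E_j(x,y)$ on the sphere $S_i$. The extra scaffolding you add (linear independence of the $A_i$, uniqueness of the expansion, and well-definedness of $q_j(i)$ via translation invariance) is sound but not needed beyond what the paper's one-line verification already contains.
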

\begin{proof}
We evaluate the both sides at $(x,y)$.
The left hand side is $q_j(i)$ for $x-y\in S_i$,
which is equal to the right hand side, since
$A_i(x,y)=\chi_i(x-y)$.
\end{proof}

\begin{corollary}\label{cor:Jhat-kernel}
In $\overrightarrow{k(\infty,v)}$, the set $\Jhat$
is the inductive limit of $J_n \to J_{n+1}$, 
and hence naturally identified with $\N$. 
\end{corollary}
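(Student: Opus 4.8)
The plan is to compute the transition partial surjections $J_{n+1}\parto J_n$ produced by the functor $J$ of Corollary~\ref{cor:J}, and to show that each one is simply the inverse of the standard inclusion $J_n=\{0,\ldots,n\}\hookrightarrow\{0,\ldots,n+1\}=J_{n+1}$. Granting this, the inductive system underlying $\Jhat$ is the chain of these inclusions, so $\Jhat=\varinjlim J_n=\bigcup_n\{0,\ldots,n\}=\N$, which is exactly the assertion. This mirrors the behaviour already recorded for profinite abelian groups, and since the kernel scheme is a translation scheme whose primitive idempotents are governed by characters, I expect the same mechanism to be at work.

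First I would unwind the map $\Psi\colon A_{X_n}\to A_{X_{n+1}}$ attached to the structure morphism $f=p_{n+1,n}\colon X_{n+1}\to X_n$ (deletion of the last coordinate). By Theorem~\ref{th:convolution} it is the pullback $\Psi(A)(x,z)=A(f(x),f(z))$. Writing $E_j^{(n)}$ for the primitive idempotent of $\overrightarrow{k(n,v)}$ indexed by $j\in J_n$ and invoking the character formula of Corollary~\ref{cor:by-p}, namely $E_j^{(n)}(a,b)=\sum_{\eta,\;\ttop(\eta)=j}\eta(a-b)$ with $\eta$ ranging over the characters of $X_n$, I would compute
\begin{equation*}
\Psi(E_j^{(n)})(x,z)=\sum_{\eta,\;\ttop(\eta)=j}\eta\bigl(f(x-z)\bigr)=\sum_{\eta,\;\ttop(\eta)=j}(\eta\circ f)(x-z).
\end{equation*}
The decisive point is that trivial extension $\eta\mapsto\eta\circ f$, which appends the trivial character in the new coordinate, is a bijection from the characters of $X_n$ with $\ttop=j$ onto the characters of $X_{n+1}$ with $\ttop=j$; this is valid because $j\le n$ forces the last coordinate of every $\ttop=j$ character of $X_{n+1}$ to be trivial, so no further such characters appear at level $n+1$. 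Comparing the displayed sum with Corollary~\ref{cor:by-p} at level $n+1$ then yields $\Psi(E_j^{(n)})=E_j^{(n+1)}$.

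Since each $\Psi(E_j^{(n)})$ is thus a single primitive idempotent, the construction of $J$ on morphisms (Corollary~\ref{cor:J}) shows that the partial surjection $J_{n+1}\parto J_n$ sends $j\mapsto j$ for $0\le j\le n$ and leaves $n+1$ outside its domain; this is precisely the inverse of the inclusion $J_n\hookrightarrow J_{n+1}$. Consequently the fibre in $J_{n+1}$ of any $j\in J_n$ is the singleton $\{j\}$, so by Proposition~\ref{prop:discrete} every $j\in J_n$ is already isolated at level $n$ and determines a unique point of $\Jhat$, while conversely every point of $\Jhat$ arises in this way (Corollary~\ref{cor:E-j}). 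This gives a bijection $\varinjlim J_n\xrightarrow{\sim}\Jhat$, and $\varinjlim J_n=\N$ because $J_n=\{0,\ldots,n\}$. The only step requiring genuine care is the bijection of $\ttop=j$ characters across the two levels, i.e.\ checking that extension by the trivial character neither alters $\ttop$ nor omits any $\ttop=j$ character at level $n+1$ when $j\le n$; the rest is formal functoriality.
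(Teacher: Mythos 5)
Your proof is correct and fills in exactly the argument the paper leaves implicit: the paper states this corollary without proof, relying on the identification of primitive idempotents with $\ttop$-classes of characters (Corollary~\ref{cor:by-p}), and your computation that $\Psi(E_j^{(n)})=E_j^{(n+1)}$ via the bijection $\eta\mapsto\eta\circ f$ on $\ttop=j$ characters is the intended mechanism. The key check — that appending the trivial character in the new coordinate preserves $\ttop$ and exhausts all $\ttop=j$ characters of $X_{n+1}$ for $j\le n$ — is handled correctly, and the passage from singleton fibres to $\Jhat=\varinjlim J_n=\N$ via isolation is sound.
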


Now we pass to the pro-kernel scheme, i.e., take
the inductive limit of $A_{\overrightarrow{k(n,v)}}$
to obtain 
$A_{\Xhat}=A_{\overrightarrow{k(\infty,v)}}$. 
\begin{proposition}\label{prop:pro-k-AE}
Let $X_n$ denote the kernel scheme
$\overrightarrow{k(n,v)}$ with $I_n$ and
primitive idempotents $J_n$, and $\Xhat$ the pro-kernel scheme
$\overrightarrow{k(\infty,v)}$ with $\Ihat$
and the primitive idempotents $\Jhat$. For $i\in \Ihat\setminus\{\infty\}$,
we denote by $A_i$ the image of $\delta_i \in C_{lc}(\Ihat)$
by the isomorphism $C_{lc}(\Ihat) \to A_\Xhat$. Note that
$\delta_\infty \notin C_{lc(\Ihat)}$.
Through $A_{X_n} \to A_{\Xhat}$, $E_j$ is mapped to $E_j$,
and $A_i$ for $i\in \{1,2,\ldots,n\}$ is mapped to $A_i$,
while $A_{\infty}\in A_{X_n}$ is mapped to an element corresponding to the
indicator function
$\chi_{\{n+1,\ldots,\infty\}}\in C_{lc}(\Ihat)$.
It is convenient to denote the corresponding element
in $A_\Xhat$ by the symbol
$$
\sum_{i\in \{n+1,\ldots,\infty\}} A_i,
$$
since
$$
\chi_{\{n+1,\ldots,\infty\}}
=
\sum_{i\in \{n+1,\ldots,\infty\}} \delta_i
$$
holds. (Note that $A_\infty$ does not exist in $A_\Xhat$.)
\end{proposition}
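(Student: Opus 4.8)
The plan is to separate the two limit isomorphisms of Definition~\ref{def:Bose-Mesner}: the assertions about $A_i$ and $A_\infty$ live on the Hadamard side $A_{\Xhat}\cong C_{lc}(\Ihat)$, while the assertion about $E_j$ lives on the convolution side $A_{\Xhat}\cong C_c(\Jhat)$, so I would prove them independently.

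For the adjacency matrices, I would first recall that by Definition~\ref{def:Bose-Mesner} the map $A_{X_n}\to A_{\Xhat}$ is, through the Hadamard isomorphisms $C(I_m)\xrightarrow{\sim}A_{X_m}$ sending $\delta_i\mapsto A_i$, identified with the canonical inclusion $C(I_n)\hookrightarrow \varinjlim C(I_m)=C_{lc}(\Ihat)$. By Lemma~\ref{lem:density}(\ref{enum:dense}) this inclusion is pullback along the projection $\pr_{I_n}:\Ihat\to I_n$, hence sends $\delta_i$ to $\chi_{\pr_{I_n}^{-1}(i)}$, so the whole claim reduces to computing fibers of $\pr_{I_n}$ from the structure maps described in Proposition~\ref{prop:pro-k}. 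For $i\in\{1,\dots,n\}$ one has $\pr_{I_n}^{-1}(i)=\{i\}$ (every index exceeding $n$ projects to $\infty$), so $A_i\mapsto\delta_i$, which is exactly $A_i\in A_{\Xhat}$; and $\pr_{I_n}^{-1}(\infty)=\{n+1,n+2,\dots\}\cup\{\infty\}$, so $A_\infty\mapsto\chi_{\{n+1,\dots,\infty\}}$ as claimed. This function is locally constant because $\{n+1,\dots,\infty\}$ is the complement of the finite set $\{1,\dots,n\}$, hence clopen in the one-point compactification $\Ihat$; by contrast $\{\infty\}$ is closed but not open in $\Ihat$, so $\delta_\infty$ is not locally constant and $A_\infty$ has no counterpart in $A_{\Xhat}\cong C_{lc}(\Ihat)$.

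For the primitive idempotents I would show that each transition map $\Psi:A_{X_n}\to A_{X_{n+1}}$ of the inductive system satisfies $\Psi(E_j)=E_j$ for $j\in J_n$; passing to the limit then yields the assertion. Recall from the proof of Theorem~\ref{th:convolution} that $\Psi$ is pullback along $f\times f$, where $f:X_{n+1}\to X_n$ deletes the last coordinate, i.e.\ $\Psi(A)(x,z)=A(f(x),f(z))$. Applying this to the character-sum formula $E_j(x,y)=\sum_{\xi\,:\,\ttop(\xi)=j}\xi(x-y)$ of Corollary~\ref{cor:by-p} gives $\Psi(E_j)(x,y)=\sum_{\xi\in\check{X_n},\,\ttop(\xi)=j}\xi(f(x-y))$, and since precomposition with $f$ sends a character $\xi=(\xi_1,\dots,\xi_n)$ to $(\xi_1,\dots,\xi_n,1)\in\check{X_{n+1}}$, it identifies the characters $\xi\in\check{X_n}$ with $\ttop(\xi)=j\le n$ with exactly the characters $\eta\in\check{X_{n+1}}$ with $\ttop(\eta)=j$ (namely those whose last coordinate is trivial). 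This yields $\Psi(E_j)=E_j\in A_{X_{n+1}}$ directly. A structural alternative avoiding the explicit formula: since $\Psi$ is a convolution homomorphism by Theorem~\ref{th:convolution}(2), $\Psi(E_j)$ is a convolution idempotent in the commutative algebra $A_{X_{n+1}}$ (Corollary~\ref{cor:ker-as}), hence a sum $\sum_{k\in S}E_k$ of primitive idempotents; the proposition preceding Definition~\ref{def:isolate} identifies its image as the projection onto $C(X_n)_j\subseteq C(X_{n+1})$, and the character description shows $C(X_n)_j=C(X_{n+1})_j$, forcing $S=\{j\}$.

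The step needing the most care is the identification $C(X_n)_j=C(X_{n+1})_j$ for $j\le n$ — equivalently, that precomposition with $f$ is a $\ttop$-preserving bijection between the two relevant character sets — since this is precisely what makes the idempotents coincide rather than merely share an image. Everything else amounts to unwinding the two limit isomorphisms of Definition~\ref{def:Bose-Mesner} and reading off the fibers of $\pr_{I_n}$, which is routine.
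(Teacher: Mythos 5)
Your proof is correct and takes essentially the same route as the paper: the statements about $A_i$ and $A_\infty$ are read off from the fibers of the projection $\Ihat \to I_n$ exactly as in the paper's proof. Your character-level verification that $\Psi(E_j)=E_j$ at each finite stage is simply an explicit unpacking of what the paper obtains by citing the injectivity of $J_n \to \Jhat$ (Corollary~\ref{cor:Jhat-kernel}) together with Proposition~\ref{prop:orthogonal}, so it is a welcome bit of added detail rather than a different argument.
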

\begin{proof}
According to Proposition~\ref{prop:pro-k} 
we have $\Ihat=\N_{>0}\cup \{\infty\}$ and $\Jhat=\N$ by Corollary~\ref{cor:Jhat-kernel}.
Since $J_n \to \Jhat$ is an injection, $E_j\in A_{X_n}$
is mapped to $E_j \in A_\Xhat$ (see Proposition~\ref{prop:orthogonal}).
The projection $\Ihat \to I_n$ is obtained by mapping $i \mapsto i$
for $1\leq i\leq n$, and $i\mapsto \infty$ for $i>n$.
Since the indicator function 
$\delta_i\in C(I_n)$ is mapped to $\delta_i \in C_{lc}(\Ihat)$ for $1\leq i\leq n$,
$A_i$ is mapped to $A_i$. The description of the image of $A_\infty$
is by the definition of $A_{X_n}\to A_{\Xhat}$, which comes from
$\Ihat \to I_n$.
\end{proof}
Everything is well-defined, except $i_0=\infty \in \Ihat$.
In fact, the operators $A_i$, $E_j$,
the values $p_i(j)$, $q_j(i)$, $\mu_\Jhat(j)=m_j$, 
$\mu_\Ihat(i)=k_i/\#X=(v-1)v^{-i}$ are all well-defined,
except that $A_\infty$ and $p_\infty(j)$
are not well-defined, since there is no
corresponding element to $A_\infty$ in $A_\Xhat$. Indeed, $A_\infty$ would 
correspond to 
the indicator function of $\infty \in \Ihat$,
which is not continuous and hence is outside $A_\Xhat$.
This forces to choose something which replaces $A_\infty$.
One possible way is to replace it with $E_0$ because
$$
E_0=\sum_{i \in \Ihat} A_i
$$
holds as a function (constant value $1$) on $\Ihat$. 
then 
$$
\{A_i \mid i\in \N_{>0}\}\cup \{E_0\}
$$
is a linear basis of $C_{lc}(\Ihat)$, although
this is not an orthogonal basis. This is the method
chosen in \cite[(8.3), (8.9)]{BARG-SKRIGANOV} to replace 
an infinite sum with a finite sum 
($\alpha_0$ in (8.8) in their paper is nothing but $E_0$),
which gives a precise definition of the adjacency algebra.
In our interpretation in Proposition~\ref{prop:pro-k-AE}, 
the expression in Corollary~\ref{cor:by-p} becomes
$$
A_i=\sum_{j\in \Jhat} p_i(j)E_j 
$$
which is a finite sum (see Lemma~\ref{lem:eigen}) and holds for $i\neq \infty$
(since the equality holds in $A_{X_i}$), except
the problem that for $i=\infty$, the value $p_\infty(j)=v^{-n}$
converges to $0$, while the summation is over infinitely many $E_j$.
In Corollary~\ref{cor:by-q} the expression
$$
E_j=\sum_{i \in \Ihat} q_j(i)A_i
$$
seems an infinite sum, but $q_j(i)=(v-1)v^{j-1}$ is constant for $i>j$,
and thus the right hand side lies in $C_{lc}(\Ihat)$,
or more precisely, the seemingly infinite sum is a finite sum
by the usage of the symbol in Proposition~\ref{prop:pro-k-AE},
and the equality does hold (since it holds in $A_{X_j}$.)
This may be translated into a finite summation formula for $j\geq 1$:
\begin{eqnarray*}
E_j&=&\sum_{i \leq j} (q_j(i)-(v-1)v^{j-1})A_i + (v-1)v^{j-1}E_0 \\
&=&(\sum_{i<j} -(v-1)v^{j-1}A_i) -v^{j}A_j + (v-1)v^{j-1}E_0.
\end{eqnarray*}
\subsection{Ordered Hamming schemes}
Martin and Stinson \cite[Section~1.3]{MARTIN-STINSON}
introduced ordered Hamming schemes as Delsarte's extension 
of length $s$ \cite[Section~2.5]{DELSARTE} of the kernel schemes.
\begin{proposition} \label{prop:extension} (Extension of length $s$)

Let $R:X \times X \to I$ be a commutative association scheme. 
Let $s$ be a positive integer, and $S_s$ a symmetric group
of degree $s$, acting on the direct product $I^s$.
We obtain a 
quotient
map $I^s \to I^s/S_s$. Then, the composition
$$
X^s \times X^s \to I^s \to I^s/S_s
$$
is a commutative association scheme.
For $\bar{i}\in I^s/S_s$, the corresponding
Hadamard primitive idempotent is
the sum of 
$$A_{i_1}\otimes\cdots \otimes A_{i_s}$$
over $(i_1,\ldots, i_s)\in I^s$ which maps to $\bar{i}$.
The set of primitive idempotents are naturally identified
with $J^s/S_s$, as follows.
For $\bar{j} \in J^s/S_s$, the corresponding
primitive idempotent is the sum of 
$$E_{j_1}\otimes\cdots \otimes E_{j_s}$$
over $(j_1,\ldots, j_s)\in J^s$ which maps to $\bar{j}$.
The corresponding eigenspace is the direct
sum of the tensors of the eigenspaces
$$
 C(X)_{j_1}\otimes\cdots \otimes C(X)_{j_s}
$$
over $(j_1,\ldots, j_s)\in J^s$ which maps to $\bar{j}$.
\end{proposition}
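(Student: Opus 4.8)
The plan is to realize the length-$s$ extension as the algebra of $S_s$-invariants inside the $s$-fold tensor (direct) product of $(X,R,I)$ with itself, and to read off all four assertions from the tensor decomposition together with the elementary fact that the invariants of a permutation action on a basis are spanned by the orbit sums.

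First I would handle the unsymmetrised tensor power. Under the identification $C(X^s\times X^s)=C(X\times X)^{\otimes s}$, the surjection $R^{\times s}\colon X^s\times X^s\to I^s$, $((x_\ell),(y_\ell))\mapsto (R(x_\ell,y_\ell))_\ell$, is a commutative association scheme whose adjacency matrix for $(i_1,\dots,i_s)$ is the Kronecker product $A_{i_1}\otimes\cdots\otimes A_{i_s}$, and whose Bose--Mesner algebra is $A_X^{\otimes s}$; the three axioms of Definition~\ref{def:assoc-scheme} and commutativity follow factorwise. Since the normalising constant of the convolution product on $X^s$ factors as $1/\#X^s=\prod_\ell 1/\#X$, the product $\bullet_{X^s}$ of Theorem~\ref{th:convolution} is the tensorisation of $\bullet_X$; consequently $E_{j_1}\otimes\cdots\otimes E_{j_s}$, for $(j_1,\dots,j_s)\in J^s$, are pairwise orthogonal primitive idempotents of $A_X^{\otimes s}$ under $\bullet_{X^s}$, with eigenspaces $C(X)_{j_1}\otimes\cdots\otimes C(X)_{j_s}$. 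Here I use the commutativity of $(X,R,I)$ and the orthogonal decomposition $C(X)=\bigoplus_{j\in J}C(X)_j$ for finite commutative schemes (cf.\ Proposition~\ref{prop:orthogonal}).

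Next I would introduce the $S_s$-action permuting the tensor factors. It acts by automorphisms of \emph{both} the Hadamard and the matrix (convolution) products, fixes the identity, and commutes with the transpose. Hence the invariant subspace $\big(A_X^{\otimes s}\big)^{S_s}$ is a commutative subalgebra for both products, closed under transpose. Because $S_s$ merely permutes the natural basis $\{A_{i_1}\otimes\cdots\otimes A_{i_s}\}$, this invariant subspace has as basis the orbit sums $\tilde A_{\bar i}:=\sum A_{i_1}\otimes\cdots\otimes A_{i_s}$ taken over $(i_1,\dots,i_s)$ mapping to $\bar i\in I^s/S_s$. Each $\tilde A_{\bar i}$ is a $\{0,1\}$-matrix, namely the adjacency matrix of the fibre $\bar R^{-1}(\bar i)$ of the composite $\bar R\colon X^s\times X^s\to I^s/S_s$, since that fibre is the disjoint union of the relations $R^{\times s,-1}(i_1,\dots,i_s)$ over the orbit. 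Therefore the span of the $\tilde A_{\bar i}$ equals $\big(A_X^{\otimes s}\big)^{S_s}$, which exhibits it as a Bose--Mesner algebra and proves that $(X^s,\bar R,I^s/S_s)$ is a commutative association scheme with Hadamard primitive idempotents the $\tilde A_{\bar i}$, exactly as claimed.

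Finally I would identify the convolution-primitive idempotents and their eigenspaces. Under $\bullet_{X^s}$ the algebra $A_X^{\otimes s}$ is $\C^{J^s}$ with componentwise multiplication, the $S_s$-action being the permutation action on $J^s$; thus $\big(A_X^{\otimes s}\big)^{S_s}\cong\C^{J^s/S_s}$, whose primitive idempotents are precisely the indicator functions of the $S_s$-orbits. Translating back, these are the orbit sums $\tilde E_{\bar j}:=\sum E_{j_1}\otimes\cdots\otimes E_{j_s}$ over $(j_1,\dots,j_s)\mapsto\bar j$, with eigenspace $\bigoplus C(X)_{j_1}\otimes\cdots\otimes C(X)_{j_s}$ summed over the same orbit, giving the identification of the set of primitive idempotents with $J^s/S_s$. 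The one genuinely non-formal step---the main obstacle---is this last identification: I must ensure that passing to invariants turns the $E_{j_1}\otimes\cdots\otimes E_{j_s}$ into idempotents that are still \emph{primitive}, not merely idempotent. Rather than a direct matrix computation, I would settle it through the semisimplification $A_X^{\otimes s}\cong\C^{J^s}$ above, where the invariants are literally $\C^{J^s/S_s}$ and the orbit indicators are manifestly the atoms.
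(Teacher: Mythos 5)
Your proposal is correct and follows essentially the same route as the paper: form the $s$-fold tensor power scheme with Bose--Mesner algebra $A_X^{\otimes s}$, pass to $S_s$-invariants (noting the action preserves both products and the transpose), identify the invariants with the span of orbit sums, and read off primitivity of the Hadamard and convolution idempotents from mutual orthogonality within the semisimple decomposition. Your explicit use of $A_X^{\otimes s}\cong\C^{J^s}$ to settle primitivity of the $\tilde E_{\bar j}$ is just a slightly more spelled-out version of the paper's ``the same argument applies for the primitive idempotents $E_{\bar j}$.''
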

A proof is not given in the paper by Delsarte,
but found in an unpublished (but reachable) paper
by Godsil \cite[3.2 Corollary]{GODSIL}. Because 
he did not mention the eigenspaces, which
we need later, we shall recall a proof. 
\begin{proof}
Let
$$
X^s\times X^s \to I^s
$$
be the direct product association scheme. Its Bose-Mesner algebra
$A_{X^s}=A_{X}^{\otimes s}$ 
is generated by the $s$-fold kronecker products
of $A_i$'s for $i\in I$, which satisfies the axiom of association 
scheme. The symmetric group $S_s$ acts on $A_{X^s}$ by permutation.
We take the fixed part $A_{X^s}^{S_s}$ of $A_{X^s}$. 
Since $S_s$
preserves the two products and the transpose, fix the $i_0$ and the $j_0$, 
$A_{X^s}^{S_s}$ is a Bose-Mesner algebra. 
As a vector space, this is canonically isomorphic
to $C(I^s)^{S_s}\cong C(I^s/S_s)$, induced by $I^s \to I^s/S_s$.
For an $\bar{i}\in I^s/S_s$, its inverse image is 
the set of $(i_1,\ldots,i_s)$ mapped to $\bar{i}$,
and the set of $A_{i_1}\otimes \cdots \otimes A_{i_s}$ are mutually
disjoint Hadamard idempotent, whose sum is an Hadamard idempotent 
$A_{\bar{i}}$.
This must be primitive in $A_{X^s}^{S_s}$, since 
the Hadamard product satisfies
$A_{\bar{i}}\circ A_{\bar{i}'}=\delta_{\bar{i}\ \bar{i}'}$,
and their sum is 
the Hadamard unit $J_{A_{X^s}}$. The same argument applies
for the primitive idempotents $E_{\bar{j}}$.
The description of the corresponding eigenspace follows.
\end{proof}

This construction, when applied to the kernel schemes $X_n$, 
$I_n=\{1,2,\ldots,n\}\cup\{\infty\}$,
and $R_n:X_n\times X_n\to I_n$ given
Definition~\ref{def:kernel}), yields an ordered Hamming scheme.
\begin{definition}
The ordered Hamming scheme, denoted by $\overrightarrow{H}(s,n,v)$,
is defined as the extension of length $s$ of the kernel scheme
$\overrightarrow{k(n,v)}$.
\end{definition}

The following is easy to check.
\begin{proposition}\label{prop:pro-ordered}
The projections $X_{n+1}^s \to X_n^s$ and 
$I_{n+1}^s/S_s \to I_n^s/S_s$ coming from Proposition~\ref{prop:pro-k}
give a projective system 
$\overrightarrow{H}(s,n+1,v)\to \overrightarrow{H}(s,n,v)$
of commutative association schemes, namely, a profinite
association scheme, which
we call the pro-ordered Hamming scheme and denote by 
$\overrightarrow{H}(s,\infty,v)$.
The $\Ihat$ of this projective association schemes
is $(\N_{>0}\cup\{\infty\})^s/S_s$,
and $\Jhat$ is $\N^s/S_s$.
\end{proposition}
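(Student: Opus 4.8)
The plan is to treat the three assertions in turn, reducing each to the kernel-scheme computation of Proposition~\ref{prop:pro-k} together with two soft facts: that finite products commute with both $\varprojlim$ and $\varinjlim$, and that a finite-group quotient commutes with each kind of limit. First I would verify that the factorwise projections assemble into a morphism in $\AS_\csurj$. The maps $X_{n+1}^s \to X_n^s$ and $I_{n+1}^s \to I_n^s$ obtained by applying the kernel-scheme structure maps in each coordinate are $S_s$-equivariant, so the latter descends to $I_{n+1}^s/S_s \to I_n^s/S_s$; the defining square of a morphism of association schemes commutes for $\overrightarrow{H}(s,n+1,v)\to\overrightarrow{H}(s,n,v)$ because it commutes already before passing to the $S_s$-quotient (by Proposition~\ref{prop:pro-k} applied coordinatewise), and taking quotients preserves commutativity. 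Surjectivity of $X_{n+1}^s \to X_n^s$ follows from that of $X_{n+1}\to X_n$, and each $\overrightarrow{H}(s,n,v)$ is commutative by Proposition~\ref{prop:extension}; hence we indeed obtain a commutative profinite association scheme.

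For $\Ihat = \varprojlim (I_n^s/S_s)$ the key step is the interchange of $\varprojlim$ with the $S_s$-quotient. I would record the general lemma: for a projective system of finite sets $Y_n$ carrying compatible actions of a finite group $G$, the natural map $(\varprojlim Y_n)/G \to \varprojlim (Y_n/G)$ is a bijection. Surjectivity holds because the fibers over a compatible family of $G$-orbits form a projective system of nonempty finite sets, whose limit is nonempty. Injectivity is where the finiteness of $G$ enters: if $y,y'\in \varprojlim Y_n$ have the same image in every $Y_n/G$, then the sets $\{g\in G \mid g y_n = y'_n\}$ are nonempty and, by equivariance of the structure maps, decrease as $n$ grows, so their intersection over the finite set $G$ is nonempty and yields a single $g$ with $gy=y'$. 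Applying this with $Y_n = I_n^s$ and $G=S_s$, and using that $\varprojlim$ commutes with the finite product, gives $\Ihat = (\varprojlim I_n)^s/S_s = (\N_{>0}\cup\{\infty\})^s/S_s$ by Proposition~\ref{prop:pro-k}.

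For $\Jhat$ the first task is to identify the structure morphisms of the $J$-system. By Proposition~\ref{prop:extension} the primitive idempotents of $\overrightarrow{H}(s,n,v)$ are indexed by $J_n^s/S_s$, with $E_{\bar{j}}$ the sum of the tensors $E_{j_1}\otimes\cdots\otimes E_{j_s}$ over the orbit of $\bar{j}$. Since on the kernel factors the transition map carries $E_j$ to $E_j$ under the inclusion $J_n=\{0,\dots,n\}\hookrightarrow J_{n+1}$ (Proposition~\ref{prop:pro-k-AE}), the induced map carries $E_{\bar{j}}$ for the $n$-th scheme to $E_{\bar{j}}$ for the $(n+1)$-st; thus the functor $J$ of Corollary~\ref{cor:J} realizes the partial surjection $J_{n+1}^s/S_s \parto J_n^s/S_s$ as the partial inverse of the injection $J_n^s/S_s \hookrightarrow J_{n+1}^s/S_s$. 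Consequently $\Jhat = \varinjlim (J_n^s/S_s)$, exactly as in the kernel case (Corollary~\ref{cor:Jhat-kernel}). Now the interchange is formal: $\varinjlim$ commutes with finite products and with the coequalizer defining the $S_s$-quotient, so $\Jhat = (\varinjlim J_n)^s/S_s = \N^s/S_s$. The only genuine obstacle is the projective-limit interchange in the $\Ihat$ step, where the injectivity argument via the finite group $S_s$ must be handled with care; the inductive-limit interchange for $\Jhat$ is automatic.
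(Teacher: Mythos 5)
Your proposal is correct; the paper itself offers no proof of this proposition (it only says ``The following is easy to check''), so your argument simply supplies the verification left to the reader. You correctly isolate the one point that is not purely formal, namely the interchange $(\varprojlim I_n^s)/S_s \cong \varprojlim (I_n^s/S_s)$, and your surjectivity argument (limit of nonempty finite fibers) and injectivity argument (a directed family of nonempty subsets of the finite group $S_s$ has nonempty intersection) are both sound; the identification of the transition maps on the $J$-side via $\Psi(E_{\bar j})=E_{\bar j}$ and Proposition~\ref{prop:extension} likewise matches how the paper handles the kernel case in Corollary~\ref{cor:Jhat-kernel}.
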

We shall mention on this profinite scheme 
later in the last of Section~\ref{sec:t-s}. 

\section{Delsarte theory for profinite association schemes}
\label{sec:delsarte}
Here we extend Delsarte theory introduced in \cite{DELSARTE}.
The method here follows Kurihara-Okuda
\cite{KURIHARA-OKUDA}, which generalizes Delsarte theory 
to compact homogeneous spaces. 
In this section, let $(X_\lambda, R_\lambda, I_\lambda)$
be a profinite association scheme, and $\Xhat$, $\Ihat$,
$\Jhat$, $A_{\Xhat}$ be those defined in Section~\ref{sec:def-pro}.
\subsection{Multiset and Averaging functional}
We consider a finite multi-subset $Y$ of $\Xhat$,
which means that $Y$ is a set in which finite multiplicity
of elements is allowed and taken into account.
To make the notion rigorous, we consider 
a map from a finite set $Z$ to a set $X$
$$
g: Z \to X.
$$
Then the image $Y:=g(Z)$ in $X$ has a natural
finite multiset structure, where the 
multiplicity of $y\in Y$ is the cardinality
of the fiber $\#g^{-1}(y)$.
For $S\subset X$, we use the notation
$$
\#(Y\cap S):=\#(Z\cap g^{-1}(S)).
$$
This merely means to count the number of elements
in $Y\cap S$ with taking the multiplicity into account.
Thus, we call $Y$ a ``multi-subset'' of $X$,
and use the notation 
\begin{center}
a finite multiset $Y\subset X$
\end{center}
by an abuse of language.

For any non-empty finite multiset $Y\subset \Xhat$, 
we would like to describe the notions of 
codes and designs. We begin with defining the averaging functional.
\begin{definition}\label{def:avg}
Let $Y\subset \Xhat$ be a finite multi-subset in the sense above. 
Define the averaging functional
$$
\avgY:C(\Xhat) \to \C, \quad f \mapsto 
\frac{1}{\#Y}\sum_{x\in Y}f(x) := \frac{1}{\#Z} \sum_{z \in Z} f(g(z)).
$$
\end{definition}
By (\ref{eq:commutative}), $C_c(\Jhat)\cong A_\Xhat \cong C_{lc}(\Ihat)$
holds, and by 
$$
 A_\Xhat \to C(\Xhat \times \Xhat) 
 \stackrel{\avgY\otimes \avgY}{\to} \C,
$$
$\avgY^2:=\avgY\otimes \avgY$ defines a functional on $A_\Xhat$,
on $C_c(\Jhat)$, and on $C_{lc}(\Ihat)$. For example,
for $f\in C_{lc}(\Ihat)$, we have
\begin{eqnarray*}
\avgY^2(f)&=&\frac{1}{\#Y^2}\sum_{x,y\in Y}f\circ \Rhat(x,y) \\
&=& \frac{1}{\#Y^2}\sum_{i \in \Ihat}
\sum_{x,y\in Y, \Rhat(x,y)=i}f(i) \\
&=& \frac{1}{\#Y^2}\sum_{i \in \Ihat}
\#((Y\times Y)\cap \Rhat^{-1}(i))f(i).
\end{eqnarray*}
Note that these coefficients are the inner-distribution
of $Y$ (Delsarte \cite[Section~3.1]{DELSARTE})
multiplied by $\frac{1}{\#Y}$.
Let us denote by $\C^{\oplus \Ihat}$ the
vector space whose basis is $\Ihat$, namely,
the space of finite linear combinations of the elements 
of $\Ihat$. 
We have a mapping 
\begin{equation}\label{eq:injectivity}
\C^{\oplus \Ihat} \to C_{lc}(\Ihat)^\vee, 
\end{equation}
where $\vee$ denotes the dual (i.e.\ $\Hom(C_{lc}(\Ihat),\C)$),
by the evaluation at $i$: $i \mapsto (f \mapsto f(i))$,
which is injective since only a finite number of
linear combinations appear in $\C^{\oplus \Ihat}$,
and their support can be separated by clopen subsets.
The above computation shows that 
$\avgY^2\in C_{lc}(\Ihat)^\vee$
lies in $\C^{\oplus \Ihat}$, namely,
\begin{equation}\label{eq:inner-distribution}
\avgY^2=
\sum_{i\in I}\frac{1}{\#Y^2}(\#((Y\times Y)\cap \Rhat^{-1}(i)))\cdot i
\in \C^{\oplus \Ihat}.
\end{equation}
Note that every coefficient is non-negative.
Next we compute $\avgY^2$ on $C_c(\Jhat)$.
For $j\in \Jhat$, we have the orthogonal
projector $E_j:C(\Xhat)\to C(\Xhat)_j$.
For $f\in C(\Xhat)_j$,
$$
f(x)=\int_{y\in \Xhat} E_j(x,y)f(y)d\mu(y)=(f,\overline{E_j(x,-)})_\HS.
$$
Thus, we define $\avgY^j \in C(\Xhat)_j$ by 
\begin{equation}\label{eq:avgY}
\avgY^j(-):=\frac{1}{\#Y}\sum_{x\in Y}E_j(x,-), 
\end{equation}
which represents the averaging functional in $C(\Xhat)_j$:
\begin{equation}\label{eq:repr}
(f,\overline{\avgY^j})_\HS=\avgY(f).
\end{equation}
Then
\begin{eqnarray}\label{eq:dual-distribution}
\avgY^2(E_j)&=& \frac{1}{\#Y^2}\sum_{x,y\in Y}E_j(x,y) \nonumber \\
&=& \frac{1}{\#Y}\sum_{y\in Y}\avgY^j(y) \nonumber\\
&=& (\avgY^j, \overline{\avgY^j}) =||\avgY^j||_\HS^2\geq 0.
\end{eqnarray}
These positivities make the LP method by Delsarte possible
\cite[Thorem~3.3]{DELSARTE}.

\begin{definition}\label{def:Q}
Let us denote by $Q_j\in C_{lc}(\Ihat)$ the image of $E_j \in A_{\Xhat}$.
This may be considered as a description of the canonical isomorphism
$$
Q:C_c(\Jhat) \to C_{lc}(\Ihat), \quad \delta_j \mapsto Q_j,
$$
where $\delta_j$ means the indicator function on $\Jhat$ at $j \in \Jhat$.
(Note that $E_j \in A_{\Xhat}$ corresponds to 
$\delta_j \in C_c(\Jhat)$ in Theorem~\ref{th:commutative}, see Corollary~\ref{cor:E-j}.)
%\footnote{The notation $\delta_j$ vs $E_j$ on $\Jhat$}
Then we have an injection 
$$
\C^{\oplus \Ihat} \to C_{lc}(\Ihat)^\vee 
\stackrel{Q^\vee}{\to} C_c(\Jhat)^\vee,
$$
obtained by 
$$i \mapsto (f \mapsto f(i)) \mapsto (\delta_j \mapsto Q_j(i)).$$
\end{definition}
\begin{definition}
The above morphism $Q^\vee|_\Ihat:\C^{\oplus \Ihat} \to C_c(\Jhat)^\vee$
is called the Mac-Williams transform.
In concrete, it maps
$$
Q^\vee|_\Ihat: \sum_{i\in I} a_i\cdot i \mapsto 
\sum_{j\in \Jhat} \sum_{i\in \Ihat} a_iQ_j(i) \ev_j,
$$
%\footnote{$E_j^\vee$ vs $\ev_j$}
where $\ev_j$ is the dual basis, i.e., $\ev_j(\delta_{j'})=\delta_{jj'}$, or equivalently, 
\[
\ev_j : C_c(\Jhat) \to \C, f \mapsto f(j).
\]
Note that the left is a finite sum, but the right may 
be an infinite sum (which causes no problem, since
the dual of a direct sum is the direct product).
\end{definition}
The following is a formal consequence:
\begin{equation}\label{eq:Mac-Williams}
Q^\vee|_\Ihat: \avgY^2 \in \C^{\oplus \Ihat} \mapsto \avgY^2 \in C_c(\Jhat)^\vee.
\end{equation}
\subsection{Codes and designs}\label{sec:code-design}
We define codes and designs.
Let $I_C \subset \Ihat$ be a subset 
which does not contain $i_0$ ($C$ for code).
Let $J_D \subset \Jhat$ be a subset
which does not contain $j_0$ ($D$ for design).  
We define a convex cone $(I_C; J_D)\subset \C^{\oplus \Ihat}$ by
\begin{eqnarray*}
(I_C;J_D)&:=& \\
&\{&\sum_{i\in I} a_i\cdot i \mid  a_i=0 \mbox{ for all but finite $i$ },\\
& & 
a_{i}\geq 0 \mbox{ for all } i\in \Ihat, \\
& & a_{i}=0 \mbox{ for all } i\in I_C, \\
& & \sum_{i\in \Ihat}a_iQ_{j}(i)\geq 0 \mbox{ for all } j\in \Jhat, \\
& & \sum_{i\in \Ihat}a_iQ_{j}(i)=0 \mbox{ for all } j\in J_D \ \  \}.
\\
\end{eqnarray*}
\begin{definition}
A non-empty finite multi-subset $Y\subset \Xhat$ is called an 
$I_C$-free-code-$J_D$-design if $\avgY^2$ lies in the cone $(I_C;J_D)$.
%\footnote{$I_C$-free code vs $I_C$-code}.
Furthermore, an $I_C$-free-code-$\emptyset$-design [resp.~an $\emptyset$-free-code-$J_D$-design]
is simply called an $I_C$-free-code [resp.~a  $J_D$-design].
\end{definition}
More explicitly, the $i$-component of $\avgY^2$ in $\C^{\oplus \Ihat}$ is 
\begin{equation}\label{eq:aY}
a_i(Y):=\frac{1}{\#Y^2}\#((Y\times Y)\cap \Rhat^{-1}(i)), 
\end{equation}
which is non-negative (\ref{eq:inner-distribution})
and required to be $0$ for $i\in I_C$ (namely, there is 
no pair $(x,y)\in Y \times Y$ with relation 
$\Rhat(x,y)\in I_C$), and its $\ev_j$-component 
in $C_c(\Jhat)^\vee$ is
\begin{equation}\label{eq:bY}
b_j(Y):=\avgY^2(E_j)=
||\avgY^j||_\HS^2,
\end{equation}
which is non-negative (\ref{eq:dual-distribution})
and required to be $0$ for $j\in J_D$
(namely, the $j$-component of $\avgY$ is zero
for $j\in J_D$, or equivalently, for any $f\in C(\Xhat)_j$,
$\sum_{y\in Y}f(y)=0$ for $j\in J_D$).
Note that $i_0$ is removed from $I_C$, since 
$$\frac{1}{\#Y^2}\#((Y\times Y)\cap \Rhat^{-1}(i_0))=\frac{1}{\#Y}$$
is positive, and $j_0$ is removed since $||\avgY^{j_0}||_\HS=1$
(being the operator norm of the averaging for constant functions). 
Since 
$$
\#Y\avgY^2(i_0)=1 \mbox{ and } \sum_{i\in \Ihat}\#Y\avgY^2(i)=\#Y,
$$
we may consider an LP problem: under the constraint that
$(a_i)_{i\in \Ihat}$ lies in the cone $(I_C; J_D)$ and $a_{i_0}=1$, 
maximize/minimize $\sum_{i\in \Ihat} a_i$, which 
gives an upper/lower bound on the cardinality of $Y$ which
is an $I_C$-free-code-$J_D$-design.
The following is a formal consequence of (\ref{eq:Mac-Williams}),
stating the relation with classic theory \cite[Section~3]{DELSARTE}.
\begin{theorem}
The Mac-Williams transformation $Q^\vee|_\Ihat$ maps
(\ref{eq:aY}) to (\ref{eq:bY}):
$$ 
\sum_{i\in \Ihat}a_i(Y)\cdot i \mapsto 
\sum_{j\in \Jhat}b_j(Y)\ev_j.
$$
In concrete,
$$
b_j(Y)=\sum_{i\in \Ihat}a_i(Y)Q_j(i).
$$
\end{theorem}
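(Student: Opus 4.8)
The strategy is to recognise that both sides of the claimed identity are coordinate expressions, in two dual bases, of one and the same functional, so that the statement is just (\ref{eq:Mac-Williams}) read off componentwise. The plan is as follows. First I would substitute the explicit form (\ref{eq:inner-distribution}) of $\avgY^2\in\C^{\oplus\Ihat}$, namely $\avgY^2=\sum_{i\in\Ihat}a_i(Y)\cdot i$ with $a_i(Y)$ as in (\ref{eq:aY}), into the formula for the Mac-Williams transform from Definition~\ref{def:Q}. Since $Q^\vee|_\Ihat(i)=\sum_{j\in\Jhat}Q_j(i)\,\ev_j$ by definition, linearity and the finiteness of the support of $\avgY^2$ give
\[
Q^\vee|_\Ihat(\avgY^2)=\sum_{j\in\Jhat}\Big(\sum_{i\in\Ihat}a_i(Y)Q_j(i)\Big)\ev_j,
\]
where each inner sum is finite, so no convergence question arises.

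Next I would prove (\ref{eq:Mac-Williams}), i.e.\ that this same element of $C_c(\Jhat)^\vee$ equals the functional $\avgY^2$ obtained by restricting $\avgY\otimes\avgY$ to $C_c(\Jhat)\cong A_\Xhat$. The key observation is that $\avgY^2$ is defined only once, as a functional on $A_\Xhat$ via $A_\Xhat\to C(\Xhat\times\Xhat)\xrightarrow{\avgY\otimes\avgY}\C$, and that the two isomorphisms $C_c(\Jhat)\cong A_\Xhat\cong C_{lc}(\Ihat)$ in (\ref{eq:commutative}) are both induced by the identity of $A_\Xhat$; the map $Q$ of Definition~\ref{def:Q} is exactly their composite, sending $\delta_j$ to the element of $C_{lc}(\Ihat)$ representing $E_j\in A_\Xhat$. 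Consequently $Q^\vee$ carries the restriction of $\avgY^2$ to $C_{lc}(\Ihat)$ to its restriction to $C_c(\Jhat)$. I would then check that the two evaluation embeddings are compatible: under $\C^{\oplus\Ihat}\hookrightarrow C_{lc}(\Ihat)^\vee$ the element $\sum_i a_i(Y)\cdot i$ is the functional $f\mapsto\sum_i a_i(Y)f(i)$, which coincides with $\avgY^2$ on $C_{lc}(\Ihat)$ by the computation preceding (\ref{eq:inner-distribution}); while in $C_c(\Jhat)^\vee$ the $\ev_j$-coordinate of $\avgY^2$ is its value on $\delta_j$, that is $\avgY^2(E_j)=b_j(Y)$ by (\ref{eq:dual-distribution}) and (\ref{eq:bY}), using that $\delta_j$ corresponds to $E_j$ under $C_c(\Jhat)\cong A_\Xhat$ (Corollary~\ref{cor:E-j}). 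Hence $Q^\vee|_\Ihat(\avgY^2)=\sum_{j\in\Jhat}b_j(Y)\,\ev_j$.

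Finally I would compare the two expressions just obtained for $Q^\vee|_\Ihat(\avgY^2)$ and read off the $\ev_j$-coordinate, which yields the concrete formula $b_j(Y)=\sum_{i\in\Ihat}a_i(Y)Q_j(i)$, as desired. The proof involves no analytic obstacle; its only content is the careful tracking of the two dual bases against the single underlying functional on $A_\Xhat$. The one point deserving attention is that the Mac-Williams transform lands in the direct product $C_c(\Jhat)^\vee=\prod_{j}\C\,\ev_j$ rather than in the direct sum, so the output may have infinitely many nonzero $\ev_j$-coordinates; this is harmless because each individual coordinate $\sum_i a_i(Y)Q_j(i)$ is a finite sum over the (finite) support of $\avgY^2$.
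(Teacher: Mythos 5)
Your proof is correct and takes essentially the same route as the paper, which simply declares the theorem a formal consequence of (\ref{eq:Mac-Williams}); you have just spelled out that formality by tracking the single functional $\avgY^2$ on $A_\Xhat$ through the two dual coordinates, using $\delta_j\leftrightarrow E_j$ and (\ref{eq:dual-distribution}) exactly as the paper intends. Your remark that the image lies in the direct product $\prod_j\C\,\ev_j$ matches the paper's own parenthetical caveat.
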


\begin{remark}
The meaning of the values (\ref{eq:aY}) is clear,
and called the inner distribution \cite[Section~3.1]{DELSARTE}
(up to a factor of $\#Y$),
but there, the meaning of the values
$b_j(Y)=\sum_{i\in \Ihat}a_i(Y)Q_j(i)$ is not clear, just the semi-positivity
is proved. 
Now the value $b_j(Y)=||\avg_Y^j||_\HS^2$ has a clear interpretation.
On the space $C(\Xhat)_j$, the operator norm
of $\avg_Y^j$ is $||\avg_Y^j||_\HS$, since
$$
\sup_{f\in C(\Xhat)_j\setminus \{0\}}
\frac{|\avgY^j(f)|}{||f||}
=
\sup_{f\in C(\Xhat)_j\setminus \{0\}}
\frac{(f,\overline\avg_Y^j)}{||f||}
\leq  \frac{||f||\cdot ||\avg_Y^j||}{||f||}=||\avg_Y^j||,
$$
and the equality holds for $f=\avgY^j$.
This may be interpreted as 
the worst-case error in approximating the integration 
for $C(X)_j$
by $\avg_Y$ (where the true integration value is zero for $j\neq j_0$, 
because of the orthogonality to the constant
functions $C(X)_{j_0}$), see a comprehensive book by Dick-Pillichshammer
\cite[Definition~2.10]{DICK-PILL-BOOK} on quasi-Monte Carlo integration.
Another remark: the injectivity (\ref{eq:injectivity})
implies that $Y$ and $Y'$ have the same inner distribution
$$
\#((Y\times Y)\cap \Rhat^{-1}(i))=\#((Y'\times Y') \cap \Rhat^{-1}(i))
\mbox{ for all $i\in \Ihat$}
$$
if and only if
$$
||\avgY^j||_\HS=||\avg_{Y'}^j||_\HS
\mbox{ for all $j\in \Jhat$}.
$$
\end{remark}

\subsection{$(t,m,s)$-nets and $(t,s)$-sequences}\label{sec:t-s}
The $(t,m,s)$-nets are point sets for quasi-Monte Carlo
integration, well-studied, see \cite{niederreiter:book}. 
For $(t,m,s)$-nets, the LP bound by ordered Hamming schemes 
introduced by Martin-Stinson \cite{MARTIN-STINSON}
yields strong lower bounds on the cardinality of the point set $P$,
see \cite{MARTIN-STINSON} and Bierbrauer \cite{BIERBRAUER}
for example. We do not have such an application, but 
profinite association schemes may be used to formalize
the notion of $(t,s)$-sequence \cite{niederreiter:book},
as mentioned at the last of this section. 

The purpose of this section is to reprove some results of
Marin-Stinson, and see a relation to pro-ordered Hamming schemes.
The main result Corollary~\ref{cor:main} of this section 
is proved in 
Martin-Stinson\cite[Theorem~3.4]{MARTIN-STINSON} 
by using the notion of dual 
schemes and weight enumerator polynomials.
Here we shall give a self-contained proof avoiding the use of 
these tools. Our proof matches to the
formalization in Section~\ref{sec:code-design},
relying on the average functional $\avgY$ introduced
in Definition~\ref{def:avg}.

Recall the kernel scheme in Definition~\ref{def:kernel},
where the set of alphabets is $Z_v := \Z/v$, and
$X_n=Z_v^n$, $I_n=\{1,2,\ldots,n\}\cup\{\infty \}$,
$$\bbot: X_n \to I_n$$
in Definition~\ref{def:bot}
give the kernel scheme as a translation scheme.
We shall identify a point in $X_n$
with a point in the real interval $[0,1]$, by 
\begin{equation}\label{eq:decimation}
(x_1,\ldots,x_n) \mapsto 0.x_1x_2\cdots x_n \in [0,1),
\end{equation}
where the right expression means the $v$-adic decimal expansion.
Then, we have a map
$$
X_n^s \to [0,1]^s.
$$
We shall define $(t,m,s)$-nets % and $(t,s)$-sequences,
following Niederreiter~\cite[Definition~4.1]{NIED:NETS}.
An elementary interval of type
$(d_1,\ldots,d_s)$
is a subset of $[0,1]^s$ of the form
$$
\prod_{i=1}^s[a_iv^{-d_i}, (a_{i}+1)v^{-d_i}),
$$
where $a_i$ and $d_i$ are non-negative integers such
that $a_i<v^{d_i}$. Its volume is
$$
\prod_{i}^s v^{-d_i} = v^{-\sum_{i=1}^sd_i}.
$$
\begin{definition}
Let $0\leq t \leq m$ be integers. A $(t,m,s)$-net in base $v$
is a multi-subset $P\subset [0,1]^s$ consisting of $v^m$ points,
such that every elementary interval of type $(d_1,\ldots,d_s)$
with $d_1+\cdots+d_s=m-t$ (hence volume $v^{t-m}$) contains
exactly $v^t$ points of $P$.
\end{definition}

Our purpose is to state these conditions 
in terms of designs $Y\subset X_n^s$ for a finite multi-subset $Y$.
\begin{definition}
%Let $f:X \to X'$ be a map between a mult-set and a set, with $X$ non-empty. We call $f$ uniform, if $\#f^{-1}(x')$ is independent of $x'\in X'$.
A finite multi-subset $Y$ in $X$ is said to be uniform on $X$ if $\# (Y \cap \{ x \})$ is constant for $x \in X$,
that is, $\# g^{-1}(x)$ is constant for $x \in X$
if $Y$ is defined by a map $g : Z \rightarrow X$.
\end{definition}

\begin{definition}
For an $s$-tuple of non-negative
integers $\bd=(d_1,\ldots,d_s)$ with each component less than or equal to $n$,
define
$$
X_\bd:=\prod_{i=1}^s Z_v^{d_i},
$$
and the projection
$$
\pr_\bd:X_n^s \to X_\bd
$$
by taking the left most 
$d_i$ components of the $i$-th $X_n$ in $X_n^s$, for $i=1,\ldots,s$.
\end{definition}

\begin{definition}\label{def:f-P}
A non-empty multi-subset $Y\subset X_n^s$ is $\bd$-balanced,
%if the composite $Y \to X_n^s \to X_\bd$ is uniform.
if the image of $Y$ in $X_\bd$ by the map $X_n^s \to X_\bd$ is uniform on $X_\bd$.
\end{definition}
\begin{proposition}\label{prop:balance}
A multi-subset $Y\subset X_n^s$ of cardinality $v^m$ gives a $(t,m,s)$-net
if and only if it is $\bd$-balanced
for any non-negative tuples $\bd=(d_1,\ldots,d_s)$
with $d_1+\cdots+d_s=m-t$.
\end{proposition}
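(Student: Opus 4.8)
The plan is to translate the geometric net condition into the combinatorial balance condition through the single observation that the $v$-adic decimation map (\ref{eq:decimation}) carries elementary intervals onto fibers of the projection $\pr_\bd$. First I would record the key digit lemma: for one coordinate, a point $0.x_1x_2\cdots x_n=\sum_{i=1}^n x_i v^{-i}$ lies in the elementary interval $[a v^{-d},(a+1)v^{-d})$ (with $0\le a<v^d$ and $d\le n$) if and only if $a=\sum_{i=1}^d x_i v^{d-i}$, i.e.\ if and only if its first $d$ digits $(x_1,\ldots,x_d)$ are those encoding $a$. The check is elementary: multiplying by $v^d$, the digits with $i\le d$ contribute the integer $\sum_{i=1}^d x_i v^{d-i}$, while the tail $\sum_{i>d} x_i v^{d-i}$ is nonnegative and at most $(v-1)\sum_{k=1}^{n-d} v^{-k}<1$, so the integer part is exactly $\sum_{i=1}^d x_i v^{d-i}$. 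In particular the map $X_n\to[0,1)$ is injective, so the image multiset $P$ of $Y$ again has $v^m$ points and counting points of $P$ agrees with counting points of $Y$, taking multiplicity into account.

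Taking the $s$-fold product of this lemma, I would note that the elementary intervals of type $\bd=(d_1,\ldots,d_s)$ are in natural bijection with the elements of $X_\bd=\prod_i Z_v^{d_i}$: the interval $\prod_i [a_i v^{-d_i},(a_i+1)v^{-d_i})$ corresponds to the element of $X_\bd$ whose $i$-th block is the length-$d_i$ digit string encoding $a_i$. Under this bijection the digit lemma says precisely that a point $x\in X_n^s$ maps into the given elementary interval if and only if $\pr_\bd(x)$ equals the corresponding $\xi\in X_\bd$. Hence the number of points of $P$ (equivalently of $Y$, with multiplicity) in that elementary interval equals $\#\big(Y\cap \pr_\bd^{-1}(\xi)\big)$.

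Now the equivalence becomes a counting statement. We have $\#X_\bd=\prod_i v^{d_i}=v^{d_1+\cdots+d_s}=v^{m-t}$ and $\#Y=v^m$. Thus $Y$ is $\bd$-balanced, i.e.\ its image in $X_\bd$ is uniform, exactly when every fiber $\pr_\bd^{-1}(\xi)$ contains the same number of points of $Y$, which by the total count $v^m=v^t\cdot v^{m-t}$ forces that number to be $v^t$; by the previous paragraph this is exactly the condition that every elementary interval of type $\bd$ contains exactly $v^t$ points of $P$. Quantifying over all tuples $\bd$ with $d_1+\cdots+d_s=m-t$ yields the claim. Here one uses $n\ge m-t$, so that $\sum_i d_i=m-t$ automatically forces $d_i\le m-t\le n$ for every $i$; thus the tuples appearing in the net definition are exactly those for which $X_\bd$ and $\pr_\bd$ are defined.

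The main obstacle is the digit lemma of the first paragraph, namely checking that elementary-interval membership in each coordinate is governed solely by the leading $d_i$ digits. Once this identification of elementary intervals with fibers of $\pr_\bd$ is in hand, both directions of the proposition collapse to the single count $\#Y/\#X_\bd=v^t$. The only subtlety to watch is the boundary behaviour of the $v$-adic expansion, controlled by the strict estimate $\sum_{i>d} x_i v^{d-i}<1$ on the tail, which simultaneously yields injectivity of the decimation map.
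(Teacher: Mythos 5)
Your proof is correct and follows essentially the same route as the paper, which simply observes that the decimation map (\ref{eq:decimation}) induces a one-to-one correspondence between $X_\bd$ and the elementary intervals of type $(d_1,\ldots,d_s)$ and declares the rest immediate. You have merely filled in the details the paper omits (the digit lemma, the tail estimate, and the count $\#Y/\#X_\bd=v^t$), all of which check out.
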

The proof is immediate when we consider 
the mapping (\ref{eq:decimation}),
since then there is a natural one-to-one correspondence
between $X_\bd$ and the set of elementary intervals
of type
$(d_1,\ldots,d_s)$.
We may state the conditions in terms of the dual group.

\begin{lemma}\label{lem:constant}
Let $X$ be a finite abelian group, and $Y$ a finite multi-subset in $X$.
For $f\in C(X)$, define 
$$
I(f):=\avg_X(f)=\frac{1}{\#X}\sum_{x\in X}f(x).
$$
(This is the true integral of $f$ over the finite set $X$).
The following are equivalent.
\begin{enumerate}
 \item $I(f)=\avgY(f)$ holds for any $f\in C(X)$.
 \item %The inclusion map $Y$ into $X$ is uniform on $X$. That is, $\# (Y \cap \{ x \})$ is constant for $x \in X$. \footnote{How to write?}
 The finite multi-subset $Y$ is uniform on $X$.
 \item $\avgY(\xi)=0$ holds for any non-trivial $\xi\in \Xcheck$.
\end{enumerate}
\end{lemma}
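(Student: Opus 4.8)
The plan is to pass from the multiset $Y$ to its multiplicity function and then exploit two standard bases of $C(X)$: the delta functions $\delta_x$ and the characters $\xi\in\Xcheck$. Concretely, writing $g:Z\to X$ for the defining map of $Y$, set $c(x):=\#(Y\cap\{x\})=\#g^{-1}(x)$, so that $\#Y=\sum_{x\in X}c(x)$ and
\[
\avgY(f)=\frac{1}{\#Y}\sum_{x\in X}c(x)f(x)
\]
for every $f\in C(X)$. Both $I$ and $\avgY$ are thereby realized as linear functionals on $C(X)$, and the whole lemma reduces to comparing these two functionals, which I will do by evaluating them on a convenient basis.

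First I would establish (1)$\Leftrightarrow$(2). Testing the identity $I(f)=\avgY(f)$ on $f=\delta_x$ gives $\frac{1}{\#X}=\frac{c(x)}{\#Y}$, i.e.\ $c(x)=\#Y/\#X$ independent of $x$, which is precisely uniformity. Conversely, if $c(x)\equiv k$ is constant then $\#Y=k\,\#X$ and the weights $c(x)/\#Y=1/\#X$ coincide with those of $I$, so $I(f)=\avgY(f)$ for all $f$ by linearity. Since the $\delta_x$ span $C(X)$, this settles the equivalence.

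Next I would establish (1)$\Leftrightarrow$(3) using the character basis. Recall that the $\xi\in\Xcheck$ form an orthonormal basis of $C(X)$, that $\avgY(1)=1$ for the trivial character, and that $I(\xi)=\frac{1}{\#X}\sum_{x}\xi(x)$ equals $1$ for $\xi=1$ and $0$ for every non-trivial $\xi$ by the orthogonality of characters. Now (1) certainly forces (3), since $I(\xi)=0$ for non-trivial $\xi$. For the converse, expand an arbitrary $f$ as $f=\sum_{\xi}\hat f(\xi)\,\xi$; then under (3) every term with $\xi\neq 1$ contributes $0$ to $\avgY(f)$, so $\avgY(f)=\hat f(1)$, and likewise $I(f)=\hat f(1)$ by the orthogonality relations. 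Hence (3) implies (1).

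Since both equivalences share condition (1) as a hinge, together they give the three-way equivalence. I do not expect a genuine obstacle here: the argument is elementary finite Fourier analysis, and the only points demanding care are the bookkeeping of the normalization factors $1/\#X$ and $1/\#Y$ and the consistent use of the multiset convention $\#(Y\cap S)=\#(Z\cap g^{-1}(S))$ fixed earlier in the section.
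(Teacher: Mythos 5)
Your proof is correct and follows essentially the same route as the paper: testing $I=\avgY$ on the delta/indicator functions to get the equivalence with uniformity, and using the character basis of $C(X)$ together with the orthogonality relation $I(\xi)=0$ for non-trivial $\xi$ to get the equivalence with (3). The only cosmetic difference is that you make the Fourier expansion $f=\sum_\xi \hat f(\xi)\xi$ explicit where the paper simply observes that it suffices to check the identity on the basis $\Xcheck$.
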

\begin{proof}
Suppose that the first condition holds.
Let $\chi_x$ be the indicator function of $x\in X$.
The condition $I(\chi_x)=\avgY(\chi_x)$
implies $\frac{1}{\#X}=\frac{\#(Y \cap \{ x \})}{\# Y}$, hence $Y$ is uniform on $X$. 
The converse is obvious.

Suppose again the first condition. For any non-trivial $\xi$,
$I(\xi)=0$. This implies that $\avgY(\xi)=0$.
For the converse, we assume the third condition.
Note that $\Xcheck$ is a base of $C(X)$,
so it suffices to show that 
$I(\xi')=\avgY(\xi')$ holds for any $\xi' \in \Xcheck$. 
This holds for non-trivial $\xi$, and for the trivial $\xi=1$,
$I(1)=1=\avgY(1)$.
\end{proof}

\begin{lemma}
The dual $\Xcheck_\bd$ of $X_\bd$
can be identified with the set of characters
$$
\{
\xi:=(\xi_1,\ldots,\xi_s)\in \check{X_n}^s \mid
\ttop(\xi_i)\leq d_i \mbox{ for } i=1,\ldots,s
\}
$$
(see Definition \ref{def:ttop} for the notation $\ttop$).
\end{lemma}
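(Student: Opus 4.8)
The plan is to realize the identification through the pullback of characters along the surjective homomorphism $\pr_\bd:X_n^s \to X_\bd$. Since $\pr_\bd$ is a surjection of finite abelian groups, pullback $g \mapsto g\circ \pr_\bd$ induces an injective homomorphism $\pr_\bd^\dagger:\Xcheck_\bd \hookrightarrow \check{X_n}^s$ of character groups: injectivity is immediate from surjectivity of $\pr_\bd$, and by the universal property of the quotient $X_\bd \cong X_n^s/\ker(\pr_\bd)$, the image consists precisely of those characters of $X_n^s$ that are trivial on $\ker(\pr_\bd)$. Thus it suffices to identify this image with the set in the statement.

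First I would compute the kernel explicitly. As $\pr_\bd$ projects the $i$-th factor $Z_v^n$ onto its leftmost $d_i$ coordinates,
$$
\ker(\pr_\bd)=\prod_{i=1}^s \bigl(\{0\}^{d_i}\times Z_v^{n-d_i}\bigr)\subset X_n^s.
$$
Next, using the product formula (\ref{eq:eval}), a character $\xi=(\xi_1,\ldots,\xi_s)\in \check{X_n}^s$ decomposes on each factor as $\xi_i(x_1,\ldots,x_n)=\prod_{k=1}^n \xi_{i,k}(x_k)$. Hence $\xi$ is trivial on $\ker(\pr_\bd)$ if and only if each $\xi_i$ is trivial on $\{0\}^{d_i}\times Z_v^{n-d_i}$, equivalently $\xi_{i,k}=1$ for all $k>d_i$. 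By Definition~\ref{def:ttop} this last condition is exactly $\ttop(\xi_i)\leq d_i$, which yields the desired identification.

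There is no genuine obstacle here; the whole argument is the standard Pontryagin duality for finite abelian groups, under which a character factors through a quotient precisely when it kills the corresponding subgroup. The only point requiring care is the bookkeeping translation between ``$\xi_i$ is trivial on the last $n-d_i$ coordinates'' and ``$\ttop(\xi_i)\leq d_i$'', which is immediate from (\ref{eq:eval}) together with the definition of $\ttop$ in Definition~\ref{def:ttop}.
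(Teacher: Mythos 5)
Your proof is correct. The paper actually states this lemma without any proof at all, so there is nothing to compare against; your argument --- pulling back characters along the surjection $\pr_\bd:X_n^s\to X_\bd$, identifying the image with the characters killing $\ker(\pr_\bd)=\prod_{i=1}^s(\{0\}^{d_i}\times Z_v^{n-d_i})$ via finite Pontryagin duality, and translating that condition into $\ttop(\xi_i)\leq d_i$ using (\ref{eq:eval}) and Definition~\ref{def:ttop} --- is the standard argument the authors evidently had in mind, and it is complete.
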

\begin{corollary}
For a multi-subset $Y\subset X_n^s$, the composition
$Y\to X_n^s \to X_\bd$ is uniform
if and only if 
$$
\avgY(\xi)=0
$$
holds for any non-trivial $\xi\in \Xcheck_\bd$.
\end{corollary}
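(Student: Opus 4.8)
The plan is to reduce the statement to Lemma~\ref{lem:constant} applied to the finite abelian group $X_\bd$ and the image multi-subset. Write $g\colon Z \to X_n^s$ for the defining map of $Y$, and let $Y'$ denote the multi-subset of $X_\bd$ defined by the composition $g':=\pr_\bd\circ g\colon Z \to X_\bd$; by definition the assertion that ``$Y\to X_n^s \to X_\bd$ is uniform'' means precisely that $Y'$ is uniform on $X_\bd$. Lemma~\ref{lem:constant}, applied to $X=X_\bd$ and to the multi-subset $Y'$, then tells us that $Y'$ is uniform if and only if $\avg_{Y'}(\eta)=0$ for every non-trivial $\eta\in \Xcheck_\bd$. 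So it remains only to match this condition with the vanishing of $\avgY$ on non-trivial elements of $\Xcheck_\bd$.

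The bridge is the pullback $\pr_\bd^\dagger\colon C(X_\bd)\to C(X_n^s)$ associated with the surjection $\pr_\bd$. For any $\eta\in \Xcheck_\bd$ I set $\xi:=\pr_\bd^\dagger\eta=\eta\circ \pr_\bd$. Directly from the definitions of the two averaging functionals,
\[
\avg_{Y'}(\eta)=\frac{1}{\#Z}\sum_{z\in Z}\eta\bigl(\pr_\bd(g(z))\bigr)
=\frac{1}{\#Z}\sum_{z\in Z}\xi\bigl(g(z)\bigr)=\avgY(\xi).
\]
Because $\pr_\bd$ is a surjective homomorphism of finite abelian groups, $\pr_\bd^\dagger$ carries characters to characters, is injective, and both preserves and reflects triviality (a character $\eta$ is trivial if and only if $\eta\circ\pr_\bd$ is). Moreover its image on $\Xcheck_\bd$ is exactly the set $\{\xi=(\xi_1,\ldots,\xi_s)\mid \ttop(\xi_i)\le d_i\}$, which is the identification of $\Xcheck_\bd$ inside $\check{X_n}^s$ recalled in the lemma immediately preceding the statement. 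Hence $\eta\mapsto\xi$ is a bijection between the non-trivial characters of $X_\bd$ and the non-trivial elements of $\Xcheck_\bd$ (in the sense of that identification), under which $\avg_{Y'}(\eta)=\avgY(\xi)$.

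Combining the two observations, the condition ``$\avg_{Y'}(\eta)=0$ for all non-trivial $\eta\in\Xcheck_\bd$'' is literally the same as ``$\avgY(\xi)=0$ for all non-trivial $\xi\in\Xcheck_\bd$'', and the corollary follows. I expect no genuine obstacle beyond bookkeeping: the single point that must be checked with care is the compatibility $\avg_{Y'}(\eta)=\avgY(\pr_\bd^\dagger\eta)$, together with the fact that the inclusion $\Xcheck_\bd\hookrightarrow\check{X_n}^s$ is precisely the dual of $\pr_\bd$. Both are immediate from the definition of $\pr_\bd$ and the identification just recalled, so the argument is essentially a translation of Lemma~\ref{lem:constant} through the duality between $\pr_\bd$ and the character inclusion.
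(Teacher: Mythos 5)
Your proof is correct and follows exactly the route the paper intends: the corollary is left as an immediate consequence of Lemma~\ref{lem:constant} applied to $X_\bd$ together with the identification of $\Xcheck_\bd$ inside $\check{X_n}^s$ from the preceding lemma, and your computation $\avg_{Y'}(\eta)=\avgY(\pr_\bd^\dagger\eta)$ is precisely the bookkeeping the paper omits. No issues.
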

\begin{corollary}\label{cor:balance-dual}
A multi-subset $Y\subset X_n^s$ of cardinality $v^m$ is a $(t,m,s)$-net
if and only if
$$
\avgY(\xi)=0
$$
holds for any $\xi \in \Xcheck_\bd\setminus \{1\}$
and any non-negative
$\bd=(d_1,\ldots,d_s)$ satisfying $\sum_{i=1}^sd_i= m-t$.
\end{corollary}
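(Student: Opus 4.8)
The plan is to chain together the three results immediately preceding the statement, since the corollary is essentially a formal consequence of them. First I would invoke Proposition~\ref{prop:balance}, which identifies the property of being a $(t,m,s)$-net (for a multi-subset of cardinality $v^m$) with being $\bd$-balanced for every non-negative tuple $\bd=(d_1,\ldots,d_s)$ subject to $\sum_{i=1}^s d_i = m-t$. This reduces the claim to a statement about balancedness for exactly this family of tuples.

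Next I would unfold Definition~\ref{def:f-P}: being $\bd$-balanced means precisely that the image of $Y$ under $\pr_\bd:X_n^s \to X_\bd$ is uniform on $X_\bd$. Applying the (unnamed) Corollary preceding the present one, uniformity of the image $Y \to X_\bd$ is equivalent to the vanishing $\avgY(\xi)=0$ for every non-trivial character $\xi \in \Xcheck_\bd$. Here one uses the Lemma identifying $\Xcheck_\bd$ with the set of $\xi=(\xi_1,\ldots,\xi_s)\in \check{X_n}^s$ satisfying $\ttop(\xi_i)\le d_i$, so that such a $\xi$ is regarded as a character on $X_n^s$ pulled back through $\pr_\bd$, and $\avgY(\xi)$ is the averaging functional already defined in Definition~\ref{def:avg}.

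Putting these together, $Y$ is a $(t,m,s)$-net if and only if, for every $\bd$ with $\sum_{i=1}^s d_i = m-t$, the image of $Y$ in $X_\bd$ is uniform, which is to say $\avgY(\xi)=0$ for all $\xi \in \Xcheck_\bd$ with $\xi \neq 1$. Since the two universal quantifiers (over $\bd$ and over non-trivial $\xi \in \Xcheck_\bd$) may be exchanged freely, this is exactly the asserted condition, and the equivalence follows.

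I do not expect a genuine obstacle here; the deduction is a two- or three-line formal step. The only points requiring attention are the bookkeeping of the quantifiers and confirming that the notion of $\avgY(\xi)$ appearing in the preceding corollary (the average over $Y$ of a character on $X_\bd$, pulled back to $X_n^s$) coincides with the one in the statement. Once these identifications are made explicit via the Lemma, the proof closes immediately.
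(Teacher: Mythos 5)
Your proof is correct and follows exactly the chain the paper intends: Proposition~\ref{prop:balance} reduces the net property to $\bd$-balancedness, Definition~\ref{def:f-P} and Lemma~\ref{lem:constant} (via the unnamed corollary) convert uniformity of the image in $X_\bd$ into vanishing of $\avgY$ on non-trivial characters of $\Xcheck_\bd$, and the lemma identifying $\Xcheck_\bd$ inside $\check{X_n}^s$ supplies the required pullback identification. The paper leaves this corollary without an explicit proof precisely because it is this formal chaining, so your argument matches the intended derivation.
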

Niederreiter\cite{NIED:NETS} 
and Rosenbloom-Tsfasman\cite{ROSENBLOOM-TSFASMAN}
defined the notion of NRT-weight,
see also Niederreiter and Pirsic\cite{NIEDERREITER-PIRSIC}:
\begin{definition}
Define the NRT-weight $\wt$ by the composition
$$
\wt:\Xcheck_n^s \stackrel{\ttop^s}\to J_n^s \stackrel{\sumup}{\to} \N,
$$
where $\sumup$ is a function taking the sum of the $s$
coordinates and $\ttop$ is defined in Definition~\ref{def:ttop}.
\end{definition}
\begin{theorem}\label{th:wt} Let $Y\subset X_n^s$ be a multi-subset
of cardinality $v^m$. 
Then it is a $(t,m,s)$-net if and only if for any
$\xi\in \Xcheck_n^s$ with $\wt(\xi)\leq m-t$ and $\xi\neq 1$,
$\avgY(\xi)=0$.
\end{theorem}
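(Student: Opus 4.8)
The plan is to derive Theorem~\ref{th:wt} from Corollary~\ref{cor:balance-dual} by a purely combinatorial comparison of the two families of characters over which $\avgY$ is required to vanish. Both statements express the $(t,m,s)$-net property of $Y$ as the condition that $\avgY(\xi)=0$ for all nontrivial $\xi$ in a prescribed set, so once those two sets are shown to coincide the theorem is immediate. For $\xi=(\xi_1,\ldots,\xi_s)\in\Xcheck_n^s$ I would write $w:=\wt(\xi)=\sum_{i=1}^s\ttop(\xi_i)$, and recall from the lemma identifying $\Xcheck_\bd$ above that $\xi\in\Xcheck_\bd$ holds exactly when $\ttop(\xi_i)\le d_i$ for every $i$. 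The target is thus the set identity
$$
\bigcup_{\bd:\,\sum_i d_i=m-t}\bigl(\Xcheck_\bd\setminus\{1\}\bigr)
=\{\xi\neq 1\mid \wt(\xi)\le m-t\},
$$
where on the left $\bd$ ranges over tuples with $0\le d_i\le n$.

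First I would record the easy inclusion ``$\subseteq$''. If $\xi\neq 1$ lies in $\Xcheck_\bd$ for some $\bd$ with $\sum_i d_i=m-t$, then $\wt(\xi)=\sum_i\ttop(\xi_i)\le\sum_i d_i=m-t$, so $\xi$ belongs to the family of Theorem~\ref{th:wt}. Hence the vanishing condition of the theorem implies that of Corollary~\ref{cor:balance-dual}, which already yields one direction of the equivalence.

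For the reverse inclusion ``$\supseteq$'', which I expect to carry the real content, I would start from a nontrivial $\xi$ with $w=\wt(\xi)\le m-t$ and construct a tuple $\bd$ satisfying $\sum_i d_i=m-t$, $0\le d_i\le n$, and $\ttop(\xi_i)\le d_i$ for all $i$; then $\xi\in\Xcheck_\bd\setminus\{1\}$, placing it in the corollary's family. Concretely I would set $d_i:=\ttop(\xi_i)$ initially and then distribute the surplus $r:=(m-t)-w\ge 0$ greedily among the coordinates, raising each $d_i$ toward its cap $n$ in any order until $r$ is exhausted. This padding succeeds because the total available room is $\sum_i\bigl(n-\ttop(\xi_i)\bigr)=sn-w$, which is at least $r=(m-t)-w$ precisely when $m-t\le sn$.

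The one point demanding attention is exactly this cap: the construction, and indeed the applicability of Corollary~\ref{cor:balance-dual} itself, needs $m-t\le sn$ (for which $n\ge m-t$ is a clean sufficient condition, since $s\ge 1$). This is the natural standing hypothesis, as it is what guarantees that the elementary intervals of type $\bd$ with $\sum_i d_i=m-t$ are resolvable at precision $n$; without it the left-hand family can be strictly smaller than the right-hand one and the theorem genuinely fails, as one sees already for $s=1$, $n=1$, $v=2$, $m=3$, $t=0$. Under this hypothesis the two character sets are equal, so the two vanishing conditions are literally the same and the theorem follows at once. I would close by noting that the trivial character is excluded consistently on both sides, since $\wt(1)=0$ while $1\in\Xcheck_\bd$ for every $\bd$.
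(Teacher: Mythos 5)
Your proof is correct and follows the same route as the paper, which simply asserts that the theorem ``follows from Corollary~\ref{cor:balance-dual}''; you supply the omitted combinatorial step, namely the identity between $\bigcup_{\bd:\sum_i d_i=m-t}(\Xcheck_\bd\setminus\{1\})$ and the set of nontrivial characters of NRT-weight at most $m-t$, with the easy inclusion giving one direction and the padding construction giving the other. Your observation that the padding (and indeed the applicability of Proposition~\ref{prop:balance} and Corollary~\ref{cor:balance-dual}) requires the implicit standing hypothesis $n\ge m-t$ --- automatic in Martin--Stinson, where $n=m$ --- is a genuine point that the paper leaves unstated.
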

This follows from Corollary~\ref{cor:balance-dual}.
Following Martin-Stinson\cite{MARTIN-STINSON}, let us define 
$$
\shape: J_n^s \to J_n^s/S_s.
$$
In Proposition~\ref{prop:extension} and Lemma~\ref{lem:eigen}, 
we observed that for $\bar{j} \in J_n^s/S_s$,
the corresponding eigenspace $C(X_n^s)_{\bar{j}}$
is spanned by the characters in the inverse image of $\bar{j}$
by
$$
\Xcheck_n^s \stackrel{\ttop^s} {\to}J_n^s \to J_n^s/S_s.
$$
It is easy to see that
$\wt$ factors as %\footnote{``height'' seems confusing (Young diagrams)}
$$
\Xcheck_n^s \stackrel{\ttop^s}{\to} 
J_n^s \stackrel{\shape}\to J_n^s/S_s
\stackrel{\height}{\to} \N,
$$
where the definition of $\height$ is clear from the context.
(We remark that $\height$ is denoted by ``height''
in Martin-Stinson\cite[P.337]{MARTIN-STINSON}, but 
we changed the notation to avoid the collision with a standard 
use of ``height'' in the context of the Young diagrams.)
Thus, 
\begin{eqnarray*}
\{\xi \in \Xcheck_n^s \mid \wt(\xi)\leq m-t\} & = &
\{\xi \in \Xcheck_n^s \mid \height(\shape(\ttop^s(\xi)))\leq m-t\}. 
\end{eqnarray*}
The condition for $Y\subset X_n^s$ to be
a $(t,m,s)$-net is that $\avgY(\xi)$ vanishes
for any nontrivial $\xi$ in the left hand side 
(Theorem~\ref{th:wt}),
which is equivalent to the vanishing in the right hand side,
namely, $\avgY(\xi)=0$ for each of $\xi \in C(X_n^s)_{\bar{j}}$
for $\bar{j}\in J_n^s/S_s\setminus \{j_0\}$, $\height(\bar{j})\leq m-t$.
This is equivalent to $\avgY^{\bar{j}}=0$ 
(the left hand side is the representation of $\avgY$
in the subspace $C(X_n^s)_{\bar{j}}$
defined in (\ref{eq:avgY}) with property (\ref{eq:repr}))
for these $\bar{j}$.

Now we proved \cite[Theorem~3.4]{MARTIN-STINSON}:
\begin{theorem}
Let $Y\subset X_n^s$ be a multi-subset
of cardinality $v^m$.
Then, $Y$ is a $(t,m,s)$-net if and only if
$\avgY^{\bar{j}}=0$ for any $\bar{j} \neq j_0$,
$\height(\bar{j})\leq m-t$.
\end{theorem}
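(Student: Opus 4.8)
The plan is to read this statement off as a reorganization of Theorem~\ref{th:wt} along the factorization of the NRT-weight, translating the character-by-character vanishing condition into the vanishing of the eigenspace representatives $\avgY^{\bar j}$. Nothing new has to be computed: the substance is already contained in Theorem~\ref{th:wt}, Lemma~\ref{lem:eigen} and Proposition~\ref{prop:extension}, and the task is to splice these together.

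First I would invoke Theorem~\ref{th:wt}: $Y$ is a $(t,m,s)$-net precisely when $\avgY(\xi)=0$ for every nontrivial $\xi\in\Xcheck_n^s$ with $\wt(\xi)\le m-t$. Next, using the factorization $\wt=\height\circ\shape\circ\ttop^s$ recorded just above the statement, the constraint $\wt(\xi)\le m-t$ is the same as $\height(\shape(\ttop^s(\xi)))\le m-t$, a condition depending on $\xi$ only through $\bar j:=\shape(\ttop^s(\xi))\in J_n^s/S_s$. I would therefore group the characters by their image $\bar j$ and rephrase the condition as: for every $\bar j\neq j_0$ with $\height(\bar j)\le m-t$, one has $\avgY(\xi)=0$ for all $\xi$ with $\shape(\ttop^s(\xi))=\bar j$. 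By Proposition~\ref{prop:extension}, for fixed $\bar j$ these characters form a basis of the eigenspace $C(X_n^s)_{\bar j}$, so the displayed condition is equivalent to the vanishing of the functional $\avgY$ on all of $C(X_n^s)_{\bar j}$.

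The final step connects this to $\avgY^{\bar j}$. By its defining property (\ref{eq:avgY}) and (\ref{eq:repr}), the vector $\avgY^{\bar j}\in C(X_n^s)_{\bar j}$ represents the restriction of $\avgY$ to $C(X_n^s)_{\bar j}$ through the inner product $(-,-)_\HS$, via $(f,\overline{\avgY^{\bar j}})_\HS=\avgY(f)$. Since $(-,-)_\HS$ is nondegenerate, the functional $\avgY|_{C(X_n^s)_{\bar j}}$ is identically zero if and only if its representative $\avgY^{\bar j}$ vanishes. Assembling the chain of equivalences yields the theorem.

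I do not expect a genuine obstacle here, since the work is carried by the cited results; the point requiring the most care is the last equivalence, where one must use both that the characters with prescribed $\shape(\ttop^s(\xi))=\bar j$ \emph{exactly} span the eigenspace $C(X_n^s)_{\bar j}$ and that $\avgY^{\bar j}$ is the Riesz representative of $\avgY$ on that subspace, so that vanishing of the functional is faithfully detected by vanishing of $\avgY^{\bar j}$.
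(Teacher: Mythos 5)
Your proposal is correct and follows essentially the same route as the paper: the paper also derives the theorem by combining Theorem~\ref{th:wt} with the factorization $\wt=\height\circ\shape\circ\ttop^s$, grouping characters by $\bar j$ so that they span $C(X_n^s)_{\bar j}$, and then identifying the vanishing of $\avgY$ on that eigenspace with $\avgY^{\bar j}=0$ via (\ref{eq:avgY}) and (\ref{eq:repr}). Your explicit remark that nondegeneracy of $(-,-)_\HS$ is what makes the last equivalence faithful is a slight sharpening of what the paper leaves implicit.
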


In terms of the designs (see Section~\ref{sec:code-design}), we may state
\begin{corollary}\label{cor:main}
Define 
$$
J_D:=\{\bar{j} \in J_n^s/S_s \mid \bar{j}\neq j_0, \ \height(\bar{j})\leq m-t \}.
$$
Then, $Y\subset X_n^s$ is a $J_D$-design if and only if
it is a $(t,m,s)$-net in $X_n^s$.
\end{corollary}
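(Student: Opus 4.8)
The plan is to recognize that this corollary is a direct translation of the theorem immediately preceding it into the language of designs from Section~\ref{sec:code-design}, so that essentially no new computation is required. First I would unwind the definition of a $J_D$-design: by definition, $Y$ is a $J_D$-design (i.e.\ an $\emptyset$-free-code-$J_D$-design) precisely when $\avgY^2$ lies in the cone $(\emptyset; J_D)$. Since $I_C=\emptyset$, the constraints ``$a_i=0$ for $i\in I_C$'' are vacuous, so I must only interrogate the remaining conditions.

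The key observation is that the positivity constraints defining the cone are automatic for every nonempty multiset $Y$, and hence impose nothing. Indeed, $a_i(Y)\geq 0$ for all $i\in\Ihat$ holds by (\ref{eq:inner-distribution}), and $\sum_{i}a_i(Y)Q_{\bar j}(i)\geq 0$ for all $\bar j\in\Jhat$ holds because this quantity equals $b_{\bar j}(Y)=\|\avgY^{\bar j}\|_\HS^2$ by (\ref{eq:dual-distribution}). Consequently, membership of $\avgY^2$ in $(\emptyset; J_D)$ reduces to the single family of equalities $\sum_{i}a_i(Y)Q_{\bar j}(i)=0$ for all $\bar j\in J_D$. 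Then, using the identity $b_{\bar j}(Y)=\sum_i a_i(Y)Q_{\bar j}(i)=\avgY^2(E_{\bar j})=\|\avgY^{\bar j}\|_\HS^2$ recorded in (\ref{eq:bY}), each such equality is equivalent to $\avgY^{\bar j}=0$. Thus $Y$ is a $J_D$-design if and only if $\avgY^{\bar j}=0$ for every $\bar j\in J_D$.

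Finally, I would match the index sets: $J_D=\{\bar j\in J_n^s/S_s\mid \bar j\neq j_0,\ \height(\bar j)\leq m-t\}$ is exactly the set of $\bar j$ appearing in the preceding theorem, which characterizes $(t,m,s)$-nets by the vanishing $\avgY^{\bar j}=0$ for precisely these $\bar j$. Combining the two equivalences immediately yields the corollary. I do not anticipate any genuine obstacle here, since the substance lies in the theorem already proved; the only points requiring care are verifying that the positivity constraints in the cone are automatic (so that no $(t,m,s)$-net is inadvertently excluded from the cone) and that the defining set $J_D$ coincides verbatim with the index set of the theorem. Both are routine bookkeeping checks.
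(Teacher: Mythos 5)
Your proposal is correct and matches the paper's intent exactly: the paper states the corollary as an immediate restatement of the preceding theorem in the language of Section~\ref{sec:code-design}, and your unwinding — the positivity constraints $a_i(Y)\geq 0$ and $b_{\bar j}(Y)=\|\avgY^{\bar j}\|_\HS^2\geq 0$ are automatic, so membership in the cone $(\emptyset;J_D)$ reduces to $\avgY^{\bar j}=0$ for $\bar j\in J_D$ — is precisely the bookkeeping the paper leaves implicit. No gap.
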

By this, Delsarte's LP method works, see \cite{MARTIN-STINSON}.
The next is an example where we may use a profinite association scheme
to make a concept of $(t,s)$-sequence
inside the design theory, (see \cite[Definition~4.2]{niederreiter:book}
and \cite[Remark~2.2]{NIEDERREITER-OZBUDAK}).

Let us consider the pro-ordered Hamming scheme
$\overrightarrow{H}(s,\infty,v) = (\Xhat,\Rhat)$
defined in Proposition \ref{prop:pro-ordered}.
Note that $\Xhat = (\varprojlim X_n)^s = \varprojlim  (X_n^s) = (Z_v^{\N_{>0}})^s$.
For each $n$, we denote by $\pi_n$ the surjection from $\Xhat$ onto $X_n^s$. 
That is, 
\begin{equation}\label{eq:pi-n}
\pi_n(x^1,\dots,x^s) := ([x^1]_n,\dots,[x^s]_n)
\end{equation}
for each $(x^1,\dots,x^s) \in \Xhat = (Z_v^{\N_{>0}})^s$,
where we put $[(x_1,\dots)]_n := (x_1,\dots,x_n)$ 
for each $(x_1,\dots) \in Z_v^{\N_{>0}}$.

\begin{definition}\label{def:t-s}
A sequence of points $p_0,p_1,\ldots \in \Xhat$
is a $(t,s)$-sequence in base $v$, if 
for any integers $k$ and $m>t$, 
the point set 
$Y_{k,m} \subset X_m^s \subset [0,1)^s$ 
consisting of the $\pi_m(p_j)$ with $kv^m\leq j <(k+1)v^m$ 
(considered as a multi-set)
is a $(t,m,s)$-net
in base $v$. 
%\footnote{$Y_{k,m}$ may be a multi-set in $X_m^s$. Note that we can take sufficiently large $n > m$ such that $\{ \pi_n(p_j) \mid kv^m\leq j <(k+1)v^m \}$ is not a multi-set}
%Here, $[p_j]_m$ denotes the truncation at the $m$-th decimal. (Namely, $[(x_1,x_2,\ldots)]_m:=(x_1,x_2,\ldots, x_m)$.)
%done\footnote{Not modified yet}
\end{definition}
For a comparison with a standard Definition~\ref{def:original-t-s}, see Remark~\ref{rem:subtle}.
%Through the identification using decimal expansion
%\begin{equation}\label{eq:decimation}
%X^{\N_{>0}} \to [0,1], \quad (x_1,x_2,\ldots) \mapsto 0.x_1x_2\ldots, 
%\end{equation}
%the above definition is 
%essentially equivalent to the original
%Definition~\ref{def:original-t-s}.
We may state the condition of $(t,s)$-sequences
in terms of the designs in the pro-ordered Hamming scheme $\overrightarrow{H}(s,\infty,v)$.
Recall that $\Jhat=\N^s/S_s$, and we may define $\height$
so that the composition
$$
\N^s \to \Jhat=(\N^s/S_s) \stackrel{\height}{\to} \N
$$
maps $(j_1,\ldots,j_s) \mapsto \sum_{i=1}^s j_i$.

%Through the identification (\ref{eq:decimation}), we have
\begin{theorem}
In the pro-ordered Hamming scheme,
we define 
$$
J_{D_\ell}:=\{h \in (\N^s/S_s)=\Jhat \mid \height(h)\leq \ell, h\neq j_0 \}.
$$
A sequence of points $p_0,p_1,\ldots \in \Xhat$
is a $(t,s)$-sequence in base $v$, if and only if
for any integers $k$ and $m>t$,
the point set $Y_{k,m}$ 
consisting of the $\pi_{m}(p_j)$ with $kv^m\leq j <(k+1)v^m$ is 
a $J_{D_{m-t}}$-design.
\end{theorem}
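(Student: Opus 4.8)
The plan is to collapse the statement, via a short chain of equivalences, onto Corollary~\ref{cor:main}, which already characterizes $(t,m,s)$-nets as designs in the \emph{finite} ordered Hamming scheme $\overrightarrow{H}(s,m,v)$. First I would unwind Definition~\ref{def:t-s}: the sequence $p_0,p_1,\ldots$ is a $(t,s)$-sequence precisely when, for every integer $k$ and every $m>t$, the multiset $Y_{k,m}\subset X_m^s$ is a $(t,m,s)$-net. Applying Corollary~\ref{cor:main} with $n=m$, this holds iff each $Y_{k,m}$ is a $J_D$-design in $\overrightarrow{H}(s,m,v)$, where $J_D=\{\bar j\in J_m^s/S_s \mid \bar j\neq j_0,\ \height(\bar j)\le m-t\}$. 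What then remains is to match this finite design condition with the profinite $J_{D_{m-t}}$-design condition in $\overrightarrow{H}(s,\infty,v)$.

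For that identification I would use $\Jhat=\N^s/S_s$ (Proposition~\ref{prop:pro-ordered}, Corollary~\ref{cor:Jhat-kernel}) together with the isolation results of Section~\ref{sec:def-pro}. Each $h\in J_{D_{m-t}}$ is the class of some $(j_1,\ldots,j_s)$ with $\sum_i j_i\le m-t$; since $m>t$, every coordinate satisfies $j_i\le m-t\le m$, so $h$ lies in the domain of the partial surjection $\Jhat\parto J_m^s/S_s$ and, as the inductive-limit injections $J_n^s/S_s\to\Jhat$ realize $\Jhat$ as $\N^s/S_s$, the inverse image of $h_m$ is the single class $\{h\}$; that is, $h$ is isolated at level $m$ (Definition~\ref{def:isolate}, Proposition~\ref{prop:discrete}). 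Hence $h\mapsto h_m$ restricts to a bijection $J_{D_{m-t}}\stackrel{\sim}{\to}J_D$, and by Proposition~\ref{prop:orthogonal} we have $C(\Xhat)_h=C(X_m^s)_{h_m}$, with $E_h\in A_\Xhat$ equal to the image of $E_{h_m}\in A_{X_m^s}$, i.e.\ the pullback of $E_{h_m}\in C(X_m^s\times X_m^s)$ along $\pi_m\times\pi_m$.

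Because each such $E_h$ factors through $\pi_m$, the averaging data agree on both schemes. Using (\ref{eq:avgY}) and the identity $E_h(x,y)=E_{h_m}(\pi_m(x),\pi_m(y))$, for any multiset one gets $\avgY^h=\avg_{\pi_m(Y)}^{h_m}\circ\pi_m$, and since $C(X_m^s)\to C(\Xhat)$ preserves inner products (Proposition~\ref{def:inner-product}) this yields $b_h(Y)=\|\avgY^h\|_\HS^2=b_{h_m}(\pi_m(Y))$ in the notation of (\ref{eq:bY}). In particular these quantities depend only on the level-$m$ projection, so it is harmless to regard $Y_{k,m}\subset X_m^s$ as a multiset in $\Xhat$ (e.g.\ via $\{p_j:kv^m\le j<(k+1)v^m\}$): the condition $b_h(Y_{k,m})=0$ for all $h\in J_{D_{m-t}}$ is then literally the condition $b_{h_m}(Y_{k,m})=0$ for all $h_m\in J_D$ in $\overrightarrow{H}(s,m,v)$. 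Chaining this with the two equivalences above establishes the claim in both directions.

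I expect the only genuine obstacle to be the finite-to-profinite bookkeeping of the middle paragraph: one must verify both that the bound $\height(h)\le m-t$ forces isolation at exactly level $m$, and that the resulting bijection of index sets is compatible with the idempotents $E_h$ and the functionals $\avgY^h$. Once this compatibility is in hand, the rest is a direct application of Definition~\ref{def:t-s} and Corollary~\ref{cor:main}.
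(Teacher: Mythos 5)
Your proposal is correct and follows exactly the route the paper intends: the paper states this theorem without a written proof, treating it as the combination of Definition~\ref{def:t-s} with Corollary~\ref{cor:main} plus the identification of $J_{D_{m-t}}\subset\Jhat$ with the finite index set $J_D\subset J_m^s/S_s$ via isolation at level $m$. Your middle paragraphs simply make explicit the finite-to-profinite bookkeeping that the paper leaves implicit, and they check out.
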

In the original definition, the sequence is taken in an 
infinite set $[0,1]^s$, where infinite precision is necessary.
Because of this infiniteness, a fixed finite scheme would 
not be able to describe the notion of $(t,s)$-sequence.
Our profinite 
association scheme is a tool to deal with the infiniteness.

We close our paper with a discussion 
on the definitions of $(t,s)$-sequences.
The following is a definition
by Niederreiter-\"Ozbudak\cite[Definition~2.2]{NIEDERREITER-OZBUDAK} (to be precise, its special case),
which is a slight variant of the original Niederreiter's one: \cite[Definition~4.2]{niederreiter:book}.
\begin{definition}\label{def:original-t-s}
Let $p_0,p_1,\ldots \in [0,1]^s$ be a sequence of points,
with prescribed $v$-adic expansions.
(In other words, each $p_j$ is considered
as an element in $\Xhat$.)
It is a $(t,s)$-sequence in base $v$, if 
for any integers $k$ and $m>t$, 
the point set $P_{k,m}$ 
consisting of the $\pi_m(p_j)$ with $kv^m\leq j <(k+1)v^m$ is a $(t,m,s)$-net
in base $v$. 
\end{definition}
\begin{remark}\label{rem:subtle}
It is easy to see that Definitions~\ref{def:t-s} and \ref{def:original-t-s} are the same.
In Definition~\ref{def:original-t-s}, 
we use $\pi_m$ in (\ref{eq:pi-n}) for $p_j$, 
where $p_j$
is an element of $\Xhat$. In fact, 
$\pi_m$ is not well-defined for $[0,1]^s$,
because of the non-injectivity 
of 
$$
(Z_v^{\N_{>0}}) \to [0,1],
$$ 
namely, for base $v=2$, for example, $0.0111\cdots=0.1000\cdots$.
One way to avoid this subtle problem
may be to require the $v$-adic expansion of
a real number to be of the latter type,
namely, to avoid expansions consisting of all $(v-1)$'s
after some digit.
However, in important constructions such as in 
Niederreiter-Xing\cite{niederreiter:xing}, points with 
the former type of expansions may appear.
Thus, to define the notion of $(t,s)$-sequences
as a sequence in $[0,1]^s$ has a subtle problem.
Definition~\ref{def:t-s} would be a natural
definition of a $(t,s)$-sequence. 
In other words, the notion would be better
defined in terms of $\Xhat$ rather than $[0,1]^s$.
This is essentially 
stated in \cite[Remark~2.2]{NIEDERREITER-OZBUDAK}. 
Of course, a large part of researchers would prefer
to work in $[0,1]^s$.
\end{remark}

\bibliographystyle{plain}
\bibliography{sfmt-kanren}

\begin{thebibliography}{10}

\bibitem{Bannai}
E.~Bannai and T.~Ito.
\newblock {\em Algebraic Combinatorics I: Association Schemes}.
\newblock Benjamin / Cummings, Calfornia, 1984.

\bibitem{BARG-SKRIGANOV}
A.~Barg and M.~Skriganov.
\newblock Association schemes on general measure spaces and zero-dimensional
  abelian groups.
\newblock {\em Adv. Math.}, 281:142--247, 2015.

\bibitem{BIERBRAUER}
J.~Bierbrauer.
\newblock A direct approach to linear programming bounds for codes and
  tms-nets.
\newblock {\em Designs, Codes and Cryptography}, 42(2):127--143, 2007.

\bibitem{BROUWER}
A.~W. Brouwer, A.~M. Cohen, and A.~Neumaier.
\newblock {\em Distance-regular graphs}, volume MATHE3, 18 of {\em Ergebnisse
  der Mathematik und ihrer Grenzgebiete. 3. Folge / A Series of Modern Surveys
  in Mathematics}.
\newblock Springer-Verlag, New York, Heidelberg, Berlin, 1989.

\bibitem{CHOKSI}
J.~R. Choksi.
\newblock Inverse limits of measure spaces.
\newblock {\em London Mathematical Society}, s3-8:321--342, 1958.

\bibitem{DELSARTE}
P.~Delsarte.
\newblock An algebraic approach to the association schemes of coding theory.
\newblock {\em Philips Res. Rep. Suppl.}, 10:i--vi and 1--97, 1973.

\bibitem{DICK-PILL-BOOK}
J.~Dick and F.~Pillichshammer.
\newblock {\em Digital Nets and Sequences: Discrepancy Theory and Quasi-Monte
  Carlo Integration}.
\newblock Cambridge University Press, Cambridge, 2010.

\bibitem{French}
C.~French.
\newblock Functors from association schemes.
\newblock {\em J. Combin. Theory Ser. A}, 120:1141 -- 1165, 2013.

\bibitem{GODSIL}
C.~Godsil.
\newblock Generalized hamming schemes.
\newblock arXiv:1011.1044.

\bibitem{HALMOS}
P.R. Halmos.
\newblock {\em Measure Theory}, volume~18 of {\em Graduate Texts in
  Mathematics}.
\newblock Springer-Verlag, New York, Heidelberg, Berlin, 1970.

\bibitem{HANAKI-CAT}
A.~Hanaki.
\newblock A category of association schemes.
\newblock {\em J. Combin. Theory Ser. A}, 117:1207 -- 1217, 2010.

\bibitem{KMO}
H.~Kajiura, M.~Matsumoto, and T.~Okuda.
\newblock Non-existence and construction of pre-difference sets, and
  equi-distributed subsets in association schemes.
\newblock {\em Graphs and Combinatorics}, 37:1531--1544, 2021.

\bibitem{KURIHARA-OKUDA}
H.~Kurihara and T.~Okuda.
\newblock A {MacWilliams} type theorem and {Delsarte}’s theory for codes and
  designs on compact homogeneous spaces.
\newblock in preparation.

\bibitem{LANG-DYADIC}
W.C. Lang.
\newblock Orthogonal wavelets on the cantor dyadic group.
\newblock {\em SIAM J. Math. Anal.}, 27:305--312, 1996.

\bibitem{MACLANE}
S.~MacLane.
\newblock {\em Categories for the Working Mathematician}, volume~5 of {\em
  Graduate Texts in Mathematics}.
\newblock Springer-Verlag, New York, Heidelberg, Berlin, 2nd edition, 1998.

\bibitem{MARTIN-STINSON}
W.J. Martin and D.R. Stinson.
\newblock Association schemes for ordered orthogonal arrays and (t, m, s)-nets.
\newblock {\em Canadian Journal of Mathematics}, 51:326--346, 1999.

\bibitem{NIED:NETS}
H.~Niederreiter.
\newblock Low-discrepancy point sets.
\newblock {\em Monatsh. Math.}, 102:155--167, 1986.

\bibitem{niederreiter:book}
H.~Niederreiter.
\newblock {\em Random Number Generation and Quasi-Monte Carlo Methods}.
\newblock CBMS-NSF, Philadelphia, Pennsylvania, 1992.

\bibitem{NIEDERREITER-OZBUDAK}
H.~Niederreiter and F.~{\"{O}zbudak}.
\newblock Low-discrepancy sequences using duality and global function fields.
\newblock {\em Acta Arith.}, 130:79--97, 2007.

\bibitem{NIEDERREITER-PIRSIC}
H.~Niederreiter and G.~Pirsic.
\newblock Duality for digital nets and its applications.
\newblock {\em Acta Arith.}, 97:173--182, 2001.

\bibitem{niederreiter:xing}
H.~Niederreiter and C.~P. Xing.
\newblock Low-discrepancy sequences and global function fields with many
  rational places.
\newblock {\em Finite Fields and Their Applications}, 2:241--273, 1996.

\bibitem{RIBES-ZALESSKII}
L.~Ribes and P.~Zalesskii.
\newblock {\em Profinite Groups}.
\newblock Springer-Verlag, Berlin Heidelberg, 2000.

\bibitem{ROSENBLOOM-TSFASMAN}
M.~Yu. Rosenbloom and M.~A. Tsfasman.
\newblock Codes for the m-metric.
\newblock {\em Problems of Information Transmission}, 33:55--63, 1997.

\bibitem{TAKEUCHI}
M.~Takeuchi.
\newblock {\em Modern spherical functions}, volume 135 of {\em Translations of
  Mathematical Monographs}.
\newblock American Mathematical Society, Providence, 1994.

\bibitem{ZIESCHANG}
P.~H. Zieschang.
\newblock {\em An Algebraic Approach to Association Schemes}, volume 1628 of
  {\em Lecture Notes in Mathematics}.
\newblock Springer-Verlag, Berlin Heidelberg, 1996.

\end{thebibliography}

%\bibitem{serre}
%Jean-Pierre Serre,
%\em{Repr\'esentations line\'aires des groupes finis.}
%Third revised edition. Hermann, Paris, 1978.
%\end{thebibliography}

\end{document}